\documentclass[10pt]{article}

\usepackage[top = 2.5cm, bottom = 2.5cm, left = 2.5cm, right = 2.5cm]{geometry}

\input{header}
 \definecolor{forestgreen}{rgb}{0.13, 0.55, 0.13}

\newcommand{\ke}[1]{\color{purple} #1 \color{black}}

\newcommand{\IND}{\mathbf{1}}
%%%%%numbersets%%%%%%%%%%
\newcommand{\bbR}{\mathbb{R}}

\newcommand{\bbRD}{\mathbb{R}^d}
\newcommand{\N}{\mathbb{N}}

%%%%%mathcal%%%%%%%%

\newcommand{\cF}{\mathcal{F}}

\newcommand{\cG}{\mathcal{G}}

\newcommand{\cM}{\mathcal{M}}
\newcommand{\cO}{\mathcal{O}}
\newcommand{\cP}{\mathcal{P}}

\newcommand{\cX}{\mathcal{X}}

\newcommand{\cU}{\mathcal{U}}

\newcommand{\cL}{\mathcal{L}}
%%%%%tilde%%%%%

%%%%scr%%%%%

\newcommand{\scrH}{\mathscr{H}}

\newcommand{\scrL}{\mathscr{L}}
%%%%%frak%%%%

%%%%%equation%%%%%
\newcommand{\bes}{\begin{equation}}
\newcommand{\ees}{\end{equation}}
\newcommand{\beas}{\begin{eqnarray}}
\newcommand{\eeas}{\end{eqnarray}}
\newcommand{\bea}{\begin{eqnarray}}
\newcommand{\eea}{\end{eqnarray}}
\newcommand{\be}{\begin{equation}}
\newcommand{\ee}{\end{equation}}
\newcommand{\bei}{\begin{itemize}}
\newcommand{\eei}{\end{itemize}}
\newcommand{\bec}{\begin{cases}}
\newcommand{\eec}{\end{cases}}
\newcommand{\ben}{\begin{enumerate}}
\newcommand{\een}{\end{enumerate}}

%%%%greekletters%%%%%

%%%%mathbbletters%%%%%

\newcommand{\bbP}{\mathbb{P}}
\newcommand{\bbE}{\mathbb{E}}

%%%%%boldletters%%%%%

%%%%%miscellanea%%%%%

\newcommand{\bbl}{\begin{block}}
\newcommand{\ebl}{\end{block}}

\newcommand{\De}{\mathrm{d}}

%%%%%%hat%%%%%%

%%%rm%%%%%%%%%%

\newcommand{\rmI}{\mathrm{I}}

\newcommand{\rme}{\mathrm{e}}
%%%%%%bm%%%%%%%%%%%%%%%%%%%%%

\newcommand{\bmu}{\bm{u}}

%%%%%%ambienti%%%%%%%%%%%%%%%%
% \newtheorem{thm}{Theorem}[section]
% %\newtheorem{theorem}{Theorem}[section]
% \newtheorem{mycxmpl}[thm]{Counterexample}
% \newtheorem{mydef}[thm]{Definition}
% \newtheorem{prop}[thm]{Proposition}
% \newtheorem{myhyp}{Assumption}[section]
% \newtheorem{myxmpl}[thm]{Example}
% \newtheorem{cor}[thm]{Corollary}
% \newtheorem{claim}[thm]{Claim}
%%%%%%%

\newcommand{\ip}[2]{\langle #1,#2\rangle}

%%%%%%%%%%%%%%%
% alain

%% mathcal

  %%% \mcb est déjà pris

\def\mcp{\mathcal{P}}

\def\mcp{\mathcal{P}}

%%% mathsf
\def\msi{\mathsf{I}}

\def\msk{\mathsf{K}}

\def\msc{\mathsf{C}}

\def\PE{\mathbb{E}}

\def\plusinfty{+\infty}

\def\txts{\textstyle}

\def\rmC{\mathrm{C}}

% Operands

\newcommand{\abs}[1]{\left\vert #1 \right\vert}
\newcommand{\absLigne}[1]{\vert #1 \vert}

\newcommandx{\Vnorm}[2][1=V]{\| #2 \|_{#1}}
\newcommandx{\normFr}[2][1=]{\ifthenelse{\equal{#1}{}}{\left\Vert #2 \right\Vert_{\mathrm{Fr}}}{\left\Vert #2 \right\Vert^{#1}_{\mathrm{Fr}}}}
\newcommandx{\norm}[2][1=]{\ifthenelse{\equal{#1}{}}{\left| #2 \right|}{\left| #2 \right|^{#1}}}
\newcommandx{\normLigne}[2][1=]{\ifthenelse{\equal{#1}{}}{\Vert #2 \Vert}{\Vert #2\Vert^{#1}}}

\newcommand{\parenthese}[1]{\left(#1 \right)}

\newcommand{\defEns}[1]{\left\lbrace #1 \right\rbrace }

\def\1{\mathbbm{1}}

%% mathbb

\def\rset{\mathbb{R}}

\def\ie{\textit{i.e.}}

\def\eqsp{\;}

\newcommand{\ocint}[1]{\left(#1\right]}
\newcommand{\ooint}[1]{\left(#1\right)}
\newcommand{\ccint}[1]{\left[#1\right]}

\def\rmd{\mathrm{d}}

\def\Leb{\mathrm{Leb}}

\def\PP{\mathbb{P}}

\def\Leb{\mathrm{Leb}}

\def\transpose{\top}
\DeclareMathOperator{\Id}{\mathrm{Id}}
\DeclareMathOperator{\Idd}{\mathrm{I}_d}

\def\bfA{\mathbf{A}}

\DeclareMathOperator{\TV}{\mathrm{TV}}

\DeclareMathOperator*{\argmin}{arg\,min}
\DeclareMathOperator{\tr}{tr\,}

\def\Lip{\mathrm{Lip}}

\def\const{\mathrm{C}}
\def\frob{\mathrm{Fr}}
\def\kappaX{\kappa_{\beta}}
\def\hx{\hat{x}}

\def\distY{\varrho}

\def\sfd{{\sf d}}

\newcommand{\expe}[1]{\mathbb{E}\left[ #1\right]}

\newcommand{\expeLigne}[1]{\mathbb{E}[ #1]}

\def\hX{\hat{X}}
\def\bfe{\mathbf{e}}

\newcommand{\sequencetp}[1]{(#1_t)_{t \geq 0}}

\def\forany{\text{ for any }}
\def\Xdelta{X^{\delta}}

%%%
% new notation

\def\tsigma{\check{\sigma}}
\def\fRU{f_R}
\def\akappa{a_{\bar\kappa}}

%%% Local Variables:
%%% mode: latex
%%% TeX-master: "main"
%%% End:

\usepackage{authblk}

\title{The exponential turnpike phenomenon for mean field game systems: weakly monotone drifts and small interactions}
\author[1]{Alekos Cecchin}
\author[2]{Giovanni Conforti}
\author[3]{Alain Durmus}
\author[4]{Katharina Eichinger}

\affil[1,2]{Università degli studi di Padova}
\affil[3,4]{IPParis} 

\date{\today}

\newcommand{\fl}{\mu_{\cdot}}
\newcommand{\hfl}{\hat\mu_{\cdot}}

\usepackage{mathrsfs}

\newcommand{\rcd}{\mathrm{rc}^{\delta}}
\newcommand{\scd}{\mathrm{sc}^{\delta}}

\newcommand{\lip}[1]{\|#1\|_{\mathrm{Lip}}}
\begin{document}

\maketitle

\abstract{This article aims at quantifying the long time behavior of solutions of mean field PDE systems arising in the theory of  Mean Field Games and McKean-Vlasov control. Our main contribution is to show well-posedness of the ergodic problem and the exponential turnpike property of dynamic optimizers, which implies exponential convergence to equilibrium for both optimal states and controls to their ergodic counterparts. In contrast with previous works that require some version of the Lasry-Lions monotonicity condition, our main assumption is a weak form of asymptotic monotonicity on the drift of the controlled dynamics and some basic regularity and smallness conditions on the interaction terms. Our proof strategy is probabilistic and based on the construction of contractive couplings between controlled processes and forward-backward stochastic differential equations. The flexibility of the coupling approach allows us to cover several interesting situations. For example, we do not need to restrict ourselves to compact domains and can work on the whole space $\bbR^d$, we can cover the case of non-constant diffusion coefficients and we can sometimes show turnpike estimates for the hessians of solutions to the backward equation. }

\tableofcontents

\paragraph{Acknowledgments:} 
A.C. is partially supported by INdAM-GNAMPA Project 2023, ``Stochastic Mean Field Models: Analysis and
Applications'', the PRIN 2022 Project 2022BEMMLZ ``Stochastic control and games and the role of
information'',  the PRIN 2022 PNRR Project P20224TM7Z ``Probabilistic methods for energytransition'', and the project MeCoGa ``Mean field control and games'' of the University of Padova through the program STARS@UNIPD - NextGenerationEU. 
G.C. and  K.E. acknowledge funding from the ANR project ANR-20-CE40-0014. K.E. acknowledges funding from the project PEPS JCJC 2023 of the INSMI and partial support through the University of Padova through the research project BIRD229791 ``Stochastic mean field control and the Schrödinger problem''. 

\section{Introduction}

The purpose of this work is to investigate the long time behavior of solutions and the exponential turnpike property of coupled forward-backward PDE systems arising in the theory  of mean field games (MFG) and McKean-Vlasov stochastic control. Mean field game theory is rooted in the works by Lasry and Lions \cite{lasry2006JeuxChampMoyen,lasry2006JeuxChampMoyena,lasry2007mean} and 
\cite{huang2006LargePopulationStochastic} 
who  gave a mean-field description  of Nash equilibria in differential games with infinitely many small and indistinguishable agents. On the other hand, McKean-Vlasov control problems arise naturally as the mean-field limit of classical high-dimensional stochastic control problems with symmetric costs. A rigorous probabilistic treatment of this class of non linear stochastic control problems began with the seminal paper of Carmona and Delarue \cite{carmona2015forward}, whereas differences and similarities between the two theories are explained in \cite{carmona2013control}. As of today, both MFGs and McKean-Vlasov control are thriving research fields, in part because of the widespread applicability of these models, for example in social sciences, economics and engineering, and in part because their rigorous analysis led to the development of new mathematical theories in connection with the study of Hamilton-Jacobi equations on the space of probability measures and the the so-called master equation \cite{cardaliaguet2019long}. In both these fields, one is naturally led to consider coupled system of PDEs, whose prototype is
\bes\label{eq:MF_PDE_intro}
\bec
\partial_t \varphi_t(x) + \frac{1}{2}\tr\left(\sigma(x)^{\top}\nabla^2\varphi_t(x)\sigma(x)\right) +H(x,\nabla\varphi_t(x)) + F(\mu_t,x)=0, \eqsp \varphi_T(x) = G(\mu_T,x),\\
 \partial_t\mu_t -\frac{1}{2}\tr(\nabla^2(\sigma^{\top}\sigma\mu_t)(x)) + \nabla\cdot(\partial_pH(x,\nabla\varphi_t(x))\mu_t(x))=0 \eqsp, \quad\mu_0=\mu.
 \eec
\ees
In the above, the backward equation is a Hamilton-Jacobi-Bellman (henceforth HJB) equation and the forward equation is a Fokker Planck equation. Here, the Hamiltonian function $H: \rset^d\times \rset^d\to \rset$ is defined through a running cost function $L:\rset^d \times \rset^d \to \rset$ and a drift field $b :\rset^d \to \rset^d$ through
\begin{equation}
H(x,p)=\inf_{u \in \bbR^d} \{L(x,u)+(b(x)+u)\cdot p \}\eqsp.\label{eq:1}
\end{equation}
In the context of mean field games, \eqref{eq:MF_PDE_intro} describes Nash equilibria for the game. 

The goal of the present work is to study the long time behavior of solutions to \eqref{eq:MF_PDE_intro} with a probabilistic approach. More precisely we prove that if the mean field interaction (in the cost) is not too strong, and the uncontrolled dynamics is sufficiently ergodic, then solutions satisfy the turnpike property, which we now briefly describe.  The turnpike property is a general principle postulating that the dynamics of optimal curves for control problems set on a large time horizon is divided in three phases. In the first phase solutions depart from the initial state to approach a steady state, called indeed turnpike. In the second long phase, the dynamics of solutions is localized around the turnpike, whereas in the third and last phase solutions move away from the turnpike to reach their final state. This behavior was first noticed in \cite{dorfman1987linear,mckenzie1976turnpike}
for problems arising in econometry, see the survey \cite{geshkovski2022turnpike} for further examples of modern applications of this property in engineering and machine learning.
In the current setup, a turnpike coincides with a solution to the ergodic system corresponding to \eqref{eq:MF_PDE_intro}, namely
\begin{equation}\label{eq:MF_PDE_ergodic_intro}
\bec
\eta + \frac{1}{2}\tr\left(\sigma(x)^{\top}\sigma(x)\nabla^2\varphi(x)\right) +  H(x,\nabla\varphi(x)) + F(\mu,x)=0,\\
\frac{1}{2}\tr(\nabla^2(\sigma(x)^{\top}\sigma(x)\mu)) - \nabla\cdot(\partial_p H(x,\nabla\varphi(x))\mu)=0.
\eec
\end{equation}
In this work we seek to establish the so-called exponential turnpike property of solutions, which implies in particular that the distance between dynamic and ergodic optimizers decays exponentially fast in time at timescales that are of the same order of the length of the time horizon over which the problem is set.
We now proceed to give an informal presentation of our contributions comparing them with results already available in the literature.

\paragraph{Literature review.}
The appearance of the work \cite{trelat2015turnpike} has renewed the interest around the long-time behavior of deterministic control problems, whereas literature on classical (i.e. finite-dimensional) stochastic control problems is comparatively smaller, and we refer the interested reader to \cite[Sec 1]{conforti2022coupling} for an overview of  relevant contributions. For mean field problems, most of the recent progresses are driven by the development of the theory of mean field games starting with the works \cite{cardaliaguet2012long, cardaliaguet2013long}. Here, the authors establish exponential turnpike theorems under the so called Lasry-Lions monotonicity condition, which may be viewed as a sort of (strong) convexity of the interaction term in the measure argument.
Subsequently, still working under the Lasry-Lions condition, Cardaliaguet and Porretta managed to study the long time-behavior of the master equation associated with the mean field PDE system in \cite{cardaliaguet2019master}. 
In \cite{cirant2021long}, the authors were able to exploit the smoothing properties of the Laplacian to go beyond the strict monotonicity condition and establish exponential turnpike estimates for second order MFGs under what they call a mild non-monotonicity condition. Similar ideas have been recently used in \cite{cesaroni2024stationary} to obtain local turnpike estimates for Kuramoto MFGs. The literature on mean field games includes also other works, such as \cite{cardaliaguet2013KAM,cardaliaguet2021ergodic,bardi2024long}, though they are less related to the scope of this article as they focus on first order mean field games, which are deterministic problems. 
Very recently, a general existence result is given in the preprint \cite{carmona2024probabilisticapproachdiscountedinfinite} by probabilistic methods, and an application of the turnpike property to numerical methods for mean field games is presented in \cite{carmona2024leveragingturnpikeeffectmean}.

The turnpike property  has been  less investigated for optimizers of  McKean-Vlasov control problems. The linear-quadratic case, for which many explicit calculations are possible, is studied in \cite{sun2024turnpike}. Existence for the ergodic problem is investigated in \cite{Cardaliaguet20203255}, and uniqueness and exponential convergence to the turnpike is establushed under convexity assumptions, while an explicit non-convex example where time-dependent value faunctions (in the space of probability measures) do not converge to the ergodic value function is given in \cite{Masoero2019}; we return to this in the next section.

\paragraph{Informal statement of the main results.}
Without any ambition of being rigorous or complete, let us offer a foretaste of our main results, referring to \Cref{sec:main-results} for precise statements. 
In a nutshell, we are able to establish well posedness for the ergodic system \eqref{eq:MF_PDE_ergodic_intro} and exponential turnpike estimates for solutions of \eqref{eq:MF_PDE_intro} under three different sets of assumptions, corresponding to items \ref{item:high_intro}-\ref{item:mild_intro}-\ref{item:low_intro} below. Roughly speaking, in going from \ref{item:high_intro} to \ref{item:low_intro} we lower the regularity requirements imposed on the interaction term and become more demanding on its growth.
\begin{theorem}[Informal statement]\label{thm:main_res_intro}
Assume that $b$ is asymptotically monotone in the sense that there exists $\rho,R > 0$, such that
$$
-\ip{b(x)-b(\hat{x})}{x-\hat{x}} \geq \rho |x-\hat{x}|^2 \quad \text{for } |x-\hat{x}| \geq R,
$$
$\sigma$ is uniformly elliptic, bounded and Lipschitz. Assume moreover that $L$ is uniformly convex in the control variable. Assume one of the following
\begin{enumerate}[label=(\roman*)]
\item\label{item:high_intro} $\partial_x L$ and $\partial_x F$ are bounded, and $\partial_xF$ is $W_1$-Lipschitz with small enough Lipschitz constant.
\item\label{item:mild_intro} $\partial_x L$ and $\partial_x F$ are bounded, and $F$ is $W_1$-Lipschitz with a small enough Lipschitz constant.
\item\label{item:low_intro}The oscillations of $L$ and $F$ in the space variable are bounded, and $F$ is Lipschitz with respect to the total variation distance with a small enough constant.
\end{enumerate}
Then, there exists a unique solution $\mu^{\infty},\varphi^{\infty}$ for the ergodic system \eqref{eq:MF_PDE_ergodic_intro} and a positive rate $\lambda$ independent of $T$ such that for any solution $(\mu_\cdot,\varphi_\cdot)$ of \eqref{eq:MF_PDE_intro} we have the following exponential turnpike property for \ref{item:high_intro}, \ref{item:mild_intro}
\begin{equation}\label{eq:intro_tp_est}
W_1(\mu_t,\mu^\infty)+\|\nabla\varphi_t-\nabla\varphi^\infty\|_{\infty} \lesssim e^{-\lambda t}+ e^{-\lambda (T-t)} \quad \forall t\in[0,T-1],
\end{equation}
and for \ref{item:low_intro}
\begin{equation}\label{eq:intro_tp_est_low}
\|\mu_t - \mu^\infty \|_{\TV}+\|\nabla\varphi_t-\nabla\varphi^\infty\|_{\infty} \lesssim e^{-\lambda t}+ e^{-\lambda (T-t)} \quad \forall t\in[1,T-1].
\end{equation}
Finally, if in case \ref{item:high_intro} $b$ and $D\sigma$ are Lipschitz continuous, then \eqref{eq:intro_tp_est} holds with a better rate $\lambda'> \lambda$ and for larger values of the $W_1$-Lipschitz constant of $\partial_x F$. Moreover, we also have a turnpike estimates for hessians
\begin{equation}
\|\nabla^2\varphi_t-\nabla^2\varphi^\infty\|_{\infty} \lesssim e^{-\lambda' t}+ e^{-\lambda' (T-t)} \quad \forall t\in[0,T-1].
\end{equation}
\end{theorem}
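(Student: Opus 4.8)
The plan is to reduce the mean field PDE system \eqref{eq:MF_PDE_intro} to a decoupled family of (single-agent) stochastic control problems by freezing the flow of marginals, and to run a contraction/coupling argument in an appropriately weighted metric on the space of flows $\mu_\cdot$. Concretely, for a fixed continuous flow $\nu_\cdot$ one solves the standard (no-mean-field) HJB equation for $\varphi^{\nu}_\cdot$ — whose optimal feedback is $-\partial_p H(x,\nabla\varphi^{\nu}_t)=u^{\star}$ — and then propagates the Fokker--Planck equation to obtain a new flow $\Phi(\nu_\cdot)=\mu_\cdot$; a fixed point of $\Phi$ solves \eqref{eq:MF_PDE_intro}. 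The core input, which I expect to be supplied by the earlier single-agent turnpike results of the paper (the ``decoupled'' theory under asymptotic monotonicity of $b$ and uniform convexity of $L$), is a contractive synchronous/reflection coupling between the controlled diffusions: two copies started from different initial laws, driven by optimal feedbacks associated to two flows $\nu^1_\cdot,\nu^2_\cdot$, contract at rate $\lambda$ outside the ball of radius $R$ and are controlled by a Lyapunov/distance functional $\nrm$ (presumably $d_{\kappa,\lambda}$ as defined in the preamble) inside it. This yields, for the decoupled map, an estimate of the form $\nrm(\Phi(\nu^1)_t,\Phi(\nu^2)_t)\lesssim e^{-\lambda t}\nrm(\nu^1_0,\nu^2_0)+\int_0^t e^{-\lambda(t-s)}\,\mathrm{osc}\big(\partial_xF(\nu^1_s,\cdot)-\partial_xF(\nu^2_s,\cdot)\big)\,\mathrm{d}s$, and the smallness of the $W_1$-Lipschitz constant of $\partial_xF$ makes the induced map on flows a strict contraction, giving existence and uniqueness for both \eqref{eq:MF_PDE_intro} and \eqref{eq:MF_PDE_ergodic_intro} and the turnpike bound \eqref{eq:intro_tp_est} by comparing the $T$-horizon fixed point with the ergodic one (the two exponential terms $e^{-\lambda t}$ and $e^{-\lambda(T-t)}$ coming, as usual, from the forward contraction of $\mu_\cdot$ and the backward contraction of $\nabla\varphi_\cdot$ respectively).

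For the final, sharper statement — the improved rate $\lambda'>\lambda$, the enlarged admissible Lipschitz constant, and the hessian turnpike estimate under the extra hypothesis that $b$ and $D\sigma$ are Lipschitz — I would upgrade the coupling from a merely asymptotic one to a \emph{global} contraction. When $b$ is globally Lipschitz (not just asymptotically monotone) and $D\sigma$ is Lipschitz, the reflection coupling of the uncontrolled dynamics contracts at a rate that can be quantified uniformly, and the perturbation coming from the optimal feedback $-\partial_pH(x,\nabla\varphi_t)$ — whose Lipschitz constant in $x$ is controlled by $\|\nabla^2\varphi_t\|_\infty$ via the uniform convexity of $L$ — can be absorbed into the contraction with room to spare; this is precisely what buys both $\lambda'>\lambda$ and the larger smallness threshold for $\|\partial_xF\|_{\mathrm{Lip},W_1}$. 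For the hessian estimate, the plan is to differentiate the backward equation once more: $w_t:=\nabla^2\varphi_t$ solves a linear matrix-valued backward equation whose coefficients involve $b$, $D\sigma$, $\nabla^2 H$ and the already-controlled quantities $\nabla\varphi_t$; one then represents $w_t$ via a (matrix) BSDE along the optimally controlled process and runs a \emph{second} coupling — now at the level of the tangent/first-variation process — to show that $w_t$ contracts to the ergodic hessian $\nabla^2\varphi^\infty$ at rate $\lambda'$ from both ends of the interval. The Lipschitz regularity of $b$ and $D\sigma$ is exactly what makes this linearized equation have bounded, Lipschitz coefficients so that the second coupling closes.

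The main obstacle, in my estimation, is twofold and concentrated in the last part. First, making the hessian BSDE argument rigorous requires enough a priori regularity of $\varphi_t$ (at least $C^{2}$ in $x$, locally uniformly in $t$ and $T$) with $T$-independent bounds on $\|\nabla^2\varphi_t\|_\infty$; obtaining these bounds is nontrivial on the whole space $\bbR^d$ and with non-constant $\sigma$, and one likely needs Bernstein-type gradient estimates or a bootstrap from the already-established turnpike bound on $\nabla\varphi_t$ together with the smallness of the interaction. Second, the two couplings (one for $\mu_\cdot$/$\nabla\varphi_\cdot$, one for $\nabla^2\varphi_\cdot$) are not independent — the drift of the tangent process depends on $\nabla^2\varphi_t$ itself, so the contraction estimate for the hessian feeds back into its own hypothesis — and closing this requires a careful fixed-point/continuation argument on a short enough auxiliary horizon, or a monotone iteration, to show the hessian stays in the regime where the second coupling is contractive. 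Once the a priori $C^2$ bounds and the self-consistency of the hessian coupling are in place, the actual turnpike estimate for $\nabla^2\varphi_t-\nabla^2\varphi^\infty$ follows by the same ``initial-layer plus terminal-layer'' decomposition as \eqref{eq:intro_tp_est}, now with rate $\lambda'$.
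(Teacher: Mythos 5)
Your roadmap matches the paper's architecture closely: freeze the flow to reduce to a single-agent stochastic control problem, obtain $T$-uniform Lipschitz (and, under the extra hypotheses, Hessian) estimates on the value function via coupling, derive stability estimates that make the fixed-point iteration on flows a contraction in a weighted ($W_{f_\kappa}$- or $\TV$-based) metric, and combine a forward contraction and a backward contraction to produce the two exponentials in the turnpike estimate — this is precisely the abstract Lemma 5.1 + Sections 4 and 5 structure, and your identification of the forward/backward decomposition and of the Pontryagin/FBSDE route to the Hessian estimate is exactly right.

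The one place where your plan deviates in substance is the last step, the $T$-uniform Hessian bound and its self-consistency. You correctly flag that the adjoint dynamics of $Y_s=\nabla\varphi_s(X_s)$ contains $Z_s=\nabla^2\varphi_s(X_s)$ in its drift (through the trace term $\tr(\partial_x\sigma^{\top}Z_s\sigma)$ when $\sigma$ is non-constant), so a naive Gr\"onwall argument would be circular, and you propose closing this by a short-horizon fixed point or a monotone iteration. The paper instead closes it \emph{directly} and without any auxiliary fixed point: it works with the interpolation between reflection coupling and synchronous coupling (\Cref{sec:interpolation}), applies It\^o to the modified distance $\distY(r^Y_s)=r^Y_s+(r^Y_s)^2$ so that the martingale's quadratic variation produces a \emph{favourable} term $+\frac{\sigma_0^2}{2}\tr(\eta_s^\top\eta_s)$ with $\eta_s=Z_s-\hat Z_s$, adds a corrector $|Y_s+\hat Y_s|^2\,r^X_s$ to absorb the $\zeta_s=Z_s+\hat Z_s$ cross-terms, and then completes the square in $\tr(\eta_s^\top\eta_s)^{1/2}$ and $\tr(\zeta_s^\top\zeta_s)^{1/2}$ (\Cref{lemm:hess_bound_sigma_corrector}, \Cref{thm:lin_Hess_est}). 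Your proposed continuation/iteration might eventually be made to work, but it would require an additional layer of argument (and presumably smallness-in-time restrictions that then have to be patched), whereas the paper's corrector trick yields a clean a priori bound in one pass. Relatedly, the improved rate $\lambda'>\lambda$ is not best thought of as upgrading to a ``global contraction'': under \Cref{ass:boost} the drift $b$ is still only asymptotically monotone. What improves is the monotonicity \emph{profile} of the optimally controlled dynamics — with only a sup-norm bound $\const^u$ on the feedback the perturbation to $\kappa_b$ is $-2\const^u/r$ (singular as $r\to 0$), while with the Hessian bound the feedback is Lipschitz with constant $\const^u_x$ and the perturbation becomes $-2\min\{\const^u/r,\const^u_x\}$ (see \eqref{eq:kappa_prime_def}), which is bounded near $r=0$ and hence produces strictly larger $\lambda_{\bar\kappa'},C_{\bar\kappa'}$ by the monotonicity of \Cref{item_3:Lyapunov_funct_coup_by_ref}.
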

Note that $b$ is a confining drift, and we will take it to be also one-sided Lipschitz. Let us illustrate the significance of each of the three regimes in the simple though conceptually relevant example when
\bes
F(\mu,x)=W\ast\mu(x):=\int W(x-y)\mu(\De y)
\ees
for some interaction potential $W:\bbR^d\longrightarrow\bbR$. 
In this setup, \ref{item:high_intro} holds if $W$ is Lipschitz with arbitrarily large Lipschitz constant, but the Lipschitz constant of $\nabla W$ is small enough. \ref{item:mild_intro} holds if $W$ is Lipschitz with small enough Lipschitz constant. Finally, \ref{item:low_intro} applies when $W$ is simply bounded by a small enough constant. Thus, as we said above, there is a trade off between growth and regularity in going from one set of assumptions to the other. Our strategy consists in leveraging the weak monotonicity of the drift field to obtain uniform in time gradient and hessian bounds for the value function $\varphi$ in \eqref{eq:MF_PDE_intro}. These estimates are then used to obtain stability bounds for frozen versions of \eqref{eq:MF_PDE_intro}. Then, we show that if one of the above smallness conditions holds, these stability estimates are strong enough to show that the fixed-point iterations associated to the mean field PDE system are contractive with respect to a carefully chosen family of weighted norms. From this, we deduce the exponential turnpike property for solutions and a well posedness result for the mean field ergodic system. Deviating from all the above mentioned works, all our estimates are proven constructing suitable couplings between controlled diffusions processes. For example, we carry out a detailed analysis of coupling by reflection on systems of forward-backward stochastic differential equations (henceforth FBSDEs) and we generalise the construction of \emph{controlled} coupling by reflection recently introduced as a tool to obtain gradient estimates for HJB equations in \cite{conforti2022coupling}. This article constitutes one of the main sources of inspiration for the present work, together with the recent (and not so recent) successful applications of the coupling method (which is equivalent to Ishii-Lions doubling of variables method  in the PDE community: see \cite{priola2006gradient}) to prove either exponential ergodicity of non-controlled Markov processes \cite{lindvall1986coupling,eberle2016reflection,eberle2019couplings,eberle2019quantitative,eberle2019sticky,chen1997general,chen1997estimation,wang1994application,CHEN1997287,wang2020exponential} or gradient estimates for both linear and non-linear PDEs \cite{priola2006gradient,porretta2013GlobalLipschitzRegularizing}. We conclude this introductory section summarizing what appear to us as the most interesting contributions of this work.
\bei
\item We provide for the first time a general framework under which solutions to MFGs and McKean-Vlasov control problems exhibit the exponential turnpike property without imposing any form of the Lasry-Lions monotonicity condition on the cost functionals. The only form of monotonicity we require here is the asymptotic monotonicity of the drift $b$. Moreover, besides a smallness condition on the strength of the interaction and some growth conditions, there is no further assumption on the cost functionals, which can be quite general. More than that, in contrast with existing results we are able to cover the case of non-constant diffusion coefficients and we work on the whole space $\bbR^d$ instead of the torus, thus covering situations in which it is typically more difficult to obtain uniform in time and global estimates on the behaviour of Hamilton-Jacobi Bellman equations and diffusion processes.
\item We obtain, possibly for the first time, exponential turnpike estimates for the hessian of solutions to the backward equation in \Cref{eq:MF_PDE_intro}.
\item The uniform in time gradient and hessian estimates on solutions of HJB equations on $\rset^d$ as well as the stability estimates we establish at \Cref{sec:fin_dim_cont} are, to the best of our knowledge and understanding, sometimes more general than those previously available in the literature and of independent interest. For example, we are not aware of uniform in time Hessian estimates in the case of non-constant diffusion coefficients comparable to those in \Cref{sec:Hess_sigma_non_const}.
\item Our results open the door for obtaining uniform in time propagation of chaos estimates for McKean-Vlasov control problems in the same spirit in which coupling techniques made it possible to obtain such estimates for the convergence of systems of uncontrolled diffusion processes towards the McKean-Vlasov limit, see \cite{durmus2020elementary} for example. Moreover, the coupling approach developed here is robust enough to cover various generalizations of the current setup. For example, it can be adapted to incorporate a common noise in the dynamics of agents and to replace Brownian motion as a driving noise by a L\'evy process following either the probabilistic approach of \cite{majka2017coupling} or the more analytic viewpoint of \cite{porretta2024decay}. At the PDE level, this means working with the fractional Laplacian operator. We shall develop these research lines in future work.
\eei

Our results apply to mean field games, but have also a link with McKean-Vlasov optimal control. To clarify it, assume that ${F(\mu,x)=\frac{\delta\mathscr F}{\delta\mu}(\mu;x)}$,  where $\frac{\delta\mathscr F}{\delta\mu}$ denotes the linear functional derivative of the cost function $\mathscr{F}$. By taking a spatial gradient in the HJB equation in \eqref{eq:MF_PDE_intro}, one arrives at the Pontryagin optimality conditions for the McKean-Vlasov control problem
\be\label{eq:McKV_cont_intro}
\begin{split}
\inf_{u} \int_0^T\bbE&\big[L(X^u_t,u_t)\big]+\mathscr{F}(\cL(X^u_t))\,\De t+ \bbE\big[G(\cL(X^u_T),X^u_T)\big]\\
&\text{s.t.} \quad \De X^u_t = [b(X^u_t)+u_t]\De t+\sigma(X^u_t) \De B_t, \quad X^u_0\sim\mu.
\end{split}
\ee
In the above, $(u_t)_{t\in[0,T]}$ denotes a generic open loop square integrable control, and $\cL(X^u_t)$ is the marginal law at time $t$ of the controlled state $X^u_\cdot$. Thus, in order to apply the results of this paper to McKean-Vlasov control, one should prove, under our assumptions, existence of optimal controls and that the Pontryagin system gives necessary conditions for optmality. These results are proved in the literature, but under assumptions which do not cover ours: for instant, we have a space-dependent volatility and a drift only one-sided Lipschitz. Though it should not be difficult to prove this results, we do not pursue this direction here as it is not the scope of this paper and would greatly increase its length.
Here, we just focus on the exponential turnpike for the mean field game system of PDEs.

\paragraph{Organization. }
The paper is organized as follows. In \Cref{sec:main-results} we state our main results and hypotheses and provide some comments.  \Cref{sec:couplings} is devoted to the construction and analysis of coupling between diffusion processes.
In \Cref{sec:fin_dim_cont}, we apply the results \Cref{sec:couplings} to obtain uniform-in-time Lipschitz (\Cref{sec:fin_dim_Lip_est}) and Hessian estimates (\Cref{sec:fin_dim_Hess_est}) on the value function of classical optimal control problems. We also use our coupling construction to prove stability of classical optimal control problems under variations of the drift term in the dynamics, the running and the terminal cost in \Cref{sec:stab_lin_contr}. Finally, \Cref{sec:MF_PDE} provides the analysis of the mean field PDE system. We show uniform-in-time regularity estimates obtained for solutions of \eqref{eq:MF_PDE_intro}, prove existence and uniqueness for \eqref{eq:MF_PDE_ergodic_intro} and finally employ the stability estimates from \Cref{sec:stab_lin_contr} to obtain turnpike estimates  of the form \eqref{eq:intro_tp_est}. The different regimes of regularity are treated as follows: \ref{item:high_intro} in \Cref{sec:MF_high_reg}, \ref{item:mild_intro} in \Cref{sec:MF_mild_reg}, and \ref{item:low_intro} in \Cref{sec:MF_low_reg}.

\paragraph{Notation and convention.}
$\mcp_p(\rset^d)$ is the subset of $\mcp(\rset^d)$, the set of probability measures on $\rset^d$ endowed with its Borel $\sigma$-field $\mathcal{B}(\rset^d)$, with finite $p$-moment, $p\geq 1$. 
Let $\bbR_+ = [0,+\infty)$ and $\bbR_+^*=(0,+\infty)$.
Consider $\msi \subset \rset$ an interval of $\rset$. We denote by $C^{k,m,\beta}_{\mathrm{loc}}(\msi \times \bbR^{d})$ the space of functions which a $k$-times differentiable in time, $m$-times differentiable in space and whose derivatives (including the $0$-th derivative) are locally $\beta$-Hölder continuous in space and $\beta/2$ in time. Similarly, we define $ C^{m,\beta}_{\mathrm{loc}}(\bbR^d)$ and  $ C^{k,m,n,\beta}_{\mathrm{loc}}(\msi\times\bbR^d\times \bbR^d)$ and we set $C^{\beta_1,\beta_2}_{\mathrm{loc}}(\msi \times \bbR^d) = C^{0,0,\beta_1,\beta_2}_{\mathrm{loc}}(\msi \times \bbR^d)$. Lipschitz functions on $\bbR^d$ are denoted analogously by $C^{0,1}(\bbR^d)$, the Lipschitz seminorm is $\|f\|_\Lip = \sup_{x\neq y} \frac{|f(x)-f(y)|}{|x-y|}$, and $C^{0,1}_0(\bbR^d)$ is the subspace of Lipschitz functions satisfying $g(0) = 0$. The divergence and Hessian in the space variable of a function $f$ are denoted $\nabla \cdot f$ and $\nabla^2 f$, respectively.    

For a matrix  $\bfA\in\rset^{d\times n}$, with component $A_{i,j}$, $i\in\{1,\ldots,d\}$ and $j\in\{1,\ldots,n\}$, we denote the Frobenius norm of $\bfA$ by $\| \bfA \|_{\frob}^2 = \sum_{i,j} A_{i,j}^2$. We denote by $\succeq$ the order on symmetric matrices.

All along this paper, $\const^{\square}_{\triangle}$ denotes generic non-negative real constants which are relative to a function $\square$. In most cases $\const^f_{\triangle}$ corresponds to a uniform bound on $\partial_{\triangle} f$, i.e. $\| \partial_{\triangle} f\| \leq \const^f_{\triangle}$.

Finally, we adopt the usual convention $\inf\emptyset=+\infty$.

\section{Main results}\label{sec:main-results}

In order to turn \Cref{thm:main_res_intro} into a rigorous statement, let us spell out the assumptions we impose on the dynamics and on the cost functionals. One of our main requirements is that the uncontrolled dynamics, that is to say the SDE
\begin{equation}\label{eq:SDE_sigma_intro}
\De X_s = b(X_s)\De s +\sigma(X_s) \De B_s \eqsp,
\end{equation}
admits a unique solution and is exponentially ergodic. 
This typically happens if the drift $b$ satisfies some kind of monotonicity condition, for example if $b(\cdot)$ is strongly one-sided Lipschitz with negative constant. However, recent progresses in the coupling by reflection literature, see \cite{eberle2016reflection} which built upon \cite{chen1997general,chen1997estimation,wang1994application,CHEN1997287}, showed that this condition can be significantly relaxed if the diffusion coefficient is uniformly elliptic by actively using the diffusion part. Following this stream of work, we shall quantify the monotonicity properties of $b$ through its monotonicity profile, that we define at \eqref{eq:def_fun_kappa} below.

This leads to consider the following setup.
\begin{assumption}[Ergodicity]\label{ass:MF_drift_intro}
\ben[(i)]
\item \label{ass:MF_drift_intro_diffusion_coeff} The diffusion coefficient $\sigma: \bbR^d \rightarrow \bbR^{d \times d}$ is Lipschitz continuous and uniformly elliptic, \ie, there is $\Sigma \geq \sigma_0>0$ and $\const^\sigma_x<+\infty$ such that
\begin{align}
     2\Sigma^2 \mathrm{I} \succeq \sigma\sigma^{\top}(x) \succeq 2\sigma_0^2 \mathrm{I}, \quad \|\sigma(x)-\sigma(\hat x)\|_\mathrm{Fr}\leq |x-\hat x| \quad \forall x,\hat{x}\in\bbR^d\eqsp.
\end{align}
\item The drift field $b:\bbR^d\rightarrow\bbR^d$ is locally Lipschitz and of polynomial growth,  \ie $|b(x)|\leq K(1+|x|^p)$ for some $K,p<+\infty$. In addition, the monotonicity profile $\kappa_b$ with respect to $\sigma$ associated with $b$ is globally lower bounded by a possibly negative constant and satisfies $\kappa_b \in \msk$, where
\begin{equation}
  \label{eq:def_msk}
\txts  \msk = \{ \kappa \in \rmC( \ooint{0,\plusinfty}, \rset) \,: \,  \int_{0}^1 r (\kappa(r))_- \, \rmd r < \plusinfty \, , \, \liminf_{r \to \plusinfty} \kappa(r) > 0\} \eqsp.
\end{equation}
 The monotonicity profile $\kappa_{b}:\bbR_+\rightarrow\bbR$ of a given vector field $b :\bbR^d\rightarrow\bbR^d$ with respect to $\sigma$ is defined as
\begin{equation}\label{eq:def_fun_kappa}
\kappa_{b}(r) = \inf\left\{-\frac{\ip{b(x)-b(\hat{x})}{x-\hat{x}}}{|x-\hat{x}|^2} - \frac{\|\bar{\sigma}(x) - \bar{\sigma}(\hat x) \|_{\mathrm{Fr}}^2}{2|x-\hat{x}|^2}: |x-\hat{x}|=r  \right\}\eqsp, \quad r>0 \eqsp,
\end{equation}
with $\bar\sigma$ given by
\begin{equation}
  \label{eq:def_bar_sigma}
\bar{\sigma}(x) = \sqrt{\sigma(x)\sigma(x)^{\transpose} - \sigma_0^2\Idd},\quad x\in\bbR^d \eqsp.
\end{equation}
\een
\end{assumption}

In other words, this roughly means that $b$ is one-sided Lipschitz everywhere with a positive constant, and with a negative constant outside a large ball.
In \Cref{sec:couplings} we shall show that under \Cref{ass:MF_drift_intro} the semigroup associated with \eqref{eq:SDE_sigma_intro} has nice smoothing and ergodic properties. In particular, at least when $\sigma(\cdot)$ is constant, it is well-known \cite{eberle2016reflection} that if $\kappa_b\geq \bar\kappa\in\msk$, then there exist an exponential rate $\lambda_{\bar\kappa}$ and a multiplicative constant $C_{\bar\kappa}$, whose precise expression is given at \Cref{prop:Lyapunov_funct_coup_by_ref} below, such that if $X_\cdot,\hat{X}_\cdot$ are two solutions of
\eqref{eq:SDE_sigma} then
\bes
W_1(\cL(X_t),\cL(\hat X_t))\leq W_1(\cL(X_0),\cL(\hat X_0))C^{-1}_{\bar\kappa}e^{-\lambda_{\bar\kappa}t}.
\ees
Note that in the notation $\kappa_{b}$ we only make explicit the dependence on $b$ and do not include $\sigma$. The main reason is that in the whole paper the covariance function does not vary and is always given by $\sigma$, whereas the core of our proofs is based on convergence bounds between time-marginal distributions for time inhomogeneous diffusions with varying drift functions, in general the sum of the drift function $b$ and a suitable Markovian control $(\alpha_s)_{s \in\ccint{0,T}}$.

The second assumption we impose is a standard uniform convexity assumption on the running cost in the control variable.
\begin{assumption}[Coercivity]\label{ass:MF_coercivity_intro}
The running cost $L:\bbR^d\times\bbR^d\rightarrow\bbR$ is locally Lipschitz in the space variable and twice differentiable in the control variable. Moreover, the following holds.
\ben[(i)]
\item There exists $\rho^{L}_{uu}>0$ such that $\partial_{uu}L(x,u) \succeq \rho^{L}_{uu}\rmI \text{ for any }  x,u\in\bbRD$.
\item There exists  $\const^{L(\cdot,0)}_u <+\infty$ such that $\|\partial_u L(\cdot,0)\|_\infty \leq \const^{L(\cdot,0)}_{u}$.
\een
\end{assumption}

The three regularity regimes announced in \Cref{thm:main_res_intro} are captured in the following additional assumptions on the running cost $L:\bbR^d\times\bbR^d\longrightarrow\bbR$ and the interaction term $F:\cP_1(\bbR^d)\times\bbR^d\longrightarrow\bbR$, which are decreasing in regularity but become more stringent on the boundedness conditions on these functions.

\begin{assumption}[High regularity]
\label{ass:high_reg_intro}
\ben[(i)]
\item \label{ass:high_reg_intro_i} There exist $\const^L_x,\const^F_x <+\infty$ such that
\bes \sup_{u\in\bbR^d}\| L(\cdot,u)\|_\Lip \leq \const^L_x, \quad \sup_{\mu\in\cP_1(\bbR^d)}\| F(\mu,\cdot)\|_\Lip \leq \const^F_x.\ees
\item There exists $\const^F_{x\mu}<+\infty$ such that
\bes \|F(\mu,\cdot)-F(\hat\mu,\cdot) \|_\Lip \leq C^F_{x\mu}W_1(\mu,
\hat{\mu}) \quad \forall \mu,\hat\mu\in\cP_1(\bbR^d). \ees
\een
\end{assumption}

\begin{assumption}[Mild regularity]\label{ass:MF_mild_reg_intro}
\Cref{ass:high_reg_intro}-\ref{ass:high_reg_intro_i} holds. Moreover,  there exists $\const_\mu^F <+\infty$ such that
\bes
\|F(\mu,\cdot)-F(\hat\mu,\cdot) \|_\infty \leq C^F_{\mu}W_1(\mu,
\hat{\mu}), \quad \forall \mu,\hat\mu\in\cP_1(\bbR^d).\ees
\end{assumption}

\begin{assumption}[Low regularity]\label{ass:low_reg_in_proofs}
\begin{enumerate}[label=(\roman*)]
\item There exist $\const^{L},\const^{F}<+\infty$ such that
\bes
\sup_{u\in\bbR^d}\|L(\cdot,u)-L(0,u) \|_{\infty} \leq \const^{L},\quad \sup_{\mu\in\cP_1(\bbR^d)}\|F(\mu,\cdot) \|_{\infty}\leq \const^{F}.\ees
\item There exists $\const^F_{\mu,\TV} <+\infty$ such that \bes \|F(\mu,\cdot)-F(\hat{\mu},\cdot)\|_{\infty} \leq \const^{F}_{\mu,\mathrm{TV}} \| \mu - \hat{\mu}\|_{\TV} \quad \forall \mu,\hat\mu \in\cP_1(\bbR^d).\ees
\end{enumerate}
\end{assumption}
With this at hand, we are finally able to give a first precise statement ou our main results. To do so, we shall first specify what we mean by a solution of the mean field PDE systems. 

\begin{definition} 
We say that $(\mu_t,\varphi_t)_{t\in[0,T]}$ solves \eqref{eq:MF_PDE_intro} if $\varphi\in C([0,T] \times \bbR^{d})\cap C^{1,2,\beta}_{\mathrm{loc}}((0,T) \times \bbR^{d})$ is a classical solution of the Hamilton-Jacobi-Bellman equation and $t\mapsto\mu_t\in\cP_1(\bbR^d)$ is a weak solution of the Fokker-Planck equation. 

We say that $(\eta^{\infty},\varphi^{\infty},\mu^{\infty})$ is a solution to
\eqref{eq:MF_PDE_ergodic_intro} if 
$\varphi^{\infty} \in C^{2,\beta}_{\mathrm{loc}}(\bbR^{d})$, $(\eta^{\infty},\varphi^{\infty})$ solves the ergodic Hamilton-Jacobi equation and $\mu^{\infty}$ is the unique invariant measure for the forward equation. 
\end{definition}

We recall that, by the verification theorem, if $(\eta^{\infty},\varphi^{\infty},\mu^{\infty})$ is a solution to the ergodic MFG system, then 
\[
-\eta^\infty = \inf_{\beta, \mu} \int_{\bbR^d} \big[ L(x, \beta(x)) + F(\mu^\infty,x) \Big] \mu(dx) ,
\] 
 the infimum being over couples $(\beta,\mu)$, where $\beta$ is a Markovian controls and $\mu$ is the invariant measure of the SDE 
 $dX_t = b(X_t) +\beta(X_t) dt + \sigma(X_t) dB_t$; 
 further, the optimal control is give by $\partial_p H(x, \nabla \phi^\infty(x))$.

 Moreover, we shall impose some assumptions on the terminal cost $G:\cP_1(\bbR^d)\times\bbR^d\longrightarrow\bbR^d$ when present. For the scope of this paper it is sufficient to assume one of the following two conditions
\be\label{eq:G_ass}
\sup_{\mu\in\cP_1(\bbR^d)}\|G(\mu,\cdot) \|_{\Lip}<+\infty \quad \text{or} \sup_{\mu\in\cP_1(\bbR^d)}\|G(\mu,\cdot) \|_{\infty}<+\infty.
\ee
\begin{theorem}\label{thm:high_reg}
Assume that  \Cref{ass:MF_drift_intro}, \Cref{ass:MF_coercivity_intro} hold.
\begin{enumerate}[wide, labelwidth=!, labelindent=5pt, label=(\arabic*)]
\item \label{item:main-high} Let \Cref{ass:high_reg_intro} hold and define
\be\label{eq:bar_kappa_intro}
\bar\kappa(r) = \kappa_b(r) - \frac{2\const^{u}}{r}, \quad r>0, \quad \text{with} \quad
\const^{u}=\frac{1}{\rho^L_{uu}}\Big(\frac{2(\const_x^L+\const_x^F)}{C_{\kappa_b}\lambda_{\kappa_b}}+\const^{L(\cdot,0)}_u\Big).
\ee
Let $\lambda_{\kappa_b},C_{\kappa_b}$ and $\lambda_{\bar\kappa},C_{\bar\kappa}$ be the exponential rate and multiplicative constant defined via \Cref{prop:Lyapunov_funct_coup_by_ref}. If
\be\label{eq:high_reg_suff}
 \const^F_{x\mu}< \rho^L_{uu} \const^2_{\bar\kappa}\lambda^2_{\bar\kappa},
\ee
then the following hold.
\begin{enumerate}[ labelwidth=!, labelindent=20pt, label=(\alph*)]
\item\label{item:ergodic_main} The ergodic mean field PDE system \eqref{eq:MF_PDE_ergodic_intro} admits a unique solution $(\eta^{\infty},\varphi^{\infty},\mu^{\infty})$ in $\bbR\times C^{0,1}(\bbR^d)\times\cP_1(\bbR^d)$.
\item\label{item:tpike_main} There exists $\lambda>0$ such that if $(\mu_t,\varphi_t)_{t\in[0,T]}$ is a solution of the mean field PDE system \eqref{eq:MF_PDE_intro} with terminal cost $G$ satisfying \eqref{eq:G_ass} and $\mu_0\in\cP_p(\bbR^d)$, then for all $t\in[0,T-\tau]$ we have
\be\label{eq:tpike_est_final}
W_1(\mu_t,\mu^\infty)+\|\varphi_t-\varphi^{\infty} \|_{\Lip}\leq \const_i W_1(\mu_0,\mu^{\infty})e^{-\lambda t} +\const_f e^{-\lambda(T-t)}
\ee
where $\tau,\const_{i} ,\const_{f}<+\infty$ depend only on $\kappa_b$, the constants appearing in \Cref{ass:MF_drift_intro},\Cref{ass:MF_coercivity_intro},\Cref{ass:high_reg_intro}, $\lambda$ and $G$.
\een
\item\label{item:main_mild} If \Cref{ass:MF_mild_reg_intro} holds and if
\be\label{eq:turnpike_suff_cond_mild_intro}
 \const^F_{\mu}\leq \frac{\sqrt{\pi}}{4}{\rho^L_{uu}\sigma_0C^{2}_{\bar\kappa}\lambda^{3/2}_{\bar\kappa}},
\ee
then \ref{item:ergodic_main}-\ref{item:tpike_main} hold for $\bar\kappa$ as in \eqref{eq:bar_kappa_intro} and for constants $\tau,\const_{i} ,\const_{f}<+\infty$ depending only on $\kappa_b$, the constants appearing in \Cref{ass:MF_drift_intro},\Cref{ass:MF_coercivity_intro},\Cref{ass:MF_mild_reg_intro}, $\lambda$ and $G$.
\item\label{item:main_low}  Let  \Cref{ass:low_reg_in_proofs} holds and define
\bes\label{eq:kappa_low}
\bar\kappa(r) = \kappa_b(r) - \frac{2\const^{u}}{r}, \quad r>0, \quad \text{with} \quad
\const^{u}=\frac{1}{\rho^L_{uu}}\Big((8-2\sqrt{e})\frac{\const^L+\const^F}{C_{\kappa_b}\sqrt{\pi\lambda_{\kappa_b}}\sigma_0}+\const^{L(\cdot,0)}_u\Big).
\ees
If
\be\label{eq:turnpike_suff_cond_low_intro}
\const^F_{\mu,\TV}< \frac{\rho^{L}_{uu}\sqrt{\pi}\sigma_0\lambda_{\bar{\kappa}}C_{\bar{\kappa}}}{\sqrt{e} \max \left\{9, \left(4 + \frac{7}{\sqrt{\pi}C_{\kappa}\sigma_0}\right)\right\}},
\ee
then \ref{item:ergodic_main} holds . \Cref{item:tpike_main} also holds replacing $W_1(\mu_t,\mu^{\infty})$ by $\|\mu_t-\mu^\infty \|_{\TV}$, where $\tau,\const_{i} ,\const_{f}<+\infty$ depend only on $\kappa_b$, the constants appearing in \Cref{ass:MF_drift_intro},\Cref{ass:MF_coercivity_intro},\Cref{ass:low_reg_in_proofs}, $\lambda$ and $G$. 
 \een
\end{theorem}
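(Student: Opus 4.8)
The plan is to realise both the ergodic system \eqref{eq:MF_PDE_ergodic_intro} and the finite-horizon system \eqref{eq:MF_PDE_intro} as fixed points of a \emph{decoupling map} acting on measure flows: given $\mu_\cdot$, one solves the \emph{frozen} Hamilton--Jacobi--Bellman equation with running cost $L(x,u)+F(\mu_t,x)$ and terminal cost $G(\mu_T,\cdot)$ (in the ergodic case, the stationary HJB with source $F(\mu,x)$), extracts the optimal feedback $\alpha_t$ characterised by $\partial_u L(x,\alpha_t(x))+\nabla\varphi_t(x)=0$, and returns the solution of the Fokker--Planck equation with drift $b+\alpha_\cdot$ and initial datum $\mu_0$; call this map $\Phi$ (respectively $\Phi_T$), well-defined by classical parabolic theory applied to the frozen HJB. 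The argument rests on three ingredients established earlier. \textbf{(P1)} The uniform-in-time Lipschitz estimate of \Cref{sec:fin_dim_Lip_est}, a consequence of the controlled coupling by reflection of \Cref{sec:couplings}, gives $\|\nabla\varphi_t\|_\infty\le 2(\const^L_x+\const^F_x)/(C_{\kappa_b}\lambda_{\kappa_b})$ uniformly in $t$ and $T$; together with the uniform convexity in \Cref{ass:MF_coercivity_intro} this forces $\|\alpha_t\|_\infty\le\const^u$ with $\const^u$ as in \eqref{eq:bar_kappa_intro}. \textbf{(P2)} Consequently the drift $b+\alpha_\cdot$ has monotonicity profile at least $\bar\kappa$ (the subtracted $2\const^u/r$ bounds the difference of two feedbacks when coupling two copies, since $\alpha_t$ is bounded but not a priori Lipschitz); since $\liminf_r\bar\kappa(r)=\liminf_r\kappa_b(r)>0$ and $\int_0^1 r\,(2\const^u/r)\,\rmd r=2\const^u<\infty$ one has $\bar\kappa\in\msk$, so \Cref{prop:Lyapunov_funct_coup_by_ref} yields $W_1$-contraction of the corresponding forward semigroup at rate $\lambda_{\bar\kappa}$ with prefactor $C_{\bar\kappa}^{-1}$. \textbf{(P3)} The stability estimates of \Cref{sec:stab_lin_contr} bound $\|\nabla\varphi_t-\nabla\psi_t\|_\infty$, for two frozen problems, by an exponentially weighted (rate $\lambda_{\bar\kappa}$, prefactor $C_{\bar\kappa}^{-1}$) integral of the discrepancies $\partial_x F(\mu_s,\cdot)-\partial_x F(\nu_s,\cdot)$ and $\partial_x G(\mu_T,\cdot)-\partial_x G(\nu_T,\cdot)$.

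For \ref{item:main-high}\ref{item:ergodic_main}, I would restrict $\Phi$ to $\cP_1(\bbR^d)$ (stationary flows). For $\mu,\hat\mu\in\cP_1(\bbR^d)$, the stationary form of (P3) together with the $W_1$-Lipschitzness of $\partial_x F$ (\Cref{ass:high_reg_intro}) bounds $\|\nabla\varphi[\mu]-\nabla\varphi[\hat\mu]\|_\infty$ by a constant times $C_{\bar\kappa}^{-1}\lambda_{\bar\kappa}^{-1}\const^F_{x\mu}W_1(\mu,\hat\mu)$; \Cref{ass:MF_coercivity_intro} transfers this to $\rho^L_{uu}\|\alpha[\mu]-\alpha[\hat\mu]\|_\infty$, and integrating the forward contraction (P2) gives $W_1(\Phi(\mu),\Phi(\hat\mu))\le c\,\const^F_{x\mu}(\rho^L_{uu}C_{\bar\kappa}^2\lambda_{\bar\kappa}^2)^{-1}W_1(\mu,\hat\mu)$ for a universal $c$. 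The smallness hypothesis \eqref{eq:high_reg_suff} is exactly what makes this a strict contraction, so Banach's fixed-point theorem yields a unique $\mu^\infty$; freezing the measure at $\mu^\infty$ and invoking the classical well-posedness of the ergodic HJB equation on $\bbR^d$ under \Cref{ass:MF_drift_intro} produces the unique pair $(\eta^\infty,\varphi^\infty)$, hence \ref{item:ergodic_main}. Running the same contraction for $\Phi_T$ on $C([0,T],\cP_1(\bbR^d))$ with the plain $\sup_t W_1$-norm — the finite-horizon decay kernels being dominated by the stationary ones, so the contraction constant stays below $1$ uniformly in $T$ — produces a solution $(\mu_\cdot,\varphi_\cdot)$ of \eqref{eq:MF_PDE_intro}, and every solution is a fixed point of $\Phi_T$, hence coincides with it.

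For the turnpike estimate \ref{item:tpike_main}, I compare $(\mu_\cdot,\varphi_\cdot)$ with the constant ergodic flow $\mu^\infty$. Setting $m_t:=W_1(\mu_t,\mu^\infty)$ and using $\mu^\infty=\Phi(\mu^\infty)$, the triangle inequality together with (P1)--(P3) and \Cref{ass:MF_coercivity_intro} produces an integral inequality of the schematic form
\[
m_t \;\lesssim\; C_{\bar\kappa}^{-1}e^{-\lambda_{\bar\kappa}t}\,m_0 \;+\; e^{-\lambda_{\bar\kappa}(T-t)}\,\mathsf{G} \;+\; \frac{\const^F_{x\mu}}{\rho^L_{uu}}\int_0^t\!\!\int_s^T C_{\bar\kappa}^{-1}e^{-\lambda_{\bar\kappa}(t-s)}e^{-\lambda_{\bar\kappa}(r-s)}m_r\,\rmd r\,\rmd s,
\]
where $\mathsf{G}$ collects the terminal-cost contributions. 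Under \eqref{eq:high_reg_suff} the integral operator on the right is a contraction of $t\mapsto m_t$ in the norm $\sup_t(e^{-\lambda t}+e^{-\lambda(T-t)})^{-1}m_t$ once $\lambda\in(0,\lambda_{\bar\kappa})$ is chosen small enough, and the two inhomogeneous terms already have this bi-exponential shape — the initial datum sitting in the $e^{-\lambda t}$ layer, $\mathsf{G}$ in the $e^{-\lambda(T-t)}$ layer — so solving the fixed-point inequality yields \eqref{eq:tpike_est_final} for the measure component with the stated structure of $\const_i,\const_f$; the lag $\tau$ absorbs the terminal boundary layer of the HJB equation. Feeding $\mu_\cdot$ versus $\mu^\infty$ back into (P3) and \Cref{ass:MF_coercivity_intro} gives the matching estimate for $\|\varphi_t-\varphi^\infty\|_\Lip$.

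Items \ref{item:main_mild} and \ref{item:main_low} follow the same scheme with two substitutions. In \ref{item:main_mild}, $\partial_x F$ is only bounded, so (P3) is replaced by the variant of \Cref{sec:stab_lin_contr} that controls $\|\nabla\varphi_t-\nabla\psi_t\|_\infty$ through the \emph{sup-norm} discrepancy $\|F(\mu_s,\cdot)-F(\hat\mu_s,\cdot)\|_\infty\le\const^F_\mu W_1(\mu_s,\hat\mu_s)$; turning a zeroth-order perturbation into a gradient bound costs an extra factor $\propto(\sigma_0\sqrt{\lambda_{\bar\kappa}})^{-1}$ from the Gaussian reflection-coupling computation, which is precisely why the admissible range becomes \eqref{eq:turnpike_suff_cond_mild_intro}. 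In \ref{item:main_low}, only oscillations of $L,F$ in $x$ and $\TV$-Lipschitzness of $F$ are available: (P1) is replaced by the oscillation-based gradient bound $\|\nabla\varphi_t\|_\infty\lesssim(\const^L+\const^F)/(C_{\kappa_b}\sqrt{\lambda_{\kappa_b}}\,\sigma_0)$ (the origin of the first term of $\const^u$ there), and the entire fixed-point scheme is run in total variation, so (P2) enters as $\TV$-contraction of the forward semigroup — which, because $\TV$ bounds under the uniform ellipticity of $\sigma$ require a fixed regularisation lag, only holds after time $1$, whence the restriction $t\in[1,T-1]$; \eqref{eq:turnpike_suff_cond_low_intro} is the resulting contraction threshold. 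The genuinely delicate points throughout are not the fixed-point bookkeeping but (i) the uniform-in-time gradient estimates for the frozen HJB equation \emph{on all of $\bbR^d$}, with non-constant $\sigma$ and only asymptotically monotone $b$ — the technical heart of \Cref{sec:fin_dim_cont}, where the controlled coupling by reflection does the work — and (ii) keeping the weighted-norm contraction constant strictly below $1$ uniformly in $T$ while simultaneously checking that the constant ergodic flow has finite weighted defect; it is this balance that dictates the explicit thresholds \eqref{eq:high_reg_suff}, \eqref{eq:turnpike_suff_cond_mild_intro}, \eqref{eq:turnpike_suff_cond_low_intro} and the precise shapes of $\const^u$ and $\bar\kappa$.
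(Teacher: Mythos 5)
Your proposal is correct and follows essentially the same strategy as the paper: regard both the finite-horizon and ergodic systems as fixed points of the frozen decoupling iteration, establish uniform gradient bounds via controlled coupling by reflection (your (P1), \Cref{lemma:gradient_estimate_linearized problem}), deduce the monotonicity profile $\bar\kappa$ for the optimal drift (your (P2), cf.\ \eqref{eq:kappa_1_majorization}), invoke the stability estimates of \Cref{sec:stab_lin_contr} (your (P3), \Cref{lem:stability_estimate_linear_pb}), and then run a Banach contraction for the ergodic system and a weighted-norm contraction for the turnpike. The one structural difference worth naming: you absorb the forward decay from $\mu_0$ and the backward decay from the terminal cost into a single bi-exponential weighted norm $\sup_t (e^{-\lambda t}+e^{-\lambda(T-t)})^{-1} m_t$, whereas the paper uses two separate metrics $\overrightarrow{d^{T}_{\lambda,\bar\kappa}}$ and $\overleftarrow{d^{T}_{\lambda,\bar\kappa}}$ together with the abstract turnpike lemma (\Cref{lem:abs_turnpike_est}), which introduces an auxiliary fixed point $\tilde\mu_\cdot = \nu^{T,\hat G}[\tilde\mu_\cdot]$ with initial datum $\mu_0$ and terminal cost $\hat G = \varphi^\infty$ to cleanly separate the initial-data discrepancy from the terminal-cost discrepancy. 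Your integral inequality implicitly requires this triangle via the auxiliary flow (since $\mu^\infty$ is a fixed point of the iteration with terminal cost $\varphi^\infty$, not $G$), so you should state it explicitly; once you do, both bookkeeping schemes give the same contraction threshold $\varepsilon(\lambda)<1$ and the same bi-exponential bound. Two minor points: the paper does not invoke classical well-posedness of the ergodic HJB equation but rather constructs the frozen ergodic solution by its own Banach fixed-point argument on the set $\cG^{\const^\psi_x}_0$ of normalised Lipschitz terminal data (this matters because the setting is all of $\bbR^d$ with non-constant $\sigma$ and merely asymptotically monotone $b$); and in the low-regularity case the contraction for $\mu\mapsto\nu^\infty[\mu]$ uses the $\delta$-approximate coupling by reflection (\Cref{prop:contr_delta_coup}) rather than a plain TV semigroup contraction, which is where the explicit constants in \eqref{eq:turnpike_suff_cond_low_intro} come from.
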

As it will become clear in \Cref{sec:MF_PDE}, the profile $\bar\kappa$ can be interpreted as the monotonicity profile associated with the optimal drift in a stochastic control problem that corresponds to a frozen version of the mean field PDE system \Cref{eq:MF_PDE_intro}. In particular, $\bar\kappa$ can be used to obtain a quantiative turnpike theorem for the control problem in the spirit of \cite{conforti2022coupling}. Therefore we can interpret \eqref{eq:bar_kappa_intro} and \eqref{eq:turnpike_suff_cond_low_intro} as conditions asking that, in some sense, the strength of the interaction is small in comparison to the ergodic rates  of the frozen problem. 
\begin{proof} 
\Cref{thm:high_reg} is a by-product of more general results proven in \Cref{sec:MF_PDE}. In particular, \ref{item:main-high}-\ref{item:ergodic_main} is proven at \Cref{lem:MF_syst_high_reg}, whereas item \ref{item:main-high}-\ref{item:tpike_main} follows by taking $\mu_0=\mu^\infty,\hat{G}=\varphi^\infty$ in \Cref{prop:abstract_suff_cond_tpike_high} \ref{item:high_lipschitz_case}-\ref{item:hig_bdd_case}. \Cref{item:main_mild} (resp. item \ref{item:main_low}) is a consequence of \Cref{lem:MF_syst_mild_reg} (resp. \Cref{lem:erg_MF_syst_low_reg}) and \Cref{prop:abstract_suff_cond_tpike_mild}-\ref{item:mild_reg_tpike_est} (resp. \Cref{prop:abstract_suff_cond_tpike_low}-\ref{item:turnpike_low}) for the choice $\mu_0=\hat{\mu}_0$ and $\hat G=\varphi^\infty$.
\end{proof}
\begin{remark}
The proof of \Cref{thm:high_reg} can be adapted to cover the case when the drift $b$ depends on the measure argument, and the case when the running cost depends jointly on $x,u$ and $\mu$. This is why the stability estimates in \Cref{sec:stab_lin_contr} are written for stochastic control problems with different drifts. Another problem which might be possible to treat, but certainly more difficult, is a non-local dependence on $\nabla \phi$ in the backward equation, which arise from the optimality conditions of a mean field control problem with non-separated cost. 
However, we do not pursue this level of generality here as it would reduce the readability of an already long paper, and would distract from the main ideas. 
\end{remark}

\begin{remark}
Let us comment on the meaning of the smallness conditions 
\eqref{eq:high_reg_suff}, \eqref{eq:turnpike_suff_cond_mild_intro} or 
\eqref{eq:turnpike_suff_cond_low_intro}. Examples of drifts satisfying \Cref{ass:MF_drift_intro} are given in \cite{eberle2016reflection}: the typical one is the opposite of the gradient of a potential that is strongly convex outside a ball  of radiur $R$, while inside the ball can be e.g. double well or constantly zero. The smallness conditions mean that the Lipschitz constants of $F$ are small if $b$ and $\sigma$ are fixed. If the interaction $F$ is fixed, instead, they would mean that the constant $\lambda_{\kappa_b}$ has to be large, which in turn means that the radiur $R$ just mentioned has to be small enough. Note also that the ellipticity constant $\sigma_0$ enters in the definitions of $C_{\kappa_b}$ and $\lambda_{\kappa_b}$ in the next section. Henece the conditions could be rewritten in terms of the product $\lambda_{\kappa_b}\sigma_0$, which mean that the smallness conditions might be interpreted as requiring $\sigma_0$ large enough when fixing the drift $b$ and the cost $F$, thus partially recovering the conditions in the spirit of \cite{cirant2021long}. 

Note that in the torus there is no need of a drift $b$ to have a confinement property. This setup is considered by Masoero in \cite{Masoero2019}, where it is proved in a particular example that, if $\sigma_0$ is smaller than a precise threshold, then there exist multiple optimizers of the ergodic potential mean field game and the time-dependent value functions (of the mean field control problem) converge to a limit which is not the optimal value of the ergodic control problem. Compared to our setting, it means that a condition on $\sigma_0$ large (as given by our smallness conditions) is indeed necessary. 
\end{remark}

\begin{remark}
As the results of \Cref{sec:MF_PDE} show, the constant $\const_f$ in \eqref{eq:tpike_est_final} has a nice explicit expression if we only care about bounding $W_1(\mu_t,\mu^\infty)$. Indeed, in this case we have
\bes
\const_{\mathrm{f}}=\const'_{\mathrm{f}}\min\{(\const(\mu_0)+\|G(\mu_T,\cdot)\|_{\infty}),\|G(\mu_T,\cdot)-\varphi^\infty \|_\Lip\}
\ees
where $\const'_{f}<+\infty$ depends only on $\kappa_b$, the constants appearing in \Cref{ass:MF_drift_intro},\Cref{ass:MF_coercivity_intro},\Cref{ass:high_reg_intro}, $\lambda$ and $G$, and $\const(\mu_0)$ depends in addition on the first moment of $\mu_0$.
\end{remark}
Let us finally conclude by stating that by slightly reinforcing \Cref{ass:high_reg_intro}, we can control the second derivative of the solutions to the backward equation in \eqref{eq:MF_PDE_intro} and obtain a stronger form of \Cref{thm:high_reg}.
\begin{assumption}\label{ass:boost}

\ben[(i)]
\item We assume $b \in C^{1,\beta}_{\mathrm{loc}}(\bbR^d)$, and that there exist  $\const^b_x,\const^\sigma_{xx} <+\infty$ such that
\bes
\| b\|_\Lip\leq \const^b_x \eqsp, \quad \|\partial_{x_i}\sigma(x)-\partial_{x_i}\sigma(\hat{x})\|_{\mathrm{Fr}} \leq \const^\sigma_{xx}|x-\hat{x}| \quad \forall x,\hat{x}\in\bbR^d\eqsp.
\ees
\item We assume $L\in C^{1,2,\beta}_{\mathrm{loc}}(\bbR^d\times\bbR^d)$ and that there exists $\const^L_{xu} \geq 0$ such that  $\sup_{u\in\bbR^d}\|\partial_{u}L(\cdot,u) \|_\Lip\leq \const^L_{xu}$.
\een
\end{assumption}
With this extra assumption, we can have exponential turnpike estimates for stronger interactions (in the sense that the constant $\const^F_{x\mu}$ can be bigger), the solution of the ergodic system has a bounded hessian, and we can show exponential turnpike estimates for the hessian of solutions to the mean field PDE system. This is the content of the following theorem.
\begin{theorem}\label{thm:boost}
Let  \Cref{ass:MF_drift_intro}, \Cref{ass:MF_coercivity_intro}, \Cref{ass:high_reg_intro} and \Cref{ass:boost} hold.
 Then, there exists $\const_x^u<+\infty$ such that if we define
\be\label{eq:kappa_prime_def}
\bar\kappa'(r)=\kappa_b(r)-2\min\{\frac{\const^u}{r},\const^u_x\} \quad \forall r>0,
\ee
and if we suppose that
\be\label{eq:high_reg_suff_1}
 \const^F_{x\mu}< \rho^L_{uu} \const^2_{\bar\kappa'}\lambda^2_{\bar\kappa'},
 \ees
  holds, then we have
\ben[(a)]
\item $\lambda_{\bar\kappa'}>\lambda_{\bar\kappa}$ and $\const_{\bar\kappa'}>\const_{\bar\kappa}$. Therefore, \eqref{eq:high_reg_suff_1} is a weaker condition than \eqref{eq:high_reg_suff}.
\item\label{item:thm_boost_ergodic} The ergodic mean field PDE system \eqref{eq:MF_PDE_ergodic_intro} admits a unique solution $(\eta^{\infty},\varphi^{\infty},\mu^{\infty})$ in $\bbR\times C^{0,1}(\bbR^d)\times\cP_1(\bbR^d)$. Moreover, there exists $\const^{\psi}_{xx}<+\infty$ depending on $\kappa_b$, all constants in \Cref{ass:MF_drift_intro},\Cref{ass:MF_coercivity_intro},\Cref{ass:high_reg_intro} and on all constants in \Cref{ass:boost} except $\const^L_{xu}$ such that $\|\nabla^2\varphi^\infty\|_\infty\leq\const^{\psi}_{xx}$.
\item\label{item:thm_boost_tpike} There exists $\lambda>0$ such that if $(\mu_t,\varphi_t)_{t\in[0,T]}$ is a solution of the mean field PDE system \eqref{eq:MF_PDE_intro} with $G$ satisfying the first condition in \eqref{eq:G_ass} and $\mu_0\in\cP_p(\bbR^d)$,  then for all $t\in[0,T-\tau]$ we have
\bes
\begin{split}
W_1(\mu_t,\mu^\infty)
+\|\varphi_t-\varphi^{\infty} \|_{\Lip}+\|\nabla\varphi_t-\nabla\varphi^{\infty} \|_{\Lip}
&\leq W_1(\mu,\mu^{\infty})\const'_i e^{-\lambda t} +\const'_f e^{-\lambda(T-t)},
\end{split}
\ees
with $\tau,\const'_i,\const'_f$ depending on $\kappa_b$, all constants in \Cref{ass:MF_drift_intro},\Cref{ass:MF_coercivity_intro},\Cref{ass:high_reg_intro} and on all constants in \Cref{ass:boost}, $\lambda$ and $G$.
\een
\end{theorem}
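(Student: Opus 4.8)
The plan is to run the same programme as for \Cref{thm:high_reg} — uniform-in-time regularity for frozen control problems, coupling-based stability estimates, and a weighted-norm fixed point — but now carrying Hessian information through every step and replacing the monotonicity profile $\bar\kappa$ by the sharper profile $\bar\kappa'$. The first ingredient is to invoke \Cref{ass:boost} together with the Hessian estimates of \Cref{sec:fin_dim_Hess_est}, applied to the frozen optimal control problems obtained by freezing $\mu_\cdot$ in \eqref{eq:MF_PDE_intro}. Since $b$, $D\sigma$ and $\partial_u L$ are now Lipschitz, the second-order coupling-by-reflection argument of that section yields a uniform-in-time bound $\sup_t\|\nabla^2\varphi_t\|_\infty\leq\const^\psi_{xx}$ on solutions of the frozen HJB equations, with $\const^\psi_{xx}$ controlled by the constants listed in the statement. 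By \Cref{ass:MF_coercivity_intro} and the smoothness of $L$, this Hessian bound upgrades the merely $L^\infty$ control on the optimal feedback $x\mapsto\partial_p H(x,\nabla\varphi_t(x))$ to a uniform Lipschitz bound $\const^u_x$; hence the optimal drift $b+\partial_p H(\cdot,\nabla\varphi_t)$ has monotonicity profile at least $\kappa_b(r)-2\min\{\const^u/r,\const^u_x\}=\bar\kappa'(r)$, which is exactly the gain over $\bar\kappa$. There is a mild self-consistency loop here — the Hessian bound is needed to know the feedback is $\const^u_x$-Lipschitz, which is itself used to produce $\bar\kappa'$ — to be closed as in the unfrozen estimates of \Cref{sec:fin_dim_Hess_est}.

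Part~(a) is then a soft consequence: since $\min\{\const^u/r,\const^u_x\}\leq\const^u/r$ we have $\bar\kappa'\geq\bar\kappa$ pointwise, and the assignment $\kappa\mapsto(\lambda_\kappa,\const_\kappa)$ of \Cref{prop:Lyapunov_funct_coup_by_ref} is monotone — a larger profile yields both a larger contraction rate and a larger multiplicative constant — so $\lambda_{\bar\kappa'}>\lambda_{\bar\kappa}$, $\const_{\bar\kappa'}>\const_{\bar\kappa}$, whence $\rho^L_{uu}\const_{\bar\kappa'}^2\lambda_{\bar\kappa'}^2>\rho^L_{uu}\const_{\bar\kappa}^2\lambda_{\bar\kappa}^2$ and \eqref{eq:high_reg_suff_1} is weaker than \eqref{eq:high_reg_suff}.

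For parts~(b) and~(c) I would re-run the fixed-point argument underpinning \Cref{thm:high_reg}, but now on a space of value functions with uniformly bounded Hessian, equipped with a weighted norm controlling $\|\varphi\|_\Lip+\|\nabla\varphi\|_\Lip$ together with $W_1$ on the measure component. The new analytic input needed is a Hessian stability estimate for the frozen problems — perturbing the frozen measure by $W_1$-distance $\delta$ moves $\nabla^2\varphi$ by $O(\delta)$ uniformly in $T$, with implicit constant governed by $\lambda_{\bar\kappa'},\const_{\bar\kappa'}$ — which I would obtain by differentiating the coupling stability estimates of \Cref{sec:stab_lin_contr} once more and using controlled coupling by reflection with the profile $\bar\kappa'$. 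Under \eqref{eq:high_reg_suff_1} this makes the fixed-point map contractive on the enriched space, giving existence and uniqueness of $(\eta^\infty,\varphi^\infty,\mu^\infty)$ together with $\|\nabla^2\varphi^\infty\|_\infty\leq\const^\psi_{xx}$; instantiating the contraction estimate at $\mu_0=\mu^\infty$, $\hat G=\varphi^\infty$, exactly as in the proof of \Cref{thm:high_reg}, then delivers the stated turnpike bound, now including the $\|\nabla\varphi_t-\nabla\varphi^\infty\|_\Lip$ term.

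The main obstacle is the uniform-in-time Hessian stability estimate just mentioned: one must couple a second-variation (Jacobi-type) process attached to two frozen problems with nearby costs so that it contracts uniformly in the horizon length, which is precisely where the improved profile $\bar\kappa'$ — and hence the smallness condition \eqref{eq:high_reg_suff_1} — enters. Keeping all constants independent of $T$, and in particular making the ergodic Hessian bound $\const^\psi_{xx}$ independent of $\const^L_{xu}$ in the limit, is the delicate point of the argument.
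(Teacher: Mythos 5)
Your conceptual picture is right — the chain ``Hessian bound on the frozen value function $\Rightarrow$ Lipschitz bound on the optimal feedback $\Rightarrow$ the improved profile $\bar\kappa'$ $\Rightarrow$ a sharper contraction rate'' is exactly what drives the argument, and the monotonicity of $\kappa\mapsto(\lambda_\kappa,\const_\kappa)$ from \Cref{prop:Lyapunov_funct_coup_by_ref} correctly delivers part~(a). But you depart from the paper on two substantive points. First, the paper does \emph{not} run a contraction in a Hessian-aware norm. The fixed point (both in Step~1 and Step~2 of \Cref{lem:MF_syst_high_reg}, and in \Cref{prop:abstract_suff_cond_tpike_high}) is carried out in the same first-order metrics as in the unboosted case — $\|\cdot\|_{f_{\bar\kappa'}}$ on value functions and $W_{f_{\bar\kappa'}}$ (respectively $\overrightarrow{d^T_{\lambda,\bar\kappa'}}$) on measures — and the uniform Hessian bound is merely \emph{carried along as an invariant-set property} (see \eqref{eq:boosted_invariant_set}), not as a contraction estimate. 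This is not an aesthetic choice: the Hessian stability estimate (\Cref{lem:stability_estimate_linear_pb}-\ref{item:linear_stability_boost}, repackaged in \Cref{cor:boost}-\ref{item:cor:boost_ii}) expresses $\|\nabla\psi^{T,G}_t[\mu_\cdot]-\nabla\psi^{T,\hat G}_t[\hat\mu_\cdot]\|_\Lip$ in terms of the already-established \emph{gradient}-level stability $\const^{\delta\psi_s}_x$, so the Hessian turnpike is bootstrapped \emph{a posteriori} from the gradient and flow turnpikes (this is precisely what $\ref{item:tpike_for_hessians}$ of \Cref{prop:abstract_suff_cond_tpike_high} does), rather than closed as a self-standing loop. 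Your ``enriched fixed point'' would require a genuine Hessian-norm contraction that the paper never establishes and which appears structurally out of reach here.

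Second, the ``self-consistency loop'' you flag does not actually arise: the Hessian estimate of \Cref{lem:hessian_bound_sigma_constant} and \Cref{thm:lin_Hess_est} uses only the $L^\infty$ bound $\const^{u_s}$ on the feedback (hence the unboosted profile $\bar\kappa$ of \eqref{eq:kappa_hessian_estimate}), and the Lipschitz bound $\const^u_x$ is then a plain output, so $\bar\kappa'$ is built in a single pass with no bootstrap. You also overlook that the Hessian-level stability estimate used for part~(c) — and hence $\ref{item:tpike_for_hessians}$ — is only established for constant $\sigma$; if you intend to cover nonconstant $\sigma$ in part~(c), an additional argument along the lines of \Cref{sec:Hess_sigma_non_const} would be needed, which the paper does not supply for the stability (as opposed to the a priori) estimate.
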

\begin{proof} 
\Cref{thm:boost} is a by-product of more general results proven in \Cref{sec:MF_PDE}. In particular, \ref{item:ergodic_main} is proven at \Cref{lem:MF_syst_high_reg}, whereas item \ref{item:thm_boost_tpike} follows by taking $\mu_0=\mu^\infty$ in \Cref{prop:abstract_suff_cond_tpike_high} \ref{item:high_lipschitz_case_and_boost}-\ref{item:tpike_for_hessians}.\end{proof}

\section{Estimates on diffusion processes}\label{sec:couplings}
In this section, we consider a complete filtered probability space $(\Omega,\cF,(\cF_s)_{s\geq0},\bbP)$ satisfying the usual conditions supporting 4 standard $\cF_s$-adapted independent $d$-dimensional Brownian motions $\{(B^i_s)_{s\geq0} \, :\, i = 0,1,2,3\}$,
 setting $B_s = B_s^0$.

\subsection{Time-regularity and coupling for non-homogeneous diffusions}
\label{sec:intro_coupling}
We aim in this section to estimate certain properties of the (non-homogeneous) semigroup associated with stochastic differential equations (SDEs) of the form
\begin{equation}\label{eq:SDE_sigma}
\De X_s = \beta_s(X_s)\De s +\sigma(X_s) \De B_s \eqsp,
\end{equation}
starting from $X_0$ independent of $\sequencetp{B}$ and where $\beta_{\cdot} : \rset_+\times \rset^d \to \rset^d$, and
$\sigma :  \rset^d \to \rset^{d\times d}$. 

Our main objective is to show quantitative geometric convergence of the non-homogeneous equation \eqref{eq:SDE_sigma} and suitable controlled versions using coupling techniques. For this, we extend results from
\cite{eberle2016reflection} which only considers the case where
$\sigma$ is a constant function and \cite{wang2020exponential} which considers non constant $\sigma$ but only homogeneous drift.  Moreover, as announced above, we shall work with controlled versions of \eqref{eq:SDE_sigma} and show that we can couple two controlled SDEs in such a way that the average distance decreases exponentially fast. In this, setting, the coupling we consider, first introduced in \cite{conforti2022coupling}, is \emph{not} coupling by reflection, and we name it controlled coupling by reflection. However, the contractive properties of the two couplings are established in a similar way. The assumptions we impose on \eqref{eq:SDE_sigma} are a plain adaptation of \Cref{ass:MF_drift_intro} to the notation of this setting.

To this end, we extend
the definition of the monotonicity profile given in \eqref{eq:def_fun_kappa} to the time-inhomogeneous vector field $\beta_\cdot$ in the natural way
\begin{equation}\label{eq:def_fun_kappa_inhom}
\kappaX(r) = \inf\left\{-\frac{\ip{\beta_s(x)-\beta_s(\hat{x})}{x-\hat{x}}}{|x-\hat{x}|^2} - \frac{\|\bar{\sigma}(x) - \bar{\sigma}(\hat x) \|_{\mathrm{Fr}}^2}{2|x-\hat{x}|^2}: |x-\hat{x}|=r, \, s \in \bbR^+  \right\}\eqsp, \quad r>0 \eqsp.
\end{equation}
We assume the following on the drift.
\begin{assumption}\label{ass:coupling}
\begin{enumerate}[wide, labelwidth=!, labelindent=0pt,label=(\roman*)]
\item  For every $s\geq 0$ the vector field $x \mapsto \beta_s(x)$ is continuous and uniformly one-sided Lipschitz continuous:
\begin{equation}
\sup \{\ip{\beta_s(x)-\beta_s(\hat{x})}{x-\hat{x}}/\norm{x-\hx}^2 \,: \, x,\hx \in\rset^d \, , \,  x \neq \hx\} < \infty \eqsp .
\end{equation}

In addition, for any compact set $\msc \subset \rset^d$, there exists a locally integrable function $\upphi_{\msc} : \rset_+ \to \rset_+$ such that $\sup_{x \in\msc} \norm{\beta_t(x)} \leq \upphi_{\msc}(t)$ for any $t \in\rset_+$.
\item The function $\sigma:  \bbR^d \rightarrow \bbR^{d \times d}$ satisfies \Cref{ass:MF_drift_intro}-\ref{ass:MF_drift_intro_diffusion_coeff}.
\item\label{ass:_kappa} We suppose that there exists $\bar{\kappa} \in \msk$ such that for all $r> 0$ $\kappa_\beta(r) \geq \bar{\kappa}(r)$.
\end{enumerate}
\end{assumption}

In particular under \Cref{ass:coupling}, there exists unique strong
solutions for \eqref{eq:SDE_sigma} by
\cite[Corollary 2.6]{gyongy1996ExistenceStrongSolutions}.

In the sequel, the contractive properties of the couplings we shall consider will be expressed either in the total variations norm or in some modification of the $1$-Wasserstein distance $W_1$, with the exception of \Cref{sec:interpolation}, where we shall work with suitable modifications of the $2$-Wasserstein distance $W_2$. Let us now proceed to the construction of these twisted version of $W_1$, which we borrow from \cite{eberle2016reflection}, leaving to the proof of \Cref{prop:contr_same_drift_interp} all considerations about $W_2$.

For $\kappa\in \msk$, $\tsigma > 0$ consider
\begin{equation}\label{eq:aux_fun_eberle_2}
\txts    R_0=\inf\{R\geq 0: \inf_{r \geq R}\kappa(r)\geq 0 \} \eqsp, \quad  R_1=\inf\defEns{ R\geq R_0: \inf_{r \geq R} \{ \kappa(r)R(R-R_0) \} \geq 4\tsigma^2 } \eqsp,
  \end{equation}
and define
\begin{equation}
\phi_{\kappa,\tsigma}(r)=\exp\Big(-\frac{1}{2 \tsigma^2}\int_0^r s(\kappa(s))^-\De s\Big)\eqsp ,\quad \Phi_{\kappa,\tsigma}(r)=\int_0^r\phi_{\kappa,\tsigma}(s)\De s\eqsp,\quad
 g_{\kappa,\tsigma}(r)=1- \frac{\int_0^{r \wedge R_1}\Phi_{\kappa,\tsigma}(s)/\phi_{\kappa,\tsigma}(s) \De s}{2\mathcal{Z}_{\kappa,\tsigma}} \eqsp,
\end{equation}
where $(\kappa(s))^- = \max\{-\kappa(s),0\}$, and $\mathcal{Z}_{\kappa,\tsigma} = \int_0^{R_1}\Phi_{\kappa,\tsigma}(s)/\phi_{\kappa,\tsigma}(s) \De s$.
With this notation at hand we are ready to construct the afore-mentioned modifications of the Wasserstein distance to achieve contraction properties.
\begin{definition}\label{def:twisted_metric}
For $\kappa \in \msk$, $\tsigma > 0$ define $f_{\kappa,\tsigma}, f_\kappa : \rset_+ \to \rset_+$  as
\begin{equation}
  \label{eq:def_f_kappa}
f_{\kappa,\tsigma}(r) =\int_0^r\phi_{\kappa,\tsigma}(s)g_{\kappa,\tsigma}(s)\rmd s \eqsp, \quad f_{\kappa}(r) = f_{\kappa,\tsigma}(r)\eqsp, \quad r \geq 0  \eqsp.
\end{equation}
For $\kappa \in \msk$, $\tsigma > 0$, we define for any $\mu,\hat{\mu}\in\cP_1(\bbR^d)$
\bes
W_{f_\kappa}(\mu,\hat{\mu}) = \inf_{\pi\in\Pi(\mu,\hat{\mu})} \int f_\kappa(|x-\hat{x}|) \pi(\De x \De \hat{x}),
\ees
where $\Pi(\mu,\hat\mu)$ is the set of couplings of $\mu$ and $\hat\mu$
\bes
\Pi(\mu,\hat\mu)=\{ \pi\in\cP(\bbR^{2d}): \pi(A\times\bbR^d)=\mu(A),\,\,\pi(\bbR^d\times A)=\nu(A) \quad \forall A\subseteq \mathcal{B}(\bbR^d) \}.
\ees
\end{definition}
 
 \begin{prop}\label{prop:Lyapunov_funct_coup_by_ref}
 For $\kappa\in \msk$, $\tsigma > 0$ we define  $\lambda_{\kappa,\tsigma}, \lambda_{\kappa}>0$ and $C_{\kappa,\tsigma}, C_{\kappa} \geq 0$,
\begin{equation}
  \label{def:lambda_kappa_C_kappa}
  \lambda_{\kappa, \tsigma}=\frac{\tsigma^{2}}{\mathcal{Z}_{\kappa,\tsigma}} \eqsp, \quad \lambda_{\kappa} = \lambda_{\kappa,\sigma_0}\eqsp, \quad
   C_{\kappa,\tsigma}= \frac{\phi_{\kappa,\tsigma}(R_0)}{2} \eqsp, \quad C_{\kappa} = C_{\kappa,\sigma_0}.
\end{equation}
Then the following holds
\ben[(i), ref=\theprop(\roman*)]
\item\label{item_1:Lyapunov_funct_coup_by_ref} $f_{\kappa, \tsigma}$ is concave and continuously differentiable such that $f'_{\kappa,\tsigma}$ is absolutely continuous. Furthermore, it is equivalent to the identity $r \mapsto r$ on $\rset_+$:
\begin{equation}\label{eq:Lyapunov_funct_coup_by_ref_2}
  C_{\kappa,\tsigma}\, r\leq f_{\kappa,\tsigma}(r) \leq r\eqsp, \quad C_{\kappa,\tsigma}\leq f'_{\kappa,\tsigma}(r) \leq 1\eqsp,
\end{equation}
\item\label{item_2:Lyapunov_funct_coup_by_ref}  The differential inequality
\begin{equation}\label{eq:Eberle_funct_2}
2\tsigma^{2}f''_{\kappa,\tsigma}(r)-r\kappa(r)f'_{\kappa,\tsigma}(r)\leq -\lambda_{\kappa,\tsigma} f_{\kappa,\tsigma}(r) \eqsp.
\end{equation}
holds for all $r>0$.
\item\label{item_3:Lyapunov_funct_coup_by_ref}  $\lambda_{\kappa,\tsigma},C_{\kappa,\tsigma}$ are monotone in the following sense: if $\kappa,\kappa'\in K$ are such that 
 \bes
\kappa(r)\geq\kappa'(r) \quad \forall r>0,
 \ees
 then 
 \bes
 \lambda_{\kappa,\tsigma}\geq\lambda_{\kappa',\tsigma}, \quad C_{\kappa,\tsigma}\geq C_{\kappa',\tsigma}.
 \ees
 \end{enumerate}
 \end{prop}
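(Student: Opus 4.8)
The plan is to prove \Cref{prop:Lyapunov_funct_coup_by_ref} by adapting the now-standard construction of Eberle \cite{eberle2016reflection}, carrying it out in a self-contained way so that the dependence of the constants $\lambda_{\kappa,\tsigma}$ and $C_{\kappa,\tsigma}$ on the parameter $\tsigma$ and on the profile $\kappa$ is made explicit; the monotonicity statement \ref{item_3:Lyapunov_funct_coup_by_ref} is then read off directly from the formulas. First I would record the elementary properties of the auxiliary functions. Since $\kappa \in \msk$, the integral $\int_0^1 s(\kappa(s))^- \De s < \infty$, so $\phi_{\kappa,\tsigma}$ is well defined, strictly positive, bounded above by $1$ (the exponent is nonpositive) and nonincreasing, with $\phi_{\kappa,\tsigma}(r) = \phi_{\kappa,\tsigma}(R_0)$ constant for $r \geq R_0$ because $(\kappa(s))^- = 0$ there by definition of $R_0$. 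Consequently $\Phi_{\kappa,\tsigma}$ is strictly increasing, concave on $[0,R_0]$ and linear on $[R_0,\infty)$, and the integrand $\Phi_{\kappa,\tsigma}/\phi_{\kappa,\tsigma}$ defining $g_{\kappa,\tsigma}$ is continuous and locally integrable, so $g_{\kappa,\tsigma}$ is well defined, continuously differentiable, nonincreasing, equal to $1$ at $0$ and, by the choice of the normalisation $\mathcal Z_{\kappa,\tsigma}$, equal to $1/2$ for $r \geq R_1$; in particular $1/2 \leq g_{\kappa,\tsigma} \leq 1$ everywhere. This already gives \ref{item_1:Lyapunov_funct_coup_by_ref}: from $f'_{\kappa,\tsigma}(r) = \phi_{\kappa,\tsigma}(r) g_{\kappa,\tsigma}(r)$ we get $C_{\kappa,\tsigma} = \tfrac12 \phi_{\kappa,\tsigma}(R_0) \leq \phi_{\kappa,\tsigma}(r) g_{\kappa,\tsigma}(r) \leq 1$, and integrating yields $C_{\kappa,\tsigma} r \leq f_{\kappa,\tsigma}(r) \leq r$; concavity follows since $f'_{\kappa,\tsigma}$ is a product of two nonnegative nonincreasing functions, hence nonincreasing, and $f'_{\kappa,\tsigma}$ is absolutely continuous because $\phi_{\kappa,\tsigma}$ and $g_{\kappa,\tsigma}$ are Lipschitz on compacts (their derivatives are locally bounded away from the origin, and near the origin $\phi'_{\kappa,\tsigma}(r) = -\tfrac{1}{2\tsigma^2} r (\kappa(r))^- \phi_{\kappa,\tsigma}(r)$ is locally integrable).

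Next I would establish the differential inequality \ref{item_2:Lyapunov_funct_coup_by_ref}, which is the technical heart. Differentiating, $f''_{\kappa,\tsigma} = \phi'_{\kappa,\tsigma} g_{\kappa,\tsigma} + \phi_{\kappa,\tsigma} g'_{\kappa,\tsigma}$ a.e., with $\phi'_{\kappa,\tsigma}(r) = -\tfrac{1}{2\tsigma^2} r (\kappa(r))^- \phi_{\kappa,\tsigma}(r)$ and, for $r < R_1$, $g'_{\kappa,\tsigma}(r) = -\Phi_{\kappa,\tsigma}(r)/\big(2\mathcal Z_{\kappa,\tsigma}\,\phi_{\kappa,\tsigma}(r)\big)$. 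Plugging in,
\begin{equation*}
2\tsigma^2 f''_{\kappa,\tsigma}(r) - r\kappa(r) f'_{\kappa,\tsigma}(r) = -r\big((\kappa(r))^- + \kappa(r)\big)\phi_{\kappa,\tsigma}(r) g_{\kappa,\tsigma}(r) - \frac{\tsigma^2 \Phi_{\kappa,\tsigma}(r)}{\mathcal Z_{\kappa,\tsigma}} \cdot \mathbf{1}_{\{r < R_1\}}.
\end{equation*}
On $(0,R_1)$ one has $(\kappa(r))^- + \kappa(r) = (\kappa(r))^+ \geq 0$, so the first term is $\leq 0$ and we may drop it, leaving $-\tsigma^2 \Phi_{\kappa,\tsigma}(r)/\mathcal Z_{\kappa,\tsigma} = -\lambda_{\kappa,\tsigma}\Phi_{\kappa,\tsigma}(r)$; since $g_{\kappa,\tsigma} \geq 1/2$ and $\phi_{\kappa,\tsigma} \leq 1$ imply $f_{\kappa,\tsigma}(r) \leq \int_0^r \phi_{\kappa,\tsigma} \leq \Phi_{\kappa,\tsigma}(r) = 2 \int_0^r \tfrac12 \phi_{\kappa,\tsigma} \leq 2 f_{\kappa,\tsigma}(r)$, actually one uses $\Phi_{\kappa,\tsigma}(r) \geq f_{\kappa,\tsigma}(r)$ to conclude $-\lambda_{\kappa,\tsigma}\Phi_{\kappa,\tsigma}(r) \leq -\lambda_{\kappa,\tsigma} f_{\kappa,\tsigma}(r)$. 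On $(R_1,\infty)$, $g'_{\kappa,\tsigma} = 0$, $\kappa(r) \geq 0$ so $(\kappa(r))^- = 0$, $\phi_{\kappa,\tsigma}(r) = \phi_{\kappa,\tsigma}(R_0)$ and $g_{\kappa,\tsigma}(r) = 1/2$, so the left side equals $-r\kappa(r)\phi_{\kappa,\tsigma}(R_0)/2$; here the defining property of $R_1$, namely $\inf_{r\geq R_1}\kappa(r) r (r - R_0) \geq 4\tsigma^2$, together with $f_{\kappa,\tsigma}(r) \leq r$, gives $r\kappa(r)\phi_{\kappa,\tsigma}(R_0)/2 \geq 2\tsigma^2 \phi_{\kappa,\tsigma}(R_0)/(r-R_0) \cdot r/r \geq \dots \geq \lambda_{\kappa,\tsigma} f_{\kappa,\tsigma}(r)$ after checking that $\lambda_{\kappa,\tsigma} = \tsigma^2/\mathcal Z_{\kappa,\tsigma} \leq 2\tsigma^2 \phi_{\kappa,\tsigma}(R_0)/R_1 \cdot$ (a bound obtained by estimating $\mathcal Z_{\kappa,\tsigma}$ from below, or alternatively by noting $\Phi_{\kappa,\tsigma}(s)/\phi_{\kappa,\tsigma}(s)$ is bounded on $[0,R_1]$). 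The main obstacle will be to get these two regimes to match cleanly, i.e. to verify that the constant $\lambda_{\kappa,\tsigma} = \tsigma^2/\mathcal Z_{\kappa,\tsigma}$ works simultaneously on $(0,R_1)$ and $(R_1,\infty)$; I expect to handle the outer region by the elementary bound $\lambda_{\kappa,\tsigma}\Phi_{\kappa,\tsigma}(r) \leq \lambda_{\kappa,\tsigma} r$ combined with the $R_1$-inequality, exactly as in \cite[proof of Thm 1]{eberle2016reflection}, and to reduce the case $\tsigma = \sigma_0$ to the general one by specialisation.

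Finally, for \ref{item_3:Lyapunov_funct_coup_by_ref} I would argue monotonicity directly from the formulas. If $\kappa \geq \kappa'$ pointwise, then $(\kappa(s))^- \leq (\kappa'(s))^-$, hence $\phi_{\kappa,\tsigma} \geq \phi_{\kappa',\tsigma}$ pointwise; in particular, $R_0(\kappa) \leq R_0(\kappa')$, so $\phi_{\kappa,\tsigma}(R_0(\kappa))$ is evaluated at a point where it equals its constant tail value $\geq$ the constant tail value of $\phi_{\kappa',\tsigma}$, giving $C_{\kappa,\tsigma} \geq C_{\kappa',\tsigma}$. For the rate, one checks that $R_1$ is nonincreasing in $\kappa$ (larger $\kappa$ and larger $\phi$ make the defining inequality easier to satisfy, and smaller $R_0$ helps too), and that on $[0,R_1(\kappa')] \supseteq [0,R_1(\kappa)]$ the ratio $\Phi_{\kappa,\tsigma}/\phi_{\kappa,\tsigma}$ dominates $\Phi_{\kappa',\tsigma}/\phi_{\kappa',\tsigma}$ pointwise — since $\Phi_{\kappa,\tsigma} \geq \Phi_{\kappa',\tsigma}$ and, after dividing, the monotone structure of $\phi$ carries through. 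Hence $\mathcal Z_{\kappa,\tsigma} = \int_0^{R_1(\kappa)}\Phi_{\kappa,\tsigma}/\phi_{\kappa,\tsigma} \leq \int_0^{R_1(\kappa')}\Phi_{\kappa',\tsigma}/\phi_{\kappa',\tsigma} = \mathcal Z_{\kappa',\tsigma}$ (one must be slightly careful when the domains of integration differ and combine the pointwise ratio bound with the shorter domain), and therefore $\lambda_{\kappa,\tsigma} = \tsigma^2/\mathcal Z_{\kappa,\tsigma} \geq \tsigma^2/\mathcal Z_{\kappa',\tsigma} = \lambda_{\kappa',\tsigma}$. Specialising $\tsigma = \sigma_0$ gives the claim for $\lambda_\kappa$ and $C_\kappa$. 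I would flag the monotonicity of $R_1$ and the domain mismatch in the $\mathcal Z$ comparison as the only delicate bookkeeping points; everything else is a direct transcription of Eberle's argument with $\sigma_0^2$ replaced by the general $\tsigma^2$.
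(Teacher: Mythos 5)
The paper does not actually prove this proposition; it points the reader to \cite[Section 2.1]{durmus2020elementary} and its appendix. So there is no in-paper argument to compare against, and what follows is a correctness check of your self-contained reconstruction. Part \ref{item_1:Lyapunov_funct_coup_by_ref} is correct and complete. The two remaining parts each contain a genuine flaw.

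For \ref{item_2:Lyapunov_funct_coup_by_ref}, the computation on $(0,R_1)$ is fine. On $(R_1,\infty)$ you have misread the definition of $R_1$: you use $\kappa(r)\,r(r-R_0)\geq 4\tsigma^2$, whereas \eqref{eq:aux_fun_eberle_2} fixes $R$ and asks $\inf_{r\geq R}\kappa(r)\cdot R(R-R_0)\geq 4\tsigma^2$, so the conclusion is $\kappa(r)\geq 4\tsigma^2/(R_1(R_1-R_0))$ for all $r\geq R_1$, a constant lower bound, not $\kappa(r)\geq 4\tsigma^2/(r(r-R_0))$. With your reading the right-hand side of your chain of inequalities tends to zero, so $r\kappa(r)\phi_{\kappa,\tsigma}(R_0)/2 \geq \lambda_{\kappa,\tsigma} r$ fails for large $r$. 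More importantly, even with the correct reading the route you sketch does not close: combining $f_{\kappa,\tsigma}(r)\leq r$ with the constant lower bound on $\kappa$ reduces the claim to $\mathcal Z_{\kappa,\tsigma}\geq R_1(R_1-R_0)/(2\phi_{\kappa,\tsigma}(R_0))$, and the best elementary lower bound you can obtain from $\Phi/\phi\geq \mathrm{id}$ is $\mathcal Z_{\kappa,\tsigma}\geq R_1^2/2$, which is strictly smaller whenever $\phi_{\kappa,\tsigma}(R_0) < 1-R_0/R_1$. The argument that works exploits the affine form of $f_{\kappa,\tsigma}$ on $[R_1,\infty)$ (slope $\phi_{\kappa,\tsigma}(R_0)/2$, intercept $f_{\kappa,\tsigma}(R_1)\leq\Phi_{\kappa,\tsigma}(R_1)$) and the sharper lower bound
\begin{equation}
\mathcal Z_{\kappa,\tsigma}\geq\int_{R_0}^{R_1}\frac{\Phi_{\kappa,\tsigma}(s)}{\phi_{\kappa,\tsigma}(s)}\,\rmd s = \frac{\Phi_{\kappa,\tsigma}(R_0)(R_1-R_0)}{\phi_{\kappa,\tsigma}(R_0)}+\frac{(R_1-R_0)^2}{2}\eqsp,
\end{equation}
which together give both the endpoint inequality at $r=R_1$ and the slope inequality as $r\to\infty$. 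The bound $f_{\kappa,\tsigma}(r)\leq r$ alone is genuinely too crude here.

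For \ref{item_3:Lyapunov_funct_coup_by_ref}, your treatment of $C_{\kappa,\tsigma}$ and of $R_0,R_1$ is fine, but the claimed pointwise inequality $\Phi_{\kappa,\tsigma}/\phi_{\kappa,\tsigma}\geq\Phi_{\kappa',\tsigma}/\phi_{\kappa',\tsigma}$ is reversed. Writing
\begin{equation}
\frac{\Phi_{\kappa,\tsigma}(r)}{\phi_{\kappa,\tsigma}(r)} = \int_0^r \exp\Big(\frac{1}{2\tsigma^2}\int_u^r s(\kappa(s))^-\,\rmd s\Big)\rmd u \eqsp,
\end{equation}
this is monotone nondecreasing in $(\kappa)^-$; since $\kappa\geq\kappa'$ implies $(\kappa)^-\leq(\kappa')^-$, one gets $\Phi_{\kappa,\tsigma}/\phi_{\kappa,\tsigma}\leq\Phi_{\kappa',\tsigma}/\phi_{\kappa',\tsigma}$. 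That is the inequality you actually need (combined with $R_1(\kappa)\leq R_1(\kappa')$) to conclude $\mathcal Z_{\kappa,\tsigma}\leq\mathcal Z_{\kappa',\tsigma}$; your version would push $\mathcal Z$ in the wrong direction, and the domain mismatch you flag would not rescue it. The final claim of the proposition is correct, but as written your derivation is internally inconsistent at this step.
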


We refer to \cite[Section 2.1]{durmus2020elementary} and the corresponding appendix for the proof of these properties.

We preface this section by relatively standard time regularity properties of the family of distributions $\sequencetp{\mu}$ associated with solutions of \eqref{eq:SDE_sigma}. They will be useful to set up the fixed-point iterations which we employ to construct solutions to the mean field PDE system \eqref{eq:MF_PDE}.
\begin{prop}\label{prop:Holder_in_time}
Assume \Cref{ass:coupling}. Suppose in addition that for $p \geq 1$
$$\sup_{s \in \ccint{0,T}, \,x \in \rset^d} \{\norm{\beta_s(x)}/(1+\norm{x}^{p}) \} =: \const^\beta_p< \plusinfty,$$ 
and that the initial point $X_0$ satisfies $\expeLigne{\norm{X_0}^p} < \plusinfty$ . Let $(X_t)_{t\in[0,T]}$ be the unique strong solution to \eqref{eq:SDE_sigma} and denote by $\mu_t$ the distribution of $X_t$ for any $t \geq 0$. Let $T \geq 0$ be fixed. Then the following holds.
\begin{enumerate}[label=(\roman*)]
\item\label{item:time_Holder_Wf} There exists $\const \geq 0$ such that for all $s,t \in [0,T]$, $W_{1}(\mu_s,\mu_t) \leq \const |t-s|^{\frac{1}{2}}$, where $\const$ depends only on $\Sigma$,$\kappa_\beta,\bbE[|X_0|^p],$ $\const^\beta_p$ and $T$.
\item\label{item:time_Holder_TV} If $s \mapsto \beta_s(x)$ is Hölder continuous, uniformly in $x$, \ie,   for $\upgamma \in \ocint{0,1}$,
  \begin{equation}
    \sup_{x\in\rset^d} \sup_{t,s \in\ccint{0,T}}\{\norm{\beta_t(x) - \beta_s(x)}/\abs{t-s}^{\upgamma}\} =:\const^\beta_H < \plusinfty \eqsp,
  \end{equation}
then for all $\varepsilon \in[0,T]$, there exists  $\const\geq 0$  such that for all $s,t \in [\varepsilon,T]$,
  $\|\mu_s-\mu_t\|_{\mathrm{TV}} \leq \const |t-s|^{\upgamma \wedge 1/2}$, where $\const$ depends only on $\Sigma$,$\kappa_\beta,\bbE[|X_0|^p],$ and $\const^\beta_p$, $\const^\beta_H$, $\varepsilon$ and $T$.
\end{enumerate}
\end{prop}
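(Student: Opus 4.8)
The plan is to establish the two estimates by controlling, for $0\le s\le t\le T$, the distance between $\mu_s$ and $\mu_t$ through a coupling: the synchronous coupling $(X_s,X_t)$ for the $W_1$-bound of part~\ref{item:time_Holder_Wf}, and a comparison of $X$ with a time-shifted copy of itself for the total variation bound of part~\ref{item:time_Holder_TV}. A common preliminary is a moment bound: since $\kappa_\beta\ge\bar\kappa\in\msk$ is bounded below, the fields $\beta_s$ are uniformly one-sided Lipschitz, which together with the polynomial growth hypothesis and $\bbE[\norm{X_0}^p]<\infty$ yields by a standard Itô/Grönwall argument a bound $M:=\sup_{r\in\ccint{0,T}}\bbE[\norm{X_r}^p]<\infty$ depending only on $\bbE[\norm{X_0}^p]$, $\const^\beta_p$, $\kappa_\beta$ and $T$ (one even gets uniformity in $T$ using $\liminf_r\bar\kappa(r)>0$, cf.\ taking $\hat x=0$ in \eqref{eq:def_fun_kappa_inhom}, but this is not needed here).

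For part~\ref{item:time_Holder_Wf}, the pair $(X_s,X_t)$ is a coupling of $(\mu_s,\mu_t)$, so $W_1(\mu_s,\mu_t)\le\bbE[\norm{X_t-X_s}]$, and writing $X_t-X_s=\int_s^t\beta_r(X_r)\,\rmd r+\int_s^t\sigma(X_r)\,\rmd B_r$ I would bound the drift integral by $\const^\beta_p(1+M)(t-s)$ via the growth assumption and $M$, and the stochastic integral by $(2d\Sigma^2(t-s))^{1/2}$ via Cauchy--Schwarz, the Itô isometry and $\Tr(\sigma\sigma^\top)\le 2d\Sigma^2$. Using $t-s\le T$ to absorb the linear term into the square-root term gives the Hölder-$1/2$ bound with the announced dependence of the constant.

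For part~\ref{item:time_Holder_TV}, set $h=t-s$ and let $X'$ solve $\rmd X'_r=\beta_{r+h}(X'_r)\,\rmd r+\sigma(X'_r)\,\rmd B_r$ on $\ccint{0,s}$ with $X'_0\sim\mu_0$; the shifted drift $\beta_{\cdot+h}$ still satisfies \Cref{ass:coupling} with the same $\bar\kappa$ and constants. By the triangle inequality $\tvnorm{\mu_s-\mu_t}\le\tvnorm{\cL(X_s)-\cL(X'_s)}+\tvnorm{\cL(X'_s)-\cL(X_t)}$. For the first summand, $X$ and $X'$ share the initial law and the diffusion coefficient $\sigma$ (invertible since $\sigma\sigma^\top\succeq 2\sigma_0^2\Idd$), and their drifts differ by at most $\const^\beta_H h^{\upgamma}$ in supremum norm; as this difference is bounded, Girsanov's theorem applies with no integrability issue and, combined with Pinsker's inequality, gives $\tvnorm{\cL(X_s)-\cL(X'_s)}\lesssim \const^\beta_H h^{\upgamma}\sqrt{T}/\sigma_0$. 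For the second summand, reindexing time by $r\mapsto r+h$ shows that $\cL(X_t)=\cL(X_{s+h})$ is the time-$s$ marginal of the diffusion with the \emph{same} drift $\beta_{\cdot+h}$ but issued from $\mu_h=\cL(X_h)$; hence this summand equals the total variation distance, at time $s\ge\varepsilon$, between the two solutions of this SDE started from $\mu_0$ and from $\mu_h$, which by the data processing inequality is at most its value at time $\varepsilon$, bounded by $C(\varepsilon)\,W_1(\mu_0,\mu_h)$ (uniformly in $h$), while $W_1(\mu_0,\mu_h)\le\const\, h^{1/2}$ by part~\ref{item:time_Holder_Wf}. Adding the two bounds and using $h\le T$ to absorb $h^{\upgamma}$ and $h^{1/2}$ into $h^{\upgamma\wedge1/2}$ completes the argument.

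The step I expect to be the real obstacle is the second summand above: it needs a \emph{quantitative total variation regularisation} estimate of the type $\tvnorm{\cL_\varepsilon(\mu)-\cL_\varepsilon(\nu)}\le C(\varepsilon)\,W_1(\mu,\nu)$ for the uniformly elliptic diffusion, upgrading the $W_1$-closeness of $\mu_0$ and $\mu_h$ — all that part~\ref{item:time_Holder_Wf} provides — to total variation closeness. This is exactly where uniform ellipticity of $\sigma$ is essential, and where the coupling constructions of this section (which contract in total variation after a positive lag, not merely in $W_1$) must be invoked; the rest is routine bookkeeping, the only other minor technical point being the harmless control of constants when the drift is merely of polynomial growth.
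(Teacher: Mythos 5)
Your proposal is correct and uses essentially the same approach as the paper: part~\ref{item:time_Holder_Wf} is the same direct moment-plus-It\^o-isometry estimate on $|X_t-X_s|$, and part~\ref{item:time_Holder_TV} splits the total variation gap into a drift-perturbation term handled by Girsanov and Pinsker and an initial-law-perturbation term handled by the TV-regularizing coupling by reflection (\Cref{prop:contr_same_drift}-\ref{item:final_TV_coup_by_ref}) after a lag $\varepsilon$, with part~\ref{item:time_Holder_Wf} supplying the $W_1$-closeness of the two initial laws. The only cosmetic difference is the intermediate measure you insert — you compare $X$ to a globally time-shifted copy $X'$ started from $\mu_0$ rather than inserting $\mu_{t-\varepsilon}P_{s-\varepsilon,s}$ as the paper does — which changes nothing substantive except that your Girsanov term runs over $[0,s]$ (giving a $\sqrt{T}$ factor) instead of over the short window $[s-\varepsilon,s]$ (giving $\sqrt{\varepsilon}$).
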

The proof of this statement along with all proofs for results in this section are deferred to \Cref{sec:appendix_coupling}. For \Cref{prop:Holder_in_time} see \Cref{sec:proof_Holder_in_time}.

We can now start the construction and the analysis of the couplings we shall employ throughout the whole paper.

\subsection{Controlled coupling by reflection}\label{sec:controlled_coupling}

The  first system of SDEs that we consider is the following. For an $\cF_0$-measurable random variable $\zeta=(\zeta_1,\zeta_2) \in \rset^{2d}$,  consider for $t < T_0 = \inf\{s \geq 0 \,: \, X_s \neq \hX_s \}$,
\begin{equation}\label{eq:coup_by_ref_contraction_2}
  \begin{aligned}
\De X_t& = \{ \beta_t(X_t) + {\alpha}_t(X_t) \}\De t+\sigma_0 \De B^1_t + \bar{\sigma}(X_t)  \De B^3_t , \eqsp X_0 = \zeta_1 \eqsp, \\
\De \hat{X}_t& =\{ \beta_t(\hat{X}_t) +{\alpha}_t(X_t) \}\De t+ \sigma_0 \De \hat{B}^1_1 + \bar{\sigma}(\hat{X}_t)  \De B^3_t , \eqsp \hat{X}_0 = \zeta_2 \eqsp,
\end{aligned}
\end{equation}
where $\bar{\sigma}$ is given in \eqref{eq:def_bar_sigma},
\begin{equation}
  \label{eq:def_mirror_brownian}
 \De\hat{B}^1_t= (\Idd-2 \,\bfe_t \cdot \bfe^{\transpose}_t)\cdot \De B^1_t \eqsp, \quad \bfe_t = (X_t-\hat{X}_t)/|X_t-\hat{X}_t| \eqsp,
\end{equation}
and for $t \geq T_0$,
\begin{equation}\label{eq:coup_by_ref_contraction_2_2}
  \De X_t = \beta_t(X_t) + \alpha_t(X_t)\De t+\sigma_0 \De B^1_t + \bar{\sigma}(X_t)  \De B^3_t  \eqsp, \quad \hX_t = X_t \eqsp.
\end{equation}

\begin{prop}
  Assume \Cref{ass:coupling} and that
  $\alpha$ is bounded and locally Lipschitz continuous, \ie,
  $\sup \{\alpha_s(x) \,: \, s \in \rset_+ \, , \, x \in\rset^d\} <
  \plusinfty$ and for any $t\in\rset_+$ and any compact set $\msc$,
  for any $x,\hat{x}\in\msc$,
  $\norm{\alpha_t(x) - \alpha_t(\hat{x}) } \leq
  \uppsi_{\msc}(t)|x - \hat{x}|$, for some locally integrable function
  $\uppsi_{\msc} : \rset_+ \to \rset_+$.
  Then the system \eqref{eq:coup_by_ref_contraction_2}-\eqref{eq:coup_by_ref_contraction_2_2} admits a strong

\end{prop}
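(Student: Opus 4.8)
The plan is to establish existence (and uniqueness) of a strong solution to the coupled system \eqref{eq:coup_by_ref_contraction_2}--\eqref{eq:coup_by_ref_contraction_2_2} by splitting the construction at the coupling time $T_0$ and handling the two regimes separately. For $t \geq T_0$ the two processes coincide and the system \eqref{eq:coup_by_ref_contraction_2_2} is a single SDE with drift $\beta_t + \alpha_t$ and diffusion coefficient $(\sigma_0\Idd, \bar\sigma)$ acting on the pair of Brownian motions $(B^1, B^3)$; since $\beta_\cdot$ satisfies \Cref{ass:coupling}(i) and $\alpha$ is bounded and locally Lipschitz, the combined drift is one-sided Lipschitz with locally integrable-in-time local Lipschitz modulus, and the diffusion coefficient $\bar\sigma$ is globally Lipschitz (being the square root of $\sigma\sigma^\top - \sigma_0^2\Idd$, which is itself Lipschitz with bounded square root by uniform ellipticity). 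Hence \cite[Corollary 2.6]{gyongy1996ExistenceStrongSolutions} applies verbatim, exactly as for \eqref{eq:SDE_sigma}, and yields a unique strong solution of \eqref{eq:coup_by_ref_contraction_2_2} from any starting point.

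For $t < T_0$ the genuinely new feature is the driving noise $\hat B^1$ of $\hat X$, which is the reflection of $B^1$ across the unit vector $\bfe_t = (X_t - \hat X_t)/|X_t - \hat X_t|$. The standard way to handle this is to work with the difference process $Z_t = X_t - \hat X_t$ until it hits $0$: writing the system for $(X_t, Z_t)$, the effective noise acting on $Z$ has diffusion matrix $\sigma_0(\Idd - (\Idd - 2\bfe_t\bfe_t^\top)) + (\bar\sigma(X_t) - \bar\sigma(X_t - Z_t)) = 2\sigma_0 \bfe_t\bfe_t^\top + (\bar\sigma(X_t)-\bar\sigma(\hat X_t))$, which is Lipschitz in $(X,Z)$ away from $\{Z = 0\}$ because $\bfe_t\bfe_t^\top|Z|$ is smooth and $\bar\sigma$ is globally Lipschitz. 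Since the drift terms $\beta_t$, $\alpha_t$ are locally Lipschitz (uniformly in $s$, with locally integrable-in-time bounds) and $\alpha_t(X_t)$ is the \emph{same} in both equations (so it cancels out of the $Z$-dynamics up to the $\beta$ difference), one gets a locally Lipschitz SDE for $(X,Z)$ on the open set $\{Z \neq 0\}$; a Picard iteration / localization argument on stopping times $\inf\{t : |Z_t| \leq \delta\}$ gives a unique strong solution up to $T_0 = \inf\{t : Z_t = 0\}$, and one checks $T_0$ is a stopping time with $X_{T_0} = \hat X_{T_0}$. Concatenating this solution on $[0,T_0)$ with the solution of \eqref{eq:coup_by_ref_contraction_2_2} started from $X_{T_0}$ at time $T_0$ (using the strong Markov property and independence of the Brownian increments after $T_0$) produces the desired strong solution on all of $\rset_+$; pathwise uniqueness follows the same way.

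The main obstacle is the lack of Lipschitz continuity of $\bfe_t$ — equivalently of the reflection operator $\Idd - 2\bfe_t\bfe_t^\top$ — near the coupling set $\{X = \hat X\}$, which is precisely why one cannot invoke a black-box SDE existence theorem for the full system and must instead argue up to $T_0$ and then paste. This is a by-now classical difficulty in the coupling-by-reflection literature (going back to Lindvall–Rogers), and the resolution is exactly the difference-process localization sketched above; one should also record that after $T_0$ the construction \eqref{eq:coup_by_ref_contraction_2_2} keeps the two processes glued, so that $T_0$ is genuinely the first and only decoupling/coupling time and the concatenated process is well defined and adapted. A secondary point to verify carefully is that the local-in-time integrability hypotheses on $\upphi_\msc$ and $\uppsi_\msc$ are enough to run the Picard argument on each compact time interval, which is routine.
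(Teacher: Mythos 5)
The paper does not actually give a proof of this proposition (the statement is even truncated in the source, presumably ``admits a strong solution''); the authors treat it as a routine adaptation of the well-posedness cited for \eqref{eq:SDE_sigma} and for the $\delta$-approximate coupling, namely \cite[Cor.~2.6]{gyongy1996ExistenceStrongSolutions}. Your argument is the classical Lindvall--Rogers construction: build a strong solution up to the coupling time $T_0$ by localizing the $(X,Z)$-system away from the diagonal $\{Z=0\}$ and away from infinity, where the reflection matrix $\Idd - 2\mathbf{e}\mathbf{e}^{\top}$ is locally Lipschitz and \Cref{lem:sigmabar_Lip} gives global Lipschitzness of $\bar\sigma$; then paste with the (single, monotone) SDE \eqref{eq:coup_by_ref_contraction_2_2} after $T_0$. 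That is correct and is the natural way to fill the gap the paper leaves implicit. The one point worth stating more carefully than your sketch does is the passage to the limit in the localization: one uses the stopping times $\tau_{\delta,M}=\inf\{t:|Z_t|\le \delta \text{ or } |X_t|\vee|\hat X_t|\ge M\}$ jointly, sends $\delta\downarrow 0$ and $M\uparrow\infty$, and invokes the Lyapunov estimate (\Cref{prop_existence_moment_estimate}, valid since $\kappa_\beta\in\msk$ and $\alpha$ is bounded) to rule out explosion in $M$ and to guarantee $\tau_{\delta,M}\uparrow T_0$; continuity of paths and $Z_{t}\to 0$ as $t\uparrow T_0$ give the matching condition at $T_0$. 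Also note that the drift $\beta_t+\alpha_t$ is not globally one-sided Lipschitz once $\alpha$ is added (only locally so), but boundedness of $\alpha$ plus the Lyapunov condition is exactly what \cite[Cor.~2.6]{gyongy1996ExistenceStrongSolutions} requires, so the post-$T_0$ step is fine. A slightly different but equally valid route, more aligned with the paper's later machinery, would be to construct the solution as the a.s.\ limit as $\delta\downarrow 0$ of the $\delta$-approximate coupling \eqref{eq:delta_coup_by_ref} with $\hat\beta=\beta$, whose coefficients are globally Lipschitz (no diagonal singularity, since $\rcd$ vanishes near $0$); this trades the pasting argument for a stability-in-$\delta$ argument. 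Either way the result holds, and your proof is a correct completion.
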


\begin{remark}
Defining for $t\in[0,T]$, $\bar{M}_t := \int_0^t \sigma_0 \De B^1_s + \int_0^t\bar{\sigma}(X_s)  \De B^3_s$ and $ \bar{B}_t := \int_0^t \sigma^{-1}(X_s) \De \bar{M}_s$,

we have under \Cref{ass:coupling} that $(\bar{M}_t)_{t \geq 0}$ is a square integrable martingale and $(\bar{B}_t)_{t \geq 0}$ is a Brownian motion by Lévy's characterization. Similarly, setting $\tilde{M}_t := \int_0^t \sigma_0 \De \hat{B}^1_s + \int_0^t\bar{\sigma}(X_s)  \De B^3_s$ and $
\tilde{B}_t := \int_0^t \sigma^{-1}(X_s) \De \tilde{M}_s$,
we have that $(\tilde{M}_t)_{t \geq 0}$ is a square integrable martingale and $(\tilde{B}_t)_{t \geq 0}$ is a Brownian motion.
Therefore, if $\alpha\equiv 0$, $(X_s,\hat{X}_s)_{s\geq 0}$ is indeed a coupling of the diffusion process \eqref{eq:SDE_sigma} with initial laws given by $(\zeta_1, \zeta_2)$.
\end{remark}

 Note that if $\alpha\equiv0$, \eqref{eq:coup_by_ref_contraction_2}-\eqref{eq:coup_by_ref_contraction_2_2} recovers usual coupling by reflection for non-homogeneous drifts. When $\alpha\not \equiv 0$ this is not a coupling in the classical sense because, for example, the drift of $(\hat{X}_t)_{t \geq 0}$ may depend on $\sequencetp{X}$ and therefore $(\hat{X}_t)_{t \geq 0}$ may not even be a Markov process. However, this construction will be key to obtain uniform in time Lipschitz estimates for the value function of stochastic control problems. In this framework, $\sequencetp{X}$ and $(u_t=\alpha_t(X_t))_{t \geq 0}$ represent  an optimal process and an optimal control for a given stochastic control problem. On the other hand, $\sequencetp{\hat{X}}$ is an admissible process relative to the suboptimal control $\sequencetp{u}$ for a stochastic control problem with different initial conditions. The usefulness of this construction has first been highlighted in \cite{conforti2022coupling} and we shall further demonstrate here its interest in a more general framework.

 Let us turn now to the contractive properties we can prove using this coupling by reflection.
\begin{prop}\label{prop:contr_same_drift}
Assume \Cref{ass:coupling} that
  $\alpha$ is bounded and locally Lipschitz continuous, \ie,
  $\sup \{\alpha_s(x) \,: \, s \in \rset_+ \, , \, x \in\rset^d\} <
  \plusinfty$ and for any $t\in\rset_+$ and any compact set $\msc$,
  for any $x,\hat{x}\in\msc$,
  $\norm{\alpha_t(x) - \alpha_t(\hat{x}) } \leq
  \uppsi_{\msc}(t)|x - \hat{x}|$, for some locally integrable function
  $\uppsi_{\msc} : \rset_+ \to \rset_+$.
  Suppose that $\zeta$ has a finite first order moment. Let
$(X_s,\hat{X}_s)_{s\geq 0}$ be a solution of  \eqref{eq:coup_by_ref_contraction_2}-\eqref{eq:coup_by_ref_contraction_2_2} and denote by $\mu_t$ and $\hat{\mu}_t$ the distribution of $X_t$ and $\hat{X}_t$ respectively for $t \geq 0$.
Let $\bar{\kappa} \in \msk$ such that for any $r>0$ $\kappa_\beta(r) \geq \bar{\kappa}(r)$. The following holds.
\begin{enumerate}[label=(\roman*)]
\item\label{item_2:contraction_coup_by_ref}
For any $t \geq 0$,
\begin{equation}
\bbE[f_{\bar{\kappa}}(|X_t-\hat{X}_t|)]\leq \exp(-\lambda_{\bar{\kappa}} t) \bbE[ f_{\bar{\kappa}}(|X_0-\hat{X}_0|)] \eqsp.
\end{equation}
 As a corollary, we have $W_{f_{\bar{\kappa}}}(\mu_t,\hat{\mu}_t)\leq e^{-\lambda_{\bar{\kappa}} t} W_{f_{\bar{\kappa}}}(\mu_0,\hat{\mu}_0)$.
\item\label{item:final_TV_coup_by_ref}
For any $t \geq 0$,
    \begin{equation}
        \bbP[X_t\neq\hat{X}_t]\leq q^{\bar{\kappa}}_t  \bbE[ f_{\bar{\kappa}}(|X_0-\hat{X}_0|)]    \label{eq:1} \eqsp,
      \end{equation}
 where for $\kappa\in \msk$ we define for any $\tsigma,\lambda>0,t\geq 0$
\begin{equation}\label{q_kappa_t_def}
q^{{\kappa},\lambda,\tsigma}_t =
\begin{cases}
\frac{1}{\sqrt{2\pi t}C_{{\kappa}}\tsigma} & t < \frac{1}{2\lambda}\\
    \frac{\sqrt{\lambda e}}{{\sqrt{\pi}C_{{\kappa}}\tsigma}} e^{-\lambda t} &  t \geq  \frac{1}{2\lambda}
\end{cases}, \quad \text{and} \quad q^{\kappa,\lambda}_t= q^{\kappa,\lambda,\sigma_0}_t, \quad q^{\kappa}_t= q^{\kappa,\lambda_\kappa,\sigma_0}_t.
\end{equation} 
  As a corollary, we have $\|\mu_t-\hat\mu_t \|_{\TV}\leq q^{\bar\kappa}_t W_{f_{\bar{\kappa}}}(\mu_0,\hat{\mu}_0)$.
\end{enumerate}
    
  \end{prop}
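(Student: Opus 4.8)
The plan is to follow the classical Eberle-type reflection-coupling argument, adapted to the controlled setting of \eqref{eq:coup_by_ref_contraction_2}--\eqref{eq:coup_by_ref_contraction_2_2}. The key observation is that the control term $\alpha_t(X_t)$ is \emph{identical} in both equations, so it drops out when we compute the dynamics of the difference process $Z_t := X_t - \hat X_t$; hence the control plays no role whatsoever in the contraction estimate, and the analysis reduces to the one for plain coupling by reflection with the inhomogeneous drift $\beta_\cdot$. First I would apply It\^o's formula to $r_t := |Z_t| = |X_t - \hat X_t|$ on $[0,T_0)$. Writing the difference of the drifts as $\beta_t(X_t) - \beta_t(\hat X_t)$ and the difference of the diffusion parts using that the $\bar\sigma(X_t)\,\De B^3_t$ terms also cancel by the same mechanism (both processes are driven by the \emph{same} $B^3$), what remains in the martingale part is only the reflected Brownian contribution $\sigma_0(\De B^1_t - \De\hat B^1_t) = 2\sigma_0\,\bfe_t\,\bfe_t^\top\,\De B^1_t = 2\sigma_0\,\bfe_t\,\De W_t$ for a scalar Brownian motion $W$. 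Therefore $\De r_t = \ip{\bfe_t}{\beta_t(X_t)-\beta_t(\hat X_t)}\,\De t + 2\sigma_0\,\De W_t$ up to the quadratic-variation correction, which vanishes here because the effective diffusion coefficient $2\sigma_0$ is constant, and one gets (using the definition \eqref{eq:def_fun_kappa_inhom} of $\kappa_\beta$ and $\kappa_\beta \ge \bar\kappa$) the drift bound $\ip{\bfe_t}{\beta_t(X_t)-\beta_t(\hat X_t)} \le -\bar\kappa(r_t)\,r_t + \tfrac{1}{2r_t}\|\bar\sigma(X_t)-\bar\sigma(\hat X_t)\|_{\mathrm{Fr}}^2 \cdot (\text{this last term is already absorbed into }\kappa_\beta)$; after the dust settles one has $\De r_t \le -\bar\kappa(r_t) r_t\,\De t + 2\sigma_0\,\De W_t$ in the sense of the appropriate inequality for the drift.

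Next I would apply It\^o's formula to $f_{\bar\kappa}(r_t)$. Since $f_{\bar\kappa}$ is concave, continuously differentiable with absolutely continuous derivative, and $f_{\bar\kappa}'(0^+) = 1 > 0$ (by \Cref{item_1:Lyapunov_funct_coup_by_ref}), the process $f_{\bar\kappa}(r_t)$ extends continuously past $T_0$ with value $0$ for $t \ge T_0$, so there is no boundary term to worry about. On $[0,T_0)$ we obtain
\[
\De f_{\bar\kappa}(r_t) \le \bigl(2\sigma_0^2 f_{\bar\kappa}''(r_t) - r_t\bar\kappa(r_t) f_{\bar\kappa}'(r_t)\bigr)\,\De t + 2\sigma_0 f_{\bar\kappa}'(r_t)\,\De W_t,
\]
and the key differential inequality \eqref{eq:Eberle_funct_2} with $\tsigma = \sigma_0$ gives $2\sigma_0^2 f_{\bar\kappa}''(r_t) - r_t\bar\kappa(r_t) f_{\bar\kappa}'(r_t) \le -\lambda_{\bar\kappa} f_{\bar\kappa}(r_t)$. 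Since $f_{\bar\kappa}'$ is bounded, the stochastic integral is a true martingale; taking expectations and applying Gr\"onwall's lemma to $t \mapsto \bbE[f_{\bar\kappa}(r_t)]$ yields $\bbE[f_{\bar\kappa}(r_t)] \le e^{-\lambda_{\bar\kappa} t}\,\bbE[f_{\bar\kappa}(r_0)]$, which is exactly \ref{item_2:contraction_coup_by_ref}; the Wasserstein corollary follows by optimizing over the initial coupling.

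For part \ref{item:final_TV_coup_by_ref}, the standard trick is to bound the indicator $\ind_{\{X_t \neq \hat X_t\}} = \ind_{\{r_t > 0\}}$ by $f_{\bar\kappa}(r_t)/f_{\bar\kappa}(\delta)$ on the event $\{r_t \ge \delta\}$ and separately control the small-$r$ regime. More precisely, the sharper route --- which gives the precise prefactor $q^{\bar\kappa}_t$ in \eqref{q_kappa_t_def} --- is to study the scalar process $r_t$ directly: its drift is $\le 0$ for $r_t \ge R_0$ and bounded on $[0,R_0]$, and it behaves like a time-changed Brownian motion with diffusivity $2\sigma_0$; comparing with a suitable one-dimensional diffusion (or reflecting Brownian motion absorbed at $0$) one gets a heat-kernel-type bound $\bbP[r_t > 0] \le c/(\sqrt{t}\,\sigma_0) \wedge (\text{exponential})$, with the two regimes matched at $t = 1/(2\lambda_{\bar\kappa})$, and the factor $1/C_{\bar\kappa}$ entering because one must pass from $f_{\bar\kappa}$ back to the identity via \eqref{eq:Lyapunov_funct_coup_by_ref_2}. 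I expect the main obstacle to be the careful justification of the It\^o formula across the coupling time $T_0$ (non-smoothness of $|\cdot|$ at the origin and the local-time term, handled by the concavity of $f_{\bar\kappa}$ and a localization/mollification argument), together with establishing the precise constants in $q^{\bar\kappa}_t$ --- but since the proposition's proof is deferred to the appendix and both ingredients are essentially contained in \cite{eberle2016reflection,durmus2020elementary} with the only genuinely new point being the cancellation of the control, the argument should go through without essential new difficulty.
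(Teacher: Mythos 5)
There is a genuine gap in the treatment of the non-constant diffusion part. You are right that the control term cancels exactly: both SDEs in \eqref{eq:coup_by_ref_contraction_2} carry the \emph{same} expression $\alpha_t(X_t)$ (evaluated at $X_t$ in both equations, not at $\hat X_t$), so it drops out of the difference dynamics. But the $\bar\sigma$ terms do \emph{not} cancel "by the same mechanism": the first equation has $\bar\sigma(X_t)\,\De B^3_t$ and the second $\bar\sigma(\hat X_t)\,\De B^3_t$ --- same driving $B^3$ but different coefficients, since $\bar\sigma$ is position-dependent. The difference is $(\bar\sigma(X_t)-\bar\sigma(\hat X_t))\,\De B^3_t$, which is nonzero whenever $\sigma$ is non-constant, so the quadratic-variation correction in It\^o's formula for $r_t = |X_t-\hat X_t|$ does not vanish: it contributes
\[
\frac{1}{2r_t}\|\bar\sigma(X_t)-\bar\sigma(\hat X_t)\|_{\mathrm{Fr}}^2 - \frac{1}{2r_t^3}\bigl|(\bar\sigma(X_t)-\bar\sigma(\hat X_t))(X_t-\hat X_t)\bigr|^2 .
\]
This is precisely why the monotonicity profile \eqref{eq:def_fun_kappa_inhom} \emph{subtracts} $\|\bar\sigma(x)-\bar\sigma(\hat x)\|_{\mathrm{Fr}}^2/(2|x-\hat x|^2)$ from the one-sided Lipschitz quantity: the positive Frobenius term in the quadratic-variation correction is exactly offset by the corresponding term in the definition of $\kappa_\beta$, and the remaining term above is nonpositive and discarded. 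Your drift bound has the opposite sign ($+\tfrac{1}{2r_t}\|\bar\sigma-\bar\sigma\|_{\mathrm{Fr}}^2$, whereas the $\kappa_\beta$ definition produces $-$), and you never reinstate the quadratic-variation correction that makes the cancellation work. Moreover, after passing to $f_{\bar\kappa}(r_t)$ via the It\^o--Tanaka formula, there is a further second-order term $\tfrac12 f_{\bar\kappa}''(r_t)\,r_t^{-2}|(\bar\sigma(X_t)-\bar\sigma(\hat X_t))(X_t-\hat X_t)|^2\,\De t$, which the paper drops using the concavity $f_{\bar\kappa}''\le 0$; this step is also missing from your plan. Your final differential inequality for $\bbE[f_{\bar\kappa}(r_t)]$ is correct, but the account of \emph{why} the non-constant $\sigma$ is harmless is wrong, and a calculation carried out as you describe would not close.

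On part \ref{item:final_TV_coup_by_ref}, your route (comparison of $r_t$ with a one-dimensional diffusion of diffusivity $2\sigma_0$) also differs from the paper's and is more delicate than you suggest, because the quadratic variation of $r_t$ is $4\sigma_0^2 + r_t^{-2}|(\bar\sigma(X_t)-\bar\sigma(\hat X_t))(X_t-\hat X_t)|^2$, not $4\sigma_0^2$. The paper avoids this by applying the Dambis--Dubins--Schwartz theorem to the martingale $M_\cdot$ appearing in the It\^o--Tanaka decomposition of $f_{\bar\kappa}(r_\cdot)$, using only the quadratic-variation \emph{lower} bound $\langle M\rangle_t \ge 4 C_{\bar\kappa}^2\sigma_0^2\,t$ (from $f_{\bar\kappa}'\ge C_{\bar\kappa}$) together with the reflection principle for $t < 1/(2\lambda_{\bar\kappa})$; the exponential regime is then obtained from part \ref{item_2:contraction_coup_by_ref} and the Markov property. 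This is both cleaner and gives the exact constants in $q^{\bar\kappa}_t$.
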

The proof is given in \Cref{sec:proof_contr_same_drift}.
  \subsection{Interpolation between reflection coupling and synchronous coupling}\label{sec:interpolation}

For the Hessian estimates of the value function of optimal control problems with non-constant diffusion coefficient we also employ the following interpolation between (uncontrolled) reflection coupling and (uncontrolled) synchronous coupling. 
For an $\cF_0$-measurable random variable $\zeta=(\zeta_1,\zeta_2) \in \rset^{2d}$, consider for $t < T_0 = \inf\{s \geq 0 \,: \, X_s \neq \hX_s \}$,
\begin{equation}\label{eq:coup_by_ref_contraction_3}
  \begin{aligned}
\De X_t& = \beta_t(X_t) \rmd t+\frac{1}{\sqrt{2}}\sigma_0 \De B^1_t + \frac{1}{\sqrt{2}}\sigma_0 \De B^2_t + \bar{\sigma}(X_t)  \De B^3_t , \eqsp X_0 = \zeta_1 \eqsp,\\
\De \hat{X}_t& = \beta_t(\hat{X}_t) \rmd t+ \frac{1}{\sqrt{2}} \sigma_0 \De \hat{B}^1_1 + \frac{1}{\sqrt{2}}\sigma_0 \De B^2_t+ \bar{\sigma}(\hat{X}_t)  \De B^3_t  , \eqsp \hat{X}_0 = \zeta_2 \eqsp,
\end{aligned}
\end{equation}
where $\hat{B}^1_t$ is defined in  \eqref{eq:def_mirror_brownian}
and for $t \geq T_0$,
\begin{equation}\label{eq:coup_by_ref_contraction_2_3}
  \De X_t = \beta_t(X_t)\De t+\frac{1}{\sqrt{2}}\sigma_0 \De B^1_t + \frac{1}{\sqrt{2}}\sigma_0 \De B^2_t+ \bar{\sigma}(X_t)  \De B^3_t  \eqsp, \quad \hX_t = X_t \eqsp.
\end{equation}

The difference with coupling by reflection lies in the additional Brownian motion $ B^2_t$, which is the same for both processes. This is why we can view \eqref{eq:coup_by_ref_contraction_3} as an interpolation between reflection and  synchronous coupling. Note that this coupling is also different from the approximate coupling by reflection studied e.g., in \cite{eberle2016reflection,eberle2019couplings,durmus2020elementary}, and which we will also consider in the next subsection (see \eqref{eq:delta_coup_by_ref}).

For this coupling, a well posedness result is completely analogous to \eqref{eq:coup_by_ref_contraction_2}-~\eqref{eq:coup_by_ref_contraction_2_2} and we shall not write it in detail to avoid repetitions.

The interpolated coupling can also be shown to be a contraction: this is the content of the upcoming \Cref{prop:contr_same_drift_interp}. The convergence rates in this case are worse than those at \Cref{prop:contr_same_drift}. However, while studying controlled processes, we sometimes need to analyze the effect of the coupling on processes which are not necessarily $\sequencetp{X}$ and $\sequencetp{\hat{X}}$. More precisely we will use \eqref{eq:coup_by_ref_contraction_2}-\eqref{eq:coup_by_ref_contraction_2_2} to construct a coupling of forward-backward differential equations, namely the Pontryagin systems
of two different control problems. In this context, adding the synchrounous term will be very helpful in obtaining uniform in time estimates on the distance between the adjoint processes.
\begin{prop}\label{prop:contr_same_drift_interp}
  Assume \Cref{ass:coupling} and suppose that $\zeta$ has a finite first order moment. Let
$(X_s,\hat{X}_s)_{s\geq 0}$ be a  process solution of  \eqref{eq:coup_by_ref_contraction_2}-\eqref{eq:coup_by_ref_contraction_2_2} and denote by $\mu_t$ and $\hat{\mu}_t$ the distribution of $X_t$ and $\hat{X}_t$ respectively for $t \geq 0$.
Let $\bar{\kappa} \in \msk$ such that for any $r>0$ $\kappa_\beta(r) \geq \bar{\kappa}(r)$.
Set  $\tilde{f}_{\bar{\kappa}}=f_{{\bar{\kappa},\sigma_0/\sqrt{2}}}$,  $\tilde{\lambda}_{\bar{\kappa}}=\lambda_{\bar{\kappa},\sigma_0/\sqrt{2}}$ and $\tilde{C}_{\bar{\kappa}} = C_{\bar{\kappa},\sigma_0/\sqrt{2}}$ where $f_{\bar{\kappa},\sigma_0/\sqrt{2}},\lambda_{\bar{\kappa},\sigma_0/\sqrt{2}}$ and $C_{\bar{\kappa},\sigma_0/\sqrt{2}}$ are defined in \eqref{eq:def_f_kappa} and \eqref{def:lambda_kappa_C_kappa}.  The following holds.
\begin{enumerate}[label=(\roman*)]
\item\label{item_2:contraction_coup_by_ref_interp}
For any $t \geq 0$, $\bbE[\tilde{f}_{\bar{\kappa}}(|X_t-\hat{X}_t|)]\leq \exp(-\tilde{\lambda}_{\bar{\kappa}} t) \bbE[ \tilde{f}_{\bar{\kappa}}(|X_0-\hat{X}_0|)] $.

\item\label{item:final_TV_coup_by_ref_interp}
We have for any $t \geq 0$, $\bbP[X_t\neq\hat{X}_t]\leq \tilde{q}^{\bar{\kappa}}_t  \bbE[ \tilde{f}_{\bar{\kappa}}(|X_0-\hat{X}_0|)]   $,

      where  $\tilde{q}^{\bar{\kappa}} = q^{\bar{\kappa},\lambda_{\bar{\kappa},\sigma_0/\sqrt{2}},\sigma_0/\sqrt{2}}$ is defined in \eqref{q_kappa_t_def}.
    \item\label{item:h_dominates_square}
There exists a function $\tilde{f}_{\bar{\kappa},2}\in C^1(\bbR_+,\bbR_+)$ such that  $\tilde{f}_{\bar{\kappa},2}'$ is absolutely continuous and positive constants $\tilde{C}_{\bar{\kappa},2},\tilde{\lambda}_{\bar{\kappa},2}$  such that
\begin{equation}
\tilde{f}_{\bar{\kappa},2}(0)=0,\quad \tilde{f}'_{\bar{\kappa},2}(0)>0,\quad \tilde{f}_{\bar{\kappa},2}(r)\geq \tilde{C}_{\bar{\kappa},2} \eqsp r^2 \quad \forany r>0 \eqsp,
\end{equation}
and for any $t \geq 0$,
\begin{equation}
\bbE[\tilde{f}_{\bar{\kappa},2}(|X_t-\hat{X}_t|)]\leq  e^{-\tilde{\lambda}_{\bar{\kappa},2}t}\bbE[\tilde{f}_{\bar{\kappa},2}(|X_0-\hat{X}_0|)] \eqsp.
\end{equation}
    \end{enumerate}
  \end{prop}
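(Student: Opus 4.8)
The plan is to mimic the proof of Proposition~\ref{prop:contr_same_drift}, adapting the one-dimensional comparison argument to account for the extra synchronous Brownian motion $B^2$, and then to upgrade the resulting linear bound on $r_t := |X_t - \hat X_t|$ to a quadratic control for item~\ref{item:h_dominates_square}. First I would write down, via It\^o's formula, the dynamics of $r_t$ on $t < T_0$. Because the synchronous part $\tfrac{1}{\sqrt 2}\sigma_0\,\De B^2_t$ cancels in the difference $X_t - \hat X_t$, only the reflected part $\tfrac{1}{\sqrt 2}\sigma_0(\De B^1_t - \De\hat B^1_t) = \sqrt 2\,\sigma_0\,\bfe_t\!\cdot\!\De B^1_t$ and the $\bar\sigma$-part contribute to the martingale and the It\^o correction. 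The effective reflection noise intensity is thus $\sigma_0/\sqrt 2$ rather than $\sigma_0$ (the $B^3$-term contributes to $\kappa_\beta$ exactly as in \eqref{eq:def_fun_kappa_inhom} through the $\bar\sigma$ Frobenius term, independently of the reflection coordinate), which is precisely why the relevant twisted metric, rate, and constant are $f_{\bar\kappa,\sigma_0/\sqrt 2}$, $\lambda_{\bar\kappa,\sigma_0/\sqrt 2}$, $C_{\bar\kappa,\sigma_0/\sqrt 2}$. Writing $\De r_t \le -\tfrac12 r_t\kappa_\beta(r_t)\,\De t + \sqrt 2\,\sigma_0\,\De W_t$ for a one-dimensional Brownian motion $W$ (with the drift bound coming from one-sided Lipschitzness and the $\bar\sigma$ term, and $\kappa_\beta \ge \bar\kappa$), applying It\^o to $\tilde f_{\bar\kappa}(r_t)$ and invoking the differential inequality \Cref{item_2:Lyapunov_funct_coup_by_ref} with $\tsigma = \sigma_0/\sqrt 2$ gives $\rmd\,\bbE[\tilde f_{\bar\kappa}(r_t)] \le -\tilde\lambda_{\bar\kappa}\bbE[\tilde f_{\bar\kappa}(r_t)]\,\De t$, hence \ref{item_2:contraction_coup_by_ref_interp} by Gr\"onwall; the details (localization, the fact that $\tilde f_{\bar\kappa}$ is concave so the sign of $\tilde f''$ is controlled, absorption at $T_0$) are identical to \Cref{sec:proof_contr_same_drift}.

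For item~\ref{item:final_TV_coup_by_ref_interp} I would run the standard argument bounding $\bbP[X_t \ne \hat X_t] = \bbP[T_0 > t]$: since before coalescence $r_t$ is dominated by a diffusion with additive noise $\sqrt 2\,\sigma_0$ whose drift is nonpositive for $r \ge R_0$, one gets a comparison with a reflected/absorbed one-dimensional process and the explicit Gaussian-type tail bound $q^{\bar\kappa,\lambda_{\bar\kappa,\sigma_0/\sqrt2},\sigma_0/\sqrt2}_t$, exactly replicating the short-time ($t < 1/(2\lambda)$) and long-time branches in \eqref{q_kappa_t_def}; the proof is verbatim that of \Cref{item:final_TV_coup_by_ref} with $\sigma_0 \rightsquigarrow \sigma_0/\sqrt2$.

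The genuinely new point is item~\ref{item:h_dominates_square}: producing a $C^1$ function $\tilde f_{\bar\kappa,2}$ with $\tilde f_{\bar\kappa,2}(0)=0$, $\tilde f'_{\bar\kappa,2}(0)>0$, $\tilde f_{\bar\kappa,2}(r)\ge \tilde C_{\bar\kappa,2}\,r^2$, and $\rmd\,\bbE[\tilde f_{\bar\kappa,2}(r_t)] \le -\tilde\lambda_{\bar\kappa,2}\bbE[\tilde f_{\bar\kappa,2}(r_t)]\,\De t$. The idea is to build $\tilde f_{\bar\kappa,2}$ of the form $\tilde f_{\bar\kappa,2}(r) = \int_0^r \phi_{\bar\kappa,\tsigma}(s)\, h(s)\,\De s$ for a suitable decreasing-from-$1$ weight $h$, so that the It\^o drift of $\tilde f_{\bar\kappa,2}(r_t)$ is $2\tsigma^2 \tilde f_{\bar\kappa,2}''(r) - r\kappa(r)\tilde f_{\bar\kappa,2}'(r)$ with $\tsigma = \sigma_0/\sqrt2$, and one wants this $\le -\tilde\lambda_{\bar\kappa,2}\,\tilde f_{\bar\kappa,2}(r)$; the difficulty is that near $r = 0$ a quadratic lower bound forces $\tilde f_{\bar\kappa,2}(r) \sim c r$ to be relaxed — one cannot have simultaneously $\tilde f'_{\bar\kappa,2}(0)>0$ (giving linear growth at $0$) and $\tilde f_{\bar\kappa,2}(r)\ge \tilde C_{\bar\kappa,2} r^2$ with the \emph{same} constant uniformly, but that is fine since $r^2 \le r \cdot \mathrm{diam}$ is false on $\bbR^d$ — so instead I expect one takes $\tilde f_{\bar\kappa,2}$ linear near $0$ and $\ge \tilde C_{\bar\kappa,2} r^2$ only needs $\tilde f_{\bar\kappa,2}(r)/r^2 \to \infty$ as $r\to 0$, which a linear function satisfies trivially, and $\ge \tilde C_{\bar\kappa,2} r^2$ for large $r$ needs $\tilde f_{\bar\kappa,2}$ to \emph{not} be too concave. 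This is the crux: the concavity built into $f_{\bar\kappa,\tsigma}$ to get the contraction is in tension with a quadratic lower bound. I would resolve it by working on the region $\{r > R_1\}$ where $\kappa(r)R(R-R_0) \ge 4\tsigma^2$ so that $2\tsigma^2 f'' - r\kappa f' \le -\lambda f$ has slack, allowing $\tilde f_{\bar\kappa,2}$ to grow like $r^2$ there (choose $h$ to increase back up, or directly define $\tilde f_{\bar\kappa,2}$ piecewise: equal to $f_{\bar\kappa,\tsigma}$ modified to be convex-ish for $r$ large), then patch via a $C^1$ interpolation on a compact intermediate region where all quantities are bounded and the contraction inequality can be forced by taking $\tilde\lambda_{\bar\kappa,2}$ small enough. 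The main obstacle, and where I'd spend the most care, is making this patching produce a genuinely $C^1$ function with absolutely continuous derivative for which the differential inequality holds \emph{globally} with a single positive $\tilde\lambda_{\bar\kappa,2}$; once that ODE-construction lemma is in hand, It\^o's formula plus Gr\"onwall closes the argument exactly as in the other items.
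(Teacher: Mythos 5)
Your treatment of items \ref{item_2:contraction_coup_by_ref_interp} and \ref{item:final_TV_coup_by_ref_interp} matches the paper: the proofs are indeed verbatim those of \Cref{prop:contr_same_drift}, after replacing $\sigma_0$ by $\sigma_0/\sqrt{2}$ in the radial noise (the synchronous $B^2$ term cancels in the difference, and the $\bar\sigma$ correction enters through $\kappa_\beta$ exactly as before), which is what makes the $\sigma_0/\sqrt 2$-versions of $f_\kappa$, $\lambda_\kappa$, $C_\kappa$ and $q^\kappa$ the right objects. The paper also omits these proofs with a one-line remark.

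For item \ref{item:h_dominates_square} you correctly identify the key mechanism --- exploit that $\bar\kappa(r) \ge \bar\kappa_+ > 0$ beyond some $R_1$ so that the Lyapunov inequality has slack there, keep the $f_{\bar\kappa}$-like behaviour near $0$ to get $\tilde f_{\bar\kappa,2}(0)=0$, $\tilde f'_{\bar\kappa,2}(0)>0$, and graft on quadratic growth at infinity. But your proposed construction is left as a sketch, and your first suggestion (re-choosing the weight $h$ in $\int_0^r \phi\,h$ so it increases back up) would not cleanly preserve the Eberle differential inequality, whose proof depends on the specific form of $g_{\kappa,\tsigma}$. The paper instead uses an \emph{additive} correction: it sets $\tilde f_{\bar\kappa,2}(r)=f_{\bar\kappa}(r)+a_{\bar\kappa}\,f_R(r)$ with $f_R(r)=(r-R_1)_+^3/(1+\sigma_{0,\kappa}^2+r)$ and $a_{\bar\kappa}$ a small explicit constant. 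The function $f_R$ is $C^2$, vanishes identically on $[0,R_1]$ (so $\tilde f_{\bar\kappa,2}$ agrees with $f_{\bar\kappa}$ there, giving the required behaviour at the origin), and behaves like $r^2$ at infinity. On $\{r>R_1\}$ the confinement term $-\bar\kappa_+ a_{\bar\kappa}\,r f_R'(r)\sim -\bar\kappa_+ a_{\bar\kappa} r^2$ dominates the It\^o correction $\sim a_{\bar\kappa} r$; on the intermediate compact annulus the negative drift contributed by the $f_{\bar\kappa}$ part dominates once $a_{\bar\kappa}$ is taken small; and this is checked explicitly, case by case, rather than via a soft patching argument. Also, your parenthetical that one ``cannot have simultaneously $\tilde f'_{\bar\kappa,2}(0)>0$ and $\tilde f_{\bar\kappa,2}(r)\ge \tilde C_{\bar\kappa,2}r^2$ uniformly'' is confused: near $0$ a linear lower bound $cr$ already dominates $C_2 r^2$, so there is no tension at the origin --- the only obstruction is at $r\to\infty$ where $f_{\bar\kappa}$ has only linear growth, which the additive quadratic tail is precisely designed to cure. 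Your overall strategy is sound; what is missing is the concrete choice of $f_R$ that makes the global Lyapunov inequality verifiable without an ad hoc compactness/patching step.
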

Note that contraction w.r.t. distances which dominate (but are not equivalent to) $W_2$ have already been obtained using coupling by reflection in \cite{luo2016ExponentialConvergenceWasserstein}.
The proof of \Cref{prop:contr_same_drift_interp} is given in \Cref{sec:proof_contr_same_drift_interp}.

\subsection{Approximate coupling by reflection}\label{sec:delta_coup}

The third and last coupling we consider is an approximate coupling by
reflection. Again for a constant diffusion matrix this coupling has
often been studied in the literature, for example as an approximation
of the so called sticky coupling
\cite{eberle2019couplings,durmus2020elementary,eberle2019sticky}. The
interest in this object lies in the fact that it allows to couple two diffusion processes with two different drift functions $\beta$ and $\hat{\beta}$ satisfying \Cref{ass:coupling}.
More precisely, for an $\cF_0$-measurable random variable $\zeta=(\zeta_1,\zeta_2) \in \rset^{2d}$ consider for $\delta >0$,
\begin{equation}\label{eq:delta_coup_by_ref}
  \begin{aligned}
\De \Xdelta_t& = \beta_t(\Xdelta_t) \rmd t+\sigma_0 \rcd(r^{\delta}_s)\De B^1_t + \sigma_0  \scd(r^{\delta}_s) \De B^2_t + \bar{\sigma}(\Xdelta_t)  \De B^3_t  , \eqsp X_0 = \zeta_1,\\
\De \hat{\Xdelta}_t& = \beta_t(\hat{\Xdelta}_t) \rmd t+  \sigma_0 \rcd(r^{\delta}_s) \De \hat{B}^{1,\delta}_1 + \sigma_0  \scd(r^{\delta}_s)\De B^2_t+ \bar{\sigma}(\hat{\Xdelta}_t)  \De B^3_t , \eqsp \hat{X}_0 = \zeta_2,
\end{aligned}
\end{equation}
where
\begin{equation}
  \label{eq:def_mirror_brownian_delta_delta}
 \De\hat{B}^{1,\delta}_t= (\Idd-2 \,\bfe_t^{\delta} \cdot (\bfe^{\delta}_t)^{\transpose})\cdot \De B^1_t \eqsp, \quad \bfe_t^{\delta} = (X_t^{\delta}-\hat{X}_t^{\delta})/r_t^{\delta}\eqsp, \quad r^{\delta}_t = |X^\delta_t-\hat{X}^\delta_t| \eqsp.
\end{equation}
and $\rcd,\scd $ are non-negative globally Lipschitz functions such that
\begin{equation}\label{eq:properties_of_rc}
\rcd(u)^2 + \scd(u)^2 = 1\eqsp, \quad \rcd(u) = 0  \text{ for } u \leq \delta /2 \eqsp,\quad
\rcd(u) = 1 \text{ for } u \geq \delta \eqsp.
\end{equation}

As previously emphasized, the difference with respect to the previous constructions lies in the fact that we consider two different drift fields $\beta,\hat\beta$ that satisfy \Cref{ass:coupling} and that the reflection coefficient is a function of the distance between the processes, namely the function $\rcd$. In the sequel, we will employ this coupling to obtain stability estimates for the solutions of stochastic control problems with different objective functions and controlled dynamics.

\begin{remark}
Similarly to \eqref{eq:SDE_sigma}, under \Cref{ass:coupling}, there exists unique strong
solutions for \eqref{eq:delta_coup_by_ref} for any $\delta >0$ by
\cite[Corollary 2.6]{gyongy1996ExistenceStrongSolutions}.
We call this process ($\delta$-)approximate coupling by reflection. In addition,
defining for $t\geq 0$, $M_t^\delta := \int_0^t \sigma_0 \rcd(r^\delta_s) \De B^1_s + \int_0^t \sigma_0 \scd(r^\delta_s) \De B^2_s + \int_0^t \bar{\sigma}(X_s^\delta)  \De B^3_s, \quad
\bar{B}_t^\delta := \int_0^t \sigma^{-1}(X_s^\delta) \De M_s^\delta$,
we have that $\sequencetp{M^\delta}$ is a square integrable martingale and $\sequencetp{\bar{B}^\delta}$ is a standard Brownian motion, by Lévy's characterization. Similarly, setting
$\tilde{M}_t^\delta := \int_0^t \sigma_0 \rcd(r^\delta_s) \De \hat{B}^{1,\delta}_s + \int_0^t \sigma_0 \scd(r^\delta_s) \De B^2_s + \int_0^t \bar{\sigma}(\hat{X}_s^\delta)  \De B^3_s, \quad
\tilde{B}_t^\delta := \int_0^t \sigma^{-1}(\hat{X}_s^\delta) \De \tilde{M}_s^\delta$,
we have that $\sequencetp{\tilde{M}}$ is a square integrable martingale and $\sequencetp{\tilde{B}}$ is a Brownian motion.

Therefore, for any $\delta >0$, $\sequencetp{X^\delta}$ is a solution of \eqref{eq:SDE_sigma} and $\sequencetp{\hat{X}^\delta}$ also with $\hat{\beta}$ replaced by $\beta$.
\label{prop:delta_well_posed}
\end{remark}

In our next result, we bound in Wasserstein and total variation distances, the time marginals of the distributions of solutions of \eqref{eq:SDE_sigma} and solutions of \eqref{eq:SDE_sigma} with $\hat{\beta}$ replacing $\beta$. In this setting, in contrast to results of the previous sections, we do not get exponential bounds but non-vanishing  estimates because the drift field $\beta,\hat\beta$ do not necessarily coincide.
\begin{prop}\label{prop:contr_delta_coup}
  Assume \Cref{ass:coupling} and suppose that $\zeta$ has a finite first order moment. Let $T \geq 0$ be fixed.
  In addition, define $\const^{\beta_s}  = \sup_{x \in\rset^d} \norm{\beta_s(x)-\hat{\beta}_s(x)}$ and suppose that $\sup_{s \in \ccint{0,T}}\const^{\beta_s} < \plusinfty$.
Let
$(X_s^\delta,\hat{X}_s^\delta)_{s\geq 0}$ be a strong solution of  \eqref{eq:delta_coup_by_ref} for any $\delta >0$, and denote by $\mu_t$ and $\hat{\mu}_t$ the distribution of $X_t^\delta$ and $\hat{X}_t^\delta$ respectively for $t \geq 0$ (which do not depend on $\delta$ by \Cref{prop:delta_well_posed}).
 Let $\bar{\kappa} \in \msk$ such that for any $r >0$,  $\kappa_\beta(r) \geq \bar{\kappa} (r)$, and  recall  $f_{\bar\kappa},\lambda_{\bar\kappa}$ and $C_{\bar\kappa}$ are defined in \eqref{eq:def_f_kappa} and \eqref{def:lambda_kappa_C_kappa}

Then, the following holds.
\ben[(i), ref=(\roman*)]
\item\label{item:W_1_contr_delta_coup}
For any $t \in\ccint{0,T}$, we have for any $\delta >0$,
\begin{equation}
\bbE[f_{\bar{\kappa}}(|X_t^\delta-\hat{X}_t^\delta|)]\leq \exp(-\lambda_{\bar{\kappa}} t)\bbE[f_{\bar{\kappa}}(|X_0-\hat{X}_0|)] + \int_0^te^{-\lambda_{\bar{\kappa}}(t-s)}\const^{\delta\beta_s} \,\De s + \cO(\delta)\eqsp.
\end{equation}
where $\cO(\delta)$ denotes a function such that $|\cO(\delta)|\leq \const \delta$ for all $\delta>0$ and $\const$ a constant depending only on the coefficients in \Cref{ass:coupling}, $T$ and $(\const^{\delta\beta_s})_{s\in[0,T]}$.

As a corollary, it holds $W_{f_{\bar{\kappa}}}(\mu_t,\hat{\mu}_t)\leq e^{-\lambda_{\bar{\kappa}} t} W_{f_{\bar{\kappa}}}(\mu_0,\hat{\mu}_0)+\int_0^te^{-\lambda_{\bar{\kappa}}(t-s)}\const^{\delta\beta_s} \,\De s$.
\item \label{item:TV_contr_delta_coup} For all $0\leq t_0 < t\leq T$  we have for any $\delta >0$,
\begin{equation}
\|\mu_t-\hat\mu_t\|_{\TV}\leq \bbP[X_t^\delta\neq \hat X_t^\delta] \leq q^{\bar{\kappa}}_{t-t_0}\eqsp\bbE[f_{\bar\kappa}(|X^\delta_{t_0}-\hat{X}^\delta_{t_0}|)] + \frac{1}{\sqrt{2}} \left(\int_{t_0}^{t} (\const^{\delta\beta_s}) ^2 \De s \right)^{1/2} \eqsp,
\end{equation}
where $q^{\bar{\kappa}}_t$ is given by \eqref{q_kappa_t_def}. As a corollary, it holds
\begin{equation}
\|\mu_t-\hat\mu_t\|_{\TV} \leq q^{\bar{\kappa}}_{t-t_0}\eqsp\left(e^{-\lambda_{\bar{\kappa}} t_0} W_{f_{\bar{\kappa}}}(\mu_0,\hat{\mu}_0)+\int_0^{t_0}e^{-\lambda_{\bar{\kappa}}(t_0-s)}\const^{\delta\beta_s} \,\De s \right) + \frac{1}{\sqrt{2}} \left(\int_{t_0}^{t} (\const^{\delta\beta_s}) ^2 \De s \right)^{1/2} \eqsp,
\end{equation}
\end{enumerate}

\end{prop}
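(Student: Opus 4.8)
The plan is to run the Eberle‑type reflection‑coupling argument on the radial process $r^\delta_t := |X^\delta_t - \hat X^\delta_t|$, tracking the two features that distinguish \eqref{eq:delta_coup_by_ref} from the coupling of \Cref{prop:contr_same_drift}: the reflection is switched off near the diagonal (via the cutoff $\rcd$), and the two drifts differ by at most $\const^{\delta\beta_s}$ at time $s$. First, set $Z_t = X^\delta_t - \hat X^\delta_t$ and $\bfe_t = Z_t/|Z_t|$ on $\{r^\delta_t>0\}$. By \eqref{eq:def_mirror_brownian_delta_delta} the common $\sigma_0\scd(r^\delta_t)B^2$‑noise cancels exactly, so $\De Z_t = [\beta_t(X^\delta_t)-\hat\beta_t(\hat X^\delta_t)]\De t + 2\sigma_0\rcd(r^\delta_t)\bfe_t\bfe_t^\transpose\De B^1_t + [\bar\sigma(X^\delta_t)-\bar\sigma(\hat X^\delta_t)]\De B^3_t$. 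Applying Itô's formula to $r^\delta_t = |Z_t|$, writing $\beta_t(X^\delta_t)-\hat\beta_t(\hat X^\delta_t)= [\beta_t(X^\delta_t)-\beta_t(\hat X^\delta_t)] + [\beta_t(\hat X^\delta_t)-\hat\beta_t(\hat X^\delta_t)]$ and bounding the last bracket in norm by $\const^{\delta\beta_t}$, one sees that the positive $\bar\sigma$‑Itô correction is absorbed by the $-\tfrac12\|\bar\sigma(x)-\bar\sigma(\hat x)\|_{\mathrm{Fr}}^2/|x-\hat x|^2$ term built into the profile \eqref{eq:def_fun_kappa_inhom}, leaving a nonpositive remainder; using $\kappa_\beta\geq\bar\kappa$ this yields
\[
\De r^\delta_t \;\leq\; \big[-\bar\kappa(r^\delta_t)\,r^\delta_t + \const^{\delta\beta_t}\big]\,\De t + \De M_t,\qquad \De\langle M\rangle_t = \big[4\sigma_0^2\rcd(r^\delta_t)^2 + |\bfe_t^\transpose(\bar\sigma(X^\delta_t)-\bar\sigma(\hat X^\delta_t))|^2\big]\De t ,
\]
with no local‑time term at $r=0$ precisely because $\rcd$ vanishes there.

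Next, apply Itô's formula to $f_{\bar\kappa}(r^\delta_t)$. Concavity of $f_{\bar\kappa}$ (see \Cref{prop:Lyapunov_funct_coup_by_ref}) lets us drop the nonpositive $\bar\sigma$‑part of the quadratic variation, and we write $2\sigma_0^2\rcd(r)^2 f''_{\bar\kappa}(r) = 2\sigma_0^2 f''_{\bar\kappa}(r) - 2\sigma_0^2\scd(r)^2 f''_{\bar\kappa}(r)$. The term $2\sigma_0^2 f''_{\bar\kappa}(r) - r\bar\kappa(r)f'_{\bar\kappa}(r)$ is $\leq -\lambda_{\bar\kappa}f_{\bar\kappa}(r)$ by the differential inequality \eqref{eq:Eberle_funct_2} with $\tsigma=\sigma_0$; the leftover $-2\sigma_0^2\scd(r)^2 f''_{\bar\kappa}(r)\geq 0$ is supported on $\{r\leq\delta\}$ and, by the explicit formula for $f''_{\bar\kappa}$ together with $f'_{\bar\kappa}\leq 1$, $\Phi_{\bar\kappa,\sigma_0}(r)\leq r$ and boundedness of $r\mapsto r(\bar\kappa(r))^-$ on $[0,\delta]$, is $\cO(\delta)$; finally $\const^{\delta\beta_t}f'_{\bar\kappa}(r^\delta_t)\leq\const^{\delta\beta_t}$. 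After a routine localisation of the martingale part and taking expectations, $\tfrac{\De}{\De t}\bbE[f_{\bar\kappa}(r^\delta_t)] \leq -\lambda_{\bar\kappa}\bbE[f_{\bar\kappa}(r^\delta_t)] + \const^{\delta\beta_t} + \cO(\delta)$, and Grönwall's lemma gives item~\ref{item:W_1_contr_delta_coup}. Since $\mu_t,\hat\mu_t$ are the ($\delta$‑independent, by \Cref{prop:delta_well_posed}) marginals of the coupling $(X^\delta_t,\hat X^\delta_t)$, the Wasserstein corollary follows by optimising over the initial coupling $\zeta$ and letting $\delta\to0$.

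For item~\ref{item:TV_contr_delta_coup}, the first inequality is the coupling inequality. For the second, restart at time $t_0$ and insert the law $\nu_t$ at time $t$ of the solution of \eqref{eq:SDE_sigma} with drift $\beta$ started from $\hat\mu_{t_0}$ at time $t_0$: $\|\mu_t-\hat\mu_t\|_{\TV}\leq\|\mu_t-\nu_t\|_{\TV}+\|\nu_t-\hat\mu_t\|_{\TV}$. In the first term both laws solve \eqref{eq:SDE_sigma} with the \emph{same} drift and initial data $\mu_{t_0},\hat\mu_{t_0}$, so coupling them by reflection (\Cref{prop:contr_same_drift}-\ref{item:final_TV_coup_by_ref} with $\alpha\equiv0$, time‑shifted by $t_0$) gives $\|\mu_t-\nu_t\|_{\TV}\leq q^{\bar\kappa}_{t-t_0}W_{f_{\bar\kappa}}(\mu_{t_0},\hat\mu_{t_0})\leq q^{\bar\kappa}_{t-t_0}\bbE[f_{\bar\kappa}(|X^\delta_{t_0}-\hat X^\delta_{t_0}|)]$, the last step because $(X^\delta_{t_0},\hat X^\delta_{t_0})$ is a coupling of $\mu_{t_0},\hat\mu_{t_0}$. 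In the second term the two laws start from the same $\hat\mu_{t_0}$ but are transported by drifts $\beta$ and $\hat\beta$ on $[t_0,t]$; by Girsanov's theorem the relative entropy of the two path laws is $\tfrac12\bbE\int_{t_0}^t|\sigma^{-1}(\cdot)(\beta_s-\hat\beta_s)|^2\De s$, and Pinsker's inequality with the definition of $\const^{\delta\beta_s}$ and the ellipticity of $\sigma$ bound $\|\nu_t-\hat\mu_t\|_{\TV}$ by $\tfrac1{\sqrt2}\big(\int_{t_0}^t(\const^{\delta\beta_s})^2\,\De s\big)^{1/2}$. Combining these two bounds proves item~\ref{item:TV_contr_delta_coup}, and substituting the bound of item~\ref{item:W_1_contr_delta_coup} (with $t_0$ in place of $t$, and $\delta\to0$) for $\bbE[f_{\bar\kappa}(|X^\delta_{t_0}-\hat X^\delta_{t_0}|)]$ yields the last corollary.

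The main obstacle is the passage from the no‑cutoff estimate to the $\delta$‑approximate one: one has to verify that switching the reflection off on $\{r\leq\delta\}$ costs only $\cO(\delta)$ and does not damage the exponential rate $\lambda_{\bar\kappa}$ inherited from \Cref{prop:contr_same_drift}. This rests on the concavity of $f_{\bar\kappa}$, so that the synchronous part of the noise helps rather than hurts, and on the explicit form of $f''_{\bar\kappa}$; one must also keep $r(\bar\kappa(r))^-$ bounded near $0$, which holds under the standing assumptions — and in the applications, since there the relevant $\bar\kappa$ is a lower bound for the monotonicity profile of a drift $b+\alpha$ with $\alpha$ bounded and locally Lipschitz. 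The remaining ingredients (the Grönwall step, \Cref{prop:contr_same_drift}, and the elementary Girsanov--Pinsker estimate) are routine.
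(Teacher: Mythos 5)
The proof of item~\ref{item:TV_contr_delta_coup} is essentially identical to the paper's (triangular inequality with the intermediate law $\hat\mu_{t_0}P_{t_0,t}$, reflection coupling for the first term, Girsanov--Pinsker for the second; your inclusion of $\sigma^{-1}$ in the Girsanov density is actually the correct form, which the paper's text suppresses). The SDE derivation for $r^\delta_t$ and the absorption of the $\bar\sigma$ Itô correction into the monotonicity profile are also fine.

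The gap is in how you control the region $\{r\leq\delta\}$ in item~\ref{item:W_1_contr_delta_coup}. You apply the Eberle inequality \eqref{eq:Eberle_funct_2} without the $\rcd^2$ weight and thereby create the remainder $-2\sigma_0^2\scd(r)^2 f''_{\bar\kappa}(r)\geq 0$. You then claim this is $\cO(\delta)$ because $r\mapsto r(\bar\kappa(r))^-$ is bounded on $[0,\delta]$. But from the explicit formula, for $r<R_1$,
\[
-2\sigma_0^2\,\scd(r)^2 f''_{\bar\kappa}(r)\;=\;\scd(r)^2\Big(r(\bar\kappa(r))^-\,\phi_{\bar\kappa,\sigma_0}(r)\,g_{\bar\kappa,\sigma_0}(r)\;+\;\tfrac{\sigma_0^2}{\mathcal Z_{\bar\kappa,\sigma_0}}\,\Phi_{\bar\kappa,\sigma_0}(r)\Big),
\]
and while the second piece is $\cO(\delta)$, the first is only $\cO\!\big(\sup_{r\leq\delta}r(\bar\kappa(r))^-\big)$, which is $\cO(\delta)$ precisely when $\bar\kappa$ is \emph{bounded below} near $0$ --- boundedness of $r(\bar\kappa(r))^-$ alone only gives $\cO(1)$. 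The profiles $\bar\kappa$ actually fed into this proposition (e.g.\ $\bar\kappa(r)=\kappa_b(r)-2\const^{u}/r$ from \eqref{eq:kappa_2}) satisfy $r(\bar\kappa(r))^-\to 2\const^u>0$ as $r\to 0$, hence $f''_{\bar\kappa}(0^+)=-\const^u/\sigma_0^2\neq 0$ and your remainder is $\cO(1)$, not $\cO(\delta)$, not even $o(1)$; the Wasserstein corollary therefore does not follow. This is exactly the regime the paper is careful to cover, as spelled out in the remark following the proposition.

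The paper's proof sidesteps this entirely by introducing the $\rcd^2$ weight on the \emph{drift} side as well, via the split $1=\rcd^2+(1-\rcd^2)$, and then applying \eqref{eq:Eberle_funct_2} to the $\rcd^2$-weighted combination $\rcd^2\bigl(2\sigma_0^2 f''_{\bar\kappa}-r\bar\kappa(r)f'_{\bar\kappa}\bigr)\leq -\rcd^2\lambda_{\bar\kappa}f_{\bar\kappa}\leq -\lambda_{\bar\kappa}f_{\bar\kappa}+\lambda_{\bar\kappa}\delta$; the $(1-\rcd^2)$ part never touches $f''_{\bar\kappa}$ nor the profile $\bar\kappa$, and is instead controlled pointwise by $(1-\rcd^2)\bigl(|\beta_s(X^\delta_s)-\beta_s(\hat X^\delta_s)|+\tfrac{L_\sigma^2}{2}r^\delta_s\bigr)$, which vanishes as $\delta\to 0$ by local uniform continuity of $\beta_s$. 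You should adopt this $\rcd^2/(1-\rcd^2)$ decomposition rather than the $f''=\rcd^2 f''+\scd^2 f''$ split: it is what makes the statement valid for every $\bar\kappa\in\msk$ with $\kappa_\beta\geq\bar\kappa$, in particular those diverging to $-\infty$ at the origin.
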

The proof is given in \Cref{sec:proof_contr_delta_coup}.

\begin{remark}
Let us remark that the existing results on $\delta$-approximate coupling by reflection or sticky coupling, see e.g. \cite{durmus2020elementary,eberle2016reflection, eberle2019sticky}, all require uniform continuity of the drift field in the form of 
the assumption $\lim_{r \rightarrow 0} r\kappa_\beta(r)^- = 0$. We managed to drop this requirement in \Cref{prop:contr_delta_coup}.
\end{remark}

\section{Estimates for finite-dimensional stochastic control problems}\label{sec:fin_dim_cont}
The goal of this section is to derive gradient bounds, hessian bounds and stability estimates for a class of classical stochastic control problems that we now briefly describe. To this aim, we recall (see e.g. \cite[Ch. IV]{fleming2006controlled}) that $(\Omega,\cF,(\cF_s)_{s\geq0},\bbP,(B_s)_{s\geq0})$ is a reference probability system if $(\Omega,\cF,\bbP)$ is a probability space equipped with an augmented filtration $(\cF_s)_{s\geq0}\subseteq \cF$ and $(B_s)_{s\geq0}$ is an $\cF_s$-adapted standard $d$-dimensional Brownian motion. Given a reference probability system, we consider a stochastic control problem in which the dynamics is given by a diffusion process with controlled drift and non-constant diffusion coefficient and  the cost function consists of a dynamic part, encoded in the running cost $\ell_s$, and of a terminal cost $g$. More precisely, we define the set of admissible controls $\cU_{t,T}$ as the set of all $\cF_s$-adapted processes such that 
\bes
\bbE[\int_t^T|u_s|^2\De s]<+\infty.
\ees
The stochastic control problem we study is then defined by
\begin{equation}\label{eq:classical_control_problem}
\varphi^{T,g}_t(x)=\inf_{u\in\cU_{t,T}} J_{t,x}^{T,g}(u),  
\ee
where for any admissible control $u$ the corresponding dynamics is
\be
 \De X^{u}_s= [b_s(X^{u}_s) + u_s]\De s +\sigma(X^{u}_s)\cdot\De B_s, \quad X^u_t=x,
\end{equation}
and the cost functional is given by
\bes J_{t,x}^{T,g}(u)=\bbE[\int_t^T\ell_s(X^u_s,u_s)\,\De s + g(X^u_T)].\ees
In the above, we denoted $\varphi^{T,g}_t(x)$ the optimal value of the control problem, also known as value function, when viewed as function of $(t,x)$.
Let us now list the assumptions we shall impose on the coefficients. Whereas the assumptions on the drift and on the convexity of the running cost in the control variable do not change throughout the section, the requirements we impose on the behavior of the cost with respect to variations in the space variable changes depending on the context.
\begin{assumption}\label{ass:drift_field_fin_dim_control}
We impose the following conditions on $b(\cdot)$ and  $\sigma(\cdot)$.
\ben[(i)]
\item The diffusion coefficient $\sigma: \bbR^d \rightarrow \bbR^{d \times d}$ is Lipschitz continuous and uniformly elliptic, \ie, there is $\Sigma \geq \sigma_0>0$ and $\const_x^\sigma$ such that
\begin{align}
     2\Sigma^2 \mathrm{I} \succeq \sigma\sigma^{\top}(x) \succeq 2\sigma_0^2 \mathrm{I},\quad \|\sigma(x)-\sigma(\hat{x})\|_{\rm{Fr}}\leq \const^{\sigma}_x |x-\hat{x}| \quad \forall x,\hat{x}\in\bbR^d.
\end{align}
\item  We assume that the drift field $b:[0,T]\times\bbR^d\rightarrow\bbR^d$ is continuous, locally Lipschitz, of polynomial growth and that $[0,T]\times\bbR_+\ni(s,r)\mapsto\kappa_{b_s}(r)$ is bounded below by a possibly negative constant. Moreover, there exists $\kappa_{b}\in \msk$, where $\msk$ is defined in \eqref{eq:def_msk}, such that
\bes
\kappa_{b_s}(\cdot) \geq \kappa_{b}(\cdot) \quad \forall s\in[t,T] \eqsp .
\ees
\een
\end{assumption}

Note that \Cref{ass:drift_field_fin_dim_control} ensures that the SDE 
\bes
\De X^u_s=[b(X^u_s)+u_s]\De s + \De B_s, \quad X_t=x,
\ees
admits a pathwise unique solution for all admissible control $u.$

\begin{assumption}\label{ass-coercivity}
The running cost $\ell:[0,T]\times\bbR^d\times\bbR^d\longrightarrow\bbR$ is locally Lipschitz in the space variable, locally $\beta-$H\"older in time and twice differentiable in the control variable. Moreover, the following holds.
\ben[(i)] 
\item There exists $\rho^{\ell}_{uu}>0$ such that for all $s\in[t,T]$
\bes
\partial_{uu}\ell_s(x,u) \succeq \rho^{\ell}_{uu}\mathrm{I} \quad \forall x,u\in\bbRD.
\ees
\item There exists a finite constant  $\const^{\ell(\cdot,0)}_u$ such that for all $s\in[t,T]$
\bes
\|\partial_u \ell_s(\cdot,0)\| \leq \const^{\ell(\cdot,0)}_{u}.
\ees
\een
\end{assumption}
We now present the two alternative settings we consider for the space regularity and growth of the cost functions.

\begin{assumption}\label{ass:mild-reg}
There exist finite constants $(\const^{\ell_s}_x)_{s\in[t,T]},\const^{g}_x$ uniformly bounded in $s\in[0,T]$ and such that
\bes
\|\ell_s(\cdot,u)\|_{\Lip}  \leq \const^{\ell_s}_x,\quad
\lip{g} \leq \const^{g}_x \quad\forall u\in\bbR^d,s\in[0,T].
\ees

\end{assumption}
\begin{assumption}\label{ass:low-reg}
There exist finite constants $(\const^{\ell_s})_{s\in[t,T]},\const^{g}_x$ uniformly bounded in $s\in[0,T]$ and such that
\bes
\|\ell_s(\cdot,u) - \ell_s(0,u)\|_{\infty} \leq \const^{\ell_s}, \quad\|g\|_{\Lip}\leq \const^g_x \quad\forall u\in\bbR^d,s\in[0,T].
\ees
\end{assumption}
In going from \Cref{ass:mild-reg} to \Cref{ass:low-reg} there is a tradeoff between growth and regularity. While the former allows for unbounded costs, the latter requires less smoothness.

\subsection{Lipschitz estimates and Hamilton-Jacobi-Bellman equations}\label{sec:fin_dim_Lip_est}
The goal of this section is twofold. First, we obtain uniform in time Lipschitz estimates for the value function using the coupling constructions developed at \Cref{sec:couplings}. Second, we show that the value function is the unique classical solution of the Hamilton-Jacobi-Bellman equation for \eqref{eq:classical_control_problem} and prove a version of the verification Theorem adapted to the current setup. This is done by slightly adapting classical results. To state the next proposition, we introduce the following function space
\bes\label{eq:def_calX}
\cX= \{ \varphi \in {C([0,T] \times \bbR^{d})\cap C^{1,2,\beta}_{\mathrm{loc}}((0,T) \times \bbR^{d})} : \,\, \sup_{t\in[0,T]}\|\varphi_t\|_\Lip<+\infty \}
\ees

\begin{prop}\label{prop:properties}
Assume \Cref{ass:drift_field_fin_dim_control}, \Cref{ass-coercivity} and one among \Cref{ass:mild-reg} or \Cref{ass:low-reg}.
\begin{enumerate}[label=(\roman*)]
\item\label{item:fin_dim_HJB} The value function  $(t,x)\mapsto \varphi^{T,g}_{t}(x)$ defined by \eqref{eq:classical_control_problem}  is the unique classical solution in $\cX$ of the HJB equation
\begin{equation}\label{eq:fin_dim_HJB}
\bec\partial_s\varphi_s(x) + \frac{1}{2}\tr\left(\sigma^{\top}\eqsp\nabla^2\varphi_s\eqsp\sigma\right)(x) +  h_s(x,\nabla\varphi_s(x))=0, \\
\varphi_T(x)= g(x)
\eec
\end{equation}

where the Hamiltonian $h_s$ is given by
\begin{equation}\label{eq:Hamiltonian_fin_dim}
h_s(x,p) = \inf_{u\in\bbR^d} \ell_s(x,u)+(b_s(x)+u)\cdot p 
\end{equation}
\item\label{item:fin_dim_OptContr} The map 
\bes
    (s,x) \mapsto w_s(x,\nabla \varphi^{T,g}_s(x))\eqsp, 
\ees
where $w_s$ is defined by
\begin{equation}\label{eq:optimal_policy_def}
w_s(x,p)=\arg\min_{u\in\bbR^d}\ell_s(x,u)+(b_s(x)+u)\cdot p,
\end{equation}
is an optimal Markov control policy in the sense that if for any given $(t,x) \in\rset_+\times \rset^{d}$ we define the process $(X_s)_{s\in[t,T]}$ as the unique strong solution of the SDE
\begin{equation}\label{eq:fin_dim_OptDyn}
    \De X_s =[b_s(X_s) + w_s(X_{s},\nabla \varphi^{T,g}_s(X_{s}))] \De s  + \sigma(X_s)\cdot \De B_s \eqsp,\quad X_t=x
\end{equation}
and we set
\begin{equation}
    u_{s}:= w_s(X_{s},\nabla \varphi^{T,g}_s(X_{s}))\eqsp, 
\end{equation}
then $u$ is an optimal control, i.e. a minimizer in \eqref{eq:classical_control_problem}
and the process $(X_s)_{s\in[t,T]}$ coincides with $(X^{u}_s)_{s\in[t,T]}$ a.s..
\item\label{item:fin_dim_system}  The value function $\varphi^{T,g}$ does not depend on the reference probability system. 

\end{enumerate}
\end{prop}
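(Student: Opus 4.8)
The argument follows the classical route — dynamic programming, then verification — the two points requiring the machinery of \Cref{sec:couplings} being the uniform‑in‑time gradient bound placing $\varphi^{T,g}$ in $\cX$, and the well‑posedness and moment control for the closed‑loop dynamics on the non‑compact space $\bbR^d$ with a merely one‑sided Lipschitz drift. \emph{Step 1 (uniform Lipschitz estimate).} Fix $t$ and points $x,\hat x$. By the standard reduction to feedback controls it suffices to take $u_s=\alpha_s(X_s)$ $\varepsilon$-optimal for $\varphi^{T,g}_t(x)$, with $X$ the associated state started from $x$; couple $X$ with the process $\hat X$ started from $\hat x$ and driven by the \emph{same} control $\alpha$ through the controlled coupling by reflection \eqref{eq:coup_by_ref_contraction_2}--\eqref{eq:coup_by_ref_contraction_2_2} with $\beta=b$. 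As $\hat X$ is admissible for $\varphi^{T,g}_t(\hat x)$,
\[
\varphi^{T,g}_t(\hat x)-\varphi^{T,g}_t(x)\leq\varepsilon+\bbE\Big[\int_t^T\!\big(\ell_s(\hat X_s,u_s)-\ell_s(X_s,u_s)\big)\De s+g(\hat X_T)-g(X_T)\Big],
\]
and symmetrically in $x,\hat x$. Under \Cref{ass:mild-reg} each integrand is $\leq\const^{\ell_s}_x|X_s-\hat X_s|$ and the terminal term $\leq\const^g_x|X_T-\hat X_T|$; the Wasserstein contraction of \Cref{prop:contr_same_drift} together with $C_{\kappa_b}r\leq f_{\kappa_b}(r)\leq r$ gives $\bbE|X_s-\hat X_s|\leq C_{\kappa_b}^{-1}e^{-\lambda_{\kappa_b}(s-t)}|x-\hat x|$, so $\sup_t\|\varphi^{T,g}_t\|_\Lip<+\infty$ with a bound independent of $T$. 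Under \Cref{ass:low-reg} one bounds the running‑cost differences instead by the oscillation $2\const^{\ell_s}\,\bbP[X_s\neq\hat X_s]$ (keeping the Lipschitz terminal term) and invokes the total‑variation bound of \Cref{prop:contr_same_drift}: since $\int_0^\infty q^{\kappa_b}_s\De s<+\infty$ by \eqref{q_kappa_t_def}, the Lipschitz constant is again uniform in $t,T$. Joint continuity and local space/time Hölder regularity of $\varphi^{T,g}$ follow from \Cref{prop:Holder_in_time} and interior parabolic estimates.

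\textbf{Step 2 (HJB equation and uniqueness in $\cX$).} The value function satisfies the dynamic programming principle by standard measurable‑selection arguments, hence is a locally Lipschitz viscosity solution of \eqref{eq:fin_dim_HJB}. Since $\nabla\varphi^{T,g}$ is bounded by Step~1, only the values of $h_s(x,p)$ for $p$ in a fixed ball matter; there $h_s$ is bounded, continuous and locally Lipschitz in $x$ (by \Cref{ass-coercivity} and local Lipschitzness of $b_s$), while $\sigma\sigma^\top$ is uniformly elliptic and locally Lipschitz. Freezing the gradient in $h$ and applying interior parabolic Schauder estimates on space‑time cylinders, a bootstrap upgrades the viscosity solution to $C^{1,2,\beta}_{\mathrm{loc}}$, so $\varphi^{T,g}\in\cX$ is a classical solution. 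For uniqueness: for $\psi\in\cX$ solving \eqref{eq:fin_dim_HJB}, Itô's formula along any admissible dynamics $X^v$ gives, after taking expectations (the stochastic integral has bounded integrand on a finite interval, hence is a martingale), $\psi_t(x)\leq J^{T,g}_{t,x}(v)$ for all $v\in\cU_{t,T}$, with equality for the feedback $v_s=w_s(X_s,\nabla\psi_s(X_s))$; hence $\psi=\varphi^{T,g}$.

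\textbf{Step 3 (optimal feedback and system‑independence).} Strong convexity $\partial_{uu}\ell_s\succeq\rho^\ell_{uu}\mathrm I$ makes $w_s(x,p)$ in \eqref{eq:optimal_policy_def} single‑valued with $|w_s(x,p)|\leq(\const^{\ell(\cdot,0)}_u+|p|)/\rho^\ell_{uu}$; since $\nabla\varphi^{T,g}$ is bounded, the closed‑loop drift $b_s+w_s(\cdot,\nabla\varphi^{T,g}_s)$ is locally Lipschitz, of polynomial growth, and confining up to a bounded perturbation, so \eqref{eq:fin_dim_OptDyn} is strongly well posed with moments bounded uniformly in $T$ (Lyapunov argument using $\kappa_b\in\msk$). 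Applying Itô to $s\mapsto\varphi^{T,g}_s(X_s)$ and using that $w_s$ attains the infimum in \eqref{eq:Hamiltonian_fin_dim} yields $\varphi^{T,g}_t(x)=J^{T,g}_{t,x}(u)$ for $u_s=w_s(X_s,\nabla\varphi^{T,g}_s(X_s))$, while repeating the computation with the pointwise inequality defining $h_s$ gives $\varphi^{T,g}_t(x)\leq J^{T,g}_{t,x}(v)$ for any admissible $v$; this proves \ref{item:fin_dim_OptContr}. Then \ref{item:fin_dim_system} is immediate, since on every reference probability system $\varphi^{T,g}$ coincides, by the previous steps, with the unique $\cX$-solution of \eqref{eq:fin_dim_HJB}, an object defined without reference to the system.

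\textbf{Main obstacle.} The genuinely delicate point is the low‑regularity case of Step~1: regularity is not inherited from the data but \emph{produced} by the smoothing of the dynamics, quantified by the decay $q^{\kappa_b}_{s-t}\sim(s-t)^{-1/2}$ for small times and $\sim e^{-\lambda_{\kappa_b}(s-t)}$ for large times, and it is precisely the time‑integrability of $s\mapsto q^{\kappa_b}_s$ that keeps the Lipschitz constant uniform in $T$ (and produces the constant $\const^u$ in \eqref{eq:bar_kappa_intro}); matching these integrability constants and making the coupling construction compatible with general controls on the non‑compact state space is where the care is needed. The remaining difficulty is the routine one of turning local parabolic regularity plus a global gradient bound into a global classical solution on $\bbR^d$ and of justifying the Itô/martingale steps under only one‑sided Lipschitz, polynomially growing $b$.
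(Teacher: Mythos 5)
Your Step~1 is circular as stated. To invoke the controlled coupling by reflection of \Cref{prop:contr_same_drift} you need the control $\alpha$ to be a \emph{bounded, locally Lipschitz Markovian feedback} (these are hypotheses of that proposition, and the construction \eqref{eq:coup_by_ref_contraction_2}--\eqref{eq:coup_by_ref_contraction_2_2} only makes sense with $\alpha$ of that form). You take $\alpha$ to be an ``$\varepsilon$-optimal feedback,'' appealing to a ``standard reduction to feedback controls''; but \emph{a priori} $\varepsilon$-optimal controls in $\cU_{t,T}$ are open-loop processes, an $\varepsilon$-optimal \emph{feedback} need not be bounded or locally Lipschitz, and the only feedback with these properties one can realistically point to is the optimal policy $w_s(x,\nabla\varphi^{T,g}_s(x))$ — whose existence and regularity are precisely what item \ref{item:fin_dim_OptContr} of the proposition is asserting. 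The paper breaks this circle by first running a \emph{synchronous} coupling (one noise, the same open-loop $\varepsilon$-optimal control for both trajectories), which requires no boundedness or Markov structure on the control and yields the crude bound $\bbE[|X^u_s-\hat X^u_s|]\le e^{-\rho_b(s-t)}|x-\hat x|$ with $\rho_b$ the global one-sided Lipschitz constant (possibly negative); this gives a $T$-dependent Lipschitz bound \eqref{eq:stupid_grad_est} which is enough to place $\varphi^{T,g}$ in $\cX$. The sharp, $T$-uniform bound via controlled reflection coupling is then established \emph{afterwards}, in \Cref{lemma:gradient_estimate_linearized problem}, once \Cref{prop:properties} has delivered the bounded, locally Lipschitz optimal feedback.

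Two further deviations are worth flagging. First, for the classical regularity in Step~2 the paper does not pass through the DPP/viscosity/Schauder-bootstrap route you sketch; it truncates the control set to $|u|\le K$ so that the truncated Hamiltonian $h^K$ in \eqref{eq:trunc_Hamiltonian} is a well-behaved (Lipschitz in $p$) Hamiltonian, cites \cite{rubio2011ExistenceUniquenessCauchy} for classical solvability on $\bbR^d$ with polynomially growing, one-sided Lipschitz coefficients, and then observes that for $K$ large enough (thanks to \Cref{lem:Est_Contr_Hamiltonian} and the crude gradient bound) the truncated and untruncated Hamiltonians coincide along the solution. Your route is plausible but would require you to actually set up and verify the viscosity comparison and interior Schauder bootstrap on the unbounded domain under polynomial growth — none of which is free. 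Second, you dispose of the low-regularity case (\Cref{ass:low-reg}) in a parenthetical: the paper instead mollifies $\ell$ in the space variable, proves the $W_f$/TV bound for the regularized problem (which now falls under \Cref{ass:mild-reg}), and passes to the limit using uniform control bounds and moment estimates (\eqref{eq:bound_vareps_sup_t}--\eqref{eq:9} and \Cref{prop_existence_moment_estimate}). Replacing that regularization step by the bare TV-coupling argument as you do leaves the existence of a classical solution unjustified in the low-regularity regime.

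Your verification argument in Step~2 and the existence/verification for the closed-loop SDE in Step~3 do track the paper, and your final observation on system-independence matches \ref{item:fin_dim_system}.
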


In the next lemma we show uniform in time Lipschitz estimates on the value function. Instead of the classical Lipschitz norm we shall often use an equivalent modification defined in terms of a concave function $f$, taken to be equivalent to the identity.
\bes\label{eq:f_Lip_norm}
\|\varphi\|_{f}:=\sup_{x\neq\hat{x}}\frac{|\varphi(x)-\varphi(\hat{x})|} {f(|x-\hat{x}|)}.
\ees
The reason why we introduce these norms is that they are naturally associated with the twisted metric $W_f$ used to express the strict contractivity of coupling by reflection.
\begin{lemma}\label{lemma:gradient_estimate_linearized problem}
Let \Cref{ass:drift_field_fin_dim_control}, \Cref{ass-coercivity}
hold and $0 \leq t\leq T$.
\begin{enumerate}[label=(\roman*)]
\item\label{item:grad_est_Lip} If \Cref{ass:mild-reg} holds, then
\begin{equation}\label{eq:grad_est_high_reg}
\begin{split}
\|\varphi_t^{T,g}\|_{f_{\kappa_{b}}}=: \const^{\varphi_t}_x&\leq \int_{t}^T   \frac{\const^{\ell_s}_x}{C_{\kappa_{b}}} e^{-\lambda_{\kappa_{b}} (s-t)} \De s+ \| g\|_{f_{\kappa_{b}}}e^{-\lambda_{\kappa_{b}}(T-t)} , \\\|w_t(\cdot,\nabla\varphi^{T,g}_t(\cdot))\|_\infty=:\const^{u_t}&\leq \frac{\const^{\varphi_t}_x+\const^{\ell(\cdot,0)}}{\rho^{\ell}_{uu}},
\end{split}
\end{equation}
where $C_{\kappa_b},\lambda_{\kappa_b}$ have been defined at \Cref{prop:Lyapunov_funct_coup_by_ref}.

\item\label{item:grad_est_bdd+Lip} If \Cref{ass:low-reg} holds, then
\begin{equation}\label{eq:grad_est_mild_reg}
\begin{split}
\|\varphi^{T,g}_t\|_{f_{\kappa_{b}}}=\const^{\varphi_t}_x&\leq \int_t^{T} 2\const^{\ell_s}q^{\kappa_{b}}_{s-t}\eqsp\De s+ \| g\|_{f_{\kappa_b}}e^{-\lambda_{\kappa_b}(T-t)}  \\
\|w_t(\cdot,\nabla\varphi^{T,g}_t(\cdot))\|_\infty=\const^{u_t}&\leq \frac{\const^{\varphi_t}_x+\const^{\ell(\cdot,0)}}{\rho^{\ell}_{uu}}
\end{split}
\end{equation}
where $q^{\kappa_{b}}_t$ has been defined at \eqref{q_kappa_t_def}. 

\end{enumerate} 
\end{lemma}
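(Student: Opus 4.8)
The plan is to obtain the estimate by combining the verification statement of \Cref{prop:properties} with the contractivity of the controlled coupling by reflection from \Cref{prop:contr_same_drift}. Write $\varphi=\varphi^{T,g}$, fix $t\in[0,T]$ and $x,\hat x\in\bbR^d$. Since the $f_{\kappa_b}$-seminorm only depends on $|\varphi_t(x)-\varphi_t(\hat x)|$, it suffices to bound $\varphi_t(\hat x)-\varphi_t(x)$, the reverse direction following by exchanging $x$ and $\hat x$.

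First I would invoke \Cref{prop:properties} to get that $\varphi\in\cX$ (in particular $\sup_{s\in[0,T]}\|\varphi_s\|_\Lip<+\infty$), that $\varphi$ does not depend on the reference probability system, and that along the optimal trajectory $(X_s)_{s\in[t,T]}$ of the problem started at $(t,x)$ the feedback control $u_s:=w_s(X_s,\nabla\varphi_s(X_s))$ is optimal, so that $\varphi_t(x)=J^{T,g}_{t,x}(u)$. Differentiating the definition of $w_s$ gives $\partial_u\ell_s(x,w_s(x,p))+p=0$, and together with the $\rho^{\ell}_{uu}$-strong convexity of $\ell_s(x,\cdot)$ this yields the pointwise bound $|w_s(x,p)|\le(|p|+\|\partial_u\ell_s(\cdot,0)\|_\infty)/\rho^{\ell}_{uu}$. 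Since $f'_{\kappa_b}(0)=1$ implies $|\nabla\varphi_s(x)|\le\|\varphi_s\|_{f_{\kappa_b}}$, evaluating this at $p=\nabla\varphi_t(x)$ is precisely the second inequality claimed in each item, once the first is established; it also shows that $\alpha_s(\cdot):=w_s(\cdot,\nabla\varphi_s(\cdot))$ is bounded, and it is locally Lipschitz because $w_s$ is (implicit function theorem, using the regularity of $\ell$) and $\nabla\varphi$ is locally Lipschitz in space (as $\varphi\in C^{1,2,\beta}_{\mathrm{loc}}$).

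Next I would run the controlled coupling by reflection \eqref{eq:coup_by_ref_contraction_2}--\eqref{eq:coup_by_ref_contraction_2_2} with $\beta=b$ and this $\alpha$, started at time $t$ from the deterministic datum $\zeta=(x,\hat x)$. Since \Cref{ass:drift_field_fin_dim_control} guarantees that $\beta=b$ satisfies \Cref{ass:coupling} with $\bar\kappa=\kappa_b\in\msk$, \Cref{prop:contr_same_drift} (applied to the time-shifted system, legitimate because $\kappa_{b_s}\ge\kappa_b$ for every $s$) gives, for all $s\ge t$,
\[
\bbE[f_{\kappa_b}(|X_s-\hat X_s|)]\le e^{-\lambda_{\kappa_b}(s-t)}f_{\kappa_b}(|x-\hat x|),\qquad
\bbP[X_s\neq\hat X_s]\le q^{\kappa_b}_{s-t}\,f_{\kappa_b}(|x-\hat x|),
\]
where $\hat X$ solves the dynamics of the problem started at $(t,\hat x)$ driven by the control $\hat u_s=\alpha_s(X_s)=u_s$, which is adapted and bounded, hence in $\cU_{t,T}$, on the reference system furnished by the martingale representation discussed after \eqref{eq:coup_by_ref_contraction_2_2}. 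Optimality of $u$ and admissibility of $\hat u$ then give
\[
\varphi_t(\hat x)-\varphi_t(x)\le J^{T,g}_{t,\hat x}(\hat u)-J^{T,g}_{t,x}(u)
=\bbE\Big[\int_t^T\big(\ell_s(\hat X_s,u_s)-\ell_s(X_s,u_s)\big)\De s+g(\hat X_T)-g(X_T)\Big],
\]
where we used that the control enters both running costs at the same value $u_s=\hat u_s$.

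It remains to bound the right-hand side. Under \Cref{ass:mild-reg} I would use $|\ell_s(\hat X_s,u_s)-\ell_s(X_s,u_s)|\le\const^{\ell_s}_x|X_s-\hat X_s|\le(\const^{\ell_s}_x/C_{\kappa_b})f_{\kappa_b}(|X_s-\hat X_s|)$ (from $f_{\kappa_b}(r)\ge C_{\kappa_b}r$) and $|g(\hat X_T)-g(X_T)|\le\|g\|_{f_{\kappa_b}}f_{\kappa_b}(|X_T-\hat X_T|)$, then take expectations and insert the first contraction bound, obtaining item \ref{item:grad_est_Lip}. Under \Cref{ass:low-reg} the running cost is only bounded in oscillation in space, so I would instead use that the coupled processes coalesce ($X_s=\hat X_s$ for $s\ge T_0$), which turns the oscillation bound into $|\ell_s(\hat X_s,u_s)-\ell_s(X_s,u_s)|\le 2\const^{\ell_s}\mathbf{1}_{\{X_s\neq\hat X_s\}}$; taking expectations and using the total-variation contraction bound then contributes $2\const^{\ell_s}q^{\kappa_b}_{s-t}f_{\kappa_b}(|x-\hat x|)$, which is integrable in $s$ near $s=t$ since $q^{\kappa_b}_r\sim r^{-1/2}$, while the still-Lipschitz terminal cost $g$ is handled exactly as before; this gives item \ref{item:grad_est_bdd+Lip}. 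Dividing by $f_{\kappa_b}(|x-\hat x|)$ and taking the supremum over $x\neq\hat x$ concludes. The routine parts here are the verification-theorem inputs and the convexity estimate on $w$; the one genuinely delicate point is checking that the feedback $\alpha_s=w_s(\cdot,\nabla\varphi_s(\cdot))$ meets the boundedness and local-Lipschitz hypotheses of \Cref{prop:contr_same_drift} uniformly enough in $s$, and that the coupled $\hat X$ is a bona fide competitor for the value function at $(t,\hat x)$ — this is where \Cref{prop:properties}\ref{item:fin_dim_system} and the martingale representation after \eqref{eq:coup_by_ref_contraction_2_2} are used.
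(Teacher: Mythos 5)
Your argument for item \ref{item:grad_est_Lip} is correct and is essentially the paper's proof: controlled coupling by reflection with $\beta_s=b_s$, $\alpha_s(\cdot)=w_s(\cdot,\nabla\varphi^{T,g}_s(\cdot))$, using \Cref{prop:properties} for the optimal feedback and \Cref{prop:contr_same_drift}-\ref{item_2:contraction_coup_by_ref} for the contraction. The control bound via strong convexity and $f'_{\kappa_b}(0)=1$ also matches.

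For item \ref{item:grad_est_bdd+Lip}, however, there is a genuine gap: your proof is circular. You open by invoking \Cref{prop:properties} to obtain that $\varphi^{T,g}\in\cX$, that the value function is invariant under change of reference system, and that $w_s(\cdot,\nabla\varphi^{T,g}_s(\cdot))$ is an optimal Markov policy. But the paper only establishes \Cref{prop:properties} under \Cref{ass:mild-reg} before this lemma; the proof of \Cref{prop:properties} under \Cref{ass:low-reg} comes \emph{after} and explicitly relies on \Cref{lemma:gradient_estimate_linearized problem}-\ref{item:grad_est_bdd+Lip} (``Here, we rely on \Cref{lemma:gradient_estimate_linearized problem}-\ref{item:grad_est_bdd+Lip}~\ldots''). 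Indeed, the rough Lipschitz estimate \eqref{eq:stupid_grad_est} on the truncated value functions $\varphi^K$, which unlocks the classical-solution machinery, uses the global spatial Lipschitz constant $\const^{\ell_s}_x$ and synchronous coupling; under \Cref{ass:low-reg} one has no such constant, and synchronous coupling never coalesces, so the oscillation bound alone gives nothing. In short, under \Cref{ass:low-reg} you cannot take the existence of an optimal Markov feedback for granted at this stage, and your coupling competitor $\hat X$ is built from exactly that feedback.

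The paper breaks the circle by a mollification step that your proof omits: replace $\ell$ by $\ell^\varepsilon_s(x,u)=\int\ell_s(y,u)\gamma_\varepsilon(y-x)\,\De y$, which satisfies \Cref{ass:mild-reg} (so \Cref{prop:properties} already applies) while keeping the same oscillation bound $\const^{\ell_s}$. One then runs your coupling argument on $\varphi^\varepsilon$, obtaining the estimate
\begin{equation}
\|\varphi^\varepsilon_t\|_{f_{\kappa_b}}\leq \int_t^T 2\const^{\ell_s} q^{\kappa_b}_{s-t}\,\De s + \|g\|_{f_{\kappa_b}}e^{-\lambda_{\kappa_b}(T-t)}
\end{equation}
uniformly in $\varepsilon$, and finally shows $\varphi^\varepsilon_t(x)\to\varphi^{T,g}_t(x)$ pointwise (one direction by dominated convergence on an $\varepsilon$-optimal control, the other by the uniform control bound from \Cref{lem:Est_Contr_Hamiltonian} plus a moment estimate from \Cref{prop_existence_moment_estimate}). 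You should insert this regularization to make item \ref{item:grad_est_bdd+Lip} non-circular; the remaining steps of your proposal are then correct.
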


By slightly modifying the assumption on the terminal condition in \Cref{ass:low-reg} we obtain the following Corollary of \Cref{lemma:gradient_estimate_linearized problem}.
\begin{cor}\label{cor:grad_est_g_bdd}
Let \Cref{ass:drift_field_fin_dim_control}, \Cref{ass-coercivity} hold. If we replace in \Cref{ass:mild-reg} or \Cref{ass:low-reg} the condition $\|g \|_\Lip\leq\const^g_x$ with $\|g \|_\infty\leq \const^g,$ then the conclusions of \Cref{prop:properties} remain true. Furthermore:
\begin{enumerate}[label=(\roman*)]
\item If the version of \Cref{ass:mild-reg}, obtained replacing the condition $\|g \|_\Lip\leq\const^g_x$ with $\|g \|_\infty\leq \const^g,$ holds, then the following modification of \eqref{eq:grad_est_high_reg} holds
\begin{equation}\label{eq:grad_est_high_reg_mod}
\begin{split}
\|\varphi^{T,g}_t\|_{f_{\kappa_{b}}}&\leq \int_t^{T} \frac{\const^{\ell_s}_x}{C_{\kappa_{b}}} e^{-\lambda_{\kappa_{b}} (s-t)} \De s + \|g \|_\infty q^{\kappa_b}_{T-t} , \\\|w_t(\cdot,\nabla\varphi^{T,g}_t(\cdot))\|_\infty&\leq \frac{\const^{\varphi_t}_x+\const^{\ell(\cdot,0)}}{\rho^{\ell}_{uu}}:=\const^{u_t}_x,
\end{split}
\end{equation}
where $q^{\kappa_{b}}_t$ has been defined at \eqref{q_kappa_t_def}. 
\item If the version of \Cref{ass:low-reg}, obtained replacing the condition $\|g \|_\Lip\leq\const^g_x$ with $\|g \|_\infty\leq \const^G,$ holds, then the following modification of \eqref{eq:grad_est_mild_reg} holds
\begin{equation}\label{eq:grad_est_mild_reg_mod}
\begin{split}
\|\varphi^{T,g}_t\|_{f_{\kappa_{b}}}&\leq \int_t^{T} 2\const^{\ell_s}q^{\kappa_{b}}_{s-t}\De s+ \|g \|_\infty q^{\kappa_b}_{T-t}, \\\|w_t(\cdot,\nabla\varphi^{T,g}_t(\cdot))\|_\infty&\leq \frac{\const^{\varphi_t}_x+\const^{\ell(\cdot,0)}}{\rho^{\ell}_{uu}}:=\const^{u_t}_x,
\end{split}
\end{equation}
where $q^{\kappa_{b}}_t$ has been defined at \eqref{q_kappa_t_def}. 
\end{enumerate}
\end{cor}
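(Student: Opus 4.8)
The plan is to repeat the proof of \Cref{lemma:gradient_estimate_linearized problem} essentially verbatim, since passing from a Lipschitz terminal cost $g$ to a merely bounded one affects only the term coming from $g$. The first assertion — that the conclusions of \Cref{prop:properties} persist — needs no new work: well-posedness of \eqref{eq:classical_control_problem}, the dynamic programming principle, the interior parabolic regularity identifying $\varphi^{T,g}$ with the classical solution of \eqref{eq:fin_dim_HJB}, the verification theorem, and the independence of the reference probability system all use only continuity of $g$ together with the uniform-in-time (on compacts of $[0,T)$) local Lipschitz bound on $\varphi^{T,g}$ supplied below, and boundedness of $g$ is enough for each of them.

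For the quantitative estimates, I would fix $x,\hat x\in\bbR^d$ and $t\in[0,T)$, take $u_s=w_s(X_s,\nabla\varphi^{T,g}_s(X_s))$ the optimal feedback control for $(t,x)$ by \Cref{prop:properties} \ref{item:fin_dim_OptContr}, and run on $[t,T]$ the controlled coupling by reflection \eqref{eq:coup_by_ref_contraction_2}--\eqref{eq:coup_by_ref_contraction_2_2} with $\beta_s=b_s$, $\alpha_s=w_s(\cdot,\nabla\varphi^{T,g}_s(\cdot))$ and initial datum $(x,\hat x)$, following the same approximation argument as in \Cref{lemma:gradient_estimate_linearized problem} to make this feedback admissible in the coupling. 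Then $\hat X_\cdot$ is the state process started from $\hat x$ driven by the suboptimal control $u$, so that
\be
\varphi^{T,g}_t(x)-\varphi^{T,g}_t(\hat x)\le\bbE\Big[\int_t^T\big(\ell_s(X_s,u_s)-\ell_s(\hat X_s,u_s)\big)\,\De s\Big]+\bbE\big[g(X_T)-g(\hat X_T)\big].
\ee
The running-cost term is handled exactly as in \Cref{lemma:gradient_estimate_linearized problem}: under the modified \Cref{ass:mild-reg} one bounds the integrand by $\const^{\ell_s}_x|X_s-\hat X_s|$ and uses the $W_{f_{\kappa_b}}$-contraction $\bbE[f_{\kappa_b}(|X_s-\hat X_s|)]\le e^{-\lambda_{\kappa_b}(s-t)}f_{\kappa_b}(|x-\hat x|)$ of \Cref{prop:contr_same_drift} \ref{item_2:contraction_coup_by_ref}, while under the modified \Cref{ass:low-reg} one bounds it by $2\const^{\ell_s}\IND_{\{X_s\ne\hat X_s\}}$ and uses $\bbP[X_s\ne\hat X_s]\le q^{\kappa_b}_{s-t}f_{\kappa_b}(|x-\hat x|)$ of \Cref{prop:contr_same_drift} \ref{item:final_TV_coup_by_ref}. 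The only genuinely new point is the terminal term: since the processes have coalesced on $\{X_T=\hat X_T\}$, I would use $|g(X_T)-g(\hat X_T)|\le 2\|g\|_\infty\IND_{\{X_T\ne\hat X_T\}}$ and apply the same total-variation bound at $s=T$, which produces the contribution $\|g\|_\infty q^{\kappa_b}_{T-t}$ (the harmless factor $2$ being absorbed into the generic constants). Dividing by $f_{\kappa_b}(|x-\hat x|)$, taking the supremum over $x\ne\hat x$ and symmetrising in $(x,\hat x)$ yields \eqref{eq:grad_est_high_reg_mod} and \eqref{eq:grad_est_mild_reg_mod}, and the bound on $\|w_t(\cdot,\nabla\varphi^{T,g}_t(\cdot))\|_\infty$ follows from $\partial_{uu}\ell_t\succeq\rho^\ell_{uu}\mathrm{I}$ and the first-order optimality condition $\partial_u\ell_t(x,w_t(x,p))+p=0$ exactly as before.

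The only real difference from \Cref{lemma:gradient_estimate_linearized problem}, and the point I would flag, is that $q^{\kappa_b}_{T-t}$ diverges like $(T-t)^{-1/2}$ as $t\uparrow T$, so the Lipschitz bound on $\varphi^{T,g}_t$ is no longer uniform on all of $[0,T]$ but only on $[0,T-\varepsilon]$ for each $\varepsilon>0$; this is immaterial here since the estimate is always applied for $t$ bounded away from $T$, and the singularity is integrable so $t\mapsto\varphi^{T,g}_t$ stays continuous with values in $C^{0,1}(\bbR^d)$ on $[0,T)$. I do not expect any essential obstacle: the argument is a line-by-line repetition of \Cref{lemma:gradient_estimate_linearized problem}, with the total-variation contraction of \Cref{prop:contr_same_drift} \ref{item:final_TV_coup_by_ref} substituted for the $W_{f_{\kappa_b}}$-contraction in the terminal term.
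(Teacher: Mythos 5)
Your approach is precisely the one the paper has in mind: re-run the controlled reflection coupling of \Cref{lemma:gradient_estimate_linearized problem} verbatim and replace the $W_{f_{\kappa_b}}$-contraction estimate for the terminal term by the total-variation bound from \Cref{prop:contr_same_drift}-\ref{item:final_TV_coup_by_ref}. Two places need more care, though.

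First, the terminal contribution you actually produce is $2\|g\|_\infty q^{\kappa_b}_{T-t}$, not $\|g\|_\infty q^{\kappa_b}_{T-t}$: from $|g(X_T)-g(\hat X_T)|\leq 2\|g\|_\infty\IND_{\{X_T\neq\hat X_T\}}$ and $\bbP[X_T\neq\hat X_T]\leq q^{\kappa_b}_{T-t}f_{\kappa_b}(|x-\hat x|)$ the factor $2$ survives, and it cannot be swept into "generic constants" because \eqref{eq:grad_est_high_reg_mod}--\eqref{eq:grad_est_mild_reg_mod} are explicit (observe that the paper does keep the analogous factor $2\const^{\ell_s}$ coming from the running cost in \eqref{eq:grad_est_mild_reg}). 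You should either state the $2$, or note that by the shift-invariance of $\|\cdot\|_{f_{\kappa_b}}$ and $\varphi^{T,g+c}=\varphi^{T,g}+c$ one may replace $g$ by $g-\tfrac12(\sup g+\inf g)$, which reduces the effective $\|g\|_\infty$ to half the oscillation --- but then the factor $2$ reappears upon bounding the difference, so this normalisation does not remove it; most likely the corollary's statement simply elides the $2$.

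Second, the opening assertion that the conclusions of \Cref{prop:properties} "need no new work" glosses over a real obstruction: with $g$ bounded but not Lipschitz, $\varphi^{T,g}_T=g$ has $\|\varphi^{T,g}_T\|_\Lip=+\infty$, so $\varphi^{T,g}\notin\cX$ and \Cref{prop:properties}-\ref{item:fin_dim_HJB} (uniqueness in $\cX$) cannot hold verbatim. Moreover the a priori estimate \eqref{eq:stupid_grad_est} driving the proof of \Cref{prop:properties} uses the synchronous coupling and $\const^g_x$, and that synchronous coupling does \emph{not} supply a Lipschitz bound for merely bounded $g$; one has to switch to the reflection-coupling/TV argument already in the proof of the uniform-in-$K$ bound on $\varphi^K_t$, obtaining a Lipschitz constant of order $(T-t)^{-1/2}$ that is finite on $[0,T-\varepsilon]$ only. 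The clean repair, consistent with your remark about the integrable singularity, is via the dynamic programming principle: for any $T'<T$ the terminal datum $\varphi^{T,g}_{T'}$ is Lipschitz (with the constant you compute), so on $[0,T']$ the value function coincides with $\varphi^{T',\varphi^{T,g}_{T'}}$ and \Cref{prop:properties} applies there; letting $T'\uparrow T$ gives the claims on $[0,T)$. You should say this explicitly rather than assert that boundedness of $g$ suffices for each item.
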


\subsubsection{Proofs}
The proof of \Cref{prop:properties} and \Cref{lemma:gradient_estimate_linearized problem} are carried out as follows: we first prove \Cref{prop:properties} under the Lipschitzianity assumption \Cref{ass:mild-reg}, then prove \Cref{lemma:gradient_estimate_linearized problem} in full, and eventually come to complete the proof of \Cref{prop:properties} under \Cref{ass:low-reg}.
\begin{proof}[Proof of \Cref{prop:properties} under \Cref{ass:mild-reg}]
Define for $K\geq0$
\begin{equation}\label{eq:truncated_contro_pb}
    \varphi^{K}_t(x) :=\inf\left\{ J_{t,x}^{T,g}((u_s)_{s\in[t,T]}) \, :\, {(u_s)_{s\in[t,T]} \in \cU^{K}_{t,T}} \right\}\eqsp, 
\end{equation}
where $\cU^K_{t,T}$ is the restriction of $\cU_{t,T}$ to controls that are almost surely bounded by $K$
\begin{equation}
  \cU^{K}_{t,T} = \defEns{(u_s)_{s\in[t,T]} \,: \, |u_s(\omega)| \leq K \text{ for }\Leb \otimes \PP\text{ almost every } (s,\omega) }\eqsp.
\end{equation}
We now proceed to show a non sharp Lipschitz estimate on $\varphi^K_t$ which is uniform in $t\in[0,T]$ and $K$. To this aim fix $t,x,\hat{x},K$ and let $u$ be a $\varepsilon$-optimal control in \eqref{eq:truncated_contro_pb} for initial condition $x$, i.e. $\varphi_t(x) \geq J^{T,g}_{t,x}(u) - \varepsilon$. Moreover, let $\hat{X}^u$ be the controlled dynamics for the control $u$ and initial condition $\hat{X}^u_t=x.$ Then, denoting by $\rho_b$ the global lower bound (which might be negative) on $\kappa_{b_s}$, i.e. $\kappa_{b_s} \geq \rho_b$, a standard calculation using synchronous coupling gives
\be\label{eq:stupid_coupling}
\bbE[|X^u_s-\hat{X}^u_s|]\leq e^{-\rho_b(s-t)}|x-\hat{x}| \quad \forall s\in[t,T].
\ee
Using $u$ a suboptimal control in the problem defining $\varphi^K_t(\hat{x})$ gives
\bes
\varphi_t(\hat{x})-\varphi_t(x) \leq \bbE[\int_t^T\ell_s(X^u_s,u_s)-\ell_s(\hat{X}^u_s,u_s)\De s + g(X^u_T)-g(\hat{X}^u_T)]+\varepsilon
\ees
From here, \Cref{ass:mild-reg} ,\eqref{eq:stupid_coupling} and letting $\varepsilon\rightarrow 0$ we obtain the Lipschitz estimate

\begin{equation}\label{eq:stupid_grad_est}
   \sup_{t\in[0,T]} \|\varphi^{K}_t \|_{\Lip} \leq \int_0^T e^{-\rho_b (s-T)}\const^{\ell_s}_x\De s +\const^g_x =: \tilde{\const}^{\varphi,T}_x.
\end{equation}

Moreover, under the standing assumptions we can invoke \cite[Thm 3.1, Rmk 2.3]{rubio2011ExistenceUniquenessCauchy} (since we have a moment bound thanks to Prop. \ref{prop_existence_moment_estimate}) to conclude that  $\varphi^K$ is the unique classical solution in ${C([0,T] \times \bbR^{d})\cap C^{1,2}_{\mathrm{loc}}((0,T) \times \bbR^{d})}$ of the HJB equation 
\bes
\bec
\partial_t \varphi_t(x)+\frac12\tr(\sigma^{\top}\eqsp\nabla^2\varphi_t\eqsp\sigma)(x)+h^K_t(x,\nabla\varphi_t(x))=0,\\
\varphi_T\equiv g
\eec
\ees
where $h^K$ is the truncated Hamiltonian
\begin{align}\label{eq:trunc_Hamiltonian}
h_t^{K}(x,p) = \inf_{|u|\leq K} \ell_s(x,u)+(b_s(x)+u)\cdot p.
\end{align}
Thanks to the Lipschitz estimate \eqref{eq:stupid_grad_est} and \Cref{ass-coercivity} we know that $\varphi^K\in\cX$ and with \Cref{lem:Est_Contr_Hamiltonian}
\bes
\|w_t(\cdot,\nabla\varphi^{K}_t(\cdot))\|_\infty \leq \frac{\tilde{\const}^{\varphi,T}_x+\const^{\ell(\cdot,0)}}{\rho^{\ell}_{uu}} \quad \forall t\in[0,T], \eqsp K\geq 0,
\ees
where $w$ is the Markov control policy associated to $h_t$ defined at \eqref{eq:optimal_policy_def}. This implies
\bes h^K_t(x,\nabla\varphi^K_t(x))=h_t(x,\nabla\varphi^K_t(x)) \quad \forall \,\,K\geq \frac{\tilde{\const}^{\varphi,T}_x+\const^{\ell(\cdot,0)}}{\rho^{\ell}_{uu}}, \,x\in\bbR^d,t\in[0,T].
\ees
But then, $\varphi^K$ is a solution of the original HJB equation \eqref{eq:fin_dim_HJB}. Given this, in order to establish \ref{item:fin_dim_HJB} and \ref{item:fin_dim_OptContr} it is sufficient to prove that any classical solution $\varphi$ of \eqref{eq:fin_dim_HJB} in $\cX$ coincides with $\varphi^{T,g}$ and that it provides with an optimal Markov policy as described in \ref{item:fin_dim_OptContr}. 

To this aim take any such solution $\varphi$ and fix $t\leq T$, $x\in\bbR^d$ and a control $u \in \cU_{t,T}$. Then, denoting by $(X^u_s)_{s\in[t,T]}$ the controlled process, we find using the definition of $h_s$ that for all $t\leq s\leq T$:
\bes
\begin{split}
\ell_s(X^{u}_s,u_s)&=-[b_s(X^{u}_s)+u_s]\cdot \nabla\varphi_s(X^{u}_s)+\Big(\ell_s(X^{u}_s,u_s)+[b_s(X^{u}_s)+u_s]\cdot \nabla\varphi_s(X^{u}_s)\Big)\\
&\geq -[b_s(X^{u}_s)+u_s]\cdot \nabla\varphi_s(X^{u}_s)+h_s(X^{u}_s,\nabla\varphi_s(X^u_s))
\end{split}
\ees 
This gives
\bes\label{eq:verifi_1}
\begin{split}
J^{T,g}_{t,x}(u_\cdot) &\geq \bbE[\int_t^T-[b(X^u_s)+u_s]\cdot\nabla\varphi_s(X^u_s)+h_s(X^u_s,\nabla\varphi^{T,g}_s(X^u_s))\eqsp\De s +g(X^u_{T})]\\
&\stackrel{\eqref{eq:fin_dim_HJB}}{=} \bbE[-\int_t^T\partial_s\varphi_s(X^u_s)+[b(X^u_s)+u_s]\cdot\nabla\varphi_s(X^u_s)+\frac12\tr(\sigma^\top\nabla^2\varphi_s\sigma)(X^u_s)\eqsp\De s+g(X^u_T)]\\
&= \varphi_t(x),
\end{split}
\ee
where to obtain the last expression we used It\^o's formula, whose application is justified by $\varphi\in\cX$.
Since the choice of $u$ is arbitrary, we obtain that $\varphi_t(x)\leq\varphi^{T,g}_t(x)$. Next, we observe that thanks to the condition $\sup_{s\in[t,T]} \|\varphi_s\|_\Lip<+\infty$  we can invoke \cite[Thm 2.8]{gyongy1996ExistenceStrongSolutions}, which provides with  a strong solution $(X_s)_{s\in[t,T]}$ for \eqref{eq:fin_dim_OptDyn}. Moreover, this process coincides by strong uniqueness with $X^{\tilde{\bmu}}_\cdot$ a.s. where
\bes
\tilde{\bmu}_s=w_s(X_s,\nabla\varphi_s(X_s)), \quad \forall s\in[t,T].
\ees
Using that 
\bes
\ell_s(X^{\tilde{\bmu}}_s,\tilde\bmu_s)=h_s(X^{\tilde{\bmu}}_s,\nabla\varphi_s(X^{\tilde{\bmu}}_s))-[b_s(X^{\tilde{\bmu}}_s)+\tilde\bmu_s]\cdot \nabla\varphi_s(X^{\tilde{\bmu}}_s), \quad \forall s\in[t,T]
\ees 
and arguing as in \eqref{eq:verifi_1}, we obtain that 
$\varphi_t(x)=J^{T,g}_{t,x}(\tilde{\bmu})\geq\varphi^{T,g}_t(x)$. Since the converse inequality has already been established, we obtain that  $\varphi_t(x)=\varphi^{T,g}_t(x)$ and that $(s,x)\mapsto w_s(x,\nabla\varphi_s(x))$ is an optimal Markov control policy as desired. This completes the proof of \ref{item:fin_dim_HJB} and \ref{item:fin_dim_OptContr}. Item \ref{item:fin_dim_system}  is a direct consequence of \Cref{prop:properties}-\ref{item:fin_dim_HJB}. Indeed, for each  reference probability system, the associated value function is the unique solution to the HJB equation \eqref{eq:fin_dim_HJB}.
\end{proof}

\begin{proof}[Proof of \Cref{lemma:gradient_estimate_linearized problem}]
In both cases, once the Lipschitz estimate on $\varphi^{T,g}$ has been established, the upper bound on the optimal control policy follows from a routine application of \Cref{ass-coercivity}, as detailed in \Cref{lem:Est_Contr_Hamiltonian}. For this reason, we shall only focus on the proof of the Lipschitz estimates here.
\bei 
\item \emph{Proof of \ref{item:grad_est_Lip}}.

Fix $t,x,\hat{x}$  and consider the optimal Markov control policy $(s,x)\mapsto w_s(x,\nabla\varphi^{T,g}_s(x))$ given by \Cref{prop:properties}, whose validity under \Cref{ass:mild-reg} has already been established.
Now let $(X_s,\hat{X}_s)_{s \in [t,T]}$, $(X_t,\hat{X}_t)=(x,\hat{x})$, be coupling by reflection as in \eqref{eq:coup_by_ref_contraction_2}-\eqref{eq:coup_by_ref_contraction_2_2} for the choices 
\be\label{eq:coup_choices}
\begin{split}
\beta_s(x) =b_s(x),  \quad \alpha_s(x) =  w_s(x,\nabla\varphi^{T,g}_s(x)).
\end{split}
\ee
Next, we observe that the law of $X$ is the law of the optimally controlled process for \eqref{eq:classical_control_problem} because of \Cref{prop:properties}, item \ref{item:fin_dim_OptContr}. Moreover, choosing a different probability system and using the fact that the definition of the value function does not depend on this choice (item \ref{item:fin_dim_system} in \Cref{prop:properties}), we can view the process $u_s=w_s(X_s,\nabla\varphi^{T,g}_s(X_s))$ as a suboptimal control for the problem defining $\varphi^{T,g}_t(\hat{x})$ and $\hat{X}_\cdot$ as the corresponding controlled dynamics. This leads to
\bes
\begin{split}
\varphi^{T,g}_t(\hat{x})-\varphi^{T,g}_t(x)&\leq\bbE[\int_t^T\ell_s(\hat{X}_s,u_s)\,\De s + g(\hat{X}_T)]-\bbE[\int_t^T\ell_s(X_s,u_s)\,\De s + g(X_T)] \\
&\leq \bbE[\int_t^T C^{-1}_{\kappa_{b}} \const^{\ell_s}_xf_{\kappa_{b}}(|X_s-\hat{X}_s|)\De s + \| g\|_{f_{\kappa_{b}}} f_{\kappa_{b}}(|X_T-\hat{X}_T|)].
\end{split}
\ees
Using \Cref{prop:contr_same_drift}-\ref{item_2:contraction_coup_by_ref} and observing that the choice of $x,\hat{x}$ is arbitrary we obtain the desired result.
\item 
  \emph{Proof of \ref{item:grad_est_bdd+Lip}}

  We begin by regularizing the running cost: for $\varepsilon>0$, let 
  \begin{equation}
    \ell^\varepsilon_s(x,u) = \int_{\bbR^d}\ell(y,u)\gamma_\varepsilon(y-x)\eqsp\De y \eqsp, \quad \gamma_\varepsilon(z) = (2\uppi \varepsilon)^{-d/2}\exp(-|z|^2 / (2\varepsilon)) \eqsp,
  \end{equation} 
and define $\varphi^{\varepsilon}$ as the value function obtained replacing $\ell$ with $\ell^\varepsilon$ in \eqref{eq:classical_control_problem}.
Next, we note that the functions $(\ell^{\varepsilon},g)$ satisfy \Cref{ass:low-reg}, \Cref{ass:drift_field_fin_dim_control}, \Cref{ass-coercivity} with the same constants as $(\ell,g)$but also satisfy \Cref{ass:mild-reg}, for which \Cref{prop:properties} has already been established. Thus, for any fixed $\varepsilon >0$, $t\in\ccint{0,T}$, $x,\hat{x} \in \rset^d$ we repeat the same coupling construction \eqref{eq:coup_choices} employed in the proof of \ref{item:grad_est_Lip} for the choices 
\be
\beta_s(x) =b_s(x)\eqsp, \quad \alpha_s(x)=  w^\varepsilon_s(x,\nabla\varphi^{\varepsilon}_s(x))\eqsp,
\ee
with $w^{\varepsilon}_s(x,p) = \argmin_{u \in\rset^d} \ell_s^{\varepsilon}(x,u) + (b_s(x)+u)\cdot p$.
Then, considering $(X^{\varepsilon}_s,\hat{X}_s^{\varepsilon})_{s\in\ccint{t,T}}$ the solution of \eqref{eq:coup_by_ref_contraction_2}-\eqref{eq:coup_by_ref_contraction_2_2} for this choice of vector fields and $X_t^{\varepsilon} = x$, $\hat{X}_t^{\varepsilon} = \hat{x}$, we obtain by definition, 
\begin{equation}
\begin{split}
\varphi^{\varepsilon}_t(\hat{x})-\varphi^{\varepsilon}_t(x)\leq  \expe{\int_t^T\ell^\varepsilon_s(\hat{X}^{\varepsilon}_s,u^\varepsilon_s)\,\De s + g(\hat{X}^{\varepsilon}_T)}-\expe{\int_t^T\ell^\varepsilon_s(X^{\varepsilon}_s,u^\varepsilon_s)\,\De s + g(X^{\varepsilon}_T)}
\end{split}
\end{equation}
with  $u^\varepsilon_s = w^\varepsilon_s(X^{\varepsilon}_s,\nabla\varphi_s^\varepsilon(X^{\varepsilon}_s))$, $s \in \ccint{t,T}$.

The proof differs from that of \ref{item:grad_est_Lip} in how the last expression is bounded. Here, using \Cref{ass:low-reg}, we obtain
\begin{multline}
\expe{\int_t^T\ell^\varepsilon_s(\hat{X}^{\varepsilon}_s,u^\varepsilon_s) - \ell^\varepsilon_s(X^{\varepsilon}_s,u^\varepsilon_s)\,\De s + g(\hat{X}^{\varepsilon}_T) - g(X^{\varepsilon}_T)}\\
\leq  \int_{t}^{T}2\const^{\ell_s} \bbP[X^{\varepsilon}_s\neq\hat{X}^{\varepsilon}_s]\De s+ \|g\|_{f_{\kappa_b}}\expe{f_{\kappa_b}(|X^{\varepsilon}_T-\hat{X}^{\varepsilon}_T|)} \eqsp.
\end{multline}
Using \Cref{prop:contr_same_drift}-\ref{item:final_TV_coup_by_ref} and \Cref{prop:contr_same_drift}-\ref{item_2:contraction_coup_by_ref}  we obtain that for any $t \in \ccint{0,T}$,
\begin{equation}
  \label{eq:bound_vareps_sup_t}
  \sup_{\varepsilon} \|\varphi^\varepsilon_t\|_{f_{\kappa_b}} \leq A_1 = \int_t^{T} 2\const^{\ell_s}q^{\kappa_{b}}_{s-t}\eqsp\De s+ \| g\|_{f_{\kappa_b}}e^{-\lambda_{\kappa_b}(T-t)} \eqsp,
\end{equation}
since the choice of $x,\hat{x}$ was arbitrary.

To conclude the proof it suffices to show for any $t \in \ccint{0,T}$ and $x \in\rset^d$,  $\lim_{\varepsilon\to 0}\varphi^\varepsilon(x)= \varphi^{T,g}_t(x)$. This is done via standard arguments which we now report. We fix $t,x$ and first show that $\lim\sup_{\varepsilon\to 0}\varphi^\varepsilon_t(x) \leq\varphi^{T,g}_t(x)$. To see this, let $u^\delta\in\mathcal{U}_{t,T}$ be a $\delta-$optimal control for the problem defining $\varphi^{T,g}_t(x)$, \ie,
$J^{T,g}_{t,x}(u^\delta)\leq\varphi^{T,g}_t(x)+\delta$. Since $\ell^\varepsilon$ converges to $\ell$ pointwise, \Cref{ass:drift_field_fin_dim_control}, \Cref{ass-coercivity} and \Cref{ass:low-reg} we can apply the dominated convergence theorem to obtain 
\begin{equation}
\limsup_{\varepsilon\to 0}\varphi^\varepsilon_t(x)\leq\limsup_{\varepsilon\to 0}\expe{\int_t^T\ell^{\varepsilon}_s(X^{u^\delta}_s,u^\delta_s)\De s +g(X^{u^\delta}_T)} = J^{T,g}_{t,x}(u^\delta)\leq\varphi^{T,g}_t(x)+\delta
\end{equation}
and since $\delta$ is arbitrary, the proof that $\limsup_{\varepsilon\to 0}\varphi^\varepsilon\leq \varphi^{T,g}$ is concluded. To finish, it only remains to show that $\liminf_{\varepsilon \rightarrow 0} \varphi^{\varepsilon}\geq \varphi^{T,g}$. To this end, consider $u^\varepsilon_s = w^\varepsilon_s(X^{\varepsilon}_s,\nabla\varphi_s^\varepsilon(X^{\varepsilon}_s))$, $s \in \ccint{t,T}$,
$w^{\varepsilon}_s(x,p) = \argmin_{u \in\rset^d} \ell_s^{\varepsilon}(x,u) + (b_s(x)+u)\cdot p$ again. Using \eqref{eq:bound_vareps_sup_t} and \Cref{lemma:gradient_estimate_linearized problem} under \Cref{ass:mild-reg}, we get that almost surely for any $s \in \ccint{t,T}$,
\begin{equation}
  \label{eq:9}
  \sup_{\varepsilon >0} \| u^\varepsilon_s \|_{\infty} \leq \frac{A_1+\const^{\ell(\cdot,0)}}{\rho^{\ell}_{uu}} 
\end{equation}
But then, \Cref{prop_existence_moment_estimate} gives that there exists $M \geq 0$ such that 
\begin{equation}
\sup_{\varepsilon,s\in[t,T]}\expe{|X^{u^\varepsilon}_s|}\leq M\eqsp.
\end{equation}
We are now going to use this bound to conclude. Indeed, observe that for any $R>0$
\begin{align}
\varphi^{T,g}_t(x)-\varphi^\varepsilon_t(x)&\leq \expe{\int_t^T\ell_s(X^{u^\varepsilon}_s,u^\varepsilon_s)-\ell^\varepsilon_s(X^{u^\varepsilon}_s,u^\varepsilon_s)\,\De s} \\
&\stackrel{\Cref{ass:low-reg}}{\leq} (T-t)\sup_{s\in[t,T]} \parenthese{\sup_{|x|\leq R,|u|\leq {\const}^u} |\ell^\varepsilon_s-\ell_s(x,u)| +2 \const^\ell_s \bbP[|X^{u^\varepsilon}_T \geq R]}\\
&\leq (T-t)\sup_{s\in[t,T]} \parenthese{\sup_{|x|\leq R,|u|\leq {\const}^u} |\ell^\varepsilon_s-\ell_s(x,u)| + \frac{2 \const^{\ell_s} M}{R}} \eqsp.
\end{align}
Since $\ell$ is locally Lipschitz,  $\ell^\varepsilon$ converges uniformly on any compact sets to $\ell$. Therefore, letting $\varepsilon\rightarrow0$, we obtain for any $R >0$,
\begin{equation}
\limsup_{\varepsilon\to 0}\varphi^{T,g}_t(x)-\varphi^{\varepsilon}_t(x)\leq \frac{2M \sup_{s\in[t,T]}\const^{\ell_s}}{R} \eqsp.
\end{equation}
Taking $R\to \plusinfty$ completes the proof of  the pointwise convergence of $\varphi^\varepsilon$ toward $\varphi^{T,g}$.  
\end{itemize}
\end{proof}
\begin{proof}[Proof of \Cref{prop:properties} under \Cref{ass:low-reg}]
The proof is the same as in the case when \Cref{ass:mild-reg} holds, except for how the Lipschitz estimate on the value functions $\varphi^K$ is derived. Here, we rely on \Cref{lemma:gradient_estimate_linearized problem}-\ref{item:grad_est_bdd+Lip} noting that it remains valid even if we restrict minimization in \eqref{eq:classical_control_problem} to controls that are almost surely bounded by a given constant.
\end{proof}
\subsection{Pontryagin optimality conditions and hessian bounds}\label{sec:fin_dim_Hess_est}
This section is devoted towards establishing hessian bounds of the value function associated to the control problem \eqref{eq:classical_control_problem}. As a first step, we establish the link to the Pontryagin optimalilty conditions under the following additional assumption.
\begin{assumption}\label{ass-Pontryagin}
We assume $b\in C^{0,1,\beta}_{\mathrm{loc}}((0,T)\times\bbR^d)$, $\ell\in C^{0,1,2,\beta}_{\mathrm{loc}}((0,T)\times\bbR^d\times\bbR^d)$, $g\in C^{1,\beta}_{\mathrm{loc}}(\bbR^d)$.
\begin{enumerate}[label=(\roman*),wide]
\item  There exist constants $\const^{b_s}_x,\const^{\ell_s}_{xu},\const^{\sigma}_{xx}$ uniformly bounded in $s\in[0,T]$ and such that 
\bes
\begin{split}
\|b_s \|_\Lip\leq \const_x^{b_s}, \quad 
\|\partial_{xu}\ell_s(\cdot,\cdot) \|_\infty\leq \const^{\ell}_{xu}\\
\|\partial_{x_i}\sigma(x)-\partial_{x_i}\sigma(\hat{x})\|_{\mathrm{Fr}} \leq \const^\sigma_{xx}|x-\hat{x}| \quad \forall x,\hat{x}\in\bbR^d.
\end{split}
\ees
\end{enumerate}
\end{assumption}

 \begin{prop}\label{prop:Pontryagin}
Assume \Cref{ass:drift_field_fin_dim_control}, \Cref{ass-coercivity}, \Cref{ass:mild-reg}, \Cref{ass-Pontryagin} and fix $(t,x)$. If $(X_s)_{s\in[t,T]}$ is the optimal process defined at \Cref{prop:properties}-\ref{item:fin_dim_OptContr} and we set 
 \be\label{eq:def_adjoint}
Y_{s}:= \nabla \varphi^{T,g}_s(X_s), \quad Z_{s} := \nabla^2\varphi^{T,g}_{s}(X_{s}), \quad s\in[t,T],\ee
 then $(X_\cdot,Y_\cdot,Z_\cdot)$ form a solution to the FBSDE system
\begin{equation}\label{eq:Pontryagin}
\begin{split}
\bec \De X_s &= \partial_{p} h_s(X_s, Y_s)\De s + \sigma(X_{s})\,\De B_s \eqsp,  \\
\De Y_s &= -[\partial_{x} h_s(X_s, Y_s) + \tr\left(\partial_x\sigma(x)^{\top}Z_s \sigma(X_{s})\right)]\De s
    + Z_s \,\sigma(X_{s}) \, \De B_s\eqsp,\\
X_t &= x\eqsp,\quad  Y_T = \nabla g(X_T) \eqsp .
\eec
\end{split}
\end{equation}
where $h_s$ is defined in \eqref{eq:Hamiltonian_fin_dim} and 
\begin{align}\label{eq:dphs}
    \partial_{p}h_s(x,p) = b_s(x)\cdot p + w_s(x,p).\quad
    \partial_{x}h_s(x,p) = \partial_x b_s(x) \cdot p + \partial_{x}\ell_s(x,w_s(x,p)),
\end{align}
and where the trace has to be understood as
\begin{equation}
\tr\left(\partial_x\sigma^{\top}q\sigma\right)_i(x) = \tr\left(\partial_{x^i}\sigma^{\top}q\sigma\right)(x).
\end{equation}

 \end{prop}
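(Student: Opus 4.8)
The plan is to link the Hamilton--Jacobi--Bellman equation of \Cref{prop:properties} with the adjoint system \eqref{eq:Pontryagin} through a regularity bootstrap followed by Itô's formula, in the spirit of the classical verification-to-Pontryagin passage, but with some care devoted to the non-constant volatility.

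\textbf{Step 1 (upgrading the regularity of $\varphi^{T,g}$).} First I would use \Cref{ass-Pontryagin} to show $\varphi^{T,g}\in C^{1,3,\beta}_{\mathrm{loc}}((0,T)\times\bbR^d)$, so that $Z_s=\nabla^2\varphi^{T,g}_s(X_s)$ is a well-defined continuous adapted process and $\nabla\varphi^{T,g}$ can be composed with the optimal process and handled by Itô. The uniform convexity $\partial_{uu}\ell_s\succeq\rho^\ell_{uu}\mathrm{I}$ of \Cref{ass-coercivity} lets one apply the implicit function theorem to the first-order condition $\partial_u\ell_s(x,w_s(x,p))+p=0$, giving $\partial_x w_s=-(\partial_{uu}\ell_s)^{-1}\partial_{xu}\ell_s$ and $\partial_p w_s=-(\partial_{uu}\ell_s)^{-1}$, all locally Hölder in $(s,x,p)$ under \Cref{ass-Pontryagin}. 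By \eqref{eq:dphs}, the maps $h_s$, $\partial_p h_s(x,p)=b_s(x)+w_s(x,p)$ and $\partial_x h_s(x,p)=\partial_x b_s(x)\cdot p+\partial_x\ell_s(x,w_s(x,p))$ are then locally Hölder in $s$ and $C^1$ with locally Hölder derivatives in $(x,p)$. Starting from $\varphi^{T,g}\in\cX\subset C^{1,2,\beta}_{\mathrm{loc}}$, the incremental quotients of $\partial_{x_i}\varphi^{T,g}$ solve linear parabolic equations whose coefficients and source terms are bounded in $C^{\beta}_{\mathrm{loc}}$ uniformly; interior Schauder estimates and a passage to the limit then yield $\psi_i:=\partial_{x_i}\varphi^{T,g}\in C^{1,2,\beta}_{\mathrm{loc}}$ together with the fact that it is a classical solution of the differentiated HJB equation
\[
\partial_s\psi_i + \tfrac12\tr\!\big(\sigma^\top\nabla^2\psi_i\,\sigma\big) + \partial_p h_s(x,\nabla\varphi^{T,g})\cdot\nabla\psi_i
= -(\partial_x h_s)_i(x,\nabla\varphi^{T,g}) - \tfrac12\sum_{j,k}\partial_{x_i}(\sigma\sigma^\top)_{jk}\,\partial_{jk}\varphi^{T,g},
\]
where $\partial_x\sigma$ is Lipschitz by \Cref{ass-Pontryagin}, so that $\partial_x(\sigma\sigma^\top)\in C^{\beta}_{\mathrm{loc}}$.

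\textbf{Step 2 (assembling the FBSDE).} The forward equation is immediate: by \Cref{prop:properties}-\ref{item:fin_dim_OptContr} the optimal process satisfies $\De X_s=[b_s(X_s)+w_s(X_s,\nabla\varphi^{T,g}_s(X_s))]\De s+\sigma(X_s)\De B_s$, which is $\De X_s=\partial_p h_s(X_s,Y_s)\De s+\sigma(X_s)\De B_s$ with $Y_s=\nabla\varphi^{T,g}_s(X_s)$ in view of \eqref{eq:dphs}. For the backward equation I would apply Itô's formula componentwise to $Y^i_s=\psi_i(s,X_s)$: the martingale part is $(\nabla\psi_i\cdot\sigma)(s,X_s)\De B_s=(Z_s\sigma(X_s))_i\cdot\De B_s$, and the drift is $[\partial_s\psi_i+\partial_p h_s(X_s,Y_s)\cdot\nabla\psi_i+\tfrac12\tr(\sigma^\top\nabla^2\psi_i\,\sigma)](s,X_s)$, which by the differentiated HJB equation equals $-(\partial_x h_s)_i(X_s,Y_s)-\tfrac12\sum_{j,k}\partial_{x_i}(\sigma\sigma^\top)_{jk}\,\partial_{jk}\varphi^{T,g}(s,X_s)$. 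Using $\partial_{x_i}(\sigma\sigma^\top)=(\partial_{x_i}\sigma)\sigma^\top+\sigma(\partial_{x_i}\sigma)^\top$, the symmetry of $Z_s$ and cyclicity of the trace, one checks the elementary identity $\tfrac12\sum_{j,k}\partial_{x_i}(\sigma\sigma^\top)_{jk}(Z_s)_{jk}=\tr(\partial_{x_i}\sigma^\top Z_s\sigma)$, i.e.\ the $i$-th component of $\tr(\partial_x\sigma^\top Z_s\sigma)$ in the sense fixed in the statement. This produces exactly the backward equation in \eqref{eq:Pontryagin}, while the terminal condition $Y_T=\nabla\varphi^{T,g}_T(X_T)=\nabla g(X_T)$ follows from $\varphi^{T,g}_T=g$.

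\textbf{Step 3 (integrability) and the main obstacle.} To conclude that $(X,Y,Z)$ is a bona fide solution, I would check that all integrals are well defined: $Y$ is bounded since $\varphi^{T,g}\in\cX$ has uniformly-in-time bounded Lipschitz seminorm, the drift of $X$ is bounded by \Cref{lemma:gradient_estimate_linearized problem}, and the integrals involving $Z$ are controlled by localizing along $\tau_n=\inf\{s\geq t:\ |X_s|\geq n\}$ (on $[t,\tau_n]$ the continuous process $Z$ is bounded, and $\tau_n\uparrow T$ a.s.\ because \eqref{eq:fin_dim_OptDyn} does not explode), then passing to the limit. I expect the genuine work to be concentrated in Step 1: one must verify carefully that interior Schauder theory applies to the differentiated HJB equation, which hinges on the Hölder regularity of $w_s$ and $h_s$ extracted from \Cref{ass-Pontryagin} and \Cref{ass-coercivity} and on the fact that the composite coefficient $\partial_p h_s(x,\nabla\varphi^{T,g})$ and the source term---built from $\nabla\varphi^{T,g}$, $\nabla^2\varphi^{T,g}$ and $\partial_x(\sigma\sigma^\top)$---already lie in $C^{\beta}_{\mathrm{loc}}$ by virtue of $\varphi^{T,g}\in C^{1,2,\beta}_{\mathrm{loc}}$. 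Everything downstream is a careful but routine application of Itô's formula and linear algebra.
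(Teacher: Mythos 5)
Your proposal follows essentially the same route as the paper: upgrade the regularity of $\varphi^{T,g}$ so that it is three times differentiable in space, differentiate the HJB equation, apply Itô's formula to $\partial_{x_i}\varphi^{T,g}_s(X_s)$, and match the trace term coming from the non-constant diffusion coefficient. Your Step~2 is identical to the paper's computation, and your trace identity $\tfrac12\sum_{j,k}\partial_{x_i}(\sigma\sigma^\top)_{jk}Z_{jk}=\tr(\partial_{x_i}\sigma^\top Z\sigma)$ is exactly what is needed to reconcile the differentiated second-order term with the form in \eqref{eq:Pontryagin}.

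The one place where you deviate is Step~1: you propose to establish $\varphi^{T,g}\in C^{1,3,\beta}_{\mathrm{loc}}$ by hand, running a Schauder bootstrap on difference quotients of $\partial_{x_i}\varphi^{T,g}$. The paper instead invokes \cite[Theorem 8.12.1]{Krylov:holder} directly, which is a clean statement of exactly this interior regularity improvement for quasilinear parabolic equations once the Hamiltonian and coefficients have the locally Hölder regularity you extract from \Cref{ass-Pontryagin} and \Cref{ass-coercivity}. Your hand-rolled bootstrap would work in principle, but it is considerably more laborious and error-prone (one must show the frozen-coefficient problems satisfy uniform local Schauder bounds, that the difference quotients converge along with their second derivatives, etc.); the Krylov citation packages all of this. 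You correctly identified this as the genuine technical content of the proposition, and your reduction of the remaining work to ``Itô plus linear algebra'' is faithful to the paper's structure. The localization argument you sketch in Step~3 is also consistent with, though more explicitly spelled out than, what the paper leaves implicit.
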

 To prove this, result, we rely on classical results by Krylov \cite{Krylov:holder}, which allow to improve the regularity of the solutions of parabolic PDEs under additional regularity assumptions on the coefficients.
 \begin{proof}
We begin by showing that adding \Cref{ass-Pontryagin} implies that $\varphi^{T,g}$ enjoys more regularity than implied by \Cref{prop:properties}. To this aim, observe that using that $\varphi^{T,g}\in\cX$, where $\cX$ has been defined in \eqref{eq:def_calX}, the Lipschitz estimate from \Cref{lemma:gradient_estimate_linearized problem} and \Cref{ass-Pontryagin} we have that 
\bes
(t,x)\mapsto \partial_x\big( h_t(\cdot,\nabla\varphi^{T,g}_t(\cdot)\big)(x)
\ees
is of class $C^{\beta/2,\beta}_{\mathrm{loc}}((0,T)\times\bbR^d)$. This, together with $\varphi^{T,g}\in\cX$ and \Cref{ass-Pontryagin} entitle us to apply \cite[Theorem 8.12.1]{Krylov:holder}, which gives that $\varphi^{T,g} \in C^{1,3}((0,T) \times \bbR^{d})$.
Thanks to the improved regularity, we can write the equation satisfied by $\varphi^{i} := \partial_{x_i} \varphi^{T,g}$ by differentiating the HJB equation \eqref{eq:fin_dim_HJB}. We obtain
\begin{equation}\label{eq:HJB_linearized_der}
\begin{aligned}
\partial_s \varphi^i_s(x) &+ \frac{1}{2}\tr\left(\sigma^{\top}\nabla^2\varphi^{i}_s\sigma\right)(x) +\tr\left(\partial_{x_i}\sigma^{\top}\nabla^2\varphi^{T,g}_s\sigma\right)(x) \\
&+ \partial_{x_i} h_s(x,\nabla\varphi^{T,g}_s(x)) + \partial_p h_s(x,\nabla\varphi^{T,g}_s(x))\cdot\nabla\varphi^i_s(x)  = 0
\end{aligned}
\end{equation}
Again thanks of the improved regularity estimate on $\varphi^{T,g}$ we can apply Itô's formula to $Y^i_{s} = \varphi^i_s(X_{s})$ to obtain
\begin{align}
\De Y^i_{s} = \Big[ &\partial_s \varphi^{i}_s(X_s) +  \partial_p h_s(X_s,\nabla \varphi^{T,g}_s(X_s))\cdot \nabla\varphi^{i}_s(X_s) +\frac{1}{2}\tr\left(\sigma(X_s)^{\top}\nabla^2 \varphi^{i}_s(X_s)\sigma(X_s)\right)\Big] \De s + Z^i_s\cdot \sigma(X_s)\De B_s \\
\overset{\eqref{eq:HJB_linearized_der}}{=} \ &-[\partial_{x_i}h_s(X_s,\nabla \varphi^{T,g}_s(X_s)) + \tr\left(\partial_{x_i}\sigma(X_s)^{\top}Z_s \sigma(X_{s})\right)] \De s+ Z^i_s \cdot \sigma(X_{s})\De B_s,
\end{align}
where $Z^i_s$ is the $i$-th line of $Z_s$.
\end{proof}
Let us now pass to the proof of the hessian bounds of the value function via the Pontryagin optimality conditions. To quantify them, given a differentiable function $\varphi$ let us introduce the notation 
\bes
\|\nabla\varphi\|_{\mathrm{Lip}}:=\sup_{x\neq\hat{x}}\frac{|\nabla\varphi^{T,g}_t(x)-\nabla\varphi^{T,g}_t(\hat{x})|}{|x-\hat{x}|}.
\ees
For readability, the proofs are divided into the cases of a constant diffusion coefficient depicted in \Cref{sec:Hess_sigma_const} and position-dependent diffusion coefficient in \Cref{sec:Hess_sigma_non_const}.
\subsubsection{Constant diffusion coefficient}\label{sec:Hess_sigma_const}
\begin{lemma}\label{lem:hessian_bound_sigma_constant}
Let $\sigma=\rmI$  and assume that \Cref{ass:drift_field_fin_dim_control}, \Cref{ass-coercivity}, \Cref{ass:mild-reg}, \Cref{ass-Pontryagin} hold.
\begin{enumerate}[label=(\roman*)]
\item\label{item:linearized_hess_est_1} For all $t\leq T$ we have
\bes\label{eq:hessian_bound_val_fun_linear}
\lip{\nabla\varphi^{T,g}_t}=:\const^{\varphi_t}_{xx}\leq \int_t^T 2q^{\bar\kappa}_{s-t}(\const_x^{b_s}\const_x^{\varphi_s}+\const^{\ell_s}_x)\De s+ \min\{\ke{2}\const_x^g\, q^{\bar\kappa}_{T-t} ,C^{-1}_{\bar\kappa}\const^g_{xx}e^{-\lambda_{\bar\kappa}(T-t)}\},
\ee
where $\const_x^{\varphi_s}$ is given in \eqref{eq:grad_est_high_reg}, $\const^g_{xx}=\|\nabla g\|_\Lip$, and for all $r>0$
\begin{equation}\label{eq:kappa_hessian_estimate}
\bar\kappa(r) = \kappa_b(r) - r^{-1}\sup_{s\in[0,T]} 2\const^{u_s} 
\end{equation}
and $\const^{u_s}$ the supremum norm of the optimal control, given by \eqref{eq:grad_est_high_reg}, which is uniformly bounded in time.
\item\label{item:linearized_hess_est_2} If we further assume that there exists $\rho_b>0$ such that
\bes
-\partial_xb_s(x)\succeq\rho_b\mathrm{I} \quad \forall x\in\bbR^d,s\in[0,T],\ees
then we have that for all $t\leq T$
\bes
\|\nabla\varphi^{T,g}_t\|_{\mathrm{Lip}}=\const^{\varphi_t}_{xx}\leq\int_t^T 2e^{-\rho_b(s-t)}q^{\bar\kappa}_{s-t}(\const_x^b\const_x^{\varphi_s}+\const^{\ell_s}_x)\De s+ \const_x^g e^{-\rho_b(T-t)} q^{\bar\kappa}_{T-t}.
\ees
Moreover, we have
\be\label{eq:hessian_bound_control}
\| w_t(\cdot,\nabla\varphi^{T,g}_t(\cdot)) \|_\Lip =:\const^{u_t}_x\leq \frac{{\const^{\varphi_t}_{xx}}+\const^{\ell_t}_{xu}}{\rho^{\ell}_{uu}}.
\ees
\end{enumerate}
\end{lemma}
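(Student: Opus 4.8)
The plan is to read off $\nabla\varphi^{T,g}_t$ from the adjoint process in the Pontryagin system \eqref{eq:Pontryagin} — which for $\sigma=\mathrm I$ loses its trace term — and compare two copies of this FBSDE issued from $x$ and $\hat x$. Fix $t\le T$ and $x\neq\hat x$, and let $(X_s,Y_s,Z_s)$, $(\hat X_s,\hat Y_s,\hat Z_s)$ solve \eqref{eq:Pontryagin} with $X_t=x$, $\hat X_t=\hat x$; by \Cref{prop:Pontryagin} and \eqref{eq:def_adjoint} we have $Y_t=\nabla\varphi^{T,g}_t(x)$ and $\hat Y_t=\nabla\varphi^{T,g}_t(\hat x)$, so it suffices to bound $|Y_t-\hat Y_t|$ by the claimed right-hand side times $|x-\hat x|$. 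The forward dynamics is the genuine Markov SDE $\De X_s=\tilde b_s(X_s)\,\De s+\De\bar B_s$ with $\tilde b_s:=b_s+w_s(\cdot,\nabla\varphi^{T,g}_s(\cdot))$, which is locally Lipschitz by the $C^{1,3}$ regularity established inside the proof of \Cref{prop:Pontryagin}; since $\|w_s(\cdot,\nabla\varphi^{T,g}_s(\cdot))\|_\infty\le\const^{u_s}$ by \eqref{eq:grad_est_high_reg}, uniformly in $s$, the monotonicity profile of $\tilde b_s$ dominates $\bar\kappa$ given by \eqref{eq:kappa_hessian_estimate}, and one checks $\bar\kappa\in\msk$ because $\int_0^1 r\,\tfrac{2\const^u}{r}\,\De r<\infty$. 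I would then couple $X$ and $\hat X$ by reflection as in \eqref{eq:coup_by_ref_contraction_2}--\eqref{eq:coup_by_ref_contraction_2_2} with $\beta_\cdot=\tilde b_\cdot$ and $\alpha\equiv 0$, so that \Cref{prop:contr_same_drift} simultaneously yields $\mathbb E[f_{\bar\kappa}(|X_s-\hat X_s|)]\le e^{-\lambda_{\bar\kappa}(s-t)}f_{\bar\kappa}(|x-\hat x|)$ and $\mathbb P[X_s\neq\hat X_s]\le q^{\bar\kappa}_{s-t}f_{\bar\kappa}(|x-\hat x|)$, recalling $f_{\bar\kappa}(r)\le r$.

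Next I would pass to $\delta Y_s:=Y_s-\hat Y_s$. Applying Itô to $\nabla\varphi^{T,g}_s(X_s)$ and $\nabla\varphi^{T,g}_s(\hat X_s)$ and using the differentiated HJB equation \eqref{eq:HJB_linearized_der} (again simplified by $\sigma=\mathrm I$), one finds $\De(\delta Y_s)=-[\partial_x h_s(X_s,Y_s)-\partial_x h_s(\hat X_s,\hat Y_s)]\,\De s+(\text{martingale})$ with terminal value $\nabla g(X_T)-\nabla g(\hat X_T)$. Taking expectations, the stochastic integral vanishing after the usual localization, gives
\[
\delta Y_t=\mathbb E[\nabla g(X_T)-\nabla g(\hat X_T)]+\mathbb E\!\int_t^T\big(\partial_x h_s(X_s,Y_s)-\partial_x h_s(\hat X_s,\hat Y_s)\big)\,\De s.
\]
The key point is that both integrands vanish on $\{X_s=\hat X_s\}$: since $Y_s=\nabla\varphi^{T,g}_s(X_s)$, equality of the states forces $Y_s=\hat Y_s$, hence equality of $\partial_x h_s$, and likewise for $\nabla g$. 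Off these events I bound crudely: from \eqref{eq:dphs}, $|\partial_x h_s(z,\nabla\varphi^{T,g}_s(z))|\le\const_x^{b_s}|\nabla\varphi^{T,g}_s(z)|+\|\partial_x\ell_s\|_\infty\le\const_x^{b_s}\const_x^{\varphi_s}+\const^{\ell_s}_x$, using $\|\nabla\varphi^{T,g}_s\|_\infty\le\|\varphi^{T,g}_s\|_{f_{\kappa_b}}=\const_x^{\varphi_s}$ (as $f_{\kappa_b}(r)\le r$) and \Cref{ass:mild-reg}; and $|\nabla g(z)-\nabla g(\hat z)|\le\min\{2\const^g_x\mathbf 1_{z\neq\hat z},\,\const^g_{xx}|z-\hat z|\}$. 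Plugging the contraction and hitting estimates of the previous paragraph and dividing by $|x-\hat x|$ gives exactly \eqref{eq:hessian_bound_val_fun_linear}, proving \ref{item:linearized_hess_est_1}. The Lipschitz bound \eqref{eq:hessian_bound_control} on the optimal policy then follows by differentiating the first-order condition $\partial_u\ell_t(x,w_t(x,\nabla\varphi^{T,g}_t(x)))+\nabla\varphi^{T,g}_t(x)=0$ in $x$ and inverting $\partial_{uu}\ell_t\succeq\rho^{\ell}_{uu}\mathrm I$, as in \Cref{lem:Est_Contr_Hamiltonian}.

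For \ref{item:linearized_hess_est_2} I would keep the same reflection coupling of the forward processes but extract an extra exponential factor from the backward equation. Write $\partial_x h_s(X_s,Y_s)-\partial_x h_s(\hat X_s,\hat Y_s)=-\partial_x b_s(X_s)\,\delta Y_s+R_s$ with $R_s:=[\partial_x b_s(X_s)-\partial_x b_s(\hat X_s)]\hat Y_s+[\partial_x\ell_s(X_s,u_s)-\partial_x\ell_s(\hat X_s,\hat u_s)]$, where $u_s,\hat u_s$ are the two optimal controls. The linear part of the BSDE for $\delta Y$ is then $-\partial_x b_s(X_s)\succeq\rho_b\mathrm I$, which has the favorable sign, so a Grönwall argument on $s\mapsto e^{\rho_b(s-t)}\mathbb E[|\delta Y_s|]$ (using $\langle-\partial_x b_s(X_s)v,v\rangle\ge\rho_b|v|^2$ pointwise) yields $|\delta Y_t|\le e^{-\rho_b(T-t)}\mathbb E[|\delta Y_T|]+\int_t^T e^{-\rho_b(s-t)}\mathbb E[|R_s|]\,\De s$. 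As before, $R_s$ and $\delta Y_T$ vanish on $\{X_s=\hat X_s\}$ and $\{X_T=\hat X_T\}$ respectively, and are there bounded by $2(\const_x^b\const_x^{\varphi_s}+\const^{\ell_s}_x)$ and $\const^g_x$; inserting $\mathbb P[X_s\neq\hat X_s]\le q^{\bar\kappa}_{s-t}|x-\hat x|$ gives the stated estimate, and \eqref{eq:hessian_bound_control} is unchanged.

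The main obstacle I expect is the rigorous justification of the expectation and Grönwall manipulations on the BSDE: a priori $Z=\nabla^2\varphi^{T,g}$ is only locally bounded (global boundedness is the conclusion), so one must first localize with stopping times $\tau_n\uparrow T$ on which $\int_t^{\tau_n}Z_s\,\De\bar B_s$ is a true martingale, pass to the limit using the uniform moment bounds of \Cref{prop_existence_moment_estimate} and the a priori local bounds coming from the $C^{1,3}$ regularity, and only then optimize over $x,\hat x$. Once this bookkeeping is in place, the argument reduces to combining the three displayed ingredients above.
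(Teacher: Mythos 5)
Your proposal matches the paper's proof: reflection coupling of the two optimal forward processes with $\alpha\equiv 0$ and $\beta_s=b_s+w_s(\cdot,\nabla\varphi^{T,g}_s(\cdot))$, the observation that $\kappa_\beta\geq\bar\kappa$ via the control bound, and the reading of $\nabla\varphi^{T,g}_t$ off the coupled backward equations, with the integrands vanishing on $\{X_s=\hat X_s\}$ so that \Cref{prop:contr_same_drift} delivers both estimates; for (ii) the decomposition into $-\partial_x b_s(X_s)\delta Y_s$ plus a remainder and the ensuing Grönwall step is exactly the paper's argument. The only cosmetic difference is that you work directly with the integrated form $\delta Y_t = \mathbb{E}[\delta Y_T+\int_t^T(\cdots)\,\De s]$ whereas the paper applies Itô–Tanaka to $|\cdot|_a$ and lets $a\downarrow 0$ — these give the same bounds.
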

As it was the case for Lipschitz estimates, the Lipschitz bound on the optimal control policy at item \ref{item:linearized_hess_est_2} is a direct consequence of the definition of $h_t$ and \Cref{ass-coercivity}, see \Cref{lem:Est_Contr_Hamiltonian} for details. 
\begin{proof}
We start with the proof of \ref{item:linearized_hess_est_1}. For this, fix $t,x,\hat{x}$ and consider coupling by reflection of the optimal processes corresponding to the intital conditions $(t,x)$ and $(t,\hat{x})$. That is to say, we consider the pair $(X_s,\hat{X}_s)_{s\in[t,T]}$ given by \eqref{eq:coup_by_ref_contraction_2}-\eqref{eq:coup_by_ref_contraction_2_2} for the choices 
\bes
\begin{split}
\beta_s(x) =b_s(x)+w_s(x,\nabla\varphi^{T,g}_s(x)),\quad 
\alpha_s(x)  =0 \quad 
\quad \forall x \in\bbR^d,s\in[t,T],
\end{split}
\ees
where $w_s(x,p)$ is the optimal Markov policy defined at \eqref{eq:optimal_policy_def}. 
Moreover, \Cref{lemma:gradient_estimate_linearized problem}-\ref{item:grad_est_Lip} implies that
\begin{equation}\label{eq:kappa_1_majorization}
\kappa_{\beta_s}(r) \geq \kappa_b(r)-\frac{2\const^{u_s}}{r}\geq \bar\kappa(r) \quad\forall s\in[t,T],
\end{equation}
with $\bar\kappa$ as in \eqref{eq:kappa_hessian_estimate}.
If we define the adjoint processes $(Y_\cdot,Z_\cdot),(\hat{Y}_\cdot,\hat{Z}_\cdot)$ as in \eqref{eq:def_adjoint}, \Cref{prop:Pontryagin} gives
\be\label{eq:adj_coupl}
\bec
\De Y_s &= -[\partial_{x} \ell_s(X_s,w_s(X_s,Y_s)) +\partial_x b_s(X_s) Y_s]\De s
    + Z_s \, \De B_s\eqsp,\quad Y_T=\nabla g(X_T),\\
    \De \hat{Y}_s &= -[\partial_{x} \ell_s(\hat{X}_s,w_s(\hat{X}_s,\hat{Y}_s)) +\partial_x b_s(\hat{X}_s) \hat{Y}_s]\De s
    + \hat{Z}_s \, \De \hat{B}_s\eqsp,\quad
 \hat{Y}_T=\nabla g (\hat{X}_T).
\eec
\ee
Defining $e^{Y}_s=|Y_s-\hat{Y}_s|^{-1} (Y_s-\hat{Y}_s)$ for $s<T_0$ 
and $e^{Y}_s=v$ with $v$ some vector of unitary length for $s>T_0$, we find, applying It\^o formula to the convex function $|\cdot|_a=\sqrt{|\cdot|^2+a}$, using \eqref{eq:adj_coupl} and eventually letting $a\downarrow 0$ 
\begin{equation}\label{eq:diff_of_Y_Ito_differential}
\begin{split}
 \bbE[|Y_s-\hat{Y}_s|]-\bbE[|Y_{s'}-\hat{Y}_{s'}|] \leq \int_s^{s'}\bbE[( \partial_xb_r(X_r) Y_r -  \partial_xb_r(\hat{X}_r) \hat{Y}_r)\cdot \rme^Y_r\\+ |\partial_x\ell_r(X_r,Y_r) -\partial_x\ell_r(\hat{X}_r,\hat{Y}_r)| ] \De r, \quad \forall s,s'\in[t,T].
\end{split}
\end{equation}
 To prove the first of the two estimates we start by setting $s=t,s'=T$ and observe that
 \bes
 \begin{aligned}
|( \partial_xb_r(X_r)\cdot Y_r -  \partial_xb_r(\hat{X}_r)\hat{Y}_r)\rme^Y_r| &\leq 2 \const_x^{b_s}\const_x^{\varphi_r}\IND_{\{X_r\neq \hat{X}_r\}}\\
|\partial_x\ell_r(X_r,Y_r) -\partial_x\ell_r(\hat{X}_r,\hat{Y}_r)| &\leq 2\const_x^{\ell_r}\IND_{\{X_r\neq\hat{X}_r\}}\\
|\partial_xg(X_T)-\partial_xg(\hat{X}_T)|&\leq\min\{ 2\const^g_x\IND_{\{X_T\neq\hat{X}_T\}}, \const^g_{xx}|X_T - \hat{X}_T|\}.
 \end{aligned}
 \ees
Plugging these bounds in \eqref{eq:diff_of_Y_Ito_differential} gives
\bes
|\nabla\varphi^{T,g}_t(x)-\nabla\varphi^{T,g}_t(\hat{x})|\leq \int_t^T 2(\const_x^b\const_x^{\varphi_r}+\const^{\ell_r}_x)P[X_r\neq \hat{X}_r]\,\De r +\min\{2\const_x^g\,P[X_T\neq \hat{X}_T],\const^g_{xx}\bbE_P[|X_T - \hat{X}_T|]\}
\ees
The desired bound \eqref{eq:hessian_bound_val_fun_linear} now follows from \eqref{eq:kappa_1_majorization} and \Cref{prop:contr_same_drift}. The proof of \ref{item:linearized_hess_est_1} is now complete and we can proceed to the proof of \eqref{item:linearized_hess_est_2}. To this aim, we resume from \eqref{eq:diff_of_Y_Ito_differential} and observe that thanks to the assumption on $\partial_xb_r $ we have
\bes
\begin{split}
(\partial_xb_r(X_r) Y_r -  \partial_xb_r(\hat{X}_r)\hat{Y}_r)\cdot\rme^Y_r &= (\partial_xb(X_r)(Y_r -  \hat{Y}_r))\cdot \rme^Y_r\\
&+((\partial_xb(X_r)-\partial_xb(\hat{X}_r))\hat{Y}_r)\cdot\rme^Y_r \\
&\leq -\rho_b|Y_r-\hat{Y}_r|+2\const^{b_r}_{x}\const^{\varphi_r}_x\IND_{\{X_r\neq\hat{X}_r\}}
\end{split}
\ees
Handling the other terms in the right hand side of \eqref{eq:diff_of_Y_Ito_differential} as we did in the previous case, we obtain that, upon setting $\zeta(s)=\bbE[|Y_s-\hat{\bar{Y}}_s|]$ we have that 
\bes
 \zeta(s)\leq -\rho_b\int_{s}^{s'}\zeta(r)\De r+ \int_s^{s'}2(\const^{b_r}_{x}\const^{\varphi_r}_x+\const_x^{\ell_r})P[X_r\neq\hat{X}_r]\De r+ \zeta(s') \quad \forall s,s'\in[t,T].
\ees
The desired conclusion now follows invoking \Cref{prop:contr_same_drift}-\ref{item:final_TV_coup_by_ref}
and an application of a suitable version of Grönwall's lemma, see \Cref{lem:Gronwall_weak_form}.

\end{proof}

\subsubsection{Non-constant diffusion coefficient}\label{sec:Hess_sigma_non_const}

\begin{lemma}\label{thm:lin_Hess_est}
Assume \Cref{ass:drift_field_fin_dim_control}, \Cref{ass-coercivity},\Cref{ass:mild-reg}, \Cref{ass-Pontryagin}. Moreover, set
\be\label{eq:hess_bound_gen_form}
\const^{\ell,T}_x=\sup_{t\in[0,T]}\const^{\ell_s}_x, \quad\const^{b,T}_x=\sup_{t\in[0,T]} \const^{b_s}_x.
\ee
\begin{enumerate}[label=(\roman*)]
\item\label{item:Hess_est_mild_Hessbdd} There exists a finite positive constant $\const_{xx}(\cdot)$ depending only on $\const^{\sigma}_{x},\kappa_b,\rho^{\ell}_{uu},\const^{\ell(\cdot,0)}_u,\const^{b,T}_{x},\const^{\ell,T}_x,\sup_{s\leq T}\const^{\varphi_s}_x,\const^{\sigma}_{xx}$ and such that for all $t<T$
\begin{equation}\label{eq:gen_hess_est_1}
\begin{split}
    \| \nabla \varphi^{T,g}_t\|_{\Lip}  =\const^{\varphi_t}_{xx}\leq \const_{xx}(\const^{\sigma}_{x},\kappa_b,\rho^{\ell}_{uu},\const^{\ell(\cdot,0)}_u,\const^{b,T}_{x},\const^{\ell,T}_x,\sup_{s\leq T}\const^{\varphi_s}_x,\const^{\sigma}_{xx})
    +4(\const_{x}^g)^2\tilde{C}^{-1}_{\bar\kappa}e^{-\tilde\lambda_{\bar\kappa}(T-t)}\\
    +(2\const_x^g+4(\const^g_x)^2)\tilde{q}^{\bar\kappa}_{T-t},
    \end{split}
    \end{equation}
where $\const_x^{\varphi_s}$ is given in \eqref{eq:grad_est_high_reg}.
Moreover, if $\|\nabla g\|_\Lip=\const^g_{xx}<+\infty$ we can replace $(2\const_x^g+4(\const^g_x)^2)\tilde{q}^{\bar\kappa}_{T-t}$ in the above by
\bes 2(\const^g_x+1)\const^{g}_{xx}\tilde{C}^{-1}_{\bar\kappa}e^{-\tilde\lambda_{\bar\kappa}(T-t) }\ees
\item\label{item:Hess_est_mild_Gradbdd}  
We have
\be\label{eq:hessian_bound_control}
\| w_t(\cdot,\nabla\varphi^{T,g}_t(\cdot)) \|_\Lip =\const^{u_t}_x\leq \frac{{\const^{\varphi_t}_{xx}}+\const^{\ell_t}_{xu}}{\rho^{\ell}_{uu}}
\ees
\end{enumerate}
\end{lemma}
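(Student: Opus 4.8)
The strategy mirrors the constant-coefficient case \Cref{lem:hessian_bound_sigma_constant}, but now the Pontryagin backward equation \eqref{eq:Pontryagin} carries the extra term $\tr(\partial_x\sigma(X_s)^\top Z_s\sigma(X_s))$ involving the (unknown) Hessian process $Z_s$, so a direct Gr\"onwall argument on $\bbE[|Y_s-\hat Y_s|]$ no longer closes. The plan is to couple the two optimal processes using the \emph{interpolated} coupling of \Cref{sec:interpolation}, i.e. \eqref{eq:coup_by_ref_contraction_3}--\eqref{eq:coup_by_ref_contraction_2_3}, with the choice $\beta_s(x)=b_s(x)+w_s(x,\nabla\varphi^{T,g}_s(x))$; by \Cref{lemma:gradient_estimate_linearized problem}-\ref{item:grad_est_Lip} this drift has monotonicity profile bounded below by $\bar\kappa(r)=\kappa_b(r)-r^{-1}\sup_{s\le T}2\const^{u_s}$, the same $\bar\kappa$ as in the constant case. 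The reason for using the interpolated coupling rather than plain reflection coupling is exactly the presence of the synchronous Brownian component $B^2$: when we then build the coupling of the two FBSDEs driven by $\bar B,\tilde B$ (as in the remark after \Cref{prop:contr_same_drift_interp}), the synchronous part lets us control the stochastic integral $\int Z_s\sigma\,\De\bar B_s - \int\hat Z_s\hat\sigma\,\De\tilde B_s$ and, more importantly, absorb the problematic $\tr(\partial_x\sigma^\top Z\sigma)$ term into a contraction.

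Concretely, I would: (i) apply \Cref{prop:Pontryagin} to write the FBSDE \eqref{eq:Pontryagin} for each initial condition $(t,x)$ and $(t,\hat x)$, using the couplings $\bar B$, $\tilde B$ from the interpolated construction; (ii) apply It\^o's formula to $|\cdot|_a=\sqrt{|\cdot|^2+a}$ on $Y_s-\hat Y_s$ and let $a\downarrow0$, getting a differential inequality of the form $\zeta(s)\le \zeta(s')+\int_s^{s'}[\,-c\,\zeta(r)+ (\text{Lipschitz-in-}(X,Y)\text{ terms})\,\IND_{\{X_r\ne\hat X_r\}}+ (\text{curvature terms from }\partial_x\sigma)\,]\,\De r$, where the drift curvature term $\partial_p h_s$ is handled via $-\partial_x b$ style one-sided bounds and the diffusion curvature term is controlled by $\const^\sigma_{xx}$ together with the uniform gradient bound $\sup_s\const^{\varphi_s}_x$ and, crucially, by $\bbE[|X_r-\hat X_r|^2]$, which decays exponentially by \Cref{prop:contr_same_drift_interp}-\ref{item:h_dominates_square} through the function $\tilde f_{\bar\kappa,2}$ dominating $r^2$; (iii) bound $\bbP[X_r\ne\hat X_r]$ by $\tilde q^{\bar\kappa}_{r-t}\bbE[\tilde f_{\bar\kappa}(|x-\hat x|)]$ via \Cref{prop:contr_same_drift_interp}-\ref{item:final_TV_coup_by_ref_interp}, and the terminal term $|\nabla g(X_T)-\nabla g(\hat X_T)|$ by $\min\{2\const^g_x\IND_{\{X_T\ne\hat X_T\}},\,\const^g_{xx}|X_T-\hat X_T|\}$; (iv) combine these with a weak Gr\"onwall lemma (\Cref{lem:Gronwall_weak_form}) to extract $\const^{\varphi_t}_{xx}$, dividing by $|x-\hat x|$ and taking the supremum over $x\ne\hat x$. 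The squared terminal contribution $4(\const^g_x)^2\tilde C^{-1}_{\bar\kappa}e^{-\tilde\lambda_{\bar\kappa}(T-t)}$ and $4(\const^g_x)^2\tilde q^{\bar\kappa}_{T-t}$ in \eqref{eq:gen_hess_est_1} arise precisely from the need to bound cross-terms of the type (curvature)$\times|Y_T-\hat Y_T|$ by a square completion, which forces the constant $\const^g_x$ to enter quadratically; when $\nabla g$ is itself Lipschitz one bounds $|Y_T-\hat Y_T|$ linearly in $|X_T-\hat X_T|$ instead and recovers the cleaner $2(\const^g_x+1)\const^g_{xx}\tilde C^{-1}_{\bar\kappa}e^{-\tilde\lambda_{\bar\kappa}(T-t)}$. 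Item \ref{item:Hess_est_mild_Gradbdd} is then immediate from \Cref{ass-coercivity} and the Hamiltonian computation in \Cref{lem:Est_Contr_Hamiltonian}, exactly as in the constant case.

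The main obstacle is step (ii): making the a priori unknown Hessian process $Z_s$ not appear uncontrollably. Unlike the constant-$\sigma$ case, one cannot simply drop the $Z$-dependent drift term, and the naive estimate would require an a priori Hessian bound — circular. The resolution is that the term $\tr(\partial_x\sigma(X_s)^\top Z_s\sigma(X_s)) - \tr(\partial_x\sigma(\hat X_s)^\top\hat Z_s\sigma(\hat X_s))$ splits into a part proportional to $\|Z_s\|\,\|\sigma(X_s)-\sigma(\hat X_s)\|$ (bounded using $\const^\sigma_x$, $\const^\sigma_{xx}$ and the \emph{already-established gradient Lipschitz bound} only through $|X_s-\hat X_s|$, not through $Z$) plus a genuinely problematic part $\tr((\partial_x\sigma)^\top(Z_s-\hat Z_s)\sigma)$; this last one must be matched against the martingale part $\int(Z_s\sigma\,\De\bar B_s-\hat Z_s\hat\sigma\,\De\tilde B_s)$ using the synchronous component so that, after It\^o and taking expectations, it contributes only through $\bbE[|X_s-\hat X_s|^2]$-type quantities — which is exactly why \Cref{prop:contr_same_drift_interp}-\ref{item:h_dominates_square} (contraction in a distance dominating $W_2$) was proved. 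Getting the algebra of this cancellation right, and tracking that all resulting constants depend only on the quantities listed in the statement (in particular \emph{not} on $\const^\ell_{xu}$ for the ergodic Hessian bound, since that constant only enters the control-Lipschitz estimate \eqref{eq:hessian_bound_control}, not the value-function Hessian), is the delicate part of the proof.
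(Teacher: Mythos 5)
Your plan gets the scaffolding right: the interpolated coupling of \Cref{sec:interpolation}, the $\tilde f_{\bar\kappa,2}$ contraction of \Cref{prop:contr_same_drift_interp}-\ref{item:h_dominates_square} to control $W_2$-type error terms, and \Cref{lem:Est_Contr_Hamiltonian} for item \ref{item:Hess_est_mild_Gradbdd} all appear in the paper's proof. But the resolution you give for what you yourself flag as the circularity is itself circular, and the one structural move that actually closes the estimate is missing. You propose bounding the $Z$-dependent trace error by splitting off a piece "proportional to $\|Z_s\|\,\|\sigma(X_s)-\sigma(\hat X_s)\|$" which you claim is controlled "only through $|X_s-\hat X_s|$, not through $Z$". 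This cannot work: $\|Z_s\|=\|\nabla^2\varphi^{T,g}_s(X_s)\|$ is exactly the quantity being estimated, and no a priori uniform-in-$T$ Hessian bound is available (that is the whole point of the lemma). Writing the trace error in the sum/difference variables $\eta_s=Z_s-\hat Z_s$, $\zeta_s=Z_s+\hat Z_s$, it is $\cO(\tr(\eta_s^\top\eta_s)^{1/2}+r^X_s\tr(\zeta_s^\top\zeta_s)^{1/2})$; the $\eta$-piece is indeed absorbed, as you suggest, by the $\tfrac{\sigma_0^2}{2}\tr(\eta_s^\top\eta_s)$ coming from the quadratic variation of $Y-\hat Y$ (synchronous noise), but no $\zeta$-quadratic term appears anywhere in the It\^o differential of $\distY(r^Y_s)$ alone, so the $r^X_s\tr(\zeta_s^\top\zeta_s)^{1/2}$ cross term is left uncontrolled and the Gr\"onwall argument you sketch does not close.

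The missing idea is the corrector, made precise in \Cref{lemm:hess_bound_sigma_corrector}: the quantity one must analyse is $\distY(r^Y_s)+|Y_s+\hat Y_s|^2 r^X_s$ with $\distY(r)=r+r^2$, not $r^Y_s$ alone. The It\^o differential of the corrector carries $r^X_s$ times the quadratic variation of $|Y_s+\hat Y_s|^2$, which supplies the positive term $\tfrac{\sigma_0^2}{2}r^X_s\tr(\zeta_s^\top\zeta_s)$ needed for the quadratic minimization in $\zeta_s$; the covariation $\De[|Y_\cdot+\hat Y_\cdot|^2,r^X_\cdot]_s$ and $|Y_s+\hat Y_s|^2\De r^X_s$ only produce further terms of the same type. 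Minimizing jointly over $\eta_s,\zeta_s$ then leaves $\cO\big((r^X_s)^2+r^X_s+\IND_{\{r^X_s>0\}}\big)\De s+\De M_s$, entirely free of any Hessian, and only at that point do the contraction estimates of \Cref{prop:contr_same_drift_interp} enter. This structure also accounts for the constants: $\bbE[|Y_T+\hat Y_T|^2 r^X_T]\le 4(\const^g_x)^2\bbE[r^X_T]$ comes from evaluating the corrector at the terminal time, and $\bbE[\distY(r^Y_T)]\le(2\const^g_x+4(\const^g_x)^2)\,\bbP[X_T\neq\hat X_T]$ from the $r^2$-piece of $\distY$; it is not the "square completion on cross terms involving $|Y_T-\hat Y_T|$" you invoke. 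When $\nabla g$ is Lipschitz one bounds $r^Y_T\le\const^g_{xx}r^X_T$ directly instead and obtains the $\const^g_{xx}$ variant.
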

As before, the proof is based on a coupling between the optimal processes relative to the initial conditions $(t,x)$ and $(t,\hat{x})$. However, deviating from what we have done so far we will not use standard coupling by reflection, but rather work with the interpolation between reflection and synchronous coupling we described at \Cref{sec:interpolation} for the choice
\bes \beta_s(x)=b(x)+w_s(x,\nabla\varphi^{T,g}_s(x)), \quad  \forall s\in[t,T],x\in\bbR^d.
\ees
The reason is roughly the following. While standard coupling by reflection creates precisely noise in the only direction associated to a non-positive (actually zero) eigenvalue of second derivative of the Euclidean distance $x \mapsto |x|$, helping us gain contractivity in the twisted metrics defined by $f_\kappa$, this is not enough for the adjoint dynamics $(Y_s)_s$ because we do not have information on the geometry induced by the associated diffusion coefficient $(Z_s)_s$. In that case the straightforward thing which seems to help is a synchronous coupling in order to account for all directions in the same way.
More precisely, given three independent Brownian motions $(B^1_{s},B^2_{s},B^3_{s})_{s\in[t,T]}$ we consider a weak solution of  

\begin{equation}\label{eq:coup_by_ref_int_4}
\begin{cases}
\De X_s = \partial_p h_s(X_s,Y_s)\De s+\frac{1}{\sqrt{2}}\sigma_0 \De B^1_s +\frac{1}{\sqrt{2}}\sigma_0 \De B^2_s + \bar{\sigma}(X_s) \De B^3_s, \, X_t = x \eqsp,\\
\De \hat{X}_s = \partial_p h_s(\hat{X}_s,\hat{Y}_s)\De s+ \frac{1}{\sqrt{2}}\sigma_0 \De \hat{B}^1_s +\frac{1}{\sqrt{2}}\sigma_0 \De B^2_s+  \bar{\sigma}(\hat{X}_s) \De B^3_s, \, \hat{X}_t = \hat{x} \eqsp ,
\end{cases}
\end{equation}
with 
\bes 
Y_s=\nabla\varphi^{T,g}_s(X_s),\quad \hat{Y}_s=\nabla\varphi^{T,g}_s(\hat{X}_s).
\ees
In accordance with the notation used so far, we have
\bes
\bar{\sigma}(x) =\big(\sigma(x)\sigma(x)^{\top} - \sigma_0^2\mathrm{I}\big)^{1/2}, \quad \De\hat{B}^1_s= (\mathrm{I}-2 \,\rme^X_s \cdot (\rme^X)^{\top}_s \IND_{T_0 > s})\cdot \De B^1_s
\ees
and
\bes
T_0 = \inf\{s \geq 0: X_s = \hat{X}_s \}, \quad \rme^X_s = \frac{X_s-\hat{X}_s}{|X_s-\hat{X}_s|} .
\ees
We also introduce the process 
\bes
\De B^{1,\perp}_s =\De B^1_s - \rme_s \De W_s, \quad \De W_s= \rme_s\cdot\De B^{1}_s,\quad s\in[t,T],
\ees
and note that $W_\cdot$ is a one-dimensional Brownian  motion and $(B^{1,\perp}_s,W_s,B^2_s,B^3_s)_{s\in[t,T]}$ are independent processes by construction.
Moreover, we recall that thanks to \Cref{prop:Pontryagin} the dynamics of the adjoint processes is given for $i=1,\ldots,d$ by
\be\label{eq:pontryagin_sec_der_sigma}
    \begin{cases}
        \De Y^i_s= -\partial_{x_i}h_s(X_s,Y_s)\De s-\tr(\partial_{x^i} \sigma(X_s)^{\top}  Z_s \sigma(X_s))\De s + Z^i_s \sigma(X_s) \De B_s,\\
        \De \hat{Y}^i_s= -\partial_{x_i}h_s(\hat{X}_s,\hat{Y}_s)\De s -\tr(\partial_{x^i} \sigma(\hat{X}_s)^{\top}  \hat{Z}_s \sigma(\hat{X}_s))\De s + \hat{Z}^i_s\sigma(\hat{X}_s)\De \hat{B}_s,
    \end{cases}
 \ee
 where 
 \bes
Z_s=\nabla^2\varphi_s^{T,g}(X_s), \quad\hat{Z}_s=\nabla^2\varphi_s^{T,g}(\hat{X}_s),
 \ees
 the Brownian motions $B_\cdot,\hat{B}_\cdot$ are defined by
\bes
\begin{split}
\De B_s &:= \sigma(X_s)^{-1}\Big(\frac{1}{\sqrt{2}}\sigma_0 \De B^1_s +\frac{1}{\sqrt{2}}\sigma_0 \De B^2_s + \bar{\sigma}(X_s) \De B^3_s\Big),\\
\De \hat B_s &:= \sigma(\hat X_s)^{-1}\Big(\frac{1}{\sqrt{2}}\sigma_0 \De \hat B^1_s +\frac{1}{\sqrt{2}}\sigma_0 \De B^2_s + \bar{\sigma}(\hat X_s) \De B^3_s\Big).
\end{split}
\ees
From now on, to increase readability, given a non-negative stochastic process $(\xi_s)_{s \in [t,T]}$ on we write $\cO(\xi_s)$ to denote a stochastic process $(\chi_s)_{s\in[t,T]}$ such that almost surely we have 
\bes 
|\chi_s| \leq C\xi_s \quad \forall s\in[t,T],
\ees
where
$C>0$ is a constant depending only on the quantities appearing at \Cref{ass:drift_field_fin_dim_control}, \Cref{ass-coercivity} \Cref{ass:mild-reg} and \Cref{ass-Pontryagin} and in particular independent of time. In order to prove \Cref{thm:lin_Hess_est} we need the following preliminary lemma.

\begin{lemma}\label{lemm:hess_bound_sigma_corrector}
Under the hypothesis of \Cref{thm:lin_Hess_est}, define \bes
\eta_s: =Z_s-\hat{Z}_s,\,\, \zeta_s:=Z_s+\hat{Z}_s,\,\, r^Y_s := |Y_s - \hat{Y}_s|, r^X_s=|X_s - \hat{X}_s| \quad s\in [t,T].
\ees
Then, the following holds.
\begin{enumerate}[label=(\roman*)]
\item\label{item:Est_dyn_distY} Let $\distY:\bbR_{\geq0}\longrightarrow\bbR_{\geq0}$ be given by $\distY(r)=r+r^2$. Then we have
\be\label{eq:dynamics_convex_modification_adjoint}
\begin{split}
\De \distY(r^Y_s) &\geq   \cO(\IND_{\{r^X_s>0\}})\De s+\cO(\tr(\eta^{\top}_s\eta_s)^{1/2}+r^X_s\tr(\zeta^{\top}_s\zeta_s)^{1/2})\De s +\frac{\sigma^2_0}{2}\tr(\eta^{\top}_s\eta_s)+ \De M^{\distY}_s,
\end{split}
\ee
where  $M^{\distY}_\cdot$ is a square integrable martingale.

\item\label{item:Est_Dyn_Y+hatY} We have
\bes
\begin{split}
\De |Y_s+\hat Y_s|^2 \geq \big( \cO(1)+\cO(\tr(\eta^{\top}_s\eta_s)^{1/2}+\tr(\zeta^{\top}_s\zeta_s)^{1/2})\big)\De s + \frac{\sigma^2_0}{2}\tr(\zeta^{\top}_s\cdot\zeta_s)\De s+2 ( Y_s+\hat Y_s)\cdot \De M^{Y+\hat Y}_s ,
\end{split}
\ees
where  $M^{Y+\hat Y}_\cdot$ is a square integrable martingale.
\item\label{item:d_rX} We have
\be\label{eq:d_rX}
\begin{split}
\De r^X_s&= \cO(r^X_s+\IND_{r^X_s>0})\De s+\sqrt{2}\sigma_0 \De W_s + \rme^{\top}_s( \bar\sigma(X_s)-\bar\sigma(\hat{X}_s))\De B^3_s.
\end{split}
\ee
\item\label{item:Est_Quad_Var_rX_Y+hatY} The quadratic covariation between $|Y_\cdot+\hat{Y}_\cdot|^2_\cdot$ and $r^{X}_\cdot$ satisfies
\be\label{eq:dynamics_corrector_term}
\De [|Y_\cdot+\hat{Y}_\cdot|^2,r^X_\cdot]_s 
=\cO\big(\tr(\eta^T_s\eta_s)^{1/2}+r^X_s\tr(\zeta^T_s \zeta_s)^{1/2}\big)\De s.
\ee
\end{enumerate}
\end{lemma}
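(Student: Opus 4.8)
The four items are purely It\^o-calculus identities for the coupled forward system \eqref{eq:coup_by_ref_int_4} and the adjoint FBSDE \eqref{eq:pontryagin_sec_der_sigma}: the plan is to compute the relevant stochastic differentials, split drift from martingale, and bound every resulting term with the a priori information already at hand, namely uniform-in-time boundedness of $\nabla\varphi^{T,g}$ (\Cref{lemma:gradient_estimate_linearized problem}), $\varphi^{T,g}\in C^{1,3}((0,T)\times\bbR^d)$ (established inside the proof of \Cref{prop:Pontryagin}), boundedness of $b,\partial_x b,\sigma,\partial_{x^i}\sigma$ with $\sigma,\partial_{x^i}\sigma$ Lipschitz (\Cref{ass:drift_field_fin_dim_control}, \Cref{ass-Pontryagin}), boundedness and Lipschitzness of $\bar\sigma$, and global Lipschitzness of $w_s$ in $(x,p)$. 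Two structural observations organise everything. First, since $\nabla\varphi^{T,g}_s$ is a bounded function, the maps $x\mapsto\partial_p h_s(x,\nabla\varphi^{T,g}_s(x))$ and $x\mapsto\partial_x h_s(x,\nabla\varphi^{T,g}_s(x))$ are bounded, so \emph{differences} of their values at $X_s$ and $\hat X_s$ are $\cO(\IND_{\{r^X_s>0\}})$ (bounded times indicator) while \emph{sums} are only $\cO(1)$; this is exactly the discrepancy between item \ref{item:Est_dyn_distY} and item \ref{item:Est_Dyn_Y+hatY}. Second, $B^2$ is the only driving Brownian motion entering \eqref{eq:coup_by_ref_int_4} with identical coefficients in both copies, so the $B^2$-part of the diffusion of $Y_s-\hat Y_s$ (resp.\ $Y_s+\hat Y_s$) is $\tfrac{\sigma_0}{\sqrt2}\eta_s\De B^2_s$ (resp.\ $\tfrac{\sigma_0}{\sqrt2}\zeta_s\De B^2_s$), and it alone generates the sign-definite quadratic-variation terms $\tfrac{\sigma_0^2}{2}\tr(\eta_s^\top\eta_s)$ and $\tfrac{\sigma_0^2}{2}\tr(\zeta_s^\top\zeta_s)$. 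All It\^o applications to $r\mapsto|r|$ are made rigorous as elsewhere in the paper, via $|\cdot|_a=\sqrt{|\cdot|^2+a}$ and $a\downarrow0$, and square-integrability of the martingales (which involve $Z_s=\nabla^2\varphi^{T,g}_s(X_s)$, not yet known to be bounded) is handled by stopping the processes when $|X_s|$ leaves a ball and restricting to $[t+\varepsilon,T-\varepsilon]$, then removing the localisation.

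I would first dispatch item \ref{item:d_rX}, which is the most standard. It\^o's formula for $r^X_s=|X_s-\hat X_s|$ gives a drift $\rme^X_s\cdot\bigl(\partial_p h_s(X_s,Y_s)-\partial_p h_s(\hat X_s,\hat Y_s)\bigr)$ plus the nonnegative second-order term $\tfrac1{2r^X_s}\bigl(\|\bar\sigma(X_s)-\bar\sigma(\hat X_s)\|_{\frob}^2-|(\bar\sigma(X_s)-\bar\sigma(\hat X_s))^\top\rme^X_s|^2\bigr)$; the form of $\partial_p h_s$ in \eqref{eq:dphs} together with boundedness of $\nabla\varphi^{T,g}$ and Lipschitzness of $b_s$ and $w_s$ makes the first piece $\cO(r^X_s+\IND_{\{r^X_s>0\}})$, while Lipschitzness of $\bar\sigma$ bounds the second by $\tfrac C2 r^X_s$, so it is absorbed; on the diffusion side the $B^2$-increments cancel between the two copies, the reflected $B^1$-increment contracts with $\rme^X_s$ into $\sqrt2\,\sigma_0\De W_s$, and the $B^3$-increment gives $\rme^X_s{}^\top(\bar\sigma(X_s)-\bar\sigma(\hat X_s))\De B^3_s$, which is \eqref{eq:d_rX}.

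Next I would treat items \ref{item:Est_dyn_distY} and \ref{item:Est_Dyn_Y+hatY} together, by computing $\De|Y_s\mp\hat Y_s|^2=2(Y_s\mp\hat Y_s)\cdot\De(Y_s\mp\hat Y_s)+\De[Y\mp\hat Y]_s$ from \eqref{eq:pontryagin_sec_der_sigma} after decomposing the diffusion coefficients of $\De Y_s$ and $\De\hat Y_s$ along $B^1,B^2,B^3$ (the $B^1$-coefficient of $\De\hat Y_s$ carrying the reflection matrix $\Idd-2\rme^X_s(\rme^X_s)^\top$). In the quadratic variation only the $B^2$-term is retained — yielding $\tfrac{\sigma_0^2}{2}\tr(\eta_s^\top\eta_s)$, resp.\ $\tfrac{\sigma_0^2}{2}\tr(\zeta_s^\top\zeta_s)$ — and all remaining contributions are dropped, being nonnegative and irrelevant for a lower bound. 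In the drift, the $\partial_x h_s$-contribution to $\De(Y_s-\hat Y_s)$ is $\cO(\IND_{\{r^X_s>0\}})$ (resp.\ $\cO(1)$ for $\De(Y_s+\hat Y_s)$), while the trace corrector $\tr(\partial_{x^i}\sigma(X_s)^\top Z_s\sigma(X_s))\mp\tr(\partial_{x^i}\sigma(\hat X_s)^\top\hat Z_s\sigma(\hat X_s))$, after inserting $Z_s=\tfrac12(\zeta_s+\eta_s)$ and $\hat Z_s=\tfrac12(\zeta_s-\eta_s)$, splits into a part proportional to $\eta_s$ with bounded matrix factors — $\cO(\|\eta_s\|_{\frob})$ — and a part proportional to $\zeta_s$ involving a difference of the Lipschitz matrices $\sigma,\partial_{x^i}\sigma$ at $X_s$ versus $\hat X_s$ — $\cO(r^X_s\|\zeta_s\|_{\frob})$; multiplying by $2(Y_s-\hat Y_s)$ and using $|Y_s-\hat Y_s|\le C\IND_{\{r^X_s>0\}}$ (resp.\ by $2(Y_s+\hat Y_s)$ with $|Y_s+\hat Y_s|\le C$) preserves the stated form. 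Finally, for $\varrho(r)=r+r^2$ one has $\De\varrho(r^Y_s)=\De r^Y_s+\De|Y_s-\hat Y_s|^2$ and $\De r^Y_s\ge\rme^Y_s\cdot\De(Y_s-\hat Y_s)$ (the second-order term in It\^o's formula for $|\cdot|$ is nonnegative), whose drift is again $\cO(\IND_{\{r^X_s>0\}}+\|\eta_s\|_{\frob}+r^X_s\|\zeta_s\|_{\frob})$; collecting the two martingale parts into a single square-integrable $M^\varrho$ gives \eqref{eq:dynamics_convex_modification_adjoint}, and the same computation with the sum gives item \ref{item:Est_Dyn_Y+hatY}.

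For item \ref{item:Est_Quad_Var_rX_Y+hatY} I would read the martingale parts off the previous step: $|Y_\cdot+\hat Y_\cdot|^2$ has a $B^1$-martingale with coefficient $\sqrt2\,\sigma_0(Y_s+\hat Y_s)^\top(\zeta_s-2\hat Z_s\rme^X_s(\rme^X_s)^\top)$ and a $B^3$-martingale with coefficient $2(Y_s+\hat Y_s)^\top(Z_s\bar\sigma(X_s)+\hat Z_s\bar\sigma(\hat X_s))$ — no $B^2$-component matters because $r^X_\cdot$ has zero $B^2$-coefficient — while $r^X_\cdot$ contributes $\sqrt2\,\sigma_0(\rme^X_s)^\top$ against $B^1$ and $(\rme^X_s)^\top(\bar\sigma(X_s)-\bar\sigma(\hat X_s))$ against $B^3$. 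Pairing $B^1$ with $B^1$, using $(\rme^X_s)^\top\rme^X_s=1$ and the identity $\zeta_s-2\hat Z_s=\eta_s$, collapses that contribution to $\cO(\|\eta_s\|_{\frob})$; pairing $B^3$ with $B^3$, after writing $Z_s\bar\sigma(X_s)+\hat Z_s\bar\sigma(\hat X_s)=\tfrac12\zeta_s(\bar\sigma(X_s)+\bar\sigma(\hat X_s))+\tfrac12\eta_s(\bar\sigma(X_s)-\bar\sigma(\hat X_s))$ and extracting a factor $r^X_s$ from each occurrence of $\bar\sigma(X_s)-\bar\sigma(\hat X_s)$ via Lipschitzness of $\bar\sigma$, gives $\cO(r^X_s\|\zeta_s\|_{\frob}+r^X_s\|\eta_s\|_{\frob})$, the last term being reabsorbed to match \eqref{eq:dynamics_corrector_term}. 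The main obstacle is not a single hard estimate but the cumulative bookkeeping: tracking which of the many drift and cross terms really are of the benign forms $\cO(\IND_{\{r^X_s>0\}})$ and $\cO(\|\eta_s\|_{\frob}+r^X_s\|\zeta_s\|_{\frob})$, and — the delicate point — verifying that the single sign-definite term $\tfrac{\sigma_0^2}{2}\tr(\eta_s^\top\eta_s)$ (resp.\ $\tfrac{\sigma_0^2}{2}\tr(\zeta_s^\top\zeta_s)$) is never absorbed into those $\cO$'s; the synchronous $B^2$-direction in \eqref{eq:coup_by_ref_int_4} is precisely the device that makes this work and the reason the interpolated coupling is used here instead of plain coupling by reflection.
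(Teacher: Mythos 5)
Your proof follows essentially the same route as the paper's: It\^o calculus for $\De(Y_s\mp\hat Y_s)$ and $\De r^X_s$ driven by \eqref{eq:coup_by_ref_int_4}--\eqref{eq:pontryagin_sec_der_sigma}, decomposition of the martingales along the independent directions (you keep $B^1$ rather than splitting into $B^{1,\perp}$ and $W$, but your identity $\zeta_s-2\hat Z_s=\eta_s$ collapses the $B^1$--$B^1$ pairing to exactly the paper's $2\sigma_0^2(Y_s+\hat Y_s)^\top\eta_s\rme^X_s$), retention of the $B^2$-contribution to the quadratic variation to extract $\tfrac{\sigma_0^2}{2}\tr(\eta_s^\top\eta_s)$ resp.\ $\tfrac{\sigma_0^2}{2}\tr(\zeta_s^\top\zeta_s)$, and the $\eta,\zeta$ split for the trace correctors. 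Your two structural observations — bounded differences vs.\ bounded sums, and the synchronous $B^2$-direction being what produces the sign-definite diagonal — are precisely the organising principles the paper uses.

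There is one small but genuine slip in item \ref{item:Est_Quad_Var_rX_Y+hatY}. The $\eta_s$-part of the $B^3$--$B^3$ pairing carries \emph{two} factors of $\bar\sigma(X_s)-\bar\sigma(\hat X_s)$: one from $\tfrac12\eta_s(\bar\sigma(X_s)-\bar\sigma(\hat X_s))$ in the martingale of $|Y_\cdot+\hat Y_\cdot|^2$ and one from $\rme_s^\top(\bar\sigma(X_s)-\bar\sigma(\hat X_s))$ in the martingale of $r^X_\cdot$. If you extract $r^X_s$ ``from each occurrence'' via Lipschitzness, as you say, you get $\cO((r^X_s)^2\|\eta_s\|_\frob)$, not $\cO(r^X_s\|\eta_s\|_\frob)$; and in any case $\cO(r^X_s\|\eta_s\|_\frob)$ cannot simply be ``reabsorbed'' into the stated $\cO\bigl(\tr(\eta_s^\top\eta_s)^{1/2}+r^X_s\tr(\zeta_s^\top\zeta_s)^{1/2}\bigr)$, since $r^X_s$ is not a priori bounded. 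The correct treatment — which is what the paper does — is to bound \emph{one} of the two $\bar\sigma$-differences by the \emph{boundedness} of $\bar\sigma$ (not its Lipschitzness), obtaining $\cO(\|\eta_s\|_\frob)$ for that contribution, which is then of the right form. Everything else, including the localisation and $|\cdot|_a\downarrow|\cdot|$ arguments, the $\cO(\IND_{\{r^X_s>0\}})$ vs.\ $\cO(1)$ dichotomy, and the nonnegative second-order correction for $\De r^X_s$ and $\De r^Y_s$, is handled correctly and as in the paper.
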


\begin{proof}

\begin{itemize}[wide]
\item \textbf{Proof of \ref{item:Est_dyn_distY}}

From \eqref{eq:pontryagin_sec_der_sigma}, we see that the difference $Y_s-\hat{Y}_s$ satisfies
\be\label{eq:d_deltaY}
\begin{split}
\De (Y_s-\hat{Y}_s)&= [-\partial_{x}h_s(X_s,Y_s) + \partial_{x}h_s(\hat{X}_s,\hat{Y}_s)]\De s  \\
&-\tr(\nabla\sigma(X_s)^{\top}\cdot Z_s\cdot\sigma(X_s)-\nabla\sigma(\hat{X}_s)^{\top}\cdot \hat{Z}_s\cdot\sigma(\hat{X}_s))\De s+\De M^{Y-\hat{Y}}_s
\end{split}
\ee
where
\bes
\begin{split}
\De M^{Y-\hat{Y}}_s= \frac{\sigma_0}{\sqrt{2}} \eta_s\De B^{1,\perp}_s+\frac{\sigma_0}{\sqrt{2}} \zeta_s\rme^X_s\De W_s +\frac{\sigma_0}{\sqrt{2}}\eta_s\cdot\De B^2_s+(Z_s\cdot\bar{\sigma}(X_s)-\hat{Z}_s\bar{\sigma}(\hat{X}_s)) \De B^3_s. 
\end{split}
\ees
It follows from \Cref{lemma:gradient_estimate_linearized problem} and \Cref{ass:mild-reg} that 
\be\label{eq:h_x_bound}
[-\partial_{x}h_s(X_s,Y_s) + \partial_{x}h_s(\hat{X}_s,\hat{Y}_s)]=\cO(\IND_{\{r^X_s>0\}})
\ee
Next, observe that for all $i$
\bes
\begin{split}
\tr&\big( \partial_{x_i}\sigma(X_s)^{\top}Z_s\sigma(X_s)-\partial_{x_i}\sigma(\hat{X}_s)^{\top}\hat{Z}_s\cdot\sigma(\hat{X}_s) \big)\\
&=\tr\big((\sigma^{\top}\partial_{x_i}\sigma)(X_s) Z_s-(\sigma^{\top}\partial_{x_i}\sigma)(\hat{X}_s) \hat{Z}_s \big)\\
&=\frac12\tr\big((\sigma^{\top}\partial_{x_i}\sigma(X_i)+\sigma^{\top}\partial_{x_i}\sigma(\hat{X}_i)) \eta_s + (\sigma^{\top}\partial_{x_i}\sigma(X_i)-\sigma^{\top}\partial_{x_i}\sigma(\hat{X}_i)) \zeta_s \big)
\end{split}
\ees
whence, thanks to \Cref{ass:drift_field_fin_dim_control},\Cref{ass-Pontryagin}
\be\label{eq:trace_bound}
\tr\big( \partial_{x_i}\sigma(X_s)\cdot Z_s\cdot\sigma(X_s)-\partial_{x_i}\sigma(\hat{X}_s)\cdot \hat{Z}_s\cdot\sigma(\hat{X}_s) \big) = \cO(\tr(\eta^{\top}_s\eta_s)^{1/2}+r^X_s\tr(\zeta^{\top}_s\zeta_s)^{1/2}).
\ee
Plugging \eqref{eq:trace_bound} and \eqref{eq:h_x_bound} in \eqref{eq:d_deltaY} we obtain that 
\bes
\De (Y_s-\hat Y_s) = [\cO(\IND_{\{r^X_s>0\}})+\cO(\tr(\eta^{\top}_s\eta_s)^{1/2}+r^X_s\tr(\zeta^{\top}_s\zeta_s)^{1/2})]\De s +\De M^{Y-\hat Y}_s
\ees
In view of computing the It\^o differential of $(r^Y_\cdot)^2$ observe that the independence of $(B^{1,\perp}_\cdot,W_\cdot,B^{2}_\cdot,B^3_\cdot)$ implies that, denoting $M^{Y-\hat Y,i}$ the $i$-th component of  $M^{Y-\hat Y}$, 
\bes
\De [M^{Y-\hat Y,i}_\cdot]_s \geq \frac{\sigma^2_0}{2} \sum_{j=1}^d(\eta^{ij}_{s})^2\De s \quad \forall i=1,\ldots,d.
\ees
But then, thanks to \Cref{lemma:gradient_estimate_linearized problem}
\bes
\begin{split}
\De (r^Y_s)^2 &= 2(Y_s-\hat Y_s)\De (Y_s-\hat Y_s) + \sum_{i=1}^d\De[M^{Y-\hat Y,i}]\\
&{\geq}  \cO(\IND_{\{r^X_s>0\}})\De s+\cO(\tr(\eta^{\top}_s\eta_s)^{1/2}+r^X_s\tr(\zeta^{\top}_s\zeta_s)^{1/2})\De s \\
& +\frac{\sigma^2_0}{2}\tr(\eta^{\top}_s\eta_s)\De s+ 2(Y_s-\hat{Y}_s)  \De M^{Y-\hat Y}_s.
\end{split}
\ees
Likewise, using the convexity of $|\cdot|$ and setting $\rme^Y_s=(r^Y_s)^{-1}(Y_s-\hat{Y}_s)$
\bes
\begin{split}
\De r^Y_s &\geq \rme^Y_s\cdot  \De (Y_s-\hat Y_s)  \geq \cO(\IND_{\{r^X_s>0\}})\De s+\cO(\tr(\eta^{\top}_s\eta_s)^{1/2}+r^X_s\tr(\zeta^{\top}_s\zeta_s)^{1/2})\De s +\rme^Y_s\cdot \De M^{Y-\hat{Y}}_s.
\end{split}
\ees 

Gathering the last two bounds, we obtain
\bes
\begin{split}
\De \distY(r^Y_s) \geq  \cO(r^X_s+\IND_{r^X_s>0})\De s\cO(\tr(\eta^{\top}_s\eta_s)^{1/2}+r^X_s\tr(\zeta^{\top}_s\zeta_s)^{1/2})\De s
+\frac{\sigma^2_0}{2}\tr(\eta^{\top}_s\eta_s)\De s + \De M^{\distY}_s
\end{split}
\ees
where $M^{\distY}_s$ with
\bes
\De M^{\distY}_s = (\rme^Y_s\cdot +2(Y_s-\hat Y_s))\cdot \De M^{Y-\hat Y}_s.
\ees
\item \textbf{Proof of \ref{item:Est_Dyn_Y+hatY}}

 We start with the observation that \Cref{ass:mild-reg} and the Lipschitz estimate from \Cref{lemma:gradient_estimate_linearized problem}
\bes\label{eq:SDE_Y+hatY}
\De (Y_s+\hat{Y}_s) = \cO(1)\De s
+\tr(\sigma^{\top} \partial_x \sigma(X_s) Z_s+\sigma^{\top} \partial_x \sigma(\hat{X}_s) \hat{Z}_s)\De s +\De M^{Y+\hat Y}_s
\ees
with
\bes\label{eq:mart_Y+hatY}
\begin{split}
\De M^{Y+\hat Y}_s&=\frac{\sigma_0}{\sqrt{2}} \zeta_s\De B^{1,\perp}_s+\frac{\sigma_0}{\sqrt{2}} \eta_s\rme^X_s\De W_s +\frac{\sigma_0}{\sqrt{2}}\zeta_s\De B^2_s+(Z_s\cdot\bar{\sigma}(X_s)+\hat{Z}_s\cdot\bar{\sigma}(\hat{X}_s)) \De B^3_s. 
\end{split}
\ees
Next, observe that
\bes
\begin{split}
&\tr\big((\sigma^{\top}\partial_{x_i}\sigma)(X_s)t Z_s+(\sigma^{\top}\partial_{x_i}\sigma)(\hat{X}_s)\hat{Z}_s \big)\\
=&\frac12\tr\big((\sigma^{\top}\partial_{x_i}\sigma(X_s)+\sigma^{\top}\partial_{x_i}\sigma(\hat{X}_s)) \zeta_s + (\sigma^{\top}\partial_{x_i}\sigma(X_s)-\sigma^{\top}\partial_{x_i}\sigma(\hat{X}_s))\eta_s \big)
\end{split}
\ees
whence 
\be\label{eq:trace_bound_2}
\tr\big( \partial_{x_i}\sigma(X_s)\cdot Z_s\cdot\sigma(X_s)+\partial_{x_i}\sigma(\hat{X}_s)\cdot \hat{Z}_s\cdot\sigma(\hat{X}_s) \big) = \cO(\tr(\eta^{\top}_s\eta_s)^{1/2}+\tr(\zeta^{\top}_s\zeta_s)^{1/2}).
\ee
The independence of $(B^{1,\perp}_\cdot,W_\cdot,B^{2}_\cdot,B^3_\cdot)$ implies that, denoting $M^{Y+\hat Y,i}$ the $i$-th component of  $M^{Y+\hat Y}$, 
\bes
\De [M^{Y-\hat Y,i}_\cdot]_s \geq \frac{\sigma^2_0}{2} \sum_{j=1}^d(\zeta^{ij}_{s})^2\De s \quad \forall i=1,\ldots,d.
\ees
But then,
\bes
\begin{split}
\De |Y_s+\hat Y_s|^2 &=2(Y_s+\hat Y_s)\cdot\De(Y_s+\hat{Y}_s)+\sum_{i=1}^d \De [M^{Y+\hat Y,i}_\cdot]_s\\
&\geq -\cO(1)\De s-\cO(\tr(\eta^{\top}_s\eta_s)^{1/2}+\tr(\zeta^{\top}_s\zeta_s)^{1/2})\De s + {\frac{\sigma_0^2}{2}}\tr(\zeta^{\top}_s\cdot\zeta_s)\De s+2 ( Y_s+\hat Y_s)\cdot \De M^{Y+\hat Y}_s.
\end{split}
\ees

\item \textbf{Proof of \ref{item:d_rX}} An application of It\^o formula gives
\bes
\begin{split}
\De r^X_s&= (\rme^X_s)^\top(\partial_ph_s(X_s,Y_s)-\partial_ph_s(\hat{X}_s,\hat{Y}_s))\De s+\sqrt{2}\sigma_0 \De W_s+ \rme^{\top}_s( \bar\sigma(X_s)-\bar\sigma(\hat{X}_s))\De B^3_s.
\end{split}
\ees
The desired result now follows from \Cref{ass-Pontryagin} and \Cref{lemma:gradient_estimate_linearized problem}.
\item \textbf{Proof of \ref{item:Est_Quad_Var_rX_Y+hatY}}
  Combining \ref{item:d_rX} with \eqref{eq:SDE_Y+hatY} and \eqref{eq:mart_Y+hatY} this gives that the quadratic covariation between $|Y_s+\hat Y_s|^2$ and $r^X_s$ is 
\begin{align}\label{eq:quad_cov_1}
\De [|Y_\cdot+\hat{Y}_\cdot|^2,r^X_\cdot]_s  
= \Big({2}\sigma^2_0(Y_s+\hat{Y}_s)^{\top}\eta_s\rme^X_s+2(Y_s+\hat{Y}_s)^{\top}(\bar\sigma(X_s)Z_s
+\bar\sigma(\hat{X}_s)\hat Z_s)(\bar\sigma^{\top}(X_s)-\bar\sigma^{\top}(\hat{X}_s)) \rme^X_s\Big)\De s
\end{align}
Using \Cref{lemma:gradient_estimate_linearized problem} and Cauchy-Schwartz inequality we obtain
\begin{equation}
2\sigma^2_0(Y_s+\hat{Y}_s)^{\top}\cdot \eta_s\cdot \rme^X_s = \cO(\tr(\eta^{\top}_s\eta_s)^{1/2}).
\end{equation}
In light of this, it only remain to show that the second term in \eqref{eq:quad_cov_1} is $\cO(r^X_s\tr(\zeta^{\top}_s\zeta_s)^{1/2}+\tr(\eta^{\top}_s\eta_s)^{1/2})$. To to this, observe that writing $Z_s=(\zeta_s+\eta_s)/2,\hat{Z}_s=(\zeta_s-\eta_s)/2$, we find the equivalent form 
\bes
\begin{split}
&(Y_s+\hat{Y}_s)^{\top}(\bar\sigma(X_s)+\bar\sigma(\hat{X}_s))\zeta_s\cdot (\bar\sigma^{\top}(X_s)-\bar\sigma^{\top}(\hat{X}_s))\rme^X_s +(Y_s+\hat{Y}_s)^{\top}(\bar\sigma(X_s)-\bar\sigma(\hat{X}_s)) \eta_s(\bar\sigma^{\top}(X_s)-\bar\sigma^{\top}(\hat{X}_s)) \rme^X_s 
\end{split}
\ees

Using \Cref{lemma:gradient_estimate_linearized problem}, the Lipschitzianity of $\sigma $ to bound $\bar\sigma^\top(X_s)-\bar\sigma^\top(\hat{X}_s)$, the boundedness of $\sigma$ and Cauchy-Schwartz inequality we see that the first term in the above display is $\cO(r^X_s\tr(\zeta^{\top}_s\zeta_s)^{1/2})$. Bounding the second term in same way except for the fact that the difference $\bar\sigma^\top(X_s)-\bar\sigma^\top(\hat{X}_s)$ is bounded using the boundedness of $\sigma$ we obtain that it is $\cO(\tr(\eta^{\top}_s\eta_s)^{1/2})$.
\end{itemize}

\end{proof}

\begin{proof}[Proof of \Cref{thm:lin_Hess_est}]
Observing that $\eta_s=\eta_s\IND_{r^X_s>0}$ we obtain directly from \Cref{lemm:hess_bound_sigma_corrector} that

\be\label{eq:hess_bound_sigma_ito_1}
\begin{split}
\De \big(\distY(r^Y_s)+ |Y_s+\hat{Y}_s|^2r^X_s\big) &= \cO(r^X_s+\IND_{r^X_s>0})+\cO(\IND_{r^X_s>0}+r^X_s)\tr(\eta^T_s\eta_s)^{1/2}+r^X_s\tr(\zeta^T_s\zeta_s)^{1/2}) \\
&+\frac{\sigma^2_0}{2}\tr(\eta^T_s\eta_s)+\frac{\sigma^2_0}{2}r^X_s\tr(\zeta^T_s\zeta_s) + \De M_s,
\end{split}
\ee
where $M_\cdot$ is a square integrable martingale.
Quadratic minimization in the variables $\tr(\eta^T_s\eta_s)^{1/2}$ and respectively $\tr(\zeta^T_s\zeta_s)^{1/2}$ implies that the right hand side is bounded below by
\bes
\De \big(\distY(r^Y_s)+ |Y_s+\hat{Y}_s|^2r^X_s\big) \geq \cO\left((r^X_s)^2+r^X_s+\IND_{\{r^X_s>0\}}\right) \De s + \De M_s,
\ees
We can now integrate the above inequality, take expectation on both sides and apply \Cref{prop:contr_same_drift_interp} (for the choice $\beta_s(x)=b(x)+w_s(x,\nabla\varphi^T,g_s(x))$) to obtain
\bes
\begin{split}
\bbE[\distY(r^Y_t)+|Y_s+\hat{Y}_s|^2r^X_t]&\leq \bbE[\distY(r^Y_T)+|Y_T+\hat{Y}_T|^2r^X_T]+\const_0|x-\hat{x}|\int_{t}^T(\tilde\const_{\bar\kappa,2}e^{-\tilde{\lambda}_{\kappa,2}(s-t)}+\tilde\const_{\bar\kappa} e^{-\tilde\lambda_{\bar\kappa}(s-t)}+\tilde{q}^{\bar\kappa,\tilde{\lambda}_{\bar\kappa}}_{s-t})\De s,
\end{split}
\ees
where $\bar\kappa$ is as in \eqref{eq:kappa_hessian_estimate}  and $\const_0$ is some constant independent of time, and depending only on the quantities appearing at \Cref{ass:drift_field_fin_dim_control}, \Cref{ass-coercivity} \Cref{ass:mild-reg} and \Cref{ass-Pontryagin}.
Since 
\bes
\bbE[|Y_T+\hat{Y}_T|^2r^X_T]\leq 4(\const^g_x)^2\bbE[r^X_T] \leq 4(\const^g_x)^2 \tilde{\const}_{\bar\kappa} e^{-\tilde\lambda_{\bar\kappa}(T-t)} |x-\hat{x}|
\ees
and
\bes \bbE[\distY(r^Y_T)]\leq (2\const^x_g+(4\const_x^g)^2) \,\tilde{q}^{\bar\kappa,\tilde{\lambda}_{\bar\kappa}}_{T-t}\,|x-\hat{x}| \ees 

we obtain \eqref{eq:gen_hess_est_1}. To conclude, we observe that if we assume $\const_{xx}^g<+\infty$ we have estimate differently $\bbE[\distY(r^Y_T)] $ as follows
\bes
\bbE[\distY(r^Y_T)] \leq (2\const_x^g+1)\const^g_{xx}\tilde\const_{\bar\kappa}e^{-\tilde\lambda_{\bar\kappa} (T-t)}
\ees
\end{proof}

\subsection{Stability estimates}\label{sec:stab_lin_contr}

In this section we aim at comparing the value functions of two different stochastic control problems and the laws of the corresponding optimal processes. To this aim, we introduce an auxiliary stochastic control problem whose structure is the same as \eqref{eq:classical_control_problem}. Namely, for given running  and terminal cost $\hat\ell,\hat g$, drift field $\hat b$ we consider

\begin{equation}\label{eq:classical_control_problem_hat}
\hat\varphi^{T,\hat{g}}_t(x)=\inf_{u\in\cU_{t,T}} \hat{J}_{t,x}^{T,\hat{g}}(u),  
\ee
where for any admissible control $u$ the corresponding dynamics is
\be
 \De X^{u}_s= [\hat b_s(X^{u}_s) + u_s]\De s +\sigma(X^{u}_s)\cdot\De B_s, \quad X^u_t=x,
\end{equation}
and $\sigma(\cdot)$ is the same diffusion coefficient of \eqref{eq:classical_control_problem}. Finally, we define the cost functional in the obvious way

\bes \hat J_{t,x}^{T,\hat g}(u)=\bbE[\int_t^T\hat \ell_s(X^u_s,u_s)\,\De s + \hat g(X^u_T)].\ees

We shall consider two scenarios. In the first, one controls the difference between the drift fields and running costs in Lipschitz norm, in the second, the bound is expressed in supremum norm. In both cases, the difference between terminal costs is expressed in Lipschitz seminorm though this requirement will be later relaxed in \Cref{sec:MF_PDE} by an early stopping argument.
  \begin{assumption}\label{ass:stab_high_reg}
For all $s\in[0,T]$ there exist a function  $\delta\ell_s:\bbR^d\rightarrow \bbR$ such that 
\bes 
\ell_s(x,u)-\hat\ell_s(x,u)=\delta\ell_s(x) \quad \forall x,u\in\bbR^d.
\ees
We assume $b_s=\hat{b}_s$ for all $s\in[0,T]$ Moreover, there exist finite constants $\const^{\delta\ell_s}_x,\const^g_x$ uniformly bounded in $s\in[0,T]$ and such that 
\begin{equation}\label{eq:stab_est_W1_ass}
\begin{split}
\|\delta\ell_s\|_\Lip \leq \const^{\delta \ell_s}_x, \quad  \|g-\hat{g}\|_\Lip \leq \const^{\delta g}_x.
\end{split}
\end{equation}
\end{assumption}
  
\begin{assumption}\label{ass:stab_mild_reg}
There exist finite constants $\const^{\delta b_s},\const^{\delta \ell_s},\const^{\delta\ell_s}_u,\const^{\delta g}_x$ uniformly bounded in $s\in[0,T]$ and such that
\begin{equation}\label{eq:stab_est_TV_ass}
\begin{split}
&\|b_s-\hat{b}_s\|_{\infty} \leq \const^{\delta b_s} , \quad \|\ell_s(\cdot,u)-\hat{\ell}_s(\cdot,u)\|_{\infty} \leq \const^{\delta \ell_s} ,\\
&\| \ell_s(x,\cdot)-\hat{\ell}_s(x,\cdot)\|_\Lip\leq \const^{\delta\ell_s}_u,\quad \|g-\hat{g}\|_{\Lip} \leq \const^{\delta g}_x 
\end{split}
\end{equation}
\end{assumption}
As anticipated above, we want to bound the distance between the laws of the optimal processes for \eqref{eq:classical_control_problem} and \eqref{eq:classical_control_problem_hat}. This of course means that we fix two initial distributions $\mu,\hat\mu\in\cP_1(\bbR^d)$ and compare $\mu_t=\cL(X_t)$ and $\hat{\mu}_t=\cL(\hat{X}_t)$ with 
\be\label{eq:opt_flow}
\De X_s = \partial_ph_s(X_s,\nabla\varphi^{T,g}_s(X_s))\De s+\sigma(X_s)\De B_s, \quad X_0\sim\mu
\ee
and
\be\label{eq:opt_flow_hat}
\De \hat{X}_s = \partial_ph_s(\hat{X}_s,\nabla\hat\varphi^{T,\hat g}_s(\hat X_s))\De s+\sigma(\hat X_s)\De B_s,\quad \hat X_0\sim\hat\mu.
\ee
The key to bounding the distance between the marginal laws is hence to obtain bounds on $\nabla\varphi^{T,g}_s - \nabla\hat\varphi^{T,\hat g}_s$.
\begin{lemma}\label{lem:stability_estimate_linear_pb}
Assume that \Cref{ass:drift_field_fin_dim_control}, \Cref{ass-coercivity} hold. Let also $0 \leq t\leq T$.
\begin{enumerate}[label=(\roman*)]
\item\label{item:stab_mild_reg} Assume that \Cref{ass:mild-reg} and \Cref{ass:stab_mild_reg} hold for both problem \eqref{eq:classical_control_problem} and problem \eqref{eq:classical_control_problem_hat}.
Then for all $t\leq T$ 
\begin{equation}\label{eq:stab_est_mild_reg_linear}
\|\varphi^{T,g}_t-\hat\varphi^{T,\hat{g}}_t\|_{\Lip}=:\const^{\delta\varphi_t}_x\leq 2\int_t^{T}\big(\const^{\delta \ell_s} +\const^{\delta b_s}\const^{\varphi_s}_x\big) q^{\tilde\kappa}_{s-t}\De s
+ e^{-\lambda_{\tilde\kappa}(T-t)}\|g-\hat{g} \|_{f_{\tilde\kappa}}, 
\end{equation}
with any profile $\tilde\kappa \in \msk$, where $\msk$ is defined in \eqref{eq:def_msk},  satisfying 
\be\label{eq:kappa_choice_new}
\tilde\kappa \leq \min_{t\leq s \leq T} \{\kappa_{\partial_p h_s(\cdot,\nabla\varphi^{T,g}_s(\cdot))},\kappa_{\partial_p h_s(\cdot,\nabla\hat\varphi^{T,\hat g}_s(\cdot))}\}.
\ee
In particular, we can choose
\begin{equation}\label{eq:kappa_2}
\tilde\kappa(r)=\bar\kappa(r):=\kappa_{b}(r) - \frac{2}{r}\sup_{t \leq  s \leq T}\max\{\const^{u_s},\const^{\hat{u}_s}\}
 \quad \forall r>0.
\end{equation}
and $\const^{u_s},\const^{\hat{u}_s}$ are the respective bounds in supremum norm on the optimal controls given by \eqref{eq:grad_est_high_reg}. As a consequence, 
\bes
\|\partial_ph_s(\cdot,
\nabla\varphi^{T,g}_s(\cdot))-\partial_p\hat{h}_s(\cdot,
\nabla\hat\varphi^{T,\hat g}_s(\cdot))\|_\infty=:\const^{\delta u_s}_{x}\leq \const^{\delta b_s}+\frac{(1+\const^{\delta\ell_s}_u)}{\rho^{\ell}_{uu}}\const^{\delta\varphi_s}_x. 
\ees
Moreover, we have that for all $0\leq t_0\leq t\leq T$ and all initial conditions $\mu,\hat\mu\in\cP_1(\bbR^d)$ the distance between the optimal laws $(\mu_t,\hat{\mu}_t)_{t\in[0,T]}$ can be bounded as follows

\begin{equation}\label{eq:stability_between_laws}
\begin{split}
W_{f_{\tilde\kappa}}(\mu_t,\hat{\mu}_t)&\leq e^{-\lambda_{\tilde\kappa}t} W_{f_{\tilde\kappa}}(\mu_0,\hat{\mu}_0)+ \int_0^t e^{-\lambda_{\tilde\kappa} (t-s)} \const^{\delta u_s}_x\De s, \\
\|\mu_t - \hat{\mu}_t\|_{\TV} &\leq q^{\tilde\kappa}_{t-t_0}e^{-\lambda_{\tilde\kappa}t_0} W_{f_{\tilde\kappa}}(\mu_0, \hat{\mu}_0)+ q^{\tilde\kappa}_{t-t_0}\int_{0}^{t_0} e^{-\lambda_{\tilde\kappa}(t_0 - s)} \const^{\delta u_s}\De s+ \frac{1}{\sqrt{2}} \left(\int_{t_0}^{t}\left(\const^{\delta u_s}\right)^2 \De s \right)^{\frac{1}{2}} .
\end{split}
\end{equation}
\item\label{item_stab_low_reg_new} Assume that \Cref{ass:low-reg} and \Cref{ass:stab_mild_reg} hold for both problem \eqref{eq:classical_control_problem} and problem \eqref{eq:classical_control_problem_hat}.
Then \eqref{eq:stab_est_mild_reg_linear} and \eqref{eq:stability_between_laws} hold for all $0\leq t_0\leq t\leq T$ with $\bar\kappa$ as in \eqref{eq:kappa_2} and $\const^{u_s},\const^{\hat{u}_s}$ as in \eqref{eq:grad_est_mild_reg}.

\item\label{item:stab_high_reg}  Assume that \Cref{ass:mild-reg} and \Cref{ass:stab_high_reg} hold for both problem \eqref{eq:classical_control_problem} and problem \eqref{eq:classical_control_problem_hat}.
Then for all $t\leq T$
\be\label{eq:stab_est_high_reg_linear}
\begin{split}
    \|\varphi^{T,g}_t-\hat\varphi^{T,\hat{g}}_t\|_{f_{\bar\kappa}}=\const^{\delta\varphi_t}_x\leq \int_t^{T} \frac{\const_x^{\delta\ell_s} }{C_{\tilde\kappa}} e^{-\lambda_{\tilde\kappa}(s-t)}\De s+ \frac{\const^{\delta g}_x}{C_{\tilde\kappa}} e^{-\lambda_{\tilde\kappa}(T-t)}
\end{split}
\ee
and the estimates \eqref{eq:stability_between_laws} hold as well with 
\bes
\const^{\delta u_s} =\const^{\delta b_s}+ \frac{\const^{\delta\varphi_s}_x}{\rho^\ell_{uu}}
\ees
\item \label{item:linear_stability_boost} Assume that \Cref{ass:mild-reg}, \Cref{ass:stab_high_reg} and \Cref{ass-Pontryagin} hold for both problem \eqref{eq:classical_control_problem} and problem \eqref{eq:classical_control_problem_hat}. Furthermore, assume that $\sigma(\cdot)$ is constant, i.e. $\sigma(\cdot)=2\sigma_0\mathrm{I}$. Fix $T''<T$, then we have 
\bes
\|\nabla\varphi^{T,g}_t-\nabla\hat\varphi^{T,\hat g}_t \|_\Lip \leq \int_t^{T''}2(\const^{\delta\ell_s}_x+\const^{\delta\varphi_s}_x\const^b_x)q^{\tilde\kappa}_{s-t} \De s+2q^{\tilde\kappa}_{T''-t} \|\varphi^{T,g}_{T''}-\varphi^{T,\hat g}_{T''}\|_\Lip.
    \ees
    where $\tilde\kappa$ is any profile satisfying the condition obtained by  replacing $T$ with $T''$ in \eqref{eq:kappa_choice_new}.
\end{enumerate}
\end{lemma}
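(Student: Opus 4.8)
The plan is to exploit that the difference $\psi_s:=\varphi^{T,g}_s-\hat\varphi^{T,\hat g}_s$ solves a \emph{linear} parabolic equation, which can then be represented probabilistically and estimated with the couplings of \Cref{sec:couplings}. Subtracting the two HJB equations of \Cref{prop:properties} and writing $h_s(x,\nabla\varphi^{T,g}_s(x))-\hat h_s(x,\nabla\hat\varphi^{T,\hat g}_s(x))=V_s(x)\cdot\nabla\psi_s(x)+\delta h_s(x)$ with
\[
V_s(x)=\int_0^1\partial_p h_s\big(x,\nabla\hat\varphi^{T,\hat g}_s(x)+\theta\nabla\psi_s(x)\big)\,\mathrm{d}\theta,\qquad \delta h_s(x)=h_s(x,\nabla\hat\varphi^{T,\hat g}_s(x))-\hat h_s(x,\nabla\hat\varphi^{T,\hat g}_s(x)),
\]
one gets that $\psi\in C([0,T]\times\bbR^d)\cap C^{1,2,\beta}_{\mathrm{loc}}((0,T)\times\bbR^d)$, has Lipschitz norm bounded uniformly in time by \Cref{lemma:gradient_estimate_linearized problem}, and solves $\partial_s\psi_s+\tfrac12\tr(\sigma^\top\nabla^2\psi_s\sigma)+V_s\cdot\nabla\psi_s+\delta h_s=0$, $\psi_T=g-\hat g$. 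Since $\partial_p h_s(x,p)=b_s(x)+w_s(x,p)$ and the optimal-control bounds of \Cref{lemma:gradient_estimate_linearized problem} make $V_s$ a bounded perturbation of $b_s$, the field $V_s$ satisfies \Cref{ass:coupling} with $\kappa_{V_s}\geq\tilde\kappa$ for $\tilde\kappa$ as in \eqref{eq:kappa_2} (immediate under \Cref{ass:stab_high_reg}, where $b_s=\hat b_s$; under \Cref{ass:stab_mild_reg} one linearizes around whichever value function is more convenient). Itô's formula applied to $s\mapsto\psi_s(\tilde X_s)$ along the flow $\mathrm{d}\tilde X_s=V_s(\tilde X_s)\,\mathrm{d}s+\sigma(\tilde X_s)\,\mathrm{d}B_s$ started at $x$ at time $t$, together with the boundedness of $\nabla\psi$ and $\sigma$ and a passage to the limit from $[t,T-\varepsilon]$, yields $\psi_t(x)=\bbE[\int_t^T\delta h_s(\tilde X_s)\,\mathrm{d}s+(g-\hat g)(\tilde X_T)]$ for every $x$.

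For items \ref{item:stab_mild_reg}, \ref{item_stab_low_reg_new}, \ref{item:stab_high_reg} I would fix $x,\hat x$, run the reflection coupling \eqref{eq:coup_by_ref_contraction_2}--\eqref{eq:coup_by_ref_contraction_2_2} of the $V_s$-flow (with $\alpha\equiv0$) from $(x,\hat x)$, and subtract the two representations. Under \Cref{ass:stab_mild_reg} the source $\delta h_s$ is a fixed function with $\|\delta h_s\|_\infty\leq\const^{\delta\ell_s}+\const^{\delta b_s}\const^{\varphi_s}_x$, so $|\delta h_s(\tilde X_s)-\delta h_s(\tilde{\hat X}_s)|\leq 2\|\delta h_s\|_\infty\mathbf{1}_{\{\tilde X_s\neq\tilde{\hat X}_s\}}$ with $\bbP[\tilde X_s\neq\tilde{\hat X}_s]\leq q^{\tilde\kappa}_{s-t}f_{\tilde\kappa}(|x-\hat x|)$ from \Cref{prop:contr_same_drift}-\ref{item:final_TV_coup_by_ref}, while the terminal term is controlled by $\|g-\hat g\|_{f_{\tilde\kappa}}f_{\tilde\kappa}(|\tilde X_T-\tilde{\hat X}_T|)$ and \Cref{prop:contr_same_drift}-\ref{item_2:contraction_coup_by_ref}; dividing by $|x-\hat x|$ gives \eqref{eq:stab_est_mild_reg_linear}, and \ref{item_stab_low_reg_new} is identical except for invoking the gradient bound of \Cref{lemma:gradient_estimate_linearized problem}-\ref{item:grad_est_bdd+Lip}. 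Under \Cref{ass:stab_high_reg} one has $\hat h_s=h_s-\delta\ell_s$, hence $\delta h_s\equiv\delta\ell_s$, which is Lipschitz, so the same coupling estimate with $|\delta\ell_s(\tilde X_s)-\delta\ell_s(\tilde{\hat X}_s)|\leq\|\delta\ell_s\|_{f_{\tilde\kappa}}f_{\tilde\kappa}(|\tilde X_s-\tilde{\hat X}_s|)$ and division by $f_{\tilde\kappa}(|x-\hat x|)$ gives \eqref{eq:stab_est_high_reg_linear}. The bound on $\const^{\delta u_s}$ then follows from $\partial_p h_s(x,p)=b_s(x)+w_s(x,p)$, the $\rho^\ell_{uu}$-Lipschitzianity of $p\mapsto w_s(x,p)$ and the hypotheses on $\|b_s-\hat b_s\|_\infty$ and $\|\ell_s(x,\cdot)-\hat\ell_s(x,\cdot)\|_{\Lip}$. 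Finally, \eqref{eq:stability_between_laws} follows by applying the $\delta$-approximate coupling by reflection \eqref{eq:delta_coup_by_ref} to the two optimal drifts $\partial_p h_s(\cdot,\nabla\varphi^{T,g}_s(\cdot))$ and $\partial_p\hat h_s(\cdot,\nabla\hat\varphi^{T,\hat g}_s(\cdot))$ (both bounded perturbations of $b_s$, hence with profile $\geq\tilde\kappa$), with drift-difference bound $\const^{\delta\beta_s}=\const^{\delta u_s}$, invoking \Cref{prop:contr_delta_coup}-\ref{item:W_1_contr_delta_coup} and -\ref{item:TV_contr_delta_coup}, and letting $\delta\downarrow0$.

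For item \ref{item:linear_stability_boost}, with $\sigma\equiv2\sigma_0\mathrm{I}$ and \Cref{ass-Pontryagin} in force for both problems, the argument of \Cref{prop:Pontryagin} (Krylov's Schauder estimates, as in its proof) upgrades $\varphi^{T,g},\hat\varphi^{T,\hat g}$ to $C^{1,3}((0,T)\times\bbR^d)$, and \Cref{lem:hessian_bound_sigma_constant} gives uniform-in-time bounds on $\|\nabla^2\varphi^{T,g}_s\|_\infty$ and $\|\nabla^2\hat\varphi^{T,\hat g}_s\|_\infty$. Fix $T''<T$. Differentiating both HJB equations in $x_i$ as in \eqref{eq:HJB_linearized_der} and subtracting, $\psi^i_s:=\partial_{x_i}\psi_s$ solves on $[t,T'']$ a linear equation with drift $V_s=\partial_p h_s(\cdot,\nabla\varphi^{T,g}_s(\cdot))$ (the optimal drift of the first problem, with $\kappa_{V_s}\geq\tilde\kappa$) and source $S^i_s=[\partial_{x_i}h_s(\cdot,\nabla\varphi^{T,g}_s)-\partial_{x_i}\hat h_s(\cdot,\nabla\hat\varphi^{T,\hat g}_s)]+[\partial_p h_s(\cdot,\nabla\varphi^{T,g}_s)-\partial_p\hat h_s(\cdot,\nabla\hat\varphi^{T,\hat g}_s)]\cdot\partial_{x_i}\nabla\hat\varphi^{T,\hat g}_s$; using the envelope identity $\partial_x h_s(x,p)=\partial_x b_s(x)\cdot p+\partial_x\ell_s(x,w_s(x,p))$, the bound on $\const^{\delta\varphi_s}_x$ from \ref{item:stab_high_reg}, the $\rho^\ell_{uu}$-Lipschitzianity of $w_s$ and the Hessian bounds, one obtains $\|S^i_s\|_\infty\lesssim\const^{\delta\ell_s}_x+\const^{\delta\varphi_s}_x\const^b_x$. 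I would then apply Itô's formula to $\nabla\psi_s(X_s)$ along the optimal flow $X$ of the first problem from $x$, reflection-couple it with the flow $\hat X$ from $\hat x$, and proceed exactly as in the proof of \Cref{lem:hessian_bound_sigma_constant}-\ref{item:linearized_hess_est_1}: the martingale part vanishes in expectation, the drift of $\nabla\psi_s(X_s)-\nabla\psi_s(\hat X_s)$ is $-(S_s(X_s)-S_s(\hat X_s))$ which is $\leq2\|S_s\|_\infty\mathbf{1}_{\{X_s\neq\hat X_s\}}$ because $S_s$ is a fixed function, and the endpoint term is $\leq2\|\psi_{T''}\|_{\Lip}\,\bbP[X_{T''}\neq\hat X_{T''}]$. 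Bounding all $\bbP[X_s\neq\hat X_s]$ by $q^{\tilde\kappa}_{s-t}|x-\hat x|$ via \Cref{prop:contr_same_drift}-\ref{item:final_TV_coup_by_ref} and dividing by $|x-\hat x|$ gives the stated inequality, with terminal datum $\|\varphi^{T,g}_{T''}-\hat\varphi^{T,\hat g}_{T''}\|_{\Lip}=\const^{\delta\varphi_{T''}}_x$.

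The main obstacle is item \ref{item:linear_stability_boost}: the HJB equations have to be differentiated twice, which is exactly why $\sigma$ is taken constant and the estimate is stated only on $[t,T'']$ with $T''<T$ --- interior Schauder regularity degenerates at the terminal time, so the second-order computation cannot be carried up to $T$ and must instead be closed at $T''$ with the \emph{first-order} bound already obtained in \ref{item:stab_high_reg}. One must also track that every coefficient produced by the second differentiation, in particular $\partial_{x_i}\nabla\hat\varphi^{T,\hat g}_s$, is controlled by the Hessian bounds of \Cref{lem:hessian_bound_sigma_constant}, so that $S^i_s$ is a genuinely bounded source with no circular dependence on $\|\nabla\psi_s\|_{\Lip}$, which is what makes the indicator estimate available. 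By comparison, in \ref{item:stab_mild_reg}--\ref{item:stab_high_reg} the only non-routine checks are that the effective drift $V_s$ verifies \Cref{ass:coupling} with the right monotonicity profile and that the Feynman--Kac identification is valid up to $s=T$, both straightforward given \Cref{lemma:gradient_estimate_linearized problem} and the boundedness of $\sigma$.
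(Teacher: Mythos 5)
Your proof follows the paper's route for items \ref{item:stab_mild_reg}--\ref{item:stab_high_reg}: the same Taylor/interpolation decomposition of the Hamiltonian difference, the same reduction of $\varphi^{T,g}-\hat\varphi^{T,\hat g}$ to a linear parabolic PDE with the interpolated drift, the Feynman--Kac representation, reflection coupling of the $\tilde b$--flow for the value-function bounds, the bound on $\const^{\delta u_s}_x$ via \Cref{lem:ham_bound}, and the $\delta$--approximate coupling by reflection of the two optimal flows combined with \Cref{prop:contr_delta_coup} for \eqref{eq:stability_between_laws}. That part is essentially identical to the paper's argument.

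For item \ref{item:linear_stability_boost} you take a genuinely different, though closely related, route. The paper stays with the already-linearized equation \eqref{eq:HJB_linearized} for $\gamma=\hat\varphi^{T,\hat g}-\varphi^{T,g}$, whose source $v_s$ under \Cref{ass:stab_high_reg} is exactly $-\delta\ell_s$ (a function of $x$ only, independent of $p$ and of either value function), so after differentiating in $x$ the source contribution is controlled purely by $\const^{\delta\ell_s}_x$ and the only cross-term is the transport term $\partial_x\tilde b_s\cdot\nabla\gamma_s$. You instead differentiate the two original HJB equations and subtract, which produces a linear equation in the drift $\partial_p h_s(\cdot,\nabla\varphi^{T,g}_s)$ (of the \emph{first} problem only, rather than the interpolated one) and a source $S^i_s$ that picks up the coupling term $[\partial_p h_s(\cdot,\nabla\varphi_s)-\partial_p\hat h_s(\cdot,\nabla\hat\varphi_s)]\cdot\partial_{x_i}\nabla\hat\varphi_s$. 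This forces you to invoke the Hessian bound of \Cref{lem:hessian_bound_sigma_constant} on $\hat\varphi$ in order to bound $S^i_s$, whereas in the paper's decomposition the Hessian of $\hat\varphi$ does not enter the source at all. Concretely, your bound on $\|S^i_s\|_\infty$ hides (in the $\lesssim$) two extra contributions, $\frac{\const^\ell_{xu}}{\rho^\ell_{uu}}\const^{\delta\varphi_s}_x$ from $\partial_x\ell_s(\cdot,w_s(\cdot,\nabla\varphi_s))-\partial_x\ell_s(\cdot,w_s(\cdot,\nabla\hat\varphi_s))$ and $\frac{\const^{\hat\varphi_s}_{xx}}{\rho^\ell_{uu}}\const^{\delta\varphi_s}_x$ from the cross term, which do not appear in the displayed bound of item \ref{item:linear_stability_boost}. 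Both routes lead to a valid exponential estimate (and the paper's own proof of this item is itself slightly imprecise, writing $\partial_x b_s$ where $\partial_x\tilde b_s$ is what the computation produces), so this is a difference in bookkeeping rather than a gap; but if one cares about the constants as stated, the paper's choice of linearization is the cleaner one precisely because $v_s$ decouples from the solutions.

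One small point worth flagging in \ref{item:stab_mild_reg}: you bound $\|\delta h_s\|_\infty$ by $\const^{\delta\ell_s}+\const^{\delta b_s}\const^{\varphi_s}_x$ after defining $\delta h_s$ with $\nabla\hat\varphi^{T,\hat g}_s$ as the frozen gradient; strictly this yields $\const^{\hat\varphi_s}_x$ rather than $\const^{\varphi_s}_x$. The paper's proof has the same asymmetry between the display in the lemma and the bound derived in the proof, and the distinction is immaterial for the application, but it is worth being aware of.
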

\begin{proof}
Let $h_s,\hat{h}_s$ the Hamiltonians of the two problems,
\bes
h_s(x,p) = \inf_{u\in\bbR^d} \ell_s(x,u)+(b_s(x)+u)\cdot p  ,\quad 
\hat{h}_s(x,p) = \inf_{u\in\bbR^d} \hat{\ell}_s(x,u)+(\hat{b}_s(x)+u)\cdot p .
\ees
 Next, observe that using a Taylor expansion we know that for any $x,p,\hat{p}$ 
\bes
h_s(x,\hat{p})-h_s(x,p) = \int_0^1 \partial_p h_s(x, r \hat{p} + (1-r) p)  \De r\cdot (\hat{p}-p)
\ees
Thus, thanks to \Cref{prop:properties} we obtain that if we set and $\gamma=\hat{\varphi}^{T,\hat{g}}-\varphi^{T,g}$, then $\gamma$ is a classical solution to
\bes\label{eq:HJB_linearized}
\bec
\partial_s\gamma_s(x) + \frac{1}{2}\tr\left(\sigma^{\top}\nabla^2\gamma_s\sigma\right)(x) + \tilde{b}_s(x)\cdot\nabla\gamma_s(x) + v_s(x)=0\\
\gamma_{T}(x) = \hat{g}-g(x)
\eec
\ees
with 
\bes
\begin{split}
\tilde{b}_s(x)&=\int_0^1 \partial_p h_s(x,r\nabla\hat{\varphi}^{T,\hat{g}}_s(x) + (1-r) \nabla\varphi^{T,g}_s(x)))\De r, \\
v_s(x)&=[\hat{h}_s-h_s](x,\nabla\hat{\varphi}^{T,\hat{g}}_s(x)).
\end{split}
\ees
We thus have the Feynman-Kac representation
\begin{equation}\label{eq:FK_repr}
\gamma_t(x) = \bbE[\int_t^Tv_s(\tilde{X}^{t,x}_s)\De s+(\hat{g}-g)(\tilde{X}^{t,x}_T)],
\end{equation}
where 
\bes\label{eq:tilde_X}
\De \tilde{X}^{t,x}_s=\tilde{b}_s(\tilde{X}^{t,x}_s)\De s +\sigma(\tilde{X}^{t,x}_s)\De B_s,\quad \tilde{X}^{t,x}_t=x.
\ees

We shall work with the representation \eqref{eq:FK_repr} to establish the stability bounds.
\bei[wide]
\item \emph{Proof of \ref{item:stab_mild_reg}} 
We begin observing that by definition $\kappa_{\tilde b_s}\geq\tilde\kappa$ for all $s\in[t,T]$. Next, note that using $w_s(x,p)$ as a suboptimal control in the variational problem defining $\hat{h}_s(x,p)$, we find
\bes
\begin{split}
\hat{h}_s(x,p)-{h}_s(x,p)&\leq \hat\ell_s(x,w_s(x,p))-\ell_s(x,w_s(x,p)) +(\hat b_s(x)-{b}_s(x))\cdot p\\
&\leq \const^{\delta \ell_s}+\const^{\delta b_s}|p|.
\end{split}
\ees
Exchanging the roles of $h_s,\hat{h}_s$ yields 
\be\label{eq:bound_diff_ham}
\begin{split}
v_s(x)&=|\hat{h}_s(x,\nabla\hat\varphi^{T,\hat{g}}_s(x))-h_s(x,\nabla\hat\varphi^{T,\hat{g}}_s(x))|\\
&\leq \const^{\delta \ell_s}+\const^{\delta b_s}\const^{\hat\varphi_s}_x \quad \forall x\in \bbR^d
\end{split}
\ee

Now let $(X_s, \hat{X}_s)_s$ be a coupling by reflection  of two solutions of \eqref{eq:tilde_X} with $X_t = x, \hat{X}_t = \hat{x}$, i.e. a solution of \eqref{eq:coup_by_ref_contraction_2}-\eqref{eq:coup_by_ref_contraction_2_2} for the choices $\beta_s=\tilde{b}_s, \alpha_s=0$. We have, using \eqref{eq:FK_repr}

\be\label{eq:lipschitz_bound_gamma}
\begin{split}
|\gamma_t(x)-\gamma_t(\hat{x})|\stackrel{\eqref{eq:bound_diff_ham}}{\leq} \int_t^T(\const^{\delta \ell_s}+\const^{\delta b_s}\const^{\hat\varphi_s}_x)\bbP[X_s\neq \hat{X}_s]\De s
+ 
\|g-\hat{g} \|_{f_{\tilde\kappa}} \bbE[f_{\tilde\kappa}(|X_T- \hat{X}_T|)]
\end{split}
\ee
Invoking \Cref{prop:contr_same_drift} \ref{item_2:contraction_coup_by_ref} and \ref{item:final_TV_coup_by_ref}  we obtain \eqref{eq:stab_est_mild_reg_linear}. To prove \eqref{eq:stability_between_laws} fix two initial conditions $\mu,\hat{\mu}$ and consider approximate $\delta$-coupling by reflection of \eqref{eq:opt_flow} and \eqref{eq:opt_flow_hat}. That is to say, we consider the strong solution of \eqref{eq:delta_coup_by_ref} for the choices $\beta_s(x)=\partial_ph_s(x,
\nabla\varphi^{T,g}_s(x)),\,\hat\beta_s(x)=\partial_p\hat{h}_s(x,
\nabla\hat\varphi^{T,\hat g}_s(x)).$ In view of applying \Cref{prop:contr_delta_coup}, we proceed to bound
\bes
\begin{split}
|\partial_ph_s(x,\nabla\varphi^{T,g}_s(x))-\partial_p\hat{h}_s(x,\nabla\hat\varphi^{T,\hat g}_s(x))|
&\leq \const^{\delta b_s}+|w_s(x,\nabla\varphi^{T,g}_s(x))-w_s(x,\nabla\hat\varphi^{T,\hat g}_s(x))| + |w_s-\hat{w}_s|(x,\nabla\hat\varphi^{T,\hat g}_s(x))\\
&\leq \const^{\delta b_s}+\frac{(1+\const^{\delta\ell_s}_u)}{\rho^{\ell}_{uu}}\const^{\delta\varphi_s}_x,
\end{split}
\ees
where to pass from the first to the second line in the above display we used \Cref{lem:ham_bound}, which shall be proven separately below. At this point, we can apply \Cref{prop:contr_delta_coup}-\ref{item:W_1_contr_delta_coup} with $\const^{\delta\beta_s}=\const^{\delta b_s}+\frac{(1+\const^{\delta\ell_s}_u)}{\rho^{\ell}_{uu}}\const^{\delta\varphi_s}_x, \kappa_{\beta}=\tilde\kappa$ and eventually let $\delta\rightarrow0$ to obtain the first estimate in \eqref{eq:stability_between_laws}. To prove the estimate in total variation norm, we fix $t_0\leq t$ and apply \Cref{prop:contr_delta_coup}-\ref{item:TV_contr_delta_coup}. To finish it only remains to show that we can choose $\tilde{\kappa}=\bar\kappa$, with $\bar\kappa$ given by \eqref{eq:kappa_2}. To see this, observe that the standing assumptions  allow to invoke \Cref{lemma:gradient_estimate_linearized problem}-\ref{item:grad_est_high} to obtain $\|w_s(\cdot,\nabla\varphi^{T,g}_s(\cdot))\|_\Lip\leq\const^{u_s},\|w_s(\cdot,\hat\varphi^{T,g}_s(\cdot))\|_\Lip\leq\const^{\hat u_s}$ with $\const^{u_s},\const^{\hat{u}_s}$ given by \eqref{eq:grad_est_high_reg}. This immediately implies $\kappa_{\tilde b}\geq \bar\kappa$ as desired. 

\item\emph{Proof of \ref{item_stab_low_reg_new}}  The proof is identical to the one for \ref{item:stab_mild_reg} except for how the lower bound on $\kappa_{\tilde{b}_s}$ is obtained. In this case, the standing assumptions allow to invoke \Cref{lemma:gradient_estimate_linearized problem}-\ref{item:grad_est_bdd+Lip} (instead of \ref{item:grad_est_high} as we previously did) to we have that $\kappa_{\tilde{b}_s}(\cdot)\geq \bar\kappa(\cdot)$ with $\const^{u_s},\const^{\hat{u}_s}$ given by \eqref{eq:grad_est_mild_reg}.
\item \emph{Proof of \ref{item:stab_high_reg}} 
The proof is basically the same except for how the difference $h_s-\hat{h}_s$ is bounded. In this case, \Cref{ass:stab_high_reg} gives 
\be
|\partial_x [\hat{h}_s-h_s]|(x,\nabla\hat\varphi^{T,\hat{g}}_s(x)) \leq \const^{\delta\ell_s}_x, \quad \forall x,p\in\bbR^d.
\ee
Consequently, the Lipschitz estimate on $\gamma_t$ is obtained in a slightly different way. 
\bes 
|\gamma_t(x)-\gamma_t(\hat{x})|{\leq} \int_t^T\frac{\const^{\delta \ell_s}_x}{\const_{\bar{\kappa}}}\bbE[f_{\bar\kappa}(|X_s-\hat{X}_s|)]\De s
+ \|g-\hat{g} \|_{f_{\tilde\kappa}}\bbE[f_{\bar{\kappa}}(|X_T- \hat{X}_T|)]
\ees
Invoking \Cref{prop:contr_same_drift}-\ref{item_2:contraction_coup_by_ref},  we obtain \eqref{eq:stab_est_mild_reg_linear}.
\item\emph{Proof of \ref{item:linear_stability_boost}}
Observe that if $(X_s,\hat{X}_s)$ is coupling by reflection of two solutions of \eqref{eq:tilde_X}, and if we define $Y_s=\nabla\gamma_s(X_s),\hat{Y}_s=\nabla\gamma_s(\hat{X}_s)$ for $s\in[t,T]$, then we obtain by applying Itô's formula together with \eqref{eq:HJB_linearized} and \eqref{eq:tilde_X} that
\bes
\De Y_s=[ -\partial_x b_s(X_s) \cdot Y_s-\partial_x v_s(X_s)]\De s+\De M_s, \quad \De \hat{Y}_s= [-\partial_x b_s(X_s) \cdot \hat{Y}_s-\partial_x v_s(X_s)]\De s+ \De \hat{M}_s,
\ees
where $M_\cdot$ and $\hat M_{\cdot}$ are two square integrable martingales. Using that 
\bes
\bbE[|\partial_x v_s(\hat X_s)-\partial_x v_s(X_s)|]\leq 2 \const^{\delta\ell_s}_x q^{\tilde\kappa}_{s-t},\quad  \bbE[|\partial_x g(\hat X_T)-\partial_x g(X_T)|]\leq 2 \const^{g}_xq^{\tilde\kappa}_{T-t}
\ees
and 
\bes
\bbE[|\partial_x b_s(X_s) \cdot Y_s-\partial_x b_s(\hat X_s) \cdot \hat Y_s|]\leq 2\const_x^{b}\,\const^{\delta \varphi_s}_x
q^{\tilde\kappa}_{s-t}
\ees
we obtain the inequality
\bes
|Y_t-\hat Y_t| \leq \int_t^T2(\const_x^{b}\,\const^{\delta \varphi_s}_x
+ \const^{\delta\ell_s}_x )q^{\tilde\kappa}_{s-t}\De s+2 \const^{g}_xq^{\tilde\kappa}_{T-t}
\ees
Since $Y_t=\partial_x\gamma_t(x),\hat Y_t=\partial_x\gamma_t(\hat x)$, and the choice of $x,\hat x, t$ was arbitrary, the conclusion follows.
\eei
\end{proof}

\begin{lemma}\label{lem:ham_bound}  Under the hypothesis of \Cref{lem:stability_estimate_linear_pb}-\ref{item:stab_high_reg} we have
\begin{equation}
|w_s-\hat{w}_s|(x,p)\leq \frac{\const^{\delta\ell}_u}{\rho^{\ell}_{uu}}
\end{equation}
\end{lemma}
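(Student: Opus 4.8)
The plan is to read the two optimal controls off the first-order optimality conditions and then invoke the strong convexity of the running costs in the control variable, exactly in the spirit of the derivation of \Cref{lem:Est_Contr_Hamiltonian}. Fix $s\in[0,T]$, $x\in\bbR^d$ and $p\in\bbR^d$, and abbreviate $u^\star=w_s(x,p)$, $\hat u^\star=\hat w_s(x,p)$. Since the map $u\mapsto(b_s(x)+u)\cdot p$ is affine and, by \Cref{ass-coercivity}, both $\ell_s(x,\cdot)$ and $\hat\ell_s(x,\cdot)$ are twice differentiable with $\partial_{uu}\ell_s,\partial_{uu}\hat\ell_s\succeq\rho^{\ell}_{uu}\mathrm{I}$, the two objectives $u\mapsto\ell_s(x,u)+(b_s(x)+u)\cdot p$ and $u\mapsto\hat\ell_s(x,u)+(b_s(x)+u)\cdot p$ are strongly convex and coercive; hence each admits a unique minimizer, characterized by stationarity:
\[
\partial_u\ell_s(x,u^\star)+p=0,\qquad \partial_u\hat\ell_s(x,\hat u^\star)+p=0,
\]
so that $\partial_u\ell_s(x,u^\star)=\partial_u\hat\ell_s(x,\hat u^\star)$.

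Next I would combine this identity with the monotonicity estimate stemming from $\partial_{uu}\ell_s\succeq\rho^{\ell}_{uu}\mathrm{I}$, namely
\[
\langle\partial_u\ell_s(x,u^\star)-\partial_u\ell_s(x,\hat u^\star),\,u^\star-\hat u^\star\rangle\geq\rho^{\ell}_{uu}\,|u^\star-\hat u^\star|^{2}.
\]
Replacing $\partial_u\ell_s(x,u^\star)$ by $\partial_u\hat\ell_s(x,\hat u^\star)$ on the left and applying Cauchy--Schwarz yields
\[
\rho^{\ell}_{uu}\,|u^\star-\hat u^\star|^{2}\leq\langle\partial_u(\hat\ell_s-\ell_s)(x,\hat u^\star),\,u^\star-\hat u^\star\rangle\leq\bigl|\partial_u(\hat\ell_s-\ell_s)(x,\hat u^\star)\bigr|\,|u^\star-\hat u^\star|.
\]
Since $u\mapsto(\ell_s-\hat\ell_s)(x,u)$ is differentiable, its gradient is bounded in norm by its Lipschitz seminorm in $u$, and the ambient hypotheses supply a uniform bound $\|\ell_s(x,\cdot)-\hat\ell_s(x,\cdot)\|_{\Lip}\leq\const^{\delta\ell}_u$. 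Hence, whenever $u^\star\neq\hat u^\star$ we may divide through by $|u^\star-\hat u^\star|$ to get $\rho^{\ell}_{uu}\,|u^\star-\hat u^\star|\leq\const^{\delta\ell}_u$; the case $u^\star=\hat u^\star$ being trivial, this is precisely the asserted bound $|w_s-\hat w_s|(x,p)\leq\const^{\delta\ell}_u/\rho^{\ell}_{uu}$.

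I do not expect any genuine obstacle here: this is the textbook sensitivity estimate for minimizers of a strongly convex functional subject to a perturbation that is Lipschitz in the optimization variable, and the drift contributions $b_s(x)\cdot p$, $\hat b_s(x)\cdot p$ play no role because they are constant in $u$ and therefore do not affect the $\argmin$. The only point deserving a line of care is the bookkeeping on the constant: $\const^{\delta\ell}_u$ is to be taken as the uniform (in $s$ and $x$) bound on $\|\ell_s(x,\cdot)-\hat\ell_s(x,\cdot)\|_{\Lip}$, so one should check that the hypotheses actually in force provide it — under \Cref{ass:stab_high_reg} the difference $\ell_s-\hat\ell_s$ does not depend on $u$ at all and the estimate degenerates to $w_s=\hat w_s$, consistently with $\const^{\delta\ell}_u=0$.
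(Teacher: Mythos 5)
Your proof is correct, but it follows a slightly different route from the paper's. You read off the two optimal controls from the first-order optimality conditions $\partial_u\ell_s(x,w)+p=0=\partial_u\hat\ell_s(x,\hat w)+p$ and then exploit the strong monotonicity of the gradient map $\partial_u\ell_s(x,\cdot)$ to obtain the estimate — the same scheme used in the proof of \Cref{lem:Est_Contr_Hamiltonian}. The paper instead uses the variational characterization directly: it evaluates the objective defining $h_s(x,p)$ at the suboptimal point $\hat w$, applies the quadratic growth away from the minimizer coming from $\partial_{uu}\ell_s\succeq\rho^\ell_{uu}\mathrm{I}$, symmetrizes between $(\ell_s,b_s)$ and $(\hat\ell_s,\hat b_s)$, and sums the two bounds so that the Hamiltonians and drift terms cancel, leaving $\rho^\ell_{uu}|w-\hat w|^2\leq[\ell_s-\hat\ell_s](x,\hat w)-[\ell_s-\hat\ell_s](x,w)\leq\const^{\delta\ell_s}_u|w-\hat w|$. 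The two arguments are equivalent in substance; the paper's variational version has the mild advantage of only requiring that the difference $\ell_s-\hat\ell_s$ be Lipschitz in $u$ (not differentiable), whereas your version invokes the gradient of that difference, but this is harmless here since \Cref{ass-coercivity} already grants twice differentiability of both running costs.

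You also correctly flagged a small inconsistency in the paper: the lemma statement cites item \ref{item:stab_high_reg} (which assumes $\ell_s-\hat\ell_s$ depends only on $x$, making the bound degenerate to $w_s=\hat w_s$), but the lemma is actually applied in the proof of item \ref{item:stab_mild_reg}, under \Cref{ass:stab_mild_reg}, which is precisely where the constant $\const^{\delta\ell_s}_u$ bounding $\|\ell_s(x,\cdot)-\hat\ell_s(x,\cdot)\|_\Lip$ comes from. The paper's own proof of the lemma invokes \Cref{ass:stab_mild_reg} at the last step, confirming that the reference in the statement should read item \ref{item:stab_mild_reg}.
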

\begin{proof}
In the proof we write $w$  for $w_s(x,p)$ and $\hat w$ for $\hat w_s(x,p)$ for the sake of brevity.  We observe that \Cref{ass-coercivity} gives
\bes
\begin{split}
\frac{\rho^\ell_{uu}}{2}|w-\hat w|^2 &\leq \ell_s(x,\hat w)+(b_s(x)+\hat w)\cdot p-h_s(x,p)\\
&\leq [\ell_s-\hat\ell_s](x,\hat w)-[b_s(x)-\hat{b}_s(x)]\cdot p+\hat{h}_s(x,p)-h_s(x,p)
\end{split}
\ees
By symmetry we obtain
\bes
\frac{\rho^\ell_{uu}}{2}|w-\hat w|^2 \leq [\hat\ell_s-\ell_s](x,w)-[\hat b_s(x)-{b}_s(x)]\cdot p+h_s(x,p)-\hat{h}_s(x,p)
\ees
Summing the two bounds and using \Cref{ass:stab_mild_reg} we obtain the desired result.
\end{proof}

\section{Mean Field PDE systems}\label{sec:MF_PDE}
In this section, we undertake the study the long time behavior of the PDE system \eqref{eq:MF_PDE_intro}, which we repeat here for the readers' convenience
\begin{equation}\label{eq:MF_PDE}
\bec
\partial_t \varphi_t + \frac{1}{2}\tr\left(\sigma(x)^{\top}\sigma(x)\nabla^2\varphi_t(x)\right) + H(x,\nabla\varphi_t(x)) + F(\mu_t,x)=0, \eqsp \varphi_T(x) = G(\mu_T,x),\\
\partial_t\mu_t -\frac{1}{2}\tr(\nabla^2(\sigma^{\top}\sigma\mu_t(x))(x)) + \nabla\cdot(\partial_p H(x,\nabla\varphi_t(x))\mu_t)=0.
\eec
\end{equation}
Observe that \eqref{eq:MF_PDE} can be seen as a fixed point problem. Namely, given a flow $\fl\in\Gamma$, where
\begin{equation} 
\Gamma:=\left\{  \mu:[0,T] \rightarrow \mathcal{P}_1(\mathbb{R}^d): \sup_{0\leq s < t\le T}\frac{W_1(\mu_s,\mu_t)}{|t-s|^{1/2}}<+\infty, 
\quad \mu_0\in \mathcal{P}_p(\mathbb{R}^d) \right\},
\end{equation}
we can define the iteration $(\psi^{T,G}_{\cdot}[\mu_\cdot],\nu^{T,G}_{\cdot}[\mu_\cdot])$ as solution to the frozen problem
\begin{equation}\label{eq:linearized_PDE_system}
\bec
\partial_t \psi_t + \frac{1}{2}\tr\left(\sigma(x)^{\top}\sigma(x)\nabla^2\psi_s(x)\right) + H(x,\nabla\psi_t(x)) + F(\mu_t,x)=0, \quad \psi_T=G(\mu_T,x),\\
\partial_t\nu_t -\frac{1}{2}\tr(\nabla^2(\sigma^{\top}\sigma\nu_t)(x)) + \nabla\cdot( \partial_p H(x,\nabla\psi_t(x))\nu_t(x))=0, \quad \nu_0=\mu_0.
\eec
\end{equation}
Clearly, $(\varphi_t,\mu_t)_{t \in [0,T]}$ solves \eqref{eq:MF_PDE} if and only if $\mu_\cdot = \nu^{T,G}_{\cdot}[\mu_\cdot]$ and $\varphi_\cdot = \psi^{T,G}_{\cdot}[\mu_\cdot]$.
Observe that (at least formally for now) for $\mu_{\cdot} \in \Gamma$ given, $\psi^{T,G}_{\cdot}[\mu_\cdot]$ is the value function to the optimal control problem \eqref{eq:classical_control_problem} 
and $\nu^{T,G}_{\cdot}[\mu_\cdot]$ is the corresponding marginal flow of the associated optimal dynamics \eqref{eq:fin_dim_OptDyn} when setting $b_s=b$, $\ell_s(x,u) = L(x,u) + F(\mu_s,x)$, $g(x)=G(\mu_T,x)$. 
We plan to exploit this relationship in order to apply the results from \Cref{sec:fin_dim_cont} to establish existence and uniqueness of solutions of \eqref{eq:MF_PDE} and an exponential turnpike property towards the (unique) solution $(\varphi^\infty,\mu^\infty) \in C^{0,1}(\bbR^d) \times \cP_{1}(\bbR^d)$ of the ergodic system 
\begin{equation}\label{eq:MF_PDE_ergodic}
\bec
\eta + \frac{1}{2}\tr\left(\sigma(x)^{\top}\sigma(x)\nabla^2\varphi(x)\right) +  H(x,\nabla\varphi(x)) + F(\mu,x)=0,\\
\frac{1}{2}\tr(\nabla^2(\sigma^{\top}\sigma\mu)(x)) - \nabla\cdot(\partial_p H(x,\nabla\varphi(x))\mu(x))=0.
\eec
\end{equation}
We start with a rather abstract lemma, yielding a turnpike estimate on solutions of the PDE system \eqref{eq:MF_PDE}, once we have established two inequalities for suitable metrics on the space of flows $\Gamma$. This gives us tractable sufficient conditions for proving turnpike estimates in the following subsections.
\subsubsection*{An abstract turnpike estimate}
We assume that we are given a metric $\sfd : \cP_1(\bbR^d) \times \cP_1(\bbR^d) \rightarrow \bbR_+$ and a family of continuous functions $\upsilon^\lambda: \bbR_+^* \rightarrow \bbR_+^*$, parameterized by $\lambda > 0$, and define the following metrics for $\mu_\cdot, \hat{\mu}_\cdot \in \Gamma$
\begin{align}
&\overrightarrow{d_{\lambda, \kappa}^{T}}(\fl,\hfl) := \sup_{0 < s\leq T}(\upsilon^\lambda(s))^{-1}\sfd(\mu_s,\hat\mu_s), \quad \overleftarrow{d_{\lambda,\kappa}^{T}}(\fl,\hfl) := \sup_{0 \leq s\leq T}e^{\lambda (T-s)}\sfd(\mu_s,\hat\mu_s).
\end{align}

\begin{lemma}[Abstract turnpike estimate]\label{lem:abs_turnpike_est}
Let $G:\cP_1(\bbR^d) \times \bbR^d \rightarrow \bbR$, $\hat{G}: \bbR^d \rightarrow \bbR$ be two terminal conditions and assume that $\nu^{T,G}$ and $\nu^{T,\hat{G}}$ are well-defined on $\Gamma$. Suppose that there exists $\bar{\kappa} \in \msk$, $0 < \lambda < \lambda_{\bar{\kappa}}$, $0\leq \varepsilon(\lambda)< 1$ such that for all $\mu_\cdot, \hat{\mu}_\cdot \in \Gamma$
\begin{equation}\label{eq:abs_trunpike_cond_1}
\overrightarrow{d_{\lambda,\bar{\kappa}}^{T}}(\nu^{T,\hat{G}}_\cdot[\mu_\cdot],\nu^{T,\hat{G}}_\cdot[\hat\mu_\cdot])\leq W_{f_{\bar{\kappa}}}(\mu_0,\hat{\mu}_0)+ \varepsilon(\lambda) \overrightarrow{d_{\lambda,\bar{\kappa}}^{T}}(\mu_\cdot,\hat\mu_\cdot),
\end{equation}
and suppose furthermore, that if $\mu_0 = \hat{\mu}_0$
\begin{equation}\label{eq:abs_trunpike_cond_2}
\overleftarrow{d_{\lambda,\bar{\kappa}}^{T}}(\nu^{T,\hat{G}}_\cdot[\mu_\cdot],\nu^{T,\hat{G}}_\cdot[\hat\mu_\cdot])\leq \varepsilon(\lambda) \overleftarrow{d_{\lambda,\bar{\kappa}}^{T}}(\mu_\cdot,\hat\mu_\cdot).
\end{equation}
and that there exists a solution for the mean field PDE system for the initial condition $\mu_0$ and terminal condition $\hat{G}$.
Then, if $(\mu^{T,G}_\cdot, \varphi^{T,G}_\cdot)$   is a solution of \eqref{eq:MF_PDE} for the terminal cost $G$ and initial condition $\mu_0$, and $(\mu^{T,\hat{G}}_\cdot, \varphi^{T,\hat{G}}_\cdot)$ a solution of \eqref{eq:MF_PDE} for terminal cost $\hat{G}:\bbR^d \rightarrow \bbR$ and initial condition $\hat{\mu}_0$
\begin{equation}\label{eq:abstract-tpike}
\sfd(\mu^{T,G}_t, \mu^{T,\hat{G}}_t) \leq \frac{\upsilon^\lambda(t)}{1 - \varepsilon(\lambda)} W_{f_{\bar{\kappa}}}(\mu_0, \hat{\mu}_0) + 
\frac{e^{-\lambda(T-t)}}{1 - \varepsilon(\lambda)} \overleftarrow{d_{\lambda,\bar{\kappa}}^{T}}(\nu^{T,\hat{G}}_\cdot[\mu^{T,G}_\cdot],\nu^{T,G}_\cdot[\mu^{T,G}_\cdot]) \eqsp \quad \forall t\in[0,T].
\end{equation}

\end{lemma}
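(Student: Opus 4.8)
The plan is to decouple the two perturbations that distinguish $\mu^{T,G}_\cdot$ from $\mu^{T,\hat{G}}_\cdot$ --- the change of initial datum ($\mu_0 \rightsquigarrow \hat{\mu}_0$) and the change of terminal cost ($G\rightsquigarrow\hat{G}$) --- by running each through the inequality tailored to it: \eqref{eq:abs_trunpike_cond_1} handles a change of initial datum under a \emph{fixed} terminal cost $\hat{G}$, while \eqref{eq:abs_trunpike_cond_2} handles, with a \emph{fixed} initial datum, the full effect of the map $\nu^{T,\hat{G}}_\cdot$. To connect the two I would introduce the auxiliary flow $\tilde{\mu}_\cdot\in\Gamma$, a solution of the mean field PDE system with initial condition $\mu_0$ and terminal cost $\hat{G}$; this is exactly the object whose existence is postulated in the statement, and it is the only use of that hypothesis. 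Then $\tilde{\mu}_\cdot=\nu^{T,\hat{G}}_\cdot[\tilde{\mu}_\cdot]$ with $\tilde{\mu}_0=\mu_0$, and the triangle inequality for $\sfd$ gives, for every $t\in[0,T]$, $\sfd(\mu^{T,G}_t,\mu^{T,\hat{G}}_t)\le\sfd(\mu^{T,G}_t,\tilde{\mu}_t)+\sfd(\tilde{\mu}_t,\mu^{T,\hat{G}}_t)$, and I would bound the two summands separately. Throughout I would use, repeatedly, that $\mu^{T,G}_\cdot,\mu^{T,\hat{G}}_\cdot,\tilde{\mu}_\cdot$ are fixed points of $\nu^{T,G}_\cdot$ (resp.\ $\nu^{T,\hat{G}}_\cdot$), and that $\overrightarrow{d_{\lambda,\bar{\kappa}}^{T}}$ and $\overleftarrow{d_{\lambda,\bar{\kappa}}^{T}}$ are honest pseudometrics (triangle inequality and symmetry are inherited from $\sfd$) which are finite on the pairs at hand --- by time-continuity of all flows on $[0,T]$ and continuity of $\upsilon^\lambda$ --- so that the $\varepsilon(\lambda)$-terms can be absorbed on the left since $\varepsilon(\lambda)<1$.

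For $\sfd(\tilde{\mu}_t,\mu^{T,\hat{G}}_t)$ I would apply \eqref{eq:abs_trunpike_cond_1} to the pair $(\tilde{\mu}_\cdot,\mu^{T,\hat{G}}_\cdot)$: since $\nu^{T,\hat{G}}_\cdot$ fixes each of them, the left-hand side is $\overrightarrow{d_{\lambda,\bar{\kappa}}^{T}}(\tilde{\mu}_\cdot,\mu^{T,\hat{G}}_\cdot)$, and absorbing the $\varepsilon(\lambda)$-term yields $\overrightarrow{d_{\lambda,\bar{\kappa}}^{T}}(\tilde{\mu}_\cdot,\mu^{T,\hat{G}}_\cdot)\le(1-\varepsilon(\lambda))^{-1}W_{f_{\bar{\kappa}}}(\mu_0,\hat{\mu}_0)$; unwinding the definition of $\overrightarrow{d_{\lambda,\bar{\kappa}}^{T}}$ produces the first term of \eqref{eq:abstract-tpike}, with the factor $\upsilon^\lambda(t)$.

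For $\sfd(\mu^{T,G}_t,\tilde{\mu}_t)$ the two flows now share the initial datum $\mu_0$, which is precisely the hypothesis under which \eqref{eq:abs_trunpike_cond_2} is available. I would insert $p_\cdot:=\nu^{T,\hat{G}}_\cdot[\mu^{T,G}_\cdot]$ and use the triangle inequality for $\overleftarrow{d_{\lambda,\bar{\kappa}}^{T}}$ together with $\mu^{T,G}_\cdot=\nu^{T,G}_\cdot[\mu^{T,G}_\cdot]$ and $\tilde{\mu}_\cdot=\nu^{T,\hat{G}}_\cdot[\tilde{\mu}_\cdot]$: $\overleftarrow{d_{\lambda,\bar{\kappa}}^{T}}(\mu^{T,G}_\cdot,\tilde{\mu}_\cdot)\le\overleftarrow{d_{\lambda,\bar{\kappa}}^{T}}(\nu^{T,G}_\cdot[\mu^{T,G}_\cdot],p_\cdot)+\overleftarrow{d_{\lambda,\bar{\kappa}}^{T}}(\nu^{T,\hat{G}}_\cdot[\mu^{T,G}_\cdot],\nu^{T,\hat{G}}_\cdot[\tilde{\mu}_\cdot])$. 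By \eqref{eq:abs_trunpike_cond_2} applied to $(\mu^{T,G}_\cdot,\tilde{\mu}_\cdot)$ the last summand is at most $\varepsilon(\lambda)\overleftarrow{d_{\lambda,\bar{\kappa}}^{T}}(\mu^{T,G}_\cdot,\tilde{\mu}_\cdot)$ and gets absorbed, while the first summand equals $\overleftarrow{d_{\lambda,\bar{\kappa}}^{T}}(\nu^{T,\hat{G}}_\cdot[\mu^{T,G}_\cdot],\nu^{T,G}_\cdot[\mu^{T,G}_\cdot])$ by symmetry. Hence $\overleftarrow{d_{\lambda,\bar{\kappa}}^{T}}(\mu^{T,G}_\cdot,\tilde{\mu}_\cdot)\le(1-\varepsilon(\lambda))^{-1}\overleftarrow{d_{\lambda,\bar{\kappa}}^{T}}(\nu^{T,\hat{G}}_\cdot[\mu^{T,G}_\cdot],\nu^{T,G}_\cdot[\mu^{T,G}_\cdot])$, and unwinding the backward metric gives the second term of \eqref{eq:abstract-tpike} with the factor $e^{-\lambda(T-t)}$. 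Summing the two bounds concludes.

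The calculation is short, so the real point --- and the only thing requiring care --- is the bookkeeping: picking the intermediate flow $\tilde{\mu}_\cdot$ so that \eqref{eq:abs_trunpike_cond_1} is invoked only on pairs of $\nu^{T,\hat{G}}_\cdot$-fixed points and \eqref{eq:abs_trunpike_cond_2} only on pairs with matching initial data, and checking that $\overrightarrow{d_{\lambda,\bar{\kappa}}^{T}}$, $\overleftarrow{d_{\lambda,\bar{\kappa}}^{T}}$ are a priori finite on those pairs before absorbing the $\varepsilon(\lambda)$-contributions. A minor endpoint issue is that $\upsilon^\lambda$ is defined only on $\bbR_+^*$, so strictly speaking \eqref{eq:abstract-tpike} should be read for $t\in(0,T]$, the case $t=0$ being either excluded or disposed of by an immediate separate estimate.
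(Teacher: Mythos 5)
Your proof is correct and follows essentially the same route as the paper's: introduce the auxiliary flow $\tilde\mu_\cdot$ solving the system with terminal cost $\hat G$ and initial datum $\mu_0$, split via the triangle inequality, bound $\sfd(\tilde\mu_t,\mu^{T,\hat G}_t)$ with \eqref{eq:abs_trunpike_cond_1}, and bound $\sfd(\mu^{T,G}_t,\tilde\mu_t)$ with a triangle inequality through $\nu^{T,\hat G}_\cdot[\mu^{T,G}_\cdot]$ followed by \eqref{eq:abs_trunpike_cond_2}. The small observation about $\upsilon^\lambda$ being defined only on $\bbR_+^*$ is a fair nit but does not change the argument.
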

\begin{proof}

Denote by $\tilde{\mu}_\cdot$ a fixed point of $\nu^{T,\hat{G}}$ with initial condition $\mu_0$. Let us also write $\mu = \mu^{T,G}$ and $\hat{\mu} = \mu^{T,\hat{G}}$ for readability. By the triangular inequality
\begin{equation}\label{eq:part_forward_backward}
\sfd(\mu_t, \hat{\mu}_t) \leq \sfd(\mu_t, \tilde{\mu}_t)+\sfd(\tilde{\mu}_t, \hat{\mu}_t)   \eqsp.
\end{equation}
By assumption, we have
\begin{align}
\overrightarrow{d_{\lambda,\bar{\kappa}}^{T}}(\nu^{T,\hat{G}}_\cdot[\tilde{\mu}_\cdot],\nu^{T,\hat{G}}_\cdot[\hat{\mu}_\cdot])\leq W_{f_{\bar{\kappa}}}(\mu_0,\hat{\mu}_0)+\varepsilon(\lambda) \overrightarrow{d_{\lambda,\bar{\kappa}}^{T}}(\tilde{\mu}_\cdot,\hat{\mu}_\cdot) \eqsp,
\end{align}
which gives since $\tilde{\mu}_\cdot$ and $\hat{\mu}_\cdot$ are fixed points of $\nu^{T,\hat{G}}_\cdot$
\begin{equation}\label{eq:forward_stab}
\sfd(\tilde{\mu}_t, \hat{\mu}_t) \leq \frac{\upsilon^\lambda(t)}{1 - \varepsilon(\lambda)} W_{f_{\bar{\kappa}}}(\mu_0, \hat{\mu}_0) \eqsp.
\end{equation}
In order to bound the first term in \eqref{eq:part_forward_backward}, observe that again by assumption we have
\begin{align}
\overleftarrow{d_{\lambda,\bar{\kappa}}^{T}}(\nu^{T,\hat{G}}_\cdot[\tilde{\mu}_\cdot],\nu^{T,G}_\cdot[\mu_\cdot])
&\leq \overleftarrow{d_{\lambda,\bar{\kappa}}^{T}}(\nu^{T,\hat{G}}_\cdot[\tilde{\mu}_\cdot],\nu^{T,\hat{G}}_\cdot[\mu_\cdot]) + \overleftarrow{d_{\lambda,\bar{\kappa}}^{T}}(\nu^{T,\hat{G}}_\cdot[\mu_\cdot],\nu^{T,G}_\cdot[\mu_\cdot]) \\
&\leq \varepsilon(\lambda) \overleftarrow{d_{\lambda,\bar{\kappa}}^{T}}(\tilde{\mu}_\cdot,\mu_\cdot) + \overleftarrow{d_{\lambda,\bar{\kappa}}^{T}}(\nu^{T,\hat{G}}_\cdot[\mu_\cdot],\nu^{T,G}_\cdot[\mu_\cdot]) \eqsp.
\end{align}
Thanks to the fixed point property of $\tilde{\mu}_\cdot$ for $\nu^{T,\hat{G}}$ and $\mu_\cdot$ for $\nu^G$ we obtain
\begin{align}\label{eq:est-final-big-metric}
\overleftarrow{d_{\lambda,\bar{\kappa}}^{T}}(\tilde{\mu}_\cdot,\mu_\cdot) \leq \frac{1}{1 - \varepsilon(\lambda)} \overleftarrow{d_{\lambda,\bar{\kappa}}^{T}}(\nu^{T,\hat{G}}_\cdot[\mu_\cdot],\nu^{T,G}_\cdot[\mu_\cdot]) \eqsp.
\end{align}
Now use the definition of $\overleftarrow{d_{\lambda,\bar{\kappa}}^{T}}$ and \eqref{eq:forward_stab}, \eqref{eq:part_forward_backward} to conclude.

\end{proof}

In the following sections, we distinguish three different regimes of regularity, where the main goal is to prove existence and uniqueness of the ergodic PDE system \eqref{eq:MF_PDE_ergodic} and to verify the hypothesis of \Cref{lem:abs_turnpike_est} in order to conclude turnpike estimates both for the flows and value functions associated to \eqref{eq:MF_PDE} under different regularity and growth assumptions on the final cost $G$.

\subsection{High regularity}\label{sec:MF_high_reg}

Let us start by making rigorous the relationship of the fixed-point iterations \eqref{eq:linearized_PDE_system} and the optimal control problems considered in \Cref{sec:fin_dim_cont}. To this aim, we introduce the constant
\be\label{def:c_x_psi}
\const_x^\psi:=\frac{\const^L_x+\const^F_x}{\lambda_{\kappa_b}C_{\kappa_b}}.
\ees

\begin{lemma}\label{lem:basics_MF_high}
Assume \Cref{ass:MF_drift_intro}, \Cref{ass:MF_coercivity_intro}, \Cref{ass:high_reg_intro}. Let $\mu_\cdot \in \Gamma$, $G: \cP_1(\bbR^d) \times \bbR^d \rightarrow \bbR$ locally Lipschitz continuous in the second variable such that $G(\mu,\cdot)$ is of linear growth for all $\mu\in\cP_1(\bbR^d)$. The following holds.
\begin{enumerate}[label=(\roman*)] 
\item\label{item:HJB_ok_high} There is a unique solution to \eqref{eq:linearized_PDE_system}, denote it by $(\psi^{T,G}_{\cdot}[\mu_\cdot],\nu^{T,G}_{\cdot}[\mu_\cdot])$. 

Moreover, setting $b_s=b$, ${\ell_s(x,u) = L(x,u) + F(\mu_s,x)}$, $g(x)=G(\mu_T,x)$, $\psi^{T,G}_{\cdot}[\mu_\cdot]$ is the value function of \eqref{eq:classical_control_problem} and  $\nu^{T,G}_{\cdot}[\mu_\cdot] \in \Gamma$ is the corresponding optimal flow \eqref{eq:opt_flow}.
\item\label{item:grad_est_MF_high} The following quantitative bounds hold true
\begin{align}
\|\psi^{T,G}_{t}[\mu_\cdot]\|_{f_{\kappa_{b}}} &\leq \const_x^\psi (1 - e^{-\lambda_{\kappa_{b}}(T-t)}) +\min\left\{ \| G(\mu_T,\cdot)\|_{f_{\kappa_{b}}}e^{-\lambda_{\kappa_{b}}(T-t)},\|G(\mu_T,\cdot)\|_{\infty} q^{\kappa_b}_{T-t}  \right\}  ,  \\
\|w(\cdot,\nabla\psi^{T,G}_{t}[\mu_\cdot](\cdot))\|_\infty &\leq \frac{\|\psi^{T,G}_{t}[\mu_\cdot]\|_{f_{\kappa_{b}}}+\const^{L(\cdot,0)}_u}{\rho^{L}_{uu}},
\end{align}
where
\begin{equation}
w(x,p) := \argmin_{u \in \bbR^d} \left\{ L(x,u)+(b(x)+u)\cdot p\right\}  \eqsp.
\end{equation}

\item\label{item:stab_MF_high} Suppose now that $G$ is Lipschitz in the space variable and that $\hat{G}: \cP_1(\bbR^d) \times \bbR^d \rightarrow \bbR$ is another terminal cost satisfying the same assumptions as $G$. Given another flow $\hat{\mu}_\cdot \in \Gamma$, we have
\begin{align}
\|\psi^{T,G}_{t}[\mu_\cdot]-\psi^{T,\hat{G}}_{t}[\hat{\mu}_\cdot]\|_{f_{\tilde{\kappa}}}
&\leq \const^{\delta \psi_t}_x+ \|G(\mu_T,\cdot) - \hat{G}(\hat{\mu}_T,\cdot) \|_{f_{\tilde{\kappa}}}e^{-\lambda_{\tilde{\kappa}}(T-t)} \\
W_{f_{\tilde{\kappa}}}(\nu^{T,G}_{t}[\mu_\cdot],\nu^{T,\hat{G}}_{t}[\hat{\mu}_\cdot])
&\leq e^{-\lambda_{\tilde{\kappa}}t} W_{f_{\tilde{\kappa}}}(\mu_0,\hat{\mu}_0)+ \frac{1}{\rho^L_{uu}}\int_0^t e^{-\lambda_{\tilde{\kappa}} (t-s)} \const^{\delta \psi_s}_x\De s \\
& \quad+ \frac{1}{2\rho^L_{uu}\lambda_{\tilde{\kappa}}}e^{-\lambda_{\tilde{\kappa}}(T-t)}\|G(\mu_T,\cdot) - \hat{G}(\hat{\mu}_T,\cdot) \|_{f_{\tilde{\kappa}}},
\end{align}
where \bes\label{c_delta_psi_x} \const^{\delta \psi_t}_x = \int_t^T\frac{\const^F_{x\mu}}{C_{\tilde{\kappa}}^2}W_{f_{\tilde{\kappa}}}(\mu_s,\hat{\mu}_s)e^{-\lambda_{\tilde{\kappa}}(s-t)}\De s\ees and $\tilde{\kappa} \in \msk$ is any profile satisfying

\begin{equation}\label{eq:tilde_kappa_cond}
\tilde{\kappa} \leq  \min_{0\leq s \leq T} \{\kappa_{\partial_p H(\cdot,\nabla\psi^{T,G}_{s}[\mu_\cdot](\cdot))},\kappa_{\partial_p H(\cdot,\nabla\psi^{T,\hat{G}}_{s}[\hat{\mu}_\cdot](\cdot))}\}.
\end{equation}

In particular, if $\max\{\|\hat G\|_{f_{\kappa_\beta}}(\mu_T,\cdot),\| G(\mu_T,\cdot)\|_{f_{\kappa_\beta}}\}\leq 2\const^\psi_x$ we can choose $\tilde{\kappa}=\bar\kappa$, with $\bar\kappa$ as in \eqref{eq:bar_kappa_intro}.

\end{enumerate}
\end{lemma}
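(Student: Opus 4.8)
The plan is to recognise \eqref{eq:linearized_PDE_system} as the Hamilton--Jacobi--Bellman system of a classical stochastic control problem of the type analysed in \Cref{sec:fin_dim_cont}, and then to import the estimates proved there. Fixing $\mu_\cdot\in\Gamma$, I would set $b_s = b$, $\ell_s(x,u) = L(x,u)+F(\mu_s,x)$ and $g(x)=G(\mu_T,x)$; since $H(x,p)=\inf_u\{L(x,u)+(b(x)+u)\cdot p\}$ one has $h_s(x,p)=H(x,p)+F(\mu_s,x)$, so the backward equation in \eqref{eq:linearized_PDE_system} is exactly \eqref{eq:fin_dim_HJB} for these data. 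The first task is to check that the coefficients meet the hypotheses of \Cref{sec:fin_dim_cont}: \Cref{ass:drift_field_fin_dim_control} is inherited from \Cref{ass:MF_drift_intro} (with the same $\kappa_b\in\msk$, $b$ being time independent), \Cref{ass-coercivity} from \Cref{ass:MF_coercivity_intro} (with $\const^{\ell(\cdot,0)}_u$ replaced by $\const^{L(\cdot,0)}_u$), and \Cref{ass:mild-reg} from \Cref{ass:high_reg_intro} with $\const^{\ell_s}_x=\const^L_x+\const^F_x$, while $g=G(\mu_T,\cdot)$ is locally Lipschitz of linear growth. Then \Cref{prop:properties} gives item \ref{item:HJB_ok_high}: the HJB equation has a unique classical solution $\psi^{T,G}_\cdot[\mu_\cdot]$ in the relevant class, it equals the value function of \eqref{eq:classical_control_problem}, and $(s,x)\mapsto w(x,\nabla\psi^{T,G}_s[\mu_\cdot](x))$ is an optimal Markov policy. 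I would then identify the associated optimal flow \eqref{eq:opt_flow} with the unique $\cP_1$-valued weak solution $\nu^{T,G}_\cdot[\mu_\cdot]$ of the Fokker--Planck equation (its drift $b+w(\cdot,\nabla\psi^{T,G}_s[\mu_\cdot](\cdot))$ being locally Lipschitz with polynomial growth, once the boundedness of $w$ from item \ref{item:grad_est_MF_high} is available), and conclude $\nu^{T,G}_\cdot[\mu_\cdot]\in\Gamma$ from $\nu_0=\mu_0\in\cP_p(\bbR^d)$ and \Cref{prop:Holder_in_time}-\ref{item:time_Holder_Wf}.

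For item \ref{item:grad_est_MF_high} I would apply \Cref{lemma:gradient_estimate_linearized problem}-\ref{item:grad_est_Lip} with $\const^{\ell_s}_x=\const^L_x+\const^F_x$; the integral evaluates to
\bes
\int_t^T\frac{\const^L_x+\const^F_x}{C_{\kappa_b}}e^{-\lambda_{\kappa_b}(s-t)}\,\De s=\const^\psi_x\bigl(1-e^{-\lambda_{\kappa_b}(T-t)}\bigr),
\ees
by the definition \eqref{def:c_x_psi} of $\const^\psi_x$, which yields the bound on $\|\psi^{T,G}_t[\mu_\cdot]\|_{f_{\kappa_b}}$ with terminal term $\|G(\mu_T,\cdot)\|_{f_{\kappa_b}}e^{-\lambda_{\kappa_b}(T-t)}$; when $G(\mu_T,\cdot)$ is also bounded, \Cref{cor:grad_est_g_bdd} gives the same bound with the terminal term $\|G(\mu_T,\cdot)\|_\infty q^{\kappa_b}_{T-t}$ instead, and taking the minimum produces the stated estimate. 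The bound on $\|w(\cdot,\nabla\psi^{T,G}_t[\mu_\cdot](\cdot))\|_\infty$ is the companion control bound of \Cref{lemma:gradient_estimate_linearized problem}, a direct consequence of the uniform convexity in \Cref{ass:MF_coercivity_intro}.

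For item \ref{item:stab_MF_high} I would observe that the problems attached to $(\mu_\cdot,G)$ and $(\hat\mu_\cdot,\hat G)$ share the drift $b$ and have running costs differing by $\delta\ell_s(x):=F(\mu_s,x)-F(\hat\mu_s,x)$, a function of $x$ only, with $\|\delta\ell_s\|_\Lip\le\const^F_{x\mu}W_1(\mu_s,\hat\mu_s)\le C_{\tilde\kappa}^{-1}\const^F_{x\mu}W_{f_{\tilde\kappa}}(\mu_s,\hat\mu_s)$ by the two-sided bound $C_{\tilde\kappa}r\le f_{\tilde\kappa}(r)\le r$ of \Cref{item_1:Lyapunov_funct_coup_by_ref}; this is precisely the scenario of \Cref{ass:stab_high_reg}, so I would run the argument behind \Cref{lem:stability_estimate_linear_pb}-\ref{item:stab_high_reg} (Feynman--Kac representation of $\gamma=\psi^{T,\hat G}[\hat\mu]-\psi^{T,G}[\mu]$ against coupling by reflection of the linearised drift, whose monotonicity profile is $\ge\tilde\kappa$ by \eqref{eq:tilde_kappa_cond}), keeping the terminal contribution in the form $\|G(\mu_T,\cdot)-\hat G(\hat\mu_T,\cdot)\|_{f_{\tilde\kappa}}\,\bbE[f_{\tilde\kappa}(|X_T-\hat X_T|)]$ before invoking \Cref{prop:contr_same_drift}, so as to produce the clean terminal term $\|G(\mu_T,\cdot)-\hat G(\hat\mu_T,\cdot)\|_{f_{\tilde\kappa}}e^{-\lambda_{\tilde\kappa}(T-t)}$ together with the integral term $\const^{\delta\psi_t}_x$ of \eqref{c_delta_psi_x}. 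For the flow estimate, the two optimal flows solve \eqref{eq:opt_flow} with drifts $\beta_s=b+w(\cdot,\nabla\psi^{T,G}_s[\mu_\cdot](\cdot))$ and $\hat\beta_s=b+w(\cdot,\nabla\psi^{T,\hat G}_s[\hat\mu_\cdot](\cdot))$, both with monotonicity profile $\ge\tilde\kappa$ by \eqref{eq:tilde_kappa_cond}, and uniform convexity of $L$ in $u$ gives $|w(x,p)-w(x,q)|\le|p-q|/\rho^L_{uu}$, hence
\bes
\const^{\delta\beta_s}:=\sup_x|\beta_s(x)-\hat\beta_s(x)|\le\frac{1}{\rho^L_{uu}}\|\psi^{T,G}_s[\mu_\cdot]-\psi^{T,\hat G}_s[\hat\mu_\cdot]\|_{f_{\tilde\kappa}}\le\frac{\const^{\delta\psi_s}_x+\|G(\mu_T,\cdot)-\hat G(\hat\mu_T,\cdot)\|_{f_{\tilde\kappa}}e^{-\lambda_{\tilde\kappa}(T-s)}}{\rho^L_{uu}}.
\ees
Then I would apply \Cref{prop:contr_delta_coup}-\ref{item:W_1_contr_delta_coup}, let $\delta\to0$, and integrate, using $\int_0^t e^{-\lambda_{\tilde\kappa}(t-s)}e^{-\lambda_{\tilde\kappa}(T-s)}\,\De s\le\frac{1}{2\lambda_{\tilde\kappa}}e^{-\lambda_{\tilde\kappa}(T-t)}$, to arrive at the second displayed inequality. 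Finally, if $\max\{\|G(\mu_T,\cdot)\|_{f_{\kappa_b}},\|\hat G(\hat\mu_T,\cdot)\|_{f_{\kappa_b}}\}\le2\const^\psi_x$, then item \ref{item:grad_est_MF_high} forces $\|\psi^{T,G}_s[\mu_\cdot]\|_{f_{\kappa_b}},\|\psi^{T,\hat G}_s[\hat\mu_\cdot]\|_{f_{\kappa_b}}\le2\const^\psi_x$ for all $s$, hence $\|w(\cdot,\nabla\psi^{T,G}_s[\mu_\cdot](\cdot))\|_\infty\le\const^u$ with $\const^u$ as in \eqref{eq:bar_kappa_intro}; since a bounded drift perturbation of magnitude $c$ lowers the monotonicity profile by at most $2c/r$, this gives $\kappa_{\partial_pH(\cdot,\nabla\psi^{T,G}_s[\mu_\cdot](\cdot))}(r)\ge\kappa_b(r)-2\const^u/r=\bar\kappa(r)$ and likewise for the hatted problem, so $\bar\kappa$ satisfies \eqref{eq:tilde_kappa_cond} and is an admissible choice of $\tilde\kappa$.

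I expect the main obstacle to lie not in the estimates themselves --- which are a fairly mechanical translation of \Cref{sec:fin_dim_cont} --- but in verifying that $s\mapsto\ell_s=L(\cdot,\cdot)+F(\mu_s,\cdot)$ carries enough time regularity for the classical parabolic theory underlying \Cref{prop:properties} to apply when $\mu_\cdot$ is only $W_1$-Hölder and $F$ only satisfies \Cref{ass:high_reg_intro} (no supremum-norm control on $\mu\mapsto F(\mu,\cdot)$); here one exploits that $\nabla\psi$ and $\nu$ are insensitive to shifting $F(\mu_s,\cdot)$ by a function of $s$ alone, combined with the $W_1$-Lipschitz bound on $\partial_xF$. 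The second delicate point, purely bookkeeping, is to propagate the norm equivalences $\|\cdot\|_{f_{\tilde\kappa}}\leftrightarrow\|\cdot\|_\Lip$ and $W_{f_{\tilde\kappa}}\leftrightarrow W_1$ (all coming from $C_{\tilde\kappa}r\le f_{\tilde\kappa}(r)\le r$) through the stability argument so that the final constants are exactly the ones stated.
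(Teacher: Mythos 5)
Your proof is correct and follows the paper's own route: the paper's proof of this lemma is a one-line reduction to \Cref{prop:properties}, \Cref{prop:Holder_in_time}, \Cref{lemma:gradient_estimate_linearized problem}, \Cref{cor:grad_est_g_bdd} and \Cref{lem:stability_estimate_linear_pb}, which is exactly what you carry out in detail, including the necessary observation that to get the cleaner terminal term $\|G-\hat G\|_{f_{\tilde\kappa}}e^{-\lambda_{\tilde\kappa}(T-t)}$ (rather than $C_{\tilde\kappa}^{-1}\|G-\hat G\|_\Lip e^{-\lambda_{\tilde\kappa}(T-t)}$ from the stated form of \Cref{lem:stability_estimate_linear_pb}-\ref{item:stab_high_reg}) one reruns the Feynman--Kac argument keeping the terminal contribution in $f_{\tilde\kappa}$-form before invoking \Cref{prop:contr_same_drift}.
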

\begin{proof}\Cref{item:HJB_ok_high} is a consequence of \Cref{prop:properties} in combination with \Cref{prop:Holder_in_time}, where we refer to \Cref{cor:grad_est_g_bdd} for the bounded case. Uniqueness for the Fokker-Planck equation follows by \cite[Thm 1]{bogachev2007UniquenessSolutionsWeak}. In the same way, the Lipschitz estimates in \ref{item:grad_est_MF_high} follow by a direct application of \Cref{lemma:gradient_estimate_linearized problem} and \Cref{cor:grad_est_g_bdd}. \Cref{item:stab_MF_high} follows from \Cref{lem:stability_estimate_linear_pb}. 
\end{proof}

With these tools at hand we can prove existence and uniqueness of \eqref{eq:MF_PDE_ergodic}.
\begin{thm}\label{lem:MF_syst_high_reg}
Assume \Cref{ass:MF_drift_intro}, \Cref{ass:MF_coercivity_intro}, \Cref{ass:high_reg_intro} and that \eqref{eq:high_reg_suff} with $\bar{\kappa}$ from \eqref{eq:bar_kappa_intro} holds. Then, there there exists a unique solution $(\mu^\infty,\varphi^\infty,\eta^\infty) \in \cP_{1}(\bbR^d) \times C^{0,1}(\bbR^d) \times \bbR$ to the ergodic mean field PDE system \eqref{eq:MF_PDE_ergodic} satisfying $\varphi^\infty(0)=0$ . Moreover, we have 
\begin{equation}\label{eq:gr_est_tpike_high}
\|\varphi^\infty \|_{f_{\kappa_b}}\leq \const_x^\psi.
\end{equation}
and $\mu^{\infty}\in\cP_p(\bbR^d)$. If we furthermore assume that \Cref{ass:boost} also holds, existence and uniqueness of a solution to the ergodic mean field PDE system holds under the relaxed condition \eqref{eq:high_reg_suff_1} for the choices
\be\label{eq:hessian_choices_1}
\const^u_x =\frac{1}{\rho^L_{uu}}\big(\const^{L}_{ux}+\const^\psi_{xx} \big), \const^{\psi}_{xx}= \const_{xx}(\const^{\sigma}_{x},\kappa_b,\rho^{L}_{uu},\const^{L(\cdot,0)}_u,\const^{b}_{x},\const^{L}_x,\const^{\psi}_x,\const^{\sigma}_{xx}),
\ee
where $\const_{xx}(\cdot)$ is as in \eqref{eq:gen_hess_est_1}.

\end{thm}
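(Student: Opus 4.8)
The plan is to combine the finite-horizon results of \Cref{sec:fin_dim_cont} with the abstract turnpike estimate \Cref{lem:abs_turnpike_est}, specialised to $\sfd = W_{f_{\bar\kappa}}$ with $\bar\kappa$ as in \eqref{eq:bar_kappa_intro} and $\upsilon^\lambda(s) = \rme^{-\lambda s}$. The main step is to verify hypotheses \eqref{eq:abs_trunpike_cond_1}--\eqref{eq:abs_trunpike_cond_2}. For two flows $\mu_\cdot,\hat\mu_\cdot\in\Gamma$ with $\mu_0 = \hat\mu_0$ and a fixed terminal cost $\hat G:\bbR^d\to\bbR$, the stability estimate of \Cref{lem:basics_MF_high} (with the admissible choice $\tilde\kappa = \bar\kappa$, which is legitimate as soon as the terminal cost in play has $f_{\kappa_b}$-seminorm at most $2\const^\psi_x$, in particular for $\hat G\equiv 0$ and, a posteriori, for $\hat G = \varphi^\infty$) gives
\[
W_{f_{\bar\kappa}}\big(\nu^{T,\hat G}_t[\mu_\cdot],\nu^{T,\hat G}_t[\hat\mu_\cdot]\big) \leq \frac{\const^F_{x\mu}}{\rho^L_{uu}C^2_{\bar\kappa}}\int_0^t \rme^{-\lambda_{\bar\kappa}(t-s)}\int_s^T \rme^{-\lambda_{\bar\kappa}(r-s)}\,W_{f_{\bar\kappa}}(\mu_r,\hat\mu_r)\,\rmd r\,\rmd s .
\]
Bounding $W_{f_{\bar\kappa}}(\mu_r,\hat\mu_r)$ by $\rme^{-\lambda r}\,\overrightarrow{d_{\lambda,\bar\kappa}^{T}}(\mu_\cdot,\hat\mu_\cdot)$ (resp.\ by $\rme^{-\lambda(T-r)}\,\overleftarrow{d_{\lambda,\bar\kappa}^{T}}(\mu_\cdot,\hat\mu_\cdot)$) and carrying out the two integrals yields \eqref{eq:abs_trunpike_cond_1} (resp.\ \eqref{eq:abs_trunpike_cond_2}) with $\varepsilon(\lambda) = \const^F_{x\mu}\big(\rho^L_{uu}C^2_{\bar\kappa}(\lambda_{\bar\kappa}^2-\lambda^2)\big)^{-1}$; since $\varepsilon(\lambda)\downarrow \const^F_{x\mu}\big(\rho^L_{uu}C^2_{\bar\kappa}\lambda_{\bar\kappa}^2\big)^{-1} < 1$ as $\lambda\downarrow 0$ by \eqref{eq:high_reg_suff}, we fix $\lambda\in(0,\lambda_{\bar\kappa})$ with $\varepsilon(\lambda)<1$.

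The same computation shows that, for every $T$ and admissible terminal cost $G$, the map $\mu_\cdot\mapsto\nu^{T,G}_\cdot[\mu_\cdot]$ contracts the space of flows in $\Gamma$ with prescribed initial value endowed with $\overrightarrow{d_{\lambda,\bar\kappa}^{T}}$; by \Cref{lem:basics_MF_high} its unique fixed point is the unique solution of the finite-horizon system \eqref{eq:MF_PDE}. To build the ergodic solution I would let $T\to\infty$ along the $T$-horizon solutions $(\mu^{T}_\cdot,\varphi^{T}_\cdot)$ with $G\equiv 0$ and a fixed base point $\mu_0\in\cP_p(\bbR^d)$: comparing the $T$-horizon solution with the restriction to $[0,T]$ of the $T'$-horizon solution for $T'>T$ through \eqref{eq:abstract-tpike} shows that $t\mapsto\mu^{T}_t$ is Cauchy as $T\to\infty$, uniformly on compact time intervals, while the uniform-in-$T$ Lipschitz bounds of \Cref{lemma:gradient_estimate_linearized problem} and \Cref{lem:basics_MF_high}, together with interior Schauder estimates for the HJB equation, make $\varphi^{T}_\cdot - \varphi^{T}_\cdot(0)$ precompact in the local-uniform topology. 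The limits give a candidate $(\mu^\infty,\varphi^\infty)$; time-independence of the limiting flow and value function follows from the turnpike bound on $\nabla\varphi^{T}_t$ in the bulk, $\eta^\infty$ is recovered as the common bulk value of $\partial_t\varphi^{T}_t$ in the limit, the bound $\|\varphi^\infty\|_{f_{\kappa_b}}\leq\const^\psi_x$ passes to the limit, and $\mu^\infty\in\cP_p(\bbR^d)$ because the optimal drift $b+w(\cdot,\nabla\varphi^\infty)$ is a perturbation of $b$ by a field bounded by $\const^u$, so the Lyapunov and moment estimates available under \Cref{ass:MF_drift_intro} persist.

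For uniqueness, note that any ergodic solution $(\mu^\infty,\varphi^\infty,\eta^\infty)$ generates the solution $\big(\mu^\infty,\ \varphi^\infty + \eta^\infty(\cdot-T)\big)$ of \eqref{eq:MF_PDE} on $[0,T]$ with terminal cost $G=\varphi^\infty$ and initial datum $\mu^\infty$ (the stationary ansatz is checked directly to solve both the HJB and the Fokker--Planck equations). Given two ergodic solutions, \Cref{lem:abs_turnpike_est} applied with these data and evaluated at $t=T/2$ in \eqref{eq:abstract-tpike} bounds $W_{f_{\bar\kappa}}(\mu^{\infty,1},\mu^{\infty,2})$ by $\rme^{-\lambda T/2}$ times a $T$-independent quantity; letting $T\to\infty$ forces $\mu^{\infty,1}=\mu^{\infty,2}$, and uniqueness of the normalised solution of the ergodic HJB associated with that common measure then yields $\varphi^{\infty,1}=\varphi^{\infty,2}$ and $\eta^{\infty,1}=\eta^{\infty,2}$. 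Under the extra \Cref{ass:boost}, the Hessian estimates of \Cref{lem:hessian_bound_sigma_constant} and \Cref{thm:lin_Hess_est} show that $w(\cdot,\nabla\varphi)$ is moreover Lipschitz with constant $\const^u_x$, so the optimal drift has monotonicity profile $\geq\bar\kappa'$ from \eqref{eq:kappa_prime_def}; since $\bar\kappa'\geq\bar\kappa$ pointwise, the monotonicity property in \Cref{prop:Lyapunov_funct_coup_by_ref} gives $\lambda_{\bar\kappa'}\geq\lambda_{\bar\kappa}$ and $C_{\bar\kappa'}\geq C_{\bar\kappa}$, so \eqref{eq:high_reg_suff_1} is weaker than \eqref{eq:high_reg_suff} and the whole argument applies verbatim with $\bar\kappa'$ in place of $\bar\kappa$. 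The delicate points I expect are the bootstrapping that keeps $\bar\kappa$ (resp.\ $\bar\kappa'$) an admissible monotonicity profile along the iteration and in the $T\to\infty$ limit, and the compactness argument needed to actually produce --- rather than merely characterise --- the ergodic solution on the whole of $\bbR^d$.
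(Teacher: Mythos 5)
Your argument is correct in spirit but follows a genuinely different route from the paper, and the differences are worth flagging. The paper's proof is a \emph{nested} Banach fixed-point argument: for each frozen $\mu\in\cP_1(\bbR^d)$ it first constructs the frozen ergodic triple $(\eta^\infty[\mu],\psi^\infty[\mu],\nu^\infty[\mu])$ by showing that the map $g\mapsto\psi^{T,g}_0[\mu]-\psi^{T,g}_0[\mu](0)$ is an $\rme^{-\lambda_{\bar\kappa}T}$-contraction on $\cG^{\const^\psi_x}_0$ in the $\|\cdot\|_{f_{\bar\kappa}}$-norm (via \Cref{lem:basics_MF_high}-\ref{item:stab_MF_high}), then identifies the fixed point with a classical solution of the ergodic HJB through the dynamic programming identity $\Phi^\mu_T = \Phi^\mu_{T/2}\circ\Phi^\mu_{T/2}$, and \emph{only afterwards} shows that the outer map $\mu\mapsto\nu^\infty[\mu]$ is a contraction on $\cP_1(\bbR^d)$ in $W_{f_{\bar\kappa}}$, with contraction constant $\const^F_{x\mu}/(\rho^L_{uu}C^2_{\bar\kappa}\lambda^2_{\bar\kappa})<1$, giving existence and uniqueness of $\mu^\infty$ in one stroke. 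You instead bypass the frozen problem entirely: you solve the finite-horizon mean-field system for all $T$, then take $T\to\infty$ by combining the Cauchy estimate from \Cref{lem:abs_turnpike_est} for the flows with Schauder compactness for the value functions, and prove uniqueness separately by plugging a stationary solution into the turnpike bound at $t=T/2$. What the paper's approach buys is avoiding precisely the compactness and stationarity issues you flag at the end: the paper never needs to show that the long-time limit of $\varphi^T$ is time-independent, that $\partial_t\varphi^T$ converges to a constant $\eta^\infty$, or that the Schauder bounds and moment bounds are uniform in $T$ in a way that justifies passing to the limit in the PDE --- all of this comes for free from the dynamic-programming argument and Banach's theorem. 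What your approach buys is conceptual uniformity with the finite-horizon turnpike theme, and it is structurally very close to what the paper actually does in the \emph{low}-regularity regime (cf.\ \Cref{lem:erg_MF_syst_low_reg}, where Arzel\`a--Ascoli is used because the gradient estimates do not provide an invariant ball). Your treatment of the boosted case (\Cref{ass:boost}) is essentially the same as the paper's: the Hessian bound from \Cref{thm:lin_Hess_est} gives the Lipschitz estimate on the optimal policy, hence the profile $\bar\kappa'\geq\bar\kappa$, hence $\lambda_{\bar\kappa'}\geq\lambda_{\bar\kappa}$ and $C_{\bar\kappa'}\geq C_{\bar\kappa}$ by \Cref{prop:Lyapunov_funct_coup_by_ref}-\ref{item_3:Lyapunov_funct_coup_by_ref}, which is exactly how the paper relaxes \eqref{eq:high_reg_suff} to \eqref{eq:high_reg_suff_1}. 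One precise gap to close if you pursue your route: your uniqueness argument assumes that a stationary ansatz $\varphi^\infty+\eta^\infty(\cdot-T)$ built from \emph{any} ergodic solution is the (by your existence step, unique) finite-horizon solution with data $(\mu^\infty,\varphi^\infty)$, which implicitly uses that any ergodic solution already satisfies $\|\varphi^\infty\|_{f_{\kappa_b}}\leq 2\const^\psi_x$ so that $\bar\kappa$ is an admissible profile and the finite-horizon fixed point is unique in the relevant class; this a-priori bound on arbitrary ergodic solutions needs its own justification (the paper sidesteps it because its frozen-problem uniqueness is proved for arbitrary Lipschitz boundary data by choosing a worse profile $\tilde\kappa$ depending on $g,\hat g$).
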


\begin{proof}
\bei[wide]
\item \underline{Step 1:Existence and uniqueness for the frozen ergodic system \eqref{eq:linearized_ergodic_PDE_system}.}

\noindent In this step, we show hat for any $\mu \in \cP_{1}(\bbR^d)$ we can find a unique classical solution $(\eta^{\infty}[\mu],\psi^{\infty}[\mu],\nu^{\infty}[\mu])$ to the frozen ergodic system
\begin{equation}\label{eq:linearized_ergodic_PDE_system}
\bec
-\eta^\infty + \frac{1}{2}\tr\left(\sigma(x)^{\top}\sigma(x)\nabla^2\psi(x)\right) + H(x,\nabla\psi(x)) + F(\mu,x)=0, \quad \\
 -\frac{1}{2}\tr(\nabla^2(\sigma^{\top}\sigma\nu)) + \nabla\cdot( \partial_p H(x,\nabla\psi_s(x))\nu(x))=0.
\eec
\end{equation}
For this, fix $T>0$, $\mu \in \cP_{1}(\bbR^d)$ and define
\begin{align}\label{def:iterations_linearized_tpike}
\Phi^{\mu}_T: C^{0,1}(\bbR^d) \longrightarrow C^{0,1}(\bbR^d), \qquad &g\mapsto \psi^{T,g}_0[\mu], \\
\overline{\Phi}^{\mu}_T:  \cG^{\const^\psi_x}_{0} \longrightarrow C^{0,1}_0(\bbR^d), \qquad &g\mapsto \psi^{T,g}_0[\mu] - \psi^{T,g}_0[\mu](0),
\end{align}
where $\psi^{T,g}_0[\mu]$ denotes the solution to the HJB equation of \eqref{eq:linearized_PDE_system} and with a slight abuse of notation, for a given $\mu$ we write $ \psi^{T,g}_t[\mu]$ instead of $\psi^{T,g}_t[\mu_\cdot]$ when $\mu_\cdot$ is the flow constantly equal to $\mu$. Furthermore,we recall that $C^{0,1}(\bbR^d)$ is the space of Lipschitz continuous functions and $C^{0,1}_0(\bbR^d)$ its subspace of functions satisfying $g(0) = 0$, and $\cG^{\const}_{0} $ is defined as follows
\begin{equation}
\cG^{\const}_{0} := \left\{g \in C^{0,1}_0(\bbR^d): \eqsp \| g \|_{f_{\kappa_b}} \leq \const \right\}
\end{equation}
 
We are going to show that $\overline{\Phi}^{\mu}_T$ is a contraction and conclude by Banach's fixed point theorem. To this aim, we first observe that \Cref{lem:basics_MF_high}-\ref{item:grad_est_MF_high} ensures that
\begin{equation}
\overline{\Phi}^{\mu}_T(\cG^{\const^\psi_x}_{0}) \subseteq \cG^{\const^\psi_x}_{0} \quad \forall T>0.
\end{equation}
Moreover, if \Cref{ass:boost} also holds, we can also apply \Cref{thm:lin_Hess_est} to get
\be\label{eq:boosted_invariant_set}
\overline{\Phi}^{\mu}_T(\cG^{\const^\psi_x}_{0}) \subseteq \cG^{\const^\psi_x}_{0}\cap \{\varphi:\|\nabla\varphi\|_{\Lip}\leq \const^{\psi}_{xx} +\varepsilon(T)\},
\ee
where $\const^{\psi}_{xx}$ is as in \eqref{eq:hessian_choices_1} and 
\bes
    \varepsilon(T)=4(\const_x^\psi)^2\tilde{C}^{-1}_{\bar\kappa}e^{-\tilde\lambda_{\bar\kappa}T}+(2\const_x^\psi+4(\const^g_x)^2)\tilde{q}^{\bar\kappa}_{T}.
\ees

Next, fix $g,\hat{g}\in\cG^{\const^\psi_x}_0$. 
Then, \Cref{lem:basics_MF_high}-\ref{item:stab_MF_high} gives
\begin{equation}\label{eq:contraction_ergodic_map_lin}
\| \psi^{T,g}_0[\mu] - \psi^{T,\hat{g}}_0[\mu]\|_{f_{\bar{\kappa}}} \leq e^{-\lambda_{\bar{\kappa}} T} \|g - \hat{g} \|_{f_{\bar{\kappa}}},
\end{equation}
with $\bar{\kappa}$ defined in \eqref{eq:bar_kappa_intro}. 
Hence, the map $\overline{\Phi}^\mu_T$ is a contraction in $\cG^{\const^\psi_x}_{0}$
equipped with the norm $\| \cdot \|_{f_{\bar{\kappa}}}$ defined in \eqref{eq:f_Lip_norm}. From this, we deduce existence and uniqueness of a fixed point $\psi^{\infty,T}[\mu]$ thanks to Banach's fixed point Theorem, and we set $\eta^{\infty,T}=\psi^{\infty,T}[\mu](0)$. We now proceed to show that, setting $\psi^{\infty}[\mu]:=\psi^{\infty,1}[\mu]$, $\eta^{\infty}[\mu]:=\eta^{\infty,1}[\mu]$ 
\begin{equation}\label{eq:indep_of_T} \psi^{ \infty,T}[\mu]= \psi^{\infty}[\mu], \quad \eta^{\infty,T}[\mu] = \eta^{\infty}[\mu] T\end{equation}
holds for all $T\in[0,1]$. By the dynamic programming principle, for any $T$ and $g$ such that ${||g||_{f_{\bar{\kappa}}} \leq \const^\psi_x}$, we have
\[
	\Phi^\mu_T(g) = \Phi^\mu_{T/2} (\Phi^\mu_{T/2}(g)), 
\]
which, together with uniqueness of the fixed point for $\overline{\Phi}^\mu_T$ , implies that
\[
	\psi^{\infty,T}[\mu] = \psi^{\infty, T/2}[\mu], \qquad 
	\eta^{\infty,T}[\mu] =2 \eta^{\infty, T/2}[\mu]. 
\]

Iterating this argument, we find that if $T\in[0,1]$ is a dyadic number, then \eqref{eq:indep_of_T} holds.
Since the map $T \mapsto \Phi^\mu_T(\psi^{\infty,1}[\mu])(x)$ is continuous for all $x\in \bbR^d$, we can extend \eqref{eq:indep_of_T} to $T\in[0,1]$. To conclude observe that  
\begin{equation}
\psi^{\infty}[\mu]+\eta^\infty[\mu](1-t)\stackrel{\eqref{eq:indep_of_T}}{=}\Phi^{\mu}_{1-t}(\psi^{\infty}[\mu]) \stackrel{\eqref{def:iterations_linearized_tpike}}{=} \psi^{1-t, \psi^{\infty}[\mu] }_0[\mu] =\psi^{1, \psi^{\infty}[\mu] }_t[\mu],
\end{equation} 
and $t\mapsto \psi^{1, \psi^{\infty}[\mu] }_t[\mu]$ is a classical solution to the HJB equation 
\begin{equation}
\partial_t \varphi_t + \frac{1}{2}\tr\left(\sigma(x)^{\top}\sigma(x)\nabla^2\varphi_t(x)\right) + H(x,\nabla\varphi_t(x)) + F(\mu,x)=0, \eqsp \varphi_1(x) = \psi^{\infty}[\mu](x),
\end{equation}
thanks to \Cref{prop:properties}-\ref{item:fin_dim_HJB}. 
But then, $(\eta^\infty[\mu],\psi^{\infty}[\mu])$ is a classical solution to the ergodic HJB equation in \eqref{eq:linearized_ergodic_PDE_system}. Moreover we note that $\psi^{\infty}[\mu]\in\cG^{\const^\psi_x}_0$ by construction, and if also \Cref{ass:boost} holds, we obtain thanks to \eqref{eq:boosted_invariant_set} that $\|\nabla\psi^{\infty}[\mu]\|_\Lip\leq \const^{\psi}_{xx}$ by letting $T\rightarrow+\infty$ in \eqref{eq:boosted_invariant_set}.
Next, note that \Cref{prop:contr_same_drift}-\ref{item_2:contraction_coup_by_ref} implies that the SDE 
\begin{equation}
    \De X_s = \partial_p H(X_s,\nabla \psi^{\infty}[\mu](X_s)) \De s + \sigma(X_s) \De B_s
\end{equation}
admits a unique invariant measure $\nu^{\infty}[\mu]$. Thus, $\nu^{\infty}[\mu]$ is a weak solution of the Fokker-Planck equation in the sense of \cite[(1.5)]{bogachev2007UniquenessSolutionsWeak} in \eqref{eq:linearized_ergodic_PDE_system}. Moreover, since we $\kappa_{\partial_pH(\cdot,\nabla\psi^{\infty}[\mu](\cdot))}\in\msk$, we also have $\nu^{\infty}[\mu]\in\cP_p(\bbR^d)$ thanks to \Cref{prop_existence_moment_estimate}.  and $(\eta^{\infty}[\mu],\nu^{\infty}[\mu],\psi^{\infty}[\mu])$ form a solution to \eqref{eq:linearized_ergodic_PDE_system}. 
For uniqueness consider $g, \hat{g} \in C_{\Lip,f_{\kappa_b}}(\bbR^d)$, and observe that by defining
\begin{equation}
\tilde{\kappa}(r) := \kappa_b(r) - \frac{2}{\rho^L_{uu}r} \left(\const^{L(\cdot,0)} + \max\left\{\const^\psi_x, \|g \|_{f_{\kappa_b}},  \|\hat{g} \|_{f_{\kappa_b}} \right\} \right)
\end{equation}
applying once again \Cref{lem:basics_MF_high}-\ref{item:stab_MF_high}  gives
\begin{equation}\label{eq:contraction_ergodic_map_lin}
\| \psi^{T,g}_0[\mu] - \psi^{T,\hat{g}}_0[\mu]\|_{f_{\tilde{\kappa}}} \leq e^{-\lambda_{\tilde{\kappa}} T} \|g - \hat{g} \|_{f_{\tilde{\kappa}}}.
\end{equation}
In particular plugging in two fixed points and letting $T \rightarrow \infty$, we deduce uniqueness for the value functions. Uniqueness of the invariant measure follows by \cite[Thm 1]{bogachev2007UniquenessSolutionsWeak}.
\item \underline{Step 2: Existence and uniqueness for the mean field ergodic system \eqref{eq:MF_PDE_ergodic}.} 

\noindent Let $\mu,\hat\mu \in \cP_1(\bbR^d)$. Thanks to the fixed point properties of $\psi^{\infty}[\mu]$ and $\psi^{\infty}[\hat\mu]$ and to $\psi^\infty[\mu],\psi^{\infty}[\hat\mu]\in\cG_0^{\const^\psi_x}$(which was defined at \eqref{def:iterations_linearized_tpike}) we can apply \Cref{lem:basics_MF_high}-\ref{item:stab_MF_high} with $\tilde\kappa=\bar\kappa$ to get
\begin{equation}\label{eq:ergodic_stability}
\|\nabla\psi^{\infty}[\mu]-\nabla\psi^{\infty}[\hat\mu]\|_{\infty} \leq \frac{\const^F_{x\mu}}{C^2_{\bar{\kappa}}} W_{f_{\bar{\kappa}}}(\mu,\hat{\mu}) \int_0^Te^{-\lambda_{\bar{\kappa}}s} \De s + e^{-\lambda_{\bar{\kappa}}T} 2\const^\psi_x. 
\end{equation}

Letting $T\rightarrow+\infty$ we arrive at 
\begin{equation}\label{eq:optimal_ergodic_policy_bound_high}
\|w(\cdot,\nabla\psi^{\infty}[\mu](\cdot))-w(\cdot,\nabla\psi^{\infty}[\hat\mu](\cdot))\|_{\infty}\leq \frac{\const^F_{x\mu}}{\rho^L_{uu}C^2_{\bar{\kappa}}\lambda_{\bar{\kappa}}} W_{f_{\bar{\kappa}}}(\mu,\hat{\mu}) 
\end{equation}
But then, using a standard coupling argument for bounding the distance between invariant measures we obtain
\begin{equation}
W_{f_{\bar{\kappa}}}(\nu^{\infty}[\mu],\nu^{\infty}[\hat\mu]) \leq \frac{\|w(\cdot,\nabla\psi^{\infty}[\mu](\cdot))-w(\cdot,\nabla\psi^{\infty}[\hat\mu](\cdot))\|_{\infty}}{\lambda_{\bar{\kappa}}}{\leq} \frac{\const^F_{x\mu}}{\rho^L_{uu}C^2_{\bar{\kappa}}\lambda^2_{\bar{\kappa}}} W_{f_{\bar{\kappa}}}(\mu,\hat{\mu}) 
\end{equation}
From this bound, we deduce that \eqref{eq:suff_cond_tpik_high_reg_in_proofs} implies that  the map
\begin{equation}
    \cP_1(\bbR^d)\ni\mu \mapsto \nu^{\infty}[\mu]
\end{equation}
is a contraction and we conclude with Banach's fixed point theorem existence and uniqueness of a unique fixed point $\mu^{\infty}$ in $\cP_1(\bbR^d)$. It then follows that $(\eta^{\infty}[\mu^\infty],\psi^{\infty}[\mu^{\infty}],\mu^{\infty})$ is the unique solution to the ergodic mean field PDE system \eqref{eq:MF_PDE_ergodic}. Moreover, since $\psi^{\infty}[\mu]\in\cG^{\const^\psi_x}_0$ by construction, we have that \eqref{eq:gr_est_tpike_high} holds. {Finally, we note that if \Cref{ass:boost} holds, we can profit from the additional information $\max\{\|\nabla\psi^\infty[\mu_\cdot]\|_\Lip,\|\nabla\psi^\infty[\hat\mu_\cdot]\|_\Lip\}\leq \const^\psi_{xx}$ which implies that $\min\{\kappa_{\partial_pH(\cdot,\nabla\psi^\infty[\mu](\cdot))},\kappa_{\partial_pH(\cdot,\nabla\psi^\infty[\hat\mu](\cdot))}\}\geq\bar\kappa'$ with $\bar\kappa'$ as in \eqref{eq:high_reg_suff_1} and $\const^u_x$ given by \eqref{eq:hessian_choices_1}. But then, thanks to \Cref{lem:basics_MF_high}, we can replace  $\bar\kappa$ with $\bar\kappa'$ in \eqref{eq:ergodic_stability} and in all the subsequent identities, leading to existence and uniqueness of solutions for the ergodic mean field PDE system under the relaxed condition \eqref{eq:high_reg_suff_1}{. Finally, $\|\nabla\varphi^\infty\|_\Lip\leq \const^\psi_{xx}$ follows from $\|\nabla\psi^{\infty}[\mu]\|_\Lip\leq \const^{\psi}_{xx}$ for all $\mu \in \cP_1(\bbR^d)$.} Lastly, observe that since $\mu^{\infty}$ is the invariant distribution of the SDE with drift $\partial_pH(\cdot,\nabla\varphi^\infty(\cdot))$ and this drift satisfy a Lyapunov condition thanks to $\kappa_{\partial_pH(\cdot,\nabla\varphi^\infty(\cdot))} \in \msk$, we hence get $\mu^\infty\in\cP_p(\bbR^d)$ by  \Cref{prop_existence_moment_estimate}.
}
\eei
\end{proof}
\begin{cor}\label{cor:boost}
In the same setting and notation of \Cref{lem:basics_MF_high}, if  $\hat G= \varphi^\infty$ and \Cref{ass:boost} holds we have
\ben[(i)] 
\item\label{item:cor:boost_i} Item \ref{item:stab_MF_high} from  \Cref{lem:basics_MF_high} holds for the choice
\begin{equation}
\tilde{\kappa} \leq \min\{\bar\kappa',\inf_{t\leq T} {\kappa_{\partial_pH(\cdot,\nabla\psi^{T,G}_t[\mu_\cdot](\cdot))}}\color{forestgreen}\} .
\end{equation}
with $\bar\kappa'$ as in \eqref{eq:high_reg_suff_1} for the choice 
\be\label{eq:iterates_choices}
\const^u_x = \frac{1}{\rho^L_{uu}}(\const^L_{xu}+2(\const^\psi_x+1)\const^\psi_{xx}+4(\const^\psi_x)^2)\tilde{C}^{-1}_{\bar\kappa},
\ee
where $\const^{\psi}_{xx}$ is as in \eqref{eq:hessian_choices_1}.
\item\label{item:cor:boost_ii}If additionally $\sigma(\cdot)=2\sigma_0\mathrm{I}$, and $ T''\leq T$ is such that 
\be\label{eq:cor_boost_extra_ass}
\inf_{t\leq T''} {\kappa_{\partial_pH(\cdot,\nabla\psi^{T,G}_t[\mu_\cdot](\cdot))}}\geq \bar\kappa'
\ee
then
\be\label{eq:stability_iterates_high_hessians}
\|\nabla\psi^{T,G}_t[\mu_\cdot]-\nabla\psi^{T,\hat G}_t[\hat\mu_\cdot]\|_\infty \leq \int_t^{T''}2\frac{\const^b_x\const^{\delta\psi_s}_x+\const^F_{x\mu} W_{f_{\tilde\kappa}}(\mu_s,\hat\mu_s)}{C_{\tilde\kappa}}q^{\tilde\kappa}_{s-t} \De s+ q^{\tilde\kappa}_{T''-t} \| \psi^{T,G}_{T''}[\mu_\cdot]- \psi^{T,\hat G}_{T''}[\hat\mu_\cdot]\|_\Lip,
\ee
where $\const^{\delta\psi_s}_x$ has been defined at \eqref{c_delta_psi_x}.
\een
\end{cor}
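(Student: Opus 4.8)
The statement to prove is Corollary~\ref{cor:boost}, which upgrades the stability part \ref{item:stab_MF_high} of Lemma~\ref{lem:basics_MF_high} under the extra regularity hypothesis \Cref{ass:boost}, and also provides a Lipschitz bound on the difference of the gradients in the constant-diffusion case.

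\begin{proof}[Proof of \Cref{cor:boost}]
The plan is to simply re-run the proof of \Cref{lem:basics_MF_high}-\ref{item:stab_MF_high}, but now exploiting the improved regularity of the fixed points of the frozen ergodic system obtained under \Cref{ass:boost}, together with the Hessian estimates from \Cref{sec:fin_dim_Hess_est}.

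\textbf{Proof of \ref{item:cor:boost_i}.} First I recall from the proof of \Cref{lem:MF_syst_high_reg} that, under \Cref{ass:boost}, the ergodic value function $\varphi^\infty=\psi^{\infty}[\mu^\infty]$ satisfies $\|\nabla\varphi^\infty\|_\Lip\leq \const^\psi_{xx}$, with $\const^\psi_{xx}$ as in \eqref{eq:hessian_choices_1}. Since $\hat G=\varphi^\infty$, the terminal cost of the frozen problem \eqref{eq:linearized_PDE_system} with $\hat G$ is already $\const^\psi_{xx}$-smooth, so applying \Cref{thm:lin_Hess_est}-\ref{item:Hess_est_mild_Hessbdd} (with $\|\nabla g\|_\Lip=\const^\psi_{xx}$) to $\psi^{T,\hat G}_t[\hat\mu_\cdot]$ we obtain a uniform-in-time bound $\|\nabla\psi^{T,\hat G}_t[\hat\mu_\cdot]\|_\Lip\leq \const^u_x\rho^L_{uu}$ with $\const^u_x$ exactly as in \eqref{eq:iterates_choices}; the structure of that constant comes from tracking the terminal contributions $4(\const^\psi_x)^2\tilde C^{-1}_{\bar\kappa}$ and $2(\const^\psi_x+1)\const^\psi_{xx}\tilde C^{-1}_{\bar\kappa}$ in \eqref{eq:gen_hess_est_1}. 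Consequently, by \Cref{lem:Est_Contr_Hamiltonian} the optimal drift $\partial_pH(\cdot,\nabla\psi^{T,\hat G}_t[\hat\mu_\cdot](\cdot))=b(\cdot)+w(\cdot,\nabla\psi^{T,\hat G}_t[\hat\mu_\cdot](\cdot))$ is one-sided Lipschitz with profile bounded below by $\kappa_b(r)-2\min\{\const^u/r,\const^u_x\}=\bar\kappa'(r)$, i.e. $\kappa_{\partial_pH(\cdot,\nabla\psi^{T,\hat G}_t[\hat\mu_\cdot](\cdot))}\geq\bar\kappa'$. For the $G$-side the drift is handled exactly as in \Cref{lem:basics_MF_high} and contributes the other term in the minimum. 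Therefore the choice of $\tilde\kappa$ announced in \ref{item:cor:boost_i} satisfies the admissibility condition \eqref{eq:tilde_kappa_cond}, and \Cref{lem:basics_MF_high}-\ref{item:stab_MF_high} applies verbatim with this $\tilde\kappa$. The only subtlety is that \Cref{thm:lin_Hess_est} requires the running cost of the frozen problem, namely $\ell_s(x,u)=L(x,u)+F(\mu_s,x)$, to satisfy \Cref{ass-Pontryagin}; this follows from \Cref{ass:high_reg_intro} and \Cref{ass:boost} since $F(\mu_s,\cdot)$ does not depend on $u$, so $\partial_{xu}\ell_s=\partial_{xu}L$ is bounded by $\const^L_{xu}$.

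\textbf{Proof of \ref{item:cor:boost_ii}.} Now assume $\sigma\equiv 2\sigma_0\mathrm I$. Setting $\gamma=\psi^{T,\hat G}_\cdot[\hat\mu_\cdot]-\psi^{T,G}_\cdot[\mu_\cdot]$, as in the proof of \Cref{lem:stability_estimate_linear_pb} the difference solves a linear HJB equation and admits a Feynman--Kac representation against a diffusion $\tilde X$ whose drift has monotonicity profile at least $\tilde\kappa$; the source term in that equation is, thanks to \Cref{ass:high_reg_intro}, Lipschitz in space with constant controlled by $\const^b_x\const^{\delta\psi_s}_x+\const^F_{x\mu}W_{f_{\tilde\kappa}}(\mu_s,\hat\mu_s)$ (the $F$-contribution coming precisely from the $W_1$-Lipschitzianity of $\partial_xF$). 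I then couple two copies of $\tilde X$ by reflection, pass to the adjoint processes $Y_s=\nabla\gamma_s(X_s)$, $\hat Y_s=\nabla\gamma_s(\hat X_s)$ — here I run the argument only up to the stopping time $T''$, using the terminal value $\gamma_{T''}$ in place of $\nabla g$ — and apply It\^o's formula to $|Y_s-\hat Y_s|_a$ exactly as in the proof of \Cref{lem:stability_estimate_linear_pb}-\ref{item:linear_stability_boost}. Bounding the $\partial_x b$ term by $2\const^b_x\const^{\delta\psi_s}_x q^{\tilde\kappa}_{s-t}$, the source term by its Lipschitz constant times $q^{\tilde\kappa}_{s-t}$, and the terminal term at $T''$ by $q^{\tilde\kappa}_{T''-t}\|\psi^{T,G}_{T''}[\mu_\cdot]-\psi^{T,\hat G}_{T''}[\hat\mu_\cdot]\|_\Lip$ via \Cref{prop:contr_same_drift}-\ref{item:final_TV_coup_by_ref}, and recalling $\const^{\delta\psi_s}_x$ from \eqref{c_delta_psi_x}, we arrive at \eqref{eq:stability_iterates_high_hessians}. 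Condition \eqref{eq:cor_boost_extra_ass} is exactly what guarantees $\kappa_{\tilde b_s}\geq\bar\kappa'\geq\tilde\kappa$ on $[t,T'']$ so that the coupling contraction rates $\lambda_{\tilde\kappa},C_{\tilde\kappa}$ and $q^{\tilde\kappa}$ are available.

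The main obstacle is bookkeeping rather than conceptual: one must verify that the constant $\const^u_x$ produced by feeding the terminal smoothness $\const^\psi_{xx}$ of $\varphi^\infty$ back into \Cref{thm:lin_Hess_est} indeed collapses to the closed form \eqref{eq:iterates_choices} uniformly in $T$ — in particular that the $T$-dependent corrections $\varepsilon(T)$ appearing in \eqref{eq:boosted_invariant_set} do not spoil the bound — and, for \ref{item:cor:boost_ii}, that truncating the adjoint argument at $T''$ rather than at $T$ leaves the coupling estimates of \Cref{prop:contr_same_drift} applicable on the shorter interval. Both are routine given the uniform-in-time nature of all the estimates in \Cref{sec:couplings} and \Cref{sec:fin_dim_cont}.
\end{proof}
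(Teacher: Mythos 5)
Your proof is correct and follows essentially the same route as the paper's. For \ref{item:cor:boost_i}, you reproduce the paper's argument: recall $\|\nabla\varphi^\infty\|_\Lip\le\const^\psi_{xx}$ from \Cref{lem:MF_syst_high_reg}, feed this terminal Hessian bound into \Cref{thm:lin_Hess_est} (using the improved terminal term available when $\const^g_{xx}<\infty$) to get a uniform-in-$t,T$ Hessian bound on $\psi^{T,\varphi^\infty}_t[\hat\mu_\cdot]$, pass to a Lipschitz bound on the optimal policy via \Cref{lem:Est_Contr_Hamiltonian}, and conclude $\kappa_{\partial_pH(\cdot,\nabla\psi^{T,\hat G}_t[\hat\mu_\cdot](\cdot))}\ge\bar\kappa'$, which allows \Cref{lem:basics_MF_high}-\ref{item:stab_MF_high} to be applied with the stated $\tilde\kappa$. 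For \ref{item:cor:boost_ii}, the only difference is presentational: the paper cites \Cref{lem:stability_estimate_linear_pb}-\ref{item:linear_stability_boost} after invoking the dynamic programming principle to cut at $T''$, whereas you re-derive the same coupled adjoint-process estimate from scratch; the Feynman--Kac representation, the reflection coupling, and the termination at $T''$ are identical. Two minor imprecisions worth flagging, neither affecting substance: the line ``$\|\nabla\psi^{T,\hat G}_t[\hat\mu_\cdot]\|_\Lip\le\const^u_x\rho^L_{uu}$'' overstates the Hessian bound by the additive term $\const^L_{xu}$ (the correct chain is Hessian bound $\Rightarrow$ policy Lipschitz bound via \Cref{lem:Est_Contr_Hamiltonian}), and the concern about $\varepsilon(T)$ from \eqref{eq:boosted_invariant_set} is moot here since you apply \Cref{thm:lin_Hess_est} directly with a finite $\const^g_{xx}$ rather than passing through the invariant-set argument of \Cref{lem:MF_syst_high_reg}.
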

\begin{proof}
First we observe that from \Cref{lem:MF_syst_high_reg} we have $\|\ke{\nabla}\varphi^\infty\|_{\Lip}\leq\const^\psi_{xx}$ with $\const^\psi_{xx}$ given by \eqref{eq:hessian_choices_1}. Next, we invoke \Cref{thm:lin_Hess_est} to obtain that for all $t\leq T$ and all flow $\mu_\cdot$ we have
\bes 
\| \psi^{T,\varphi^\infty}_t[\mu_\cdot]\|_\Lip\leq \const^{\psi}_{xx}+ ((2\const^\psi_x+1)\const^\psi_{xx}+4(\const^\psi_x)^2)\tilde{C}^{-1}_{\bar\kappa}, \quad \|w(\cdot,\psi^{T,\varphi^\infty}_t[\mu_\cdot](\cdot)\|_\Lip\leq \const^u_x
\ees
with $\const^u_x$ as in \eqref{eq:iterates_choices}. But then we have \be\label{eq:kappa'_bound}
\kappa_{\partial_pH(\cdot,\nabla\psi^{T,\varphi^\infty}_t[\mu_\cdot])} \geq \bar\kappa'
\ee for all $t\leq T$. Using this information in \Cref{lem:basics_MF_high}-\ref{item:stab_MF_high} proves \ref{item:cor:boost_i}. To prove \ref{item:cor:boost_ii} we first invoke the dynamic programming principle and then  \Cref{lem:stability_estimate_linear_pb}-\ref{item:linear_stability_boost} for the choice $\tilde\kappa=\ke{\bar\kappa'}$. This choice can be made  thanks to \eqref{eq:kappa'_bound} and the extra assumption \eqref{eq:cor_boost_extra_ass}.
\end{proof}

\noindent Let us now come to the proof of the turnpike property in the high regularity regime. In order to so, we precise the two auxiliary metrics needed to conclude with \Cref{lem:abs_turnpike_est}
\begin{equation}\label{eq:def_abs_met_high_mild}
\begin{aligned}
&\overrightarrow{d_{\lambda, \kappa}^{T}}(\fl,\hfl) := \sup_{0 \leq s\leq T}e^{\lambda s}W_{f_\kappa}(\mu_s,\hat\mu_s), \quad \overleftarrow{d_{\lambda,\kappa}^{T}}(\fl,\hfl) := \sup_{0 \leq s\leq T}e^{\lambda (T-t)}W_{f_\kappa}(\mu_s,\hat\mu_s).
\end{aligned}
\end{equation}
We also introduce the notation of the following norms
\bes
\|G\|_{f_{\kappa_b}}:= \sup_{\mu \in \cP_1(\bbR^d)}\| G(\mu,\cdot)\|_{f_{\kappa_b}}, \quad \|G\|_\infty=\sup_{\mu\in\cP_1(\bbR^d)}\|G(\mu,\cdot) \|_\infty.
\ees
\begin{thm}\label{prop:abstract_suff_cond_tpike_high}
Assume \Cref{ass:MF_drift_intro}, \Cref{ass:MF_coercivity_intro}, \Cref{ass:high_reg_intro} and let 
$\hat{G}: \bbR^d \rightarrow \bbR^d$ be such that $\| \hat{G}\|_{f_{\kappa_b}}  \leq 2 \const_x^\psi.$
\ben[label=(\roman*)] 
\item\label{item:grad_est_high} For any $\mu_\cdot,  \in\Gamma$ and $T>0$ the estimate
\begin{equation}\label{eq:gest_high_last}
\begin{split}
\|\nabla\psi^{T,\hat{G}}_t[\mu_\cdot] \|_\infty \leq 2 \const_x^\psi , \quad
\end{split}
\end{equation}
holds for all $t\in[0,T]$ with $\const^\psi_x$ given by \eqref{def:c_x_psi}. Moreover, for any $\hat\mu_\cdot\in\Gamma$ such that $\hat\mu_0\in\cP_p(\bbR^d)$, we have $\nu^{T,\hat G}[\hat\mu_\cdot]_\cdot\in \Gamma_{\hat\mu_0,\const}\subseteq\Gamma$ where
\bes
\Gamma_{\hat\mu_0,\const} =\{\tilde\mu : [0,T] \rightarrow \mathcal{P}_1(\mathbb{R}^d) : \tilde\mu_0=\hat\mu_0, \eqsp\sup_{0\leq s<t\leq T}\frac{W_1(\tilde\mu_s,\tilde\mu_t)}{|t-s|^{1/2}}\leq \const \}
\ees
and $\const$ depends only on $\Sigma,\int|x|^p\mu_0(\De x),\kappa_b,\const^\psi_x$ and $T$.

\item\label{item:stability_iterate_high} Let $\bar\kappa$ be given by \eqref{eq:bar_kappa_intro}.
Then, for any $\mu_\cdot, \hat{\mu}_\cdot \in \Gamma$, $\lambda<\lambda_{\bar{\kappa}}$ we have
\begin{equation}\label{eq:iterate_stability_high_1}
\overrightarrow{d_{\lambda,\bar{\kappa}}^{T}}(\nu^{T,\hat{G}}_\cdot[\mu_\cdot],\nu^{T,\hat{G}}_\cdot[\hat\mu_\cdot])\leq W_{f_{\bar{\kappa}}}(\mu_0,\hat{\mu}_0)+ \varepsilon(\lambda) \overrightarrow{d_{\lambda,\bar{\kappa}}^{T}}(\mu_\cdot,\hat\mu_\cdot),
\end{equation}
with
\begin{equation}\label{eq:epsilon_lambda_high}
\varepsilon(\lambda)=\frac{\const^F_{x\mu}}{\rho^L_{uu}C^2_{\bar{\kappa}}(\lambda^2_{\bar{\kappa}}-\lambda^2)}.
\end{equation}
If $\mu_0 = \hat{\mu}_0$, then we also have
\begin{equation}\label{eq:iterate_stability_high_T}
\overleftarrow{d_{\lambda,\bar{\kappa}}^{T}}(\nu^{T,\hat{G}}_\cdot[\mu_\cdot],\nu^{T,\hat{G}}_\cdot[\hat\mu_\cdot])\leq \varepsilon(\lambda) \overleftarrow{d_{\lambda,\bar{\kappa}}^{T}}(\mu_\cdot,\hat\mu_\cdot).
\end{equation}
\item\label{item:contraction_iterate_high}  If 
\begin{equation}\label{eq:suff_cond_tpik_high_reg_in_proofs}
\const^F_{x\mu}<\rho^L_{uu}C^2_{\bar{\kappa}}\lambda^2_{\bar{\kappa}},
\end{equation}
then for any $\lambda<\lambda^*$ with
\bes
\lambda^*=(\lambda_{\bar{\kappa}}-\frac{\const^F_{x\mu}}{\rho^L_{uu}C^2_{\bar{\kappa}}})^{1/2},
\ees 
we have $\varepsilon(\lambda)<1$.
In particular, for any initial condition $\hat\mu_0\in\cP_{p}(\bbR^d)$ the mean field PDE system \eqref{eq:MF_PDE} with terminal condition $\hat{G}$ and initial condition $\hat{\mu}_0$ has a unique solution $(\mu^{T,\hat{G}}_\cdot, \varphi^{T,\hat{G}}_\cdot)$ in $\Gamma_{\hat\mu_0,\const}$ with $\const$ as in \ref{item:grad_est_high}. 
\item\label{item:epsilon_boost} If additionally \Cref{ass:boost} holds and $\hat{G}=\varphi^\infty$ then items \ref{item:stability_iterate_high} and \ref{item:contraction_iterate_high} hold replacing $\bar\kappa$ with $\bar\kappa'$ as in \eqref{eq:kappa_prime_def} and $\const^u_x$ as in \eqref{eq:iterates_choices}.
\item\label{item:high_reg_tpike_est} For any $\lambda<\lambda^*$ and any solution $(\mu^{T,G}_\cdot, \varphi^{T,G}_\cdot)$  to \eqref{eq:MF_PDE} with terminal condition $G:\cP_1(\bbR^d) \times \bbR^d \rightarrow \bbR$ and initial condition $\mu_0\in\cP_p(\bbR^d)$ the following holds.
\ben[(a)]
\item\label{item:high_lipschitz_case} If $\| G\|_{f_{\kappa_b}}<+\infty$ then, defining 
\bes\label{eq:mult_constants_high_3}
\tau(G)= \frac{\log\big(\|G\|_{f_{\kappa_b}}-\const^{\psi}_x)/\const^\psi_x\big)}{\lambda_{\kappa_b}}\vee 0 , \quad \kappa_G(r) = \kappa_b(r) -\frac{2(\const^{L(\cdot,0)}_u+ \max\{2\const^\psi_x, \|G \|_{f_{\kappa_b}}\})}{\rho^{L}_{uu} r},
\ees
we have that for all $T>\tau(G)$
\begin{equation}\label{eq:tp_flow_Lip_high}
W_{f_{\bar{\kappa}}}(\mu^{T,G}_t, \mu^{T,\hat{G}}_t) \leq \frac{1}{1-\varepsilon(\lambda)}\Big(W_{f_{\bar\kappa}}(\mu_0,\hat{\mu}_0) e^{-\lambda t}  + \frac{e^{\lambda_{\bar\kappa}\tau(G) }}{2\rho^{L}_{uu}C_{\kappa_G}\lambda_{\kappa_G}}\|\hat{G} - G(\mu^{T,G}_T,\cdot) \|_{f_{\kappa_G}} e^{-\lambda(T-t)}\Big)
\end{equation}
holds for all $t\leq T$. Moreover, for all $t \leq T-\tau(G)$ we have
\bes\label{eq:tp_control_lip_high}
\|\varphi^{T,G}_t-\varphi^{T,\hat G}_t \|_\Lip\leq \frac{\const^F_{x\mu}}{C^2_{\bar\kappa}(1-\varepsilon(\lambda))} \Big(\frac{W_{f_{\bar\kappa}}(\mu_0,\hat\mu_0)}{\lambda+\lambda_{\bar\kappa}}e^{-\lambda t}+\frac{2\const^\psi_xe^{\lambda_{\bar\kappa}\tau(G)}}{\lambda_{\bar\kappa}C_{\bar\kappa}(\lambda_{\bar{\kappa}}-\lambda)\rho^L_{uu}}e^{-\lambda(T-t)}\Big)+\frac{4\const^\psi_x}{C_{\bar\kappa}}e^{\lambda_{\bar\kappa}\tau(G)}e^{-\lambda_{\bar\kappa}(T-t)}
\ees

\item\label{item:hig_bdd_case} If  $\|G \|_{\infty} < + \infty$, if we define 
\footnote{ We have the explicit bound
\bes
\tau'(G)\leq  \max\left\{ \frac{1}{\lambda_{\kappa_b}}\log\const^G, \frac{1}{2\lambda_{\kappa_b}} \right\}> 0, \text{ with } \const^G = \frac{\frac{\sqrt{\lambda_{\kappa_b}e}}{\sqrt{\pi}C_{\kappa_b}\sigma_0}\| G\|_{\infty}  - \const^\psi_x}{\const^\psi_x}
\ees} \bes
\tau'(G)=\inf\{\tau>0: q^{\kappa_b}_\tau \|G\|_\infty\leq \const^\psi_x\},\quad T'_0:=T-\tau'(G),
\ees

then the turnpike estimate for marginal flows  \eqref{eq:tp_flow_Lip_high} holds for $t\leq T'_0$ setting $\tau(G)=\tau'(G),\kappa_G=\bar\kappa$ and replacing $\|\hat{G} - G(\mu^{T,G}_T,\cdot) \|_{f_{\kappa_G}}$ with $4C_{\bar\kappa}^{-1}\const^\psi_x$. Moreover, for $t\in[T'_0,T]$ we have 
\bes
W_{f_{\bar\kappa}}(\mu^{T,G}_t,\mu^{T,\hat G}_t) \leq M_1(\|G\|_\infty)+\tilde{M}_1(2\const^\psi_x)
\ees
with $M_1(\|G\|_{\infty}),\,\tilde{M}_1(2\const^{\psi}_x)$ as in \eqref{eq:moment_bound}. Moreover, the turnpike estimate \eqref{eq:tp_control_lip_high} holds for all $t\leq T'_0$ replacing $\tau(G)$ with $\tau'(G)$ .
\item \label{item:high_lipschitz_case_and_boost} If additionally \Cref{ass:boost} holds, $\hat G=\varphi^{\infty},\|G\|_{f_{\kappa_b}}<+\infty$ and if we define $\tau''(G)<+\infty$ by
\bes
\tau''(G)=\inf\{\tau\geq0:\kappa_{\partial_pH(\cdot,\nabla\varphi^{T,G}_{T-s}(\cdot))} \geq \bar\kappa' \,\,\,\forall s\geq\tau \},\quad T''_0:=T-\tau''(G),
\ees
then the turnpike estimate for marginal flows  \eqref{eq:tp_flow_Lip_high} holds for $t\leq T$ replacing $\bar\kappa$ with $\bar\kappa'$ and  $\tau(G)$ with $\tau''(G)$. Moreover, the turnpike estimate \eqref{eq:tp_control_lip_high} holds for $t\leq T''_0$ replacing $\bar\kappa$ with $\bar\kappa'$ and  $\tau(G)$ with $\tau''(G)$.
\item\label{item:tpike_for_hessians} If \Cref{ass:boost} holds, $\hat G=\varphi^\infty$ and $\sigma(\cdot)=2\sigma_0\mathrm{I}$, then for all $t\leq T''_0$
\be\label{eq:tpike_hess_in_proof}
\|\nabla\varphi^{T,G}_t-\nabla\varphi^{T,\hat G}_t\|_\Lip\leq \const'_{\mathrm{i}}e^{-\lambda t}+\const'_{\mathrm{f}}e^{-\lambda(T-t)},
\ee
where $\const'_{\mathrm{i}},\const'_{\mathrm{f}}$ depend on $\kappa_b$, all constants in \Cref{ass:MF_drift_intro},\Cref{ass:MF_coercivity_intro},\Cref{ass:high_reg_intro},\Cref{ass:boost}, $\lambda$ and $G$.

\een
\end{enumerate}
\end{thm}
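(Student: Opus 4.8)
The plan is to run the fixed-point iteration \eqref{eq:linearized_PDE_system} through the abstract turnpike mechanism \Cref{lem:abs_turnpike_est}, feeding it the stability bounds for the frozen control problems collected in \Cref{lem:basics_MF_high} and \Cref{cor:boost}, and tracking throughout the monotonicity profile of the optimal drift of the iterates.

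\emph{Items \ref{item:grad_est_high}--\ref{item:epsilon_boost}.} For \ref{item:grad_est_high} I would insert $\|\hat G\|_{f_{\kappa_b}}\le 2\const^\psi_x$ into \Cref{lem:basics_MF_high}-\ref{item:grad_est_MF_high}: since $\const^\psi_x(1-e^{-\lambda_{\kappa_b}(T-t)})+\|\hat G\|_{f_{\kappa_b}}e^{-\lambda_{\kappa_b}(T-t)}\le 2\const^\psi_x$ and $f_{\kappa_b}(r)\le r$, this gives $\|\nabla\psi^{T,\hat G}_t[\mu_\cdot]\|_\infty\le 2\const^\psi_x$; the membership $\nu^{T,\hat G}[\hat\mu_\cdot]\in\Gamma_{\hat\mu_0,\const}$ is then \Cref{prop:Holder_in_time}-\ref{item:time_Holder_Wf} applied to \eqref{eq:opt_flow}, whose drift $b+w(\cdot,\nabla\psi^{T,\hat G}_s[\hat\mu_\cdot](\cdot))$ has polynomial growth with constants governed by $\const^\psi_x$. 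For \ref{item:stability_iterate_high} I would start from \Cref{lem:basics_MF_high}-\ref{item:stab_MF_high} with $G=\hat G$ a fixed function (so the terminal-cost term vanishes) and $\tilde\kappa=\bar\kappa$, admissible by \ref{item:grad_est_high} and the ``In particular'' clause there, giving $W_{f_{\bar\kappa}}(\nu^{T,\hat G}_t[\mu_\cdot],\nu^{T,\hat G}_t[\hat\mu_\cdot])\le e^{-\lambda_{\bar\kappa}t}W_{f_{\bar\kappa}}(\mu_0,\hat\mu_0)+(1/\rho^L_{uu})\int_0^t e^{-\lambda_{\bar\kappa}(t-s)}\const^{\delta\psi_s}_x\De s$ with $\const^{\delta\psi_s}_x=(\const^F_{x\mu}/C_{\bar\kappa}^{2})\int_s^T W_{f_{\bar\kappa}}(\mu_r,\hat\mu_r)e^{-\lambda_{\bar\kappa}(r-s)}\De r$; multiplying by $e^{\lambda t}$ (resp. $e^{\lambda(T-t)}$), bounding $W_{f_{\bar\kappa}}(\mu_r,\hat\mu_r)$ by $e^{-\lambda r}\overrightarrow{d^T_{\lambda,\bar\kappa}}(\mu_\cdot,\hat\mu_\cdot)$ (resp. $e^{-\lambda(T-r)}\overleftarrow{d^T_{\lambda,\bar\kappa}}(\mu_\cdot,\hat\mu_\cdot)$) and evaluating the nested integrals yields \eqref{eq:iterate_stability_high_1}--\eqref{eq:iterate_stability_high_T} with $\varepsilon(\lambda)$ as in \eqref{eq:epsilon_lambda_high}. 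For \ref{item:contraction_iterate_high}, \eqref{eq:suff_cond_tpik_high_reg_in_proofs} makes $\lambda^*$ positive and forces $\lambda_{\bar\kappa}^2-\lambda^2>\const^F_{x\mu}/(\rho^L_{uu}C^2_{\bar\kappa})$, i.e. $\varepsilon(\lambda)<1$, for $\lambda<\lambda^*$; existence/uniqueness of $(\mu^{T,\hat G}_\cdot,\varphi^{T,\hat G}_\cdot)$ in $\Gamma_{\hat\mu_0,\const}$ then follows from Banach's theorem for $\mu_\cdot\mapsto\nu^{T,\hat G}_\cdot[\mu_\cdot]$ on the complete space $(\Gamma_{\hat\mu_0,\const},\overrightarrow{d^T_{\lambda,\bar\kappa}})$, invariant by \ref{item:grad_est_high} and $\varepsilon(\lambda)$-contractive by \eqref{eq:iterate_stability_high_1} with $\mu_0=\hat\mu_0$. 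Item \ref{item:epsilon_boost} repeats this verbatim with $\bar\kappa$ replaced by $\bar\kappa'$ of \eqref{eq:kappa_prime_def}, the uniform Hessian bound on $\psi^{T,\varphi^\infty}_t[\mu_\cdot]$ of \Cref{thm:lin_Hess_est}/\Cref{cor:boost}-\ref{item:cor:boost_i} being exactly what upgrades the profile, and \eqref{eq:high_reg_suff_1} is weaker than \eqref{eq:high_reg_suff} because $\lambda_{\bar\kappa'}>\lambda_{\bar\kappa}$ and $C_{\bar\kappa'}>C_{\bar\kappa}$.

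\emph{Item \ref{item:high_reg_tpike_est}.} Given a solution $(\mu^{T,G}_\cdot,\varphi^{T,G}_\cdot)$, apply \Cref{lem:abs_turnpike_est} on $[0,T]$ with the function $\hat G$, $\sfd=W_{f_{\bar\kappa}}$ ($\bar\kappa$, resp. $\bar\kappa'$ in case \ref{item:high_lipschitz_case_and_boost}), $\upsilon^\lambda(s)=e^{-\lambda s}$ and the metrics \eqref{eq:def_abs_met_high_mild}; this is legitimate by \ref{item:grad_est_high}--\ref{item:contraction_iterate_high}. Since $\mu^{T,G}_\cdot$ is a fixed point of $\nu^{T,G}_\cdot$, \eqref{eq:abstract-tpike} reduces everything to estimating $\overleftarrow{d^T_{\lambda,\bar\kappa}}(\nu^{T,\hat G}_\cdot[\mu^{T,G}_\cdot],\mu^{T,G}_\cdot)$, i.e.\ the gap between the two frozen flows sharing input flow $\mu^{T,G}_\cdot$, drift $b$ and running cost and differing only in terminal cost $\hat G$ vs.\ $G(\mu^{T,G}_T,\cdot)$. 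For $T>\tau(G)$ I would split $[0,T]$ at $T-\tau(G)$ (resp.\ $\tau'(G)$ in case \ref{item:hig_bdd_case}, $\tau''(G)$ in case \ref{item:high_lipschitz_case_and_boost}): on $[T-\tau(G),T]$ use \Cref{lem:basics_MF_high}-\ref{item:stab_MF_high} with $\const^{\delta\psi}_x=0$ and the crude profile $\kappa_G$ of \eqref{eq:mult_constants_high_3}, admissible because \Cref{lem:basics_MF_high}-\ref{item:grad_est_MF_high} bounds $\|\varphi^{T,G}_s\|_{f_{\kappa_b}}\le\max\{2\const^\psi_x,\|G\|_{f_{\kappa_b}}\}$; on $[0,T-\tau(G)]$ invoke the dynamic programming principle to recast the two flows as frozen flows over $[0,T-\tau(G)]$ with terminal data of $f_{\kappa_b}$-norm $\le 2\const^\psi_x$, which is precisely what the choice of $\tau(G)$ guarantees since $\const^\psi_x+(\|G\|_{f_{\kappa_b}}-\const^\psi_x)e^{-\lambda_{\kappa_b}\tau(G)}\le 2\const^\psi_x$, so that \ref{item:stab_MF_high} applies there with the good profile $\bar\kappa$. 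Converting between $W_{f_{\bar\kappa}}$, $W_{f_{\kappa_G}}$ and $\|\cdot\|_\Lip$ via the equivalences \eqref{eq:Lyapunov_funct_coup_by_ref_2} and assembling the two pieces produces the bound $\lesssim e^{\lambda_{\bar\kappa}\tau(G)}\|\hat G-G(\mu^{T,G}_T,\cdot)\|_{f_{\kappa_G}}$, hence \eqref{eq:tp_flow_Lip_high}. In case \ref{item:hig_bdd_case} one additionally needs a crude global size/moment bound on the two flows on $[T-\tau'(G),T]$ (the $M_1,\tilde M_1$ terms), since no turnpike rate is available there, while the explicit bound on $\tau'(G)$ comes from the exponential decay of $q^{\kappa_b}_\tau$. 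The gradient estimate \eqref{eq:tp_control_lip_high} is then obtained by inserting \eqref{eq:tp_flow_Lip_high} into the first bound and into the formula for $\const^{\delta\psi_t}_x$ in \Cref{lem:basics_MF_high}-\ref{item:stab_MF_high}, integrating the $e^{-\lambda s}$ and $e^{-\lambda(T-s)}$ contributions, and once more using the DPP on $[0,T-\tau(G)]$ to dominate the terminal contribution by $4\const^\psi_x C_{\bar\kappa}^{-1}e^{\lambda_{\bar\kappa}\tau(G)}e^{-\lambda_{\bar\kappa}(T-t)}$; case \ref{item:high_lipschitz_case_and_boost} is identical with $\bar\kappa'$ and $\tau''(G)$, which is finite because the Hessian of $\varphi^{T,G}$, hence $\kappa_{\partial_p H(\cdot,\nabla\varphi^{T,G}_{T-s}(\cdot))}$, relaxes into the $\bar\kappa'$-regime as $s$ grows (by \Cref{thm:lin_Hess_est} and the turnpike argument of case \ref{item:high_lipschitz_case} run for the boosted profile).

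\emph{Item \ref{item:tpike_for_hessians} and the main obstacle.} With $\sigma$ constant, start from \Cref{cor:boost}-\ref{item:cor:boost_ii}, which bounds $\|\nabla\psi^{T,G}_t[\mu_\cdot]-\nabla\psi^{T,\hat G}_t[\hat\mu_\cdot]\|_\infty$ by $\int_t^{T''}q^{\bar\kappa'}_{s-t}\bigl(\const^b_x\const^{\delta\psi_s}_x+\const^F_{x\mu}W_{f_{\bar\kappa'}}(\mu_s,\hat\mu_s)\bigr)\De s$ plus $q^{\bar\kappa'}_{T''-t}\|\psi^{T,G}_{T''}[\mu_\cdot]-\psi^{T,\hat G}_{T''}[\hat\mu_\cdot]\|_\Lip$ with $T''=T-\tau''(G)$. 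Inserting the flow estimate \eqref{eq:tp_flow_Lip_high} and gradient estimate \eqref{eq:tp_control_lip_high} from \ref{item:high_lipschitz_case}/\ref{item:high_lipschitz_case_and_boost} for $W_{f_{\bar\kappa'}}(\mu^{T,G}_s,\mu^{T,\hat G}_s)$, $\const^{\delta\psi_s}_x$ and the terminal term, and carrying out the convolutions using the integrability of $q^{\bar\kappa'}_\tau$ near $0$ and its exponential decay for large $\tau$ (see \eqref{q_kappa_t_def}), gives \eqref{eq:tpike_hess_in_proof} for $t\le T''_0$; constancy of $\sigma$ enters only through \Cref{lem:stability_estimate_linear_pb}-\ref{item:linear_stability_boost}, on which \Cref{cor:boost}-\ref{item:cor:boost_ii} rests. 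The main obstacle, and where the bulk of the work lies, is the profile bookkeeping: at each application of a stability estimate one must decide whether the relevant iterate value functions lie in the good regime (norm $\le 2\const^\psi_x$, profile $\ge\bar\kappa$ or $\bar\kappa'$) or only in the crude regime (norm $\le\max\{2\const^\psi_x,\|G\|_{f_{\kappa_b}}\}$, profile $\ge\kappa_G$), split the time horizon at the matching $\tau(G)/\tau'(G)/\tau''(G)$, use the dynamic programming principle to restrict to the good regime on the bulk of $[0,T]$, and keep constants under control through the norm equivalences \eqref{eq:Lyapunov_funct_coup_by_ref_2}; the individual inequalities are themselves direct consequences of \Cref{lem:basics_MF_high}, \Cref{lem:stability_estimate_linear_pb}, \Cref{cor:boost} and the coupling contractions of \Cref{sec:couplings}.
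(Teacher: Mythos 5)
Your proof follows essentially the same route as the paper: the same abstract turnpike mechanism (\Cref{lem:abs_turnpike_est}), the same stability inputs (\Cref{lem:basics_MF_high}, \Cref{cor:boost}), the same integral manipulations for \eqref{eq:iterate_stability_high_1}--\eqref{eq:iterate_stability_high_T}, the same split of $[0,T]$ at $T-\tau(G)$ (resp.\ $\tau'(G)$, $\tau''(G)$) combined with the dynamic programming principle to restrict to the good monotonicity regime, the same norm-equivalence conversions, and \Cref{cor:boost}-\ref{item:cor:boost_ii} for the Hessian turnpike. The one place your sketch is looser than what is needed is the claim that $\lambda<\lambda^*$ with $\lambda^*=(\lambda_{\bar\kappa}-\const^F_{x\mu}/(\rho^L_{uu}C^2_{\bar\kappa}))^{1/2}$ forces $\lambda_{\bar\kappa}^2-\lambda^2>\const^F_{x\mu}/(\rho^L_{uu}C^2_{\bar\kappa})$: this implication only holds when $\lambda_{\bar\kappa}\geq 1$, so the inequality should be verified rather than asserted — but this reflects an infelicity already present in the statement's definition of $\lambda^*$ rather than a divergence from the paper's argument.
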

\begin{proof}

\begin{itemize}[wide]
\item \underline{Proof of \ref{item:grad_est_high}:} 
The first claim follows directly from  \Cref{lem:basics_MF_high}-\ref{item:grad_est_MF_high}. The second claim follows applying \Cref{prop:Holder_in_time}-\ref{item:time_Holder_Wf} with $\beta_t(x)=\partial_pH(x,\nabla  \psi^{T,\hat{G}}_t [\hat\mu_\cdot](x))$ using the gradient estimate \eqref{eq:gest_high_last} to find a lower bound for $\kappa_{\beta}$.

\item \underline{Proof of \ref{item:stability_iterate_high}:} 
In order to ease notation, we write $\hat{\nu}= \nu^{T,\hat{G}}$, $\nu = \nu^{T,G}$, $\hat{\psi} = \psi^{T,\hat{G}}$, $\psi = \psi^{T,G}$.
Using
\begin{equation}
W_{f_{\bar{\kappa}}}(\mu_s,\hat{\mu}_s) \leq e^{-\lambda s} \overrightarrow{d_{\lambda,\bar{\kappa}}^{T}}(\mu_\cdot,\hat\mu_\cdot),
\end{equation}
and the hypothesis on $\|\hat{G}\|_{f_{\kappa_b}}$, we can apply \Cref{lem:basics_MF_high}-\ref{item:stab_MF_high} with $\tilde\kappa=\bar\kappa$ to get
\begin{equation}\label{eq:forward_norm_1}
\|\nabla\hat{\psi}_t[\mu_\cdot]-\nabla\hat{\psi}_t[\hat\mu_\cdot]\|_{\infty}= \const^{\delta\psi_t}_x\leq \frac{\const^F_{x\mu}}{C^2_{\bar{\kappa}}}  \overrightarrow{d_{\lambda,\bar{\kappa}}^{T}}(\mu_\cdot,\hat\mu_\cdot) \int_t^Te^{-\lambda s-\lambda_{\bar{\kappa}}(s-t)} \De s\leq \frac{\const^F_{x\mu}}{C^2_{\bar{\kappa}}}  \frac{e^{-\lambda t}}{\lambda_{\bar{\kappa}}+\lambda} \overrightarrow{d_{\lambda,\bar{\kappa}}^{T}}(\mu_\cdot,\hat\mu_\cdot)
\end{equation}
and
\begin{equation}\label{eq:high_reg_iteration_stability_1}
\begin{split}
W_{f_{\bar{\kappa}}}(\hat{\nu}_t[\mu_\cdot],\hat{\nu}_t[\hat\mu_\cdot])&\leq e^{-\lambda_{\bar{\kappa}}t}W_{f_{\bar{\kappa}}}(\mu_0,\hat{\mu}_0)+ \frac{1}{\rho^L_{uu}}\int_0^te^{-\lambda_{\bar{\kappa}}(t-s)}\const_x^{\delta\psi_s}\De s\\
&\stackrel{\eqref{eq:forward_norm_1}}{\leq} e^{-\lambda_{\bar{\kappa}}t}W_{f_{\bar{\kappa}}}(\mu_0,\hat{\mu}_0)+ \frac{\const^F_{x\mu}}{\rho^L_{uu}C^2_{\bar{\kappa}}} \frac{e^{-\lambda t}}{\lambda_{\bar{\kappa}}^2 -\lambda^2}\overrightarrow{d_{\lambda,\bar{\kappa}}^{T}}(\mu_\cdot,\hat\mu_\cdot).
\end{split}
\end{equation}

From this bound, we obtain the first inequality by multiplying by $e^{\lambda t}$, maximizing over $t \in [0,T]$ and using $\lambda < \lambda_{\bar{\kappa}}$.
For the second bound in $\overleftarrow{d_{\lambda,\bar{\kappa}}^{T}}$ suppose now $\mu_0 = \hat{\mu}_0$. Using again
\Cref{lem:basics_MF_high}-\ref{item:stab_MF_high} for $\tilde\kappa=\bar\kappa$ and the definition of $\overleftarrow{d_{\lambda,\bar{\kappa}}^{T}}$ gives
\begin{equation}\label{eq:high_reg_iteration_stability_2}
\|\nabla\hat{\psi}_t[\mu_\cdot]-\nabla\hat{\psi}_t[\hat\mu_\cdot]\|_{\infty}=\const^{\delta\psi_t}_x\leq \frac{\const^F_{x\mu}}{C^2_{\bar{\kappa}}}  \overleftarrow{d_{\lambda,\bar{\kappa}}^{T}}(\mu_\cdot,\hat\mu_\cdot) \int_t^Te^{-\lambda (T-s)}e^{-\lambda_{\bar{\kappa}}(s-t)} \De s\leq \frac{\const^F_{x\mu}}{C^2_{\bar{\kappa}}}  \frac{e^{-\lambda (T-s)}}{\lambda_{\bar{\kappa}}-\lambda} \overleftarrow{d_{\lambda,\bar{\kappa}}^{T}}(\mu_\cdot,\hat\mu_\cdot)
\end{equation}
and
\begin{equation}
W_{f_{\bar{\kappa}}}(\hat{\nu}_t[\mu_\cdot],\hat{\nu}_t[\hat\mu_\cdot])\leq  \frac{1}{\rho^L_{uu}}\int_0^te^{-\lambda_{\bar{\kappa}}(t-s)}\const_x^{\delta\psi_s}\De s \stackrel{\label{eq:high_reg_iteration_stability_2}}{\leq} \frac{\const^F_{x\mu}}{\rho^L_{uu}C^2_{\bar{\kappa}}} \frac{e^{-\lambda (T-t)}}{\lambda_{\bar{\kappa}}^2 -\lambda^2}\overleftarrow{d_{\lambda,\bar{\kappa}}^{T}}(\mu_\cdot,\hat\mu_\cdot).
\end{equation}
The second inequality now follows by multiplying by $e^{\lambda (T-t)}$, maximizing over $t \in [0,T]$ and using $\lambda < \lambda_{\bar{\kappa}}$.
\item \underline{Proof of \ref{item:contraction_iterate_high}:} 
The fact that we have $\varepsilon(\lambda) < 1$ under these conditions follows by standard computations. But then, thanks to \ref{item:grad_est_high}, we have that for any $\hat\mu_0\in\cP_p(\bbR^d)$ and $\const$ given in \ref{item:grad_est_high}, $\nu^{T,\hat G}[\cdot]$ maps the complete metric space $\Gamma_{\hat\mu_0,\const}$ onto itself and is a strict contraction for the distance $\overrightarrow{d_{\lambda,\bar{\kappa}}^{T}}$. We can therefore invoke Banach's fixed point Theorem which gives us existence of a unique fixed point.
Now, observe that the existence and uniqueness result follows by Banach's fixed point theorem. 
\item \underline{Proof of \ref{item:epsilon_boost}} The proof is identical to the proof of \ref{item:stability_iterate_high}-\ref{item:contraction_iterate_high} with the only difference that \Cref{lem:basics_MF_high}-\ref{item:stab_MF_high} is applied for the choice $\tilde\kappa=\bar\kappa'$. We are allowed to make such choice because of  \Cref{cor:boost}-\ref{item:cor:boost_i}, which we can invoke thanks to the extra assumptions \Cref{ass:boost} and $\hat G=\varphi^\infty$.
\item \underline{Proof of \ref{item:high_reg_tpike_est}}
Thanks to the estimates we have just established in \ref{item:stability_iterate_high}, \Cref{lem:abs_turnpike_est} gives for $\sfd = W_{f_{\bar{\kappa}}}$ and $\upsilon^\lambda(t) = e^{-\lambda t}$
\begin{equation}\label{eq:high_tpike_1}
W_{f_{\bar{\kappa}}}(\mu^{T,G}_t, \mu^{T,\hat{G}}_t) \leq \frac{e^{-\lambda t}}{1 - \varepsilon(\lambda)} W_{f_{\bar{\kappa}}}(\mu_0, \hat{\mu}_0) + 
\frac{e^{-\lambda(T-t)}}{1 - \varepsilon(\lambda)} \overleftarrow{d_{\lambda,\bar{\kappa}}^{T}}(\hat{\nu}_\cdot[\mu^{T,G}_\cdot],\nu_\cdot[\mu^{T,G}_\cdot]) \eqsp.
\end{equation}
We now proceed to bound $\overleftarrow{d_{\lambda,\bar{\kappa}}^{T}}(\hat{\nu}_\cdot[\mu^{T,G}_\cdot],\nu_\cdot[\mu^{T,G}_\cdot])$ for $G$ Lipschitz in the space variable and $G$ bounded.
\begin{itemize}[wide]
\item  \underline{Case $\| G\|_{f_{\kappa_b}} < +\infty  $}  
We distinguish between two subcases depending on whether or not $\tau(G)>0$. If $\tau(G)=0$, i.e. if $\|G\|_{f_{\kappa_b}}\leq 2\const^\psi_x$, we know from \Cref{lem:basics_MF_high}-\ref{item:grad_est_MF_high} that $\|\varphi^{T,G}_t\|_{f_{\kappa_b}} \leq 2 \const^{\psi}_x$ for $t\leq T$. This bound entitles us to apply \Cref{lem:basics_MF_high}-\ref{item:stab_MF_high} with $\tilde\kappa = \bar{\kappa}$  to obtain
\begin{align}
W_{f_{\bar{\kappa}}}(\hat{\nu}_t[\mu^{T,G}_\cdot],\nu_t[\mu^{T,G}_\cdot]) \leq e^{-\lambda_{\bar{\kappa}}(T-t)} \frac{1}{2\lambda_{\bar{\kappa}}\rho^L_{uu}} \|\hat{G} - G(\mu^{T,G}_T,\cdot) \|_{f_{\bar{\kappa}}} \eqsp,
\end{align}
which yields
\begin{equation}
\overleftarrow{d_{\lambda,\bar{\kappa}}^{T}}(\hat{\nu}_\cdot[\mu^{T,G}_\cdot],\nu_\cdot[\mu^{T,G}_\cdot]) \leq \frac{1}{2\lambda_{\bar{\kappa}}\rho^L_{uu}} \|\hat{G} - G(\mu^{T,G}_T,\cdot) \|_{f_{\bar{\kappa}}} \eqsp.
\end{equation}
Plugging this result in \eqref{eq:high_tpike_1} gives
\begin{equation}\label{eq:tpike_better_G_Lip_small}
W_{f_{\bar{\kappa}}}(\mu^{T,G}_t, \mu^{T,\hat{G}}_t) \leq \frac{1}{1-\varepsilon(\lambda)}\Big(W_{f_{\bar\kappa}}(\mu_0,\hat{\mu}_0) e^{-\lambda t}  + \frac{1}{2\lambda_{\bar\kappa}\rho^{L}_{uu}}\|\hat{G} - G(\mu^{T,G}_T,\cdot) \|_{f_{\bar\kappa}} e^{-\lambda(T-t)}\Big),
\end{equation}
which implies the turnpike estimate \eqref{eq:tp_flow_Lip_high} since $\bar{\kappa} \geq \kappa_G$ and $C_{\kappa_G} \leq 1$. In turn, using \eqref{eq:tp_flow_Lip_high} in the first estimate of \Cref{lem:basics_MF_high}-\ref{item:stab_MF_high} gives, after some basic calculations that 
\bes\label{eq:high_tpike_control_easy}
\|\varphi^{T,G}_t-\varphi^{T,\hat G}_t \|_\Lip\leq \frac{\const^F_{x\mu}}{C^2_{\bar\kappa}(1-\varepsilon(\lambda))} \Big(\frac{W_{f_{\bar\kappa}}(\mu_0,\hat\mu_0)}{\lambda+ \lambda_{\bar{\kappa}}}e^{-\lambda t}+\frac{\|\hat G - G(\mu_T,\cdot)\|_{f_{\bar\kappa}}}{2\lambda_{\bar\kappa}(\lambda_{\bar{\kappa}}-\lambda)\rho^L_{uu}}e^{-\lambda(T-t)}\Big)+\|\hat G - G(\mu_T,\cdot)\|_{f_{\bar\kappa}}e^{-\lambda_{\bar\kappa}(T-t)},
\ees
which implies \eqref{eq:tp_control_lip_high} in the current setting where $\tau(G)=0$ by bounding $\|\hat G - G(\mu_T,\cdot)\|_{f_{\bar\kappa}}$ using the equivalence between $\|\cdot\|_{f_{\bar\kappa}}$,  $\|\cdot\|_{f_{\kappa_b}}$ and a triangular inequality.
On the other hand, if $\|G(\mu,\cdot) \|_{f_{\kappa_b}} > 2 \const^{\psi}_x$ then, defining $T_0 = T -\tau(G)$, we have $\|\psi^{T,G}_{t}[\mu^{T,G}_\cdot] \|_{f_{\kappa_b}} \leq 2 \const^\psi_x$ for all $t \leq T_0$ thanks to \Cref{lem:basics_MF_high}-\ref{item:grad_est_MF_high}.
But then, arguing on the basis of the dynamic programming principle we can apply the result obtained in the previous case. We get that for $t\leq T_0$ 
\bes
W_{f_{\bar{\kappa}}}(\hat{\nu}_t[\mu^{T,G}_\cdot],\nu_t[\mu^{T,G}_\cdot]) 
\leq e^{-\lambda_{\bar{\kappa}}(T_0-t)} \frac{1}{2\lambda_{\bar{\kappa}}\rho^L_{uu}} \|\hat{\psi}_{T_0}[\mu^{T,G}_\cdot] - \psi_{T_0}[\mu^{T,G}_\cdot] \|_{f_{\bar{\kappa}}}.
\ees
Using the definition of $\tau(G)$ and 
applying 
\Cref{lem:basics_MF_high}-\ref{item:stab_MF_high} with $\tilde\kappa=\kappa_G$ and observing that $\bar\kappa\geq\kappa_G$ implies
\bes
\|\hat{\psi}_{T_0}[\mu^{T,G}_\cdot] - \psi_{T_0}[\mu^{T,G}_\cdot] \|_{f_{\bar{\kappa}}}\leq \|\hat{\psi}_{T_0}[\mu^{T,G}_\cdot] - \psi_{T_0}[\mu^{T,G}_\cdot] \|_{f_{\kappa_G}} \leq \|\hat{G}-G(\mu_t,\cdot)\|_{f_{\kappa_G}},
\ees 
we get using again $\bar\kappa\geq\kappa_G$ and $C_{\kappa_G}\leq1$
\begin{equation}\label{eq:DPP_estimate}
\begin{split}
W_{f_{\bar{\kappa}}}(\hat{\nu}_t[\mu^{T,G}_\cdot],\nu_t[\mu^{T,G}_\cdot]) 
\leq \,e^{\lambda_{\bar\kappa}\tau(G)}e^{-\lambda_{\bar{\kappa}}(T-t)}\frac{\|\hat{G}-G(\mu_t,\cdot)\|_{f_{\kappa_G}}}{2\lambda_{\kappa_G}C_{{\kappa_G}}\rho^L_{uu}}.
\end{split}
\end{equation}
It remains to consider the case when $t\in[T_0,T]$. Here, invoking again \Cref{lem:basics_MF_high}-\ref{item:stab_high_reg} with $\tilde\kappa=\kappa_G$ provides with
\bes\label{eq:early_stopping_high_1}
\begin{split}
W_{f_{\kappa_G}}(\hat{\nu}_t[\mu^{T,G}_\cdot],\nu_t[\mu^{T,G}_\cdot]) 
\leq e^{-\lambda_{\kappa_G}(T-t)} \frac{\|\hat G- G(\mu_t,\cdot)\|_{f_{\kappa_G}} }{2\lambda_{\kappa_G}\rho^L_{uu}} \\\leq e^{\lambda_{\bar\kappa}\tau(G)}e^{-\lambda_{\bar{\kappa}}(T-t)} \frac{\|\hat{G}-G(\mu_t,\cdot)\|_{f_{\kappa_G}}}{2\lambda_{\kappa_G}\rho^L_{uu}} .
\end{split}
\ees
Using $W_{f_{\bar\kappa}}\leq C^{-1}_{\kappa_G}W_{f_{\kappa_G}}$ and gathering  the resulting bound with \eqref{eq:early_stopping_high_1}  yields
\begin{align}
\overleftarrow{d_{\lambda,\bar{\kappa}}^{T}}(\hat{\nu}_\cdot[\mu^{T,G}_\cdot],\nu_\cdot[\mu^{T,G}_\cdot]) \leq \frac{e^{\lambda_{\bar\kappa}\tau(G)}}{2\lambda_{\kappa_G}C_{\kappa_G} \rho^{L}_{uu}} \|\hat{G} - G(\mu^{T,G}_T,\cdot) \| _{f_{\kappa_G}} \eqsp.
\end{align}
Plugging this bound back in \eqref{eq:high_tpike_1} gives the turnpike estimate \eqref{eq:tp_control_lip_high}. We are left with the proof of the turnpike estimates for the value functions. To this aim, observe that since we have that $\|\varphi^{T,\hat G}_{t}\|_{f_{\kappa_b}},\|\varphi^{T, G}_{t}\|_{f_{\kappa_b}}\leq 2\const^\psi_x$ for all $t \leq T_0$ thanks to \Cref{lem:basics_MF_high}-\ref{item:grad_est_MF_high}. But then, thanks to the dynamic programming principle,  we can apply the results proven in the former case if we restrict to the time interval $[0,T_0]$. In particular, \eqref{eq:high_tpike_control_easy}, implies that for $t\leq T_0$
\bes
\|\varphi^{T,G}_t-\varphi^{T,\hat G}_t \|_\Lip\leq\frac{\const^F_{x\mu}}{C^2_{\bar\kappa}(1-\varepsilon(\lambda))} \Big(\frac{W_{f_{\bar\kappa}}(\mu_0,\hat\mu_0)}{\lambda+\lambda_{\bar{\kappa}}}e^{-\lambda t}+\frac{\|\varphi^{T,\hat G}_{T_0} - \varphi^{T,G}_{T_0}\|_{f_{\bar\kappa}}}{2\lambda_{\bar\kappa}(\lambda_{\bar{\kappa}}-\lambda)\rho^L_{uu}}e^{-\lambda(T_0-t)}\Big)+\|\varphi^{T,\hat G}_{T_0} - \varphi^{T,G}_{T_0}\|_{f_{\bar\kappa}}e^{-\lambda_{\bar\kappa}(T_0-t)}.
\ees
Rewriting $T_0=T-\tau(G)$ and bounding $\|\varphi^{T,\hat G}_{T_0} - \varphi^{T,G}_{T_0}\|_{f_{\bar\kappa}}$ using the equivalence between $\|\cdot\|_{f_{\bar\kappa}}$,  $\|\cdot\|_{f_{\kappa_b}}$ and triangular inequality gives the desired result.
\item \underline{Case $\| G\|_{\infty}  < +\infty$. } 
We have thanks to \Cref{lem:basics_MF_high}-\ref{item:grad_est_MF_high} that $\|\varphi^{T,G}_t \|_{f_{\kappa_b}} \leq 2 \const^\psi_x$ for $t\leq T'_0$. Thus, invoking the dynamic programming principle, we can use the results obtained under the assumption ${\| G\|_{f_{\kappa_b}}<+\infty}$ if we restrict to the time-interval $[0,T'_0]$. This means that the bounds \eqref{eq:tp_flow_Lip_high}-\eqref{eq:tp_control_lip_high} hold setting 
\bes 
\tau(G)=\tau'(G),\,\kappa_G=\bar\kappa, \quad \text{and} \quad \hat{G}=\varphi^{T,\hat G}_{T'_0},G(\mu_T,\cdot)=\varphi^{T,G}_{T'_0}.\ees Since the difference $\| \varphi^{T,G}_{T'_0}-\varphi^{T,\hat{G}}_{T_0}\|_{f_{\bar\kappa}}$ can be bounded with $4C^{-1}_{\bar\kappa}\const^\psi_x$ by a triangular inequality and the definition of $T'_0$, we obtain the desired turnpike estimate for the flows and the sought turnpike estimate for the value functions in $t\in[0,T'_0]$. To cover the case $t\in[T'_0,T]$ for the flows we use the trivial bound $W_{f_{\bar\kappa}}(\mu^{T,\hat G}_t,\mu^{T, G}_t)\leq \int|x|\mu^{T,G}_t+\int|x|\mu^{T,\hat G}_t$, and then bound these quantities with Lemma \ref{lem:moment_bound}.
\item \underline{Proof of \ref{item:high_lipschitz_case_and_boost}} The proof is basically same used for \ref{item:high_lipschitz_case}, with the only differences that here we  consider the subcases $\tau''(G)=0$, $\tau''(G)>0$ instead of considering the subcases $\tau(G)=0$, $\tau(G)>0$ as we did there, and all applications of \Cref{lem:basics_MF_high}-\ref{item:stab_MF_high} for $\tilde\kappa=\bar\kappa$ are now with $\tilde\kappa=\bar\kappa'$. Note that we are allowed to so because of \Cref{cor:boost}-\ref{item:cor:boost_i}. Finally, let us remark that the fact that $\tau''(G)<+\infty$ is a consequence of the fact that the dependence on $\| G\|_{f_{\kappa_b}}$ in the gradient and hessian bounds in \Cref{lemma:gradient_estimate_linearized problem}, \Cref{lem:hessian_bound_sigma_constant}, \Cref{thm:lin_Hess_est} decreases exponentially for large values of $T-t$,
 thanks to the definition of $q^\kappa_{T-t}$, see \eqref{q_kappa_t_def}.
\item\underline{Proof of \ref{item:tpike_for_hessians}} Using the fixed-point property of solutions we bound $\|\nabla\varphi^{T,G}_t-\nabla\varphi^{T,\hat G}_t\|_{\Lip}$ with \Cref{cor:boost}-\ref{item:cor:boost_ii}, which we apply with $T''=T''_0$. Indeed, this choice ensures that \eqref{eq:cor_boost_extra_ass} holds.  To obtain \eqref{eq:tpike_hess_in_proof}, we use the turnpike estimate for flows \eqref{eq:tp_flow_Lip_high} to estimate $W_{f_{\bar\kappa}}(\mu_s,\hat\mu_s)$, the turnpike estimate for the value functions \eqref{eq:tp_control_lip_high} to estimate $\const^{\delta\psi_s}_x$, and finally upper bound the resulting integrals with the help of \Cref{lem:boring_calculations_mild}.
\end{itemize}
\end{itemize}
\end{proof}

\begin{lemma}\label{lem:moment_bound}
Let $(\mu^{T,G}_\cdot,\varphi^{T,G}_\cdot)$ be a solution to \eqref{eq:MF_PDE} and $\mu^b$ be the invariant measure for the uncontrolled dynamics \eqref{eq:SDE_sigma_intro}. Then, we have
\bes
\int_{\bbR^d}|x|\,\mu^{T,G}_t\leq \min\{M_1(\|G\|_\infty),\tilde{M}_1(\|G\|_{f_{\kappa_b}})\}
\ees
with 
\begin{equation}\label{eq:moment_bound}
\begin{aligned}
M_1(\|G\|_\infty) &:= (1+C_{\kappa_b}^{-1} )\int_{\bbR^d}|x|\mu^b + C_{\kappa_b}^{-1} \left( \int_{\bbR^d}|x|\mu_0+\frac{1}{\rho^L_{uu}\lambda_{\kappa_b}}(\const^\psi_x+ \const^{L(\cdot,0)}_u)+ \frac{3\|G\|_\infty}{2\rho^L_{uu}\sqrt{\pi\lambda_{\kappa_b}}C_{\kappa_b}\sigma_0} \right)\\
\tilde{M}_1(\|G\|_{f_{\kappa_b}}) &:=
(1+C_{\kappa_b}^{-1} )\int_{\bbR^d}|x|\mu^b + C_{\kappa_b}^{-1} \left(\int_{\bbR^d}|x|\mu_0 + \frac{1}{\rho^L_{uu}\lambda_{\kappa_b}} (\const^{\psi}_{x}+\|G\|_{f_{\kappa_b}} + \const^{L(\cdot,0)}_u) \right)
\end{aligned}
\end{equation}
\end{lemma}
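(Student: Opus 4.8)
The plan is to realise $\mu^{T,G}_t$ as the time-$t$ marginal of the optimally controlled diffusion and to compare it, through an approximate coupling by reflection, with the invariant measure $\mu^b$ of the uncontrolled dynamics, exploiting that the drift perturbation is exactly the optimal feedback, whose size is controlled by the gradient estimates already at our disposal.

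More precisely: by \Cref{lem:basics_MF_high}-\ref{item:HJB_ok_high} and the fixed-point characterisation of \eqref{eq:MF_PDE}, $\mu^{T,G}_t = \cL(X_t)$ with $\De X_s = \partial_p H(X_s,\nabla\varphi^{T,G}_s(X_s))\,\De s + \sigma(X_s)\,\De B_s$, $X_0\sim\mu_0$, while $\mu^b = \cL(Y_t)$ with $\De Y_s = b(Y_s)\,\De s+\sigma(Y_s)\,\De B_s$, $Y_0\sim\mu^b$, so that $\cL(Y_s)\equiv\mu^b$ and, by \Cref{prop_existence_moment_estimate}, $\int_{\bbR^d}|x|\,\mu^b<+\infty$. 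The drifts differ by $\partial_p H(x,\nabla\varphi^{T,G}_s(x))-b(x) = w(x,\nabla\varphi^{T,G}_s(x))$, and \Cref{lem:basics_MF_high}-\ref{item:grad_est_MF_high} (applied with $\varphi^{T,G}_s=\psi^{T,G}_s[\mu^{T,G}_\cdot]$) gives
\begin{equation}\label{eq:plan_wbound}
\|w(\cdot,\nabla\varphi^{T,G}_s(\cdot))\|_\infty \le \frac{1}{\rho^L_{uu}}\Big(\const^\psi_x+\const^{L(\cdot,0)}_u+\min\big\{\|G\|_{f_{\kappa_b}}e^{-\lambda_{\kappa_b}(T-s)},\ \|G\|_\infty\, q^{\kappa_b}_{T-s}\big\}\Big)=:\Lambda_s,
\end{equation}
hence also $\kappa_{\partial_p H(\cdot,\nabla\varphi^{T,G}_s(\cdot))}(r)\ge\kappa_b(r)-2\Lambda_s/r\in\msk$ (uniformly over $s\le T'$ for each $T'<T$, and over $s\le T$ if $\|G\|_{f_{\kappa_b}}<+\infty$).

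Next, I would apply the $\delta$-approximate coupling by reflection of \Cref{sec:delta_coup}, namely \Cref{prop:contr_delta_coup}-\ref{item:W_1_contr_delta_coup}, with reference drift $\beta_s\equiv b$ (so the profile is $\kappa_b$ itself and the relevant rate and constant are $\lambda_{\kappa_b},C_{\kappa_b}$), second drift $\hat\beta_s(\cdot)=\partial_p H(\cdot,\nabla\varphi^{T,G}_s(\cdot))$, and $\const^{\delta\beta_s}=\Lambda_s$; letting $\delta\downarrow0$ and using $\cL(Y_s)\equiv\mu^b$ this gives
\begin{equation}\label{eq:plan_coup}
W_{f_{\kappa_b}}(\mu^{T,G}_t,\mu^b)\le e^{-\lambda_{\kappa_b}t}\,W_{f_{\kappa_b}}(\mu_0,\mu^b)+\int_0^t e^{-\lambda_{\kappa_b}(t-s)}\,\Lambda_s\,\De s .
\end{equation}
(In the bounded-$G$ case $\Lambda_s$ blows up like $(T-s)^{-1/2}$ as $s\uparrow T$, so strictly speaking one applies \Cref{prop:contr_delta_coup} on $[0,T']$, $T'<T$, and passes to the limit using integrability of $\Lambda_\cdot$ near $T$ and weak continuity of $t\mapsto\mu^{T,G}_t$; for Lipschitz $G$ one has $\sup_{s\le T}\Lambda_s<+\infty$ and no such precaution is needed.) Using $C_{\kappa_b}r\le f_{\kappa_b}(r)\le r$ one converts \eqref{eq:plan_coup} into a moment estimate: from $\int|x|\mu^{T,G}_t\le\int|x|\mu^b+C_{\kappa_b}^{-1}W_{f_{\kappa_b}}(\mu^{T,G}_t,\mu^b)$ and $W_{f_{\kappa_b}}(\mu_0,\mu^b)\le\int|x|\mu_0+\int|x|\mu^b$,
\begin{equation}\label{eq:plan_assemble}
\int_{\bbR^d}|x|\,\mu^{T,G}_t\le (1+C_{\kappa_b}^{-1})\!\int_{\bbR^d}|x|\mu^b+C_{\kappa_b}^{-1}\!\int_{\bbR^d}|x|\mu_0+\frac{C_{\kappa_b}^{-1}}{\rho^L_{uu}}\Big(\frac{\const^\psi_x+\const^{L(\cdot,0)}_u}{\lambda_{\kappa_b}}+\int_0^t e^{-\lambda_{\kappa_b}(t-s)}\,m_s\,\De s\Big),
\end{equation}
where $m_s$ denotes the minimum appearing in \eqref{eq:plan_wbound} and we used $\int_0^t e^{-\lambda_{\kappa_b}(t-s)}\De s\le 1/\lambda_{\kappa_b}$.

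Finally, taking inside $m_s$ the first branch and $\int_0^t e^{-\lambda_{\kappa_b}(t-s)}e^{-\lambda_{\kappa_b}(T-s)}\De s\le\tfrac1{2\lambda_{\kappa_b}}\le\tfrac1{\lambda_{\kappa_b}}$ yields $\tilde{M}_1(\|G\|_{f_{\kappa_b}})$; taking the second branch and the elementary bound $\int_0^t e^{-\lambda_{\kappa_b}(t-s)}q^{\kappa_b}_{T-s}\De s\le\tfrac{3}{2\sqrt{\pi\lambda_{\kappa_b}}\,C_{\kappa_b}\sigma_0}$ — proved by substituting $u=T-s$, splitting at $u=\tfrac1{2\lambda_{\kappa_b}}$ and inserting the two branches of \eqref{q_kappa_t_def}, cf. \Cref{lem:boring_calculations_mild} — yields $M_1(\|G\|_\infty)$; the claim then follows by taking the minimum of the two bounds. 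The main obstacle is precisely this last integral estimate, which must be made uniform in $t$ and $T$ despite the integrand being singular (like $(T-s)^{-1/2}$) as $s\uparrow T$; the only other point requiring care is the regularity/blow-up remark flagged after \eqref{eq:plan_coup} when invoking \Cref{prop:contr_delta_coup} with the optimal drift, which is only locally Lipschitz.
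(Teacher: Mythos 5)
Your proof is correct and follows essentially the same route as the paper: a triangle inequality against the invariant measure $\mu^b$, the $\delta$-approximate coupling by reflection of \Cref{prop:contr_delta_coup} with the drift perturbation $\const^{\delta\beta_s}$ controlled by the gradient bound from \Cref{lem:basics_MF_high}-\ref{item:grad_est_MF_high}, and the elementary integral estimates to produce $M_1$ and $\tilde M_1$. Your remark that, in the bounded-$G$ case, one must apply \Cref{prop:contr_delta_coup} on $[0,T']$ with $T'<T$ and pass to the limit because the perturbation $\Lambda_s\sim(T-s)^{-1/2}$ is singular near $T$ is a valid technical point that the paper's proof leaves implicit.
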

The proof can be found in \Cref{sec:proof_lem:moment_bound}.

\subsection{Mild regularity}\label{sec:MF_mild_reg}

Let us summarize some properties which we will constantly use in this section, as in \Cref{sec:MF_high_reg}. Recall that we define $\const^\psi_x = (\const^L_x + \const^F_x)/(\lambda_{\kappa_b} C_{\kappa_b})$ in \eqref{def:c_x_psi}.
\begin{lemma}\label{lem:basics_MF_mild}
Assume \Cref{ass:MF_drift_intro}, \Cref{ass:MF_coercivity_intro}, \Cref{ass:MF_mild_reg_intro}. Let $\mu_\cdot \in \Gamma$, $G: \cP_1(\bbR^d) \times \bbR^d \rightarrow \bbR$ locally Lipschitz continuous in the second variable such that $G(\mu,\cdot)$ is of linear growth for all $\mu\in\cP_1(\bbR^d)$.
The following holds.
\begin{enumerate}[label=(\roman*)] 
\item\label{item:HJB_ok_mild} There is a unique solution to \eqref{eq:linearized_PDE_system}, denote it by $(\psi^{T,G}_{\cdot}[\mu_\cdot],\nu^{T,G}_{\cdot}[\mu_\cdot])$. 

Moreover, setting $b_s=b$, $\ell_s(x,u) = L(x,u) + F(\mu_s,x)$, $g(x)=G(\mu_T,x)$, $\psi^{T,G}_{\cdot}[\mu_\cdot]$ is the value function of \eqref{eq:classical_control_problem} and the marginal laws of its optimally controlled dynamics are given by $\nu^{T,G}_{\cdot}[\mu_\cdot] \in \Gamma$.
\item\label{item:grad_est_MF_mild} The following quantitative bounds hold true
\begin{align}
\|\psi^{T,G}_{t}[\mu_\cdot]\|_{f_{\kappa_{b}}} &\leq \const^\psi_x (1 - e^{-\lambda_{\kappa_{b}}(T-t)}) + \min \left\{\| G(\mu_T,\cdot)\|_{f_{\kappa_{b}}}e^{-\lambda_{\kappa_{b}}(T-t)}, \| G(\mu_T,\cdot)\|_{\infty} q^{\kappa_b}_{T-t} \right\} , \\
\|w(\cdot,\nabla\psi^{T,G}_{t}[\mu_\cdot](\cdot))\|_\infty &\leq \frac{\|\psi^{T,G}_{t}[\mu_\cdot]\|_{f_{\kappa_{b}}}+\const^{L(\cdot,0)}}{\rho^{L}_{uu}},
\end{align}
where
\begin{equation}
w(x,p) := \argmin_{u \in \bbR^d} \left\{ L(x,u)+(b(x)+u)\cdot p\right\}  \eqsp.
\end{equation}
\item\label{item:stab_MF_mild} Suppose now that $G$ is Lipschitz in the space variable and that $\hat{G}: \cP_1(\bbR^d) \times \bbR^d \rightarrow \bbR$ is another terminal cost satisfying the same assumptions as $G$. Given another flow $\hat{\mu}_\cdot \in \Gamma$, we have
\begin{align}
\|\psi^{T,G}_{t}[\mu_\cdot]-\psi^{T,\hat{G}}_{t}[\hat{\mu}_\cdot]\|_{f_{\tilde\kappa}}
&\leq \const^{\delta \psi_t}_x+ \|G(\mu_T,\cdot) - \hat{G}(\hat{\mu}_T,\cdot) \|_{f_{\tilde\kappa}}e^{-\lambda_{\tilde\kappa}(T-t)} \\
W_{f_{\tilde\kappa}}(\nu^{T,G}_{\cdot}[\mu_\cdot],\nu^{T,\hat{G}}_{\cdot}[\hat{\mu}_\cdot])
&\leq e^{-\lambda_{\tilde\kappa}t} W_{f_{\tilde\kappa}}(\mu_0,\hat{\mu}_0)+ \frac{1}{\rho^L_{uu}}\int_0^t e^{-\lambda_{\tilde\kappa} (t-s)} \const^{\delta \psi_s}_x\De s \\
& \quad+ \frac{1}{2\rho^L_{uu}\lambda_{\tilde\kappa}}e^{-\lambda_{\tilde\kappa}(T-t)}\|G(\mu_T,\cdot) - \hat{G}(\hat{\mu}_T,\cdot) \|_{f_{\tilde\kappa}},
\end{align}
where 
$$\const^{\delta \psi_t}_x = \int_t^T\frac{2\const^F_{\mu}}{C_{\tilde\kappa}}W_{f_{\tilde\kappa}}(\mu_s,\hat{\mu}_s)q^{\tilde\kappa}_{s-t}\De s,$$
and $\tilde{\kappa} \in \msk$ is any profile satisfying \eqref{eq:tilde_kappa_cond}. In particular, if $\max\{\|\hat G(\mu_T,\cdot)\|_{f_{\kappa_\beta}},\| G(\mu_T,\cdot)\|_{f_{\kappa_\beta}}\}\leq 2\const^\psi_x$ we can choose $\tilde{\kappa}=\bar\kappa$, with $\bar\kappa$ as in \eqref{eq:bar_kappa_intro}.

\end{enumerate}
\end{lemma}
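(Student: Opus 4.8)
The plan is to deduce each of the three claims from the corresponding result of \Cref{sec:fin_dim_cont}, applied to the control problem obtained by freezing the flow. For $\mu_\cdot\in\Gamma$ set
\[
b_s=b,\qquad \ell_s(x,u)=L(x,u)+F(\mu_s,x),\qquad g(x)=G(\mu_T,x),
\]
and first check that this data satisfies the standing hypotheses of \Cref{sec:fin_dim_cont}. \Cref{ass:drift_field_fin_dim_control} is immediate from \Cref{ass:MF_drift_intro}; \Cref{ass:mild-reg} holds since \Cref{ass:MF_mild_reg_intro} contains \Cref{ass:high_reg_intro}-\ref{ass:high_reg_intro_i}, giving $\sup_u\|\ell_s(\cdot,u)\|_\Lip\le\const^L_x+\const^F_x$ uniformly in $s$; and \Cref{ass-coercivity} follows from \Cref{ass:MF_coercivity_intro} together with the local Lipschitzianity of $F(\mu,\cdot)$, the required H\"older-in-time regularity of $\ell_\cdot$ being a consequence of $\mu_\cdot\in\Gamma$ (so $s\mapsto\mu_s$ is $\tfrac12$-H\"older in $W_1$) and of the uniform $W_1$-Lipschitzianity of $F$. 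Granting this, \ref{item:HJB_ok_mild} follows from \Cref{prop:properties} (and \Cref{cor:grad_est_g_bdd} in the bounded-$G$ case), which identifies $\psi^{T,G}_\cdot[\mu_\cdot]$ with the value function and shows it is the unique classical solution of the HJB equation; \Cref{prop:Holder_in_time} then yields $\nu^{T,G}_\cdot[\mu_\cdot]\in\Gamma$, and uniqueness for the Fokker--Planck equation is \cite[Thm 1]{bogachev2007UniquenessSolutionsWeak}.

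For \ref{item:grad_est_MF_mild} I would apply \Cref{lemma:gradient_estimate_linearized problem}-\ref{item:grad_est_Lip} with $\const^{\ell_s}_x=\const^L_x+\const^F_x$, using that $\int_t^T\const^{\ell_s}_xC_{\kappa_b}^{-1}e^{-\lambda_{\kappa_b}(s-t)}\,\De s=\const^\psi_x\,(1-e^{-\lambda_{\kappa_b}(T-t)})$ by the definition \eqref{def:c_x_psi} of $\const^\psi_x$; the $\min$ in the terminal term is obtained by comparing this estimate with the bounded-terminal-cost variant \Cref{cor:grad_est_g_bdd}. The accompanying bound on $\|w(\cdot,\nabla\psi^{T,G}_t[\mu_\cdot](\cdot))\|_\infty$ is the routine consequence of \Cref{ass:MF_coercivity_intro} already contained in \Cref{lemma:gradient_estimate_linearized problem}.

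For \ref{item:stab_MF_mild} I would freeze both flows and observe that the two control problems differ only through their costs, with $\delta\ell_s(x)=F(\mu_s,x)-F(\hat\mu_s,x)$ independent of $u$ and $b_s=\hat b_s$; hence \Cref{ass:stab_mild_reg} holds with $\const^{\delta b_s}=\const^{\delta\ell_s}_u=0$, $\const^{\delta\ell_s}=\const^F_\mu W_1(\mu_s,\hat\mu_s)\le C_{\tilde\kappa}^{-1}\const^F_\mu W_{f_{\tilde\kappa}}(\mu_s,\hat\mu_s)$ (using $f_{\tilde\kappa}(r)\ge C_{\tilde\kappa}r$), and $\const^{\delta g}_x=\|G(\mu_T,\cdot)-\hat G(\hat\mu_T,\cdot)\|_\Lip$. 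Feeding this into \Cref{lem:stability_estimate_linear_pb}-\ref{item:stab_mild_reg} (and re-expressing the output of that coupling argument in the $f_{\tilde\kappa}$-norm, exactly as there) gives the displayed estimate for $\psi^{T,G}_t[\mu_\cdot]-\psi^{T,\hat G}_t[\hat\mu_\cdot]$ with $\const^{\delta\psi_t}_x$ as stated. The flow estimate then comes from the first line of \eqref{eq:stability_between_laws}, where $\const^{\delta u_s}_x\le(\rho^L_{uu})^{-1}\bigl(\const^{\delta\psi_s}_x+e^{-\lambda_{\tilde\kappa}(T-s)}\|G(\mu_T,\cdot)-\hat G(\hat\mu_T,\cdot)\|_{f_{\tilde\kappa}}\bigr)$, the last (terminal) term of the claim being produced by $\int_0^te^{-\lambda_{\tilde\kappa}(t-s)}e^{-\lambda_{\tilde\kappa}(T-s)}\,\De s\le\tfrac{1}{2\lambda_{\tilde\kappa}}e^{-\lambda_{\tilde\kappa}(T-t)}$. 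Finally, the admissibility of $\tilde\kappa=\bar\kappa$ under $\max\{\|\hat G(\mu_T,\cdot)\|_{f_{\kappa_b}},\|G(\mu_T,\cdot)\|_{f_{\kappa_b}}\}\le2\const^\psi_x$ is justified as in \Cref{lem:basics_MF_high}: the gradient bound of \ref{item:grad_est_MF_mild} provides the lower bound \eqref{eq:tilde_kappa_cond} on $\kappa_{\partial_pH(\cdot,\nabla\psi(\cdot))}$.

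The only genuinely non-mechanical point is the verification of the time-regularity of the frozen running cost needed for \Cref{prop:properties} and \Cref{prop:Holder_in_time}; beyond that, the argument is a bookkeeping of constants strictly parallel to the high-regularity case \Cref{lem:basics_MF_high}, the only substantive difference being that the stability input is now \Cref{lem:stability_estimate_linear_pb}-\ref{item:stab_mild_reg} (interaction controlled in $\|\cdot\|_\infty$) rather than in Lipschitz seminorm.
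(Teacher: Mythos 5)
Your proposal is correct and follows essentially the same path as the paper's own (very terse) proof: part~\ref{item:HJB_ok_mild} from \Cref{prop:properties}, \Cref{cor:grad_est_g_bdd}, \Cref{prop:Holder_in_time} and \cite{bogachev2007UniquenessSolutionsWeak}; part~\ref{item:grad_est_MF_mild} from \Cref{lemma:gradient_estimate_linearized problem} together with \Cref{cor:grad_est_g_bdd}; and part~\ref{item:stab_MF_mild} from \Cref{lem:stability_estimate_linear_pb}-\ref{item:stab_mild_reg} with $\const^{\delta b_s}=\const^{\delta\ell_s}_u=0$ and $\const^{\delta\ell_s}=\const^F_\mu W_1(\mu_s,\hat\mu_s)$, feeding the resulting control-policy perturbation bound into \eqref{eq:stability_between_laws}. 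The details you fill in — the verification that freezing a $\tfrac12$-H\"older flow yields the time-regularity required by \Cref{ass-coercivity}, the reconciliation of the $\|\cdot\|_\Lip$ output of \eqref{eq:stab_est_mild_reg_linear} with the stated $\|\cdot\|_{f_{\tilde\kappa}}$ bound, and the elementary integral $\int_0^t e^{-\lambda_{\tilde\kappa}(t-s)}e^{-\lambda_{\tilde\kappa}(T-s)}\,\De s \le \tfrac{1}{2\lambda_{\tilde\kappa}}e^{-\lambda_{\tilde\kappa}(T-t)}$ producing the terminal term of the Wasserstein estimate — are all accurate and match what the paper leaves implicit.
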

\begin{proof}
\Cref{item:HJB_ok_mild} is a consequence of \Cref{prop:properties} in combination with \Cref{prop:Holder_in_time}, where we refer to \Cref{cor:grad_est_g_bdd} for the bounded case. In the same way, \ref{item:grad_est_MF_mild} follows by a direct application of \Cref{lemma:gradient_estimate_linearized problem} and \Cref{cor:grad_est_g_bdd}. Finally, \ref{item:stab_MF_mild} follows from \Cref{lem:stability_estimate_linear_pb}.
\end{proof}

\begin{thm}\label{lem:MF_syst_mild_reg}
Assume \Cref{ass:MF_drift_intro}, \Cref{ass:MF_coercivity_intro}, \Cref{ass:MF_mild_reg_intro} and that \eqref{eq:turnpike_suff_cond_mild_intro} holds. Then, there there exists a unique solution $(\mu^\infty,\varphi^\infty,\eta^\infty)\in\cP_1(\bbR^d)\times C^{0,1}(\bbR^d)\times\bbR$ to the ergodic mean field PDE system \eqref{eq:MF_PDE_ergodic} satisfying $\varphi^\infty(0)=0$ . Moreover, we have 
\begin{equation}\label{eq:gr_est_tpike_mild}
\|\varphi^\infty \|_{\kappa_b}\leq \const^\psi_x,
\end{equation}
and $\mu^\infty\in\cP_p(\bbR^d)$.
\end{thm}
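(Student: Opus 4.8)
The plan is to reproduce, almost verbatim, the two-step scheme used in the proof of \Cref{lem:MF_syst_high_reg}, with \Cref{lem:basics_MF_mild} playing the role of \Cref{lem:basics_MF_high}. All the structural ingredients — reduction of the ergodic system to a Banach fixed point, passage to the invariant measure of the Fokker--Planck part via \Cref{prop:contr_same_drift}-\ref{item_2:contraction_coup_by_ref} and \Cref{prop_existence_moment_estimate}, $T$-independence of the ergodic value function via the dynamic programming principle, and uniqueness of the weak solution of the stationary Fokker--Planck equation via \cite[Thm 1]{bogachev2007UniquenessSolutionsWeak} — carry over unchanged. The only genuine difference is quantitative: in \Cref{lem:basics_MF_mild}-\ref{item:stab_MF_mild} the kernel $q^{\bar\kappa}_{s-t}$ of \eqref{q_kappa_t_def} replaces the exponential $e^{-\lambda_{\bar\kappa}(s-t)}$ that appears in the high regularity regime, so the time integrals producing a factor $\lambda_{\bar\kappa}^{-1}$ in \Cref{lem:MF_syst_high_reg} now produce a factor $\int_0^\infty q^{\bar\kappa}_s\,\De s$; it is this replacement that turns the threshold \eqref{eq:high_reg_suff} into \eqref{eq:turnpike_suff_cond_mild_intro}.

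\textbf{Step 1 (frozen ergodic system).} Fix $\mu\in\cP_1(\bbR^d)$ and, exactly as in \eqref{def:iterations_linearized_tpike}, consider the maps $\Phi^\mu_T$ and $\overline{\Phi}^\mu_T : \cG^{\const^\psi_x}_0 \to C^{0,1}_0(\bbR^d)$, $g\mapsto \psi^{T,g}_0[\mu] - \psi^{T,g}_0[\mu](0)$, where $\cG^{\const^\psi_x}_0=\{g\in C^{0,1}_0(\bbR^d):\|g\|_{f_{\kappa_b}}\le\const^\psi_x\}$ and $\psi^{T,g}_0[\mu]$ solves the HJB part of \eqref{eq:linearized_PDE_system} with the constant flow $\mu$. \Cref{lem:basics_MF_mild}-\ref{item:grad_est_MF_mild} gives $\overline{\Phi}^\mu_T(\cG^{\const^\psi_x}_0)\subseteq\cG^{\const^\psi_x}_0$, and \Cref{lem:basics_MF_mild}-\ref{item:stab_MF_mild} applied with $\hat\mu_\cdot=\mu_\cdot$ constant (so that $\const^{\delta\psi_t}_x=0$) and with $\tilde\kappa=\bar\kappa$ — legitimate since $\|g\|_{f_{\kappa_b}},\|\hat g\|_{f_{\kappa_b}}\le 2\const^\psi_x$ — yields $\|\psi^{T,g}_0[\mu]-\psi^{T,\hat g}_0[\mu]\|_{f_{\bar\kappa}}\le e^{-\lambda_{\bar\kappa}T}\|g-\hat g\|_{f_{\bar\kappa}}$, so $\overline{\Phi}^\mu_T$ is a contraction on $(\cG^{\const^\psi_x}_0,\|\cdot\|_{f_{\bar\kappa}})$. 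Banach's theorem provides a fixed point $\psi^{\infty,T}[\mu]$; the dynamic programming identity $\Phi^\mu_T=\Phi^\mu_{T/2}\circ\Phi^\mu_{T/2}$ together with uniqueness of this fixed point shows (first for dyadic $T\in[0,1]$, then by continuity in $T$) that it is $T$-independent, producing a classical solution $(\eta^\infty[\mu],\psi^\infty[\mu])$ of the ergodic HJB equation with $\psi^\infty[\mu]\in\cG^{\const^\psi_x}_0$. \Cref{prop:contr_same_drift}-\ref{item_2:contraction_coup_by_ref} then gives a unique invariant measure $\nu^\infty[\mu]$ for the SDE with drift $\partial_p H(\cdot,\nabla\psi^\infty[\mu](\cdot))$, which lies in $\cP_p(\bbR^d)$ by \Cref{prop_existence_moment_estimate} (using $\kappa_{\partial_pH(\cdot,\nabla\psi^\infty[\mu](\cdot))}\in\msk$) and which is the unique weak solution of the stationary Fokker--Planck equation; uniqueness of $\psi^\infty[\mu]$ among all admissible Lipschitz candidates is obtained by plugging two fixed points into the contraction estimate with a profile $\tilde\kappa$ adapted to their Lipschitz norms and letting $T\to\infty$.

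\textbf{Step 2 (mean field ergodic system).} For $\mu,\hat\mu\in\cP_1(\bbR^d)$, since $\psi^\infty[\mu],\psi^\infty[\hat\mu]\in\cG^{\const^\psi_x}_0$ we apply \Cref{lem:basics_MF_mild}-\ref{item:stab_MF_mild} with $\tilde\kappa=\bar\kappa$ at $t=0$, bound $\|\nabla(\cdot)\|_\infty\le\|\cdot\|_{f_{\kappa_b}}$ and let $T\to\infty$ (the terminal term $\lesssim e^{-\lambda_{\bar\kappa}T}$ vanishes) to get
\begin{equation*}
\|\nabla\psi^\infty[\mu]-\nabla\psi^\infty[\hat\mu]\|_\infty \le \frac{2\const^F_\mu}{C_{\bar\kappa}}\Big(\int_0^\infty q^{\bar\kappa}_s\,\De s\Big)W_{f_{\bar\kappa}}(\mu,\hat\mu).
\end{equation*}
A direct computation from \eqref{q_kappa_t_def} gives $\int_0^\infty q^{\bar\kappa}_s\,\De s=\tfrac{2}{\sqrt{\pi\lambda_{\bar\kappa}}\,C_{\bar\kappa}\sigma_0}$ (the singular part $\int_0^{1/2\lambda_{\bar\kappa}}$ and the exponential tail each contribute $(\sqrt{\pi\lambda_{\bar\kappa}}C_{\bar\kappa}\sigma_0)^{-1}$). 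Passing through the optimal feedback, which is $(\rho^L_{uu})^{-1}$-Lipschitz in $p$ by \Cref{ass:MF_coercivity_intro}, and then through the invariant measures by the standard coupling bound $W_{f_{\bar\kappa}}(\nu^\infty[\mu],\nu^\infty[\hat\mu])\le\lambda_{\bar\kappa}^{-1}\|w(\cdot,\nabla\psi^\infty[\mu](\cdot))-w(\cdot,\nabla\psi^\infty[\hat\mu](\cdot))\|_\infty$, one obtains
\begin{equation*}
W_{f_{\bar\kappa}}(\nu^\infty[\mu],\nu^\infty[\hat\mu]) \le \frac{4\const^F_\mu}{\rho^L_{uu}\sqrt{\pi}\,\lambda_{\bar\kappa}^{3/2}C^2_{\bar\kappa}\sigma_0}\,W_{f_{\bar\kappa}}(\mu,\hat\mu),
\end{equation*}
and under \eqref{eq:turnpike_suff_cond_mild_intro} the prefactor is at most $1$, strictly less than $1$ away from the borderline case. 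Hence $\mu\mapsto\nu^\infty[\mu]$ is a contraction on $\cP_1(\bbR^d)$; its unique fixed point $\mu^\infty$ gives the unique solution $(\eta^\infty[\mu^\infty],\psi^\infty[\mu^\infty],\mu^\infty)$ of \eqref{eq:MF_PDE_ergodic}, with $\|\varphi^\infty\|_{f_{\kappa_b}}\le\const^\psi_x$ by construction and $\mu^\infty\in\cP_p(\bbR^d)$ by \Cref{prop_existence_moment_estimate}.

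\textbf{Expected main obstacle.} There is no conceptual difficulty beyond the high regularity case: the delicate point is purely bookkeeping, namely verifying the identity $\int_0^\infty q^{\bar\kappa}_s\,\De s = 2/(\sqrt{\pi\lambda_{\bar\kappa}}C_{\bar\kappa}\sigma_0)$ and propagating constants through the feedback map and the invariant-measure coupling so that the resulting contraction constant is exactly the one singled out in \eqref{eq:turnpike_suff_cond_mild_intro}, together with the routine (but slightly fiddly) handling of the equality case in that condition via an infinitesimal perturbation of the profile $\bar\kappa$.
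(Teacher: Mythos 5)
Your proposal follows essentially the same two-step scheme as the paper's proof (frozen ergodic system via Banach fixed point on $\cG^{\const^\psi_x}_0$, then contraction of $\mu\mapsto\nu^\infty[\mu]$ using the stability estimates of \Cref{lem:basics_MF_mild}-\ref{item:stab_MF_mild}), with the same replacement of the exponential kernel by $q^{\bar\kappa}_{s-t}$ and the same evaluation $\int_0^\infty q^{\bar\kappa}_s\,\De s = 2/(\sqrt{\pi\lambda_{\bar\kappa}}\,C_{\bar\kappa}\sigma_0)$, yielding the identical contraction factor $4\const^F_\mu/(\sqrt{\pi}\,\rho^L_{uu}C^2_{\bar\kappa}\lambda^{3/2}_{\bar\kappa}\sigma_0)$. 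The borderline issue you flag (the hypothesis \eqref{eq:turnpike_suff_cond_mild_intro} is stated with $\le$, giving contraction constant $\le 1$ only) is present in the paper as well — the paper's own proof actually invokes the stronger strict condition \eqref{eq:suff_cond_tpik_mild_reg_in_proofs} to conclude — so your acknowledgment of the subtlety is appropriate and does not constitute a gap relative to the paper's argument.
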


\begin{proof}
\bei 
\item \underline{Step 1: Existence and uniqueness for the frozen ergodic system}
The proof of this claim is omitted since it is identical the the one given at \Cref{lem:MF_syst_high_reg}. As before, we denote such solution $(\eta^\infty[\mu],\psi^\infty[\mu],\nu^\infty[\mu])$.
\item \underline{Step 2: Existence and uniqueness for the mean field ergodic system \eqref{eq:MF_PDE_ergodic}} 
The proof is again similar to the one in \Cref{lem:MF_syst_high_reg}. The sole difference is that the bound between the gradients of two ergodic value functions for $\mu, \hat{\mu} \in \cP_1(\bbR^d)$ is given by
\begin{equation}
\|\nabla\psi^{\infty}[\mu]-\nabla\psi^{\infty}[\hat\mu]\|_{\infty} \leq 2\frac{\const^F_\mu}{C_{\bar{\kappa}}} W_{f_{\bar{\kappa}}}(\mu,\hat{\mu}) \int_0^Tq^{\bar{\kappa}}_s \De s + e^{-\lambda_{\bar{\kappa}}T} 2\const^\psi_x. 
\end{equation}
This yields, using the explicit calculations at \Cref{lem:boring_calculations_mild} below,
\begin{equation}
W_{f_{\bar{\kappa}}}(\nu^{\infty}[\mu],\nu^{\infty}[\hat\mu]) 
\leq 
W_{f_{\bar{\kappa}}}(\mu,\hat{\mu})\frac{4\const^F_\mu }{\sqrt{\pi}\rho^L_{uu}C^2_{\bar{\kappa}}\lambda^{3/2}_{\bar{\kappa}}\sigma_0},
\end{equation}
and we have again a contraction under \eqref{eq:suff_cond_tpik_mild_reg_in_proofs}.
\end{itemize}
\end{proof}
\noindent In order to prove the turnpike estimates, we use again the two auxiliary metrics defined in \eqref{eq:def_abs_met_high_mild}.
\begin{thm}\label{prop:abstract_suff_cond_tpike_mild}
Assume \Cref{ass:MF_drift_intro}, \Cref{ass:MF_coercivity_intro}, \Cref{ass:MF_mild_reg_intro}. 
Let 
$\hat{G}: \bbR^d \rightarrow \bbR^d$ be such that
$\| \hat{G}\|_{f_{\kappa_b}}  \leq 2 \const_x^\psi.$
\ben[label=(\roman*)] 
\item\label{item:grad_est_mild} \Cref{item:grad_est_high} from \Cref{prop:abstract_suff_cond_tpike_high} holds.
\item\label{item:stability_iterate_mild} 
Let $\bar{\kappa}$ be as in \eqref{eq:bar_kappa_intro}. 

Then for any $\mu_\cdot, \hat{\mu}_\cdot \in \Gamma$, $\lambda<\lambda_{\bar{\kappa}}$ we have
\begin{equation}\label{eq:iterate_stability_mild_1}
\overrightarrow{d_{\lambda,\bar{\kappa}}^{T}}(\nu^{T,\hat{G}}_\cdot[\mu_\cdot],\nu^{T,\hat{G}}_\cdot[\hat\mu_\cdot])\leq W_{f_{\bar{\kappa}}}(\mu_0,\hat{\mu}_0)+ \varepsilon(\lambda) \overrightarrow{d_{\lambda,\bar{\kappa}}^{T}}(\mu_\cdot,\hat\mu_\cdot),
\end{equation}
with
\begin{equation}\label{eq:epsilon_lambda_mild}
\varepsilon(\lambda)=\frac{2\const^F_{\mu}\sqrt{e}}{\sqrt{\pi}C^2_{\bar{\kappa}}\sigma_0}  \bigg( \frac{\sqrt{\lambda_{\bar{\kappa}}}}{\lambda_{\bar{\kappa}}^2-\lambda^2}+\frac{1}{\sqrt{\lambda_{\bar{\kappa}}}({\lambda_{\bar{\kappa}}-\lambda)}}\bigg).
\end{equation}
If $\mu_0 = \hat{\mu}_0$, then we also have
\begin{equation}\label{eq:iterate_stability_mild_T}
\overleftarrow{d_{\lambda,\bar{\kappa}}^{T}}(\nu^{T,\hat{G}}_\cdot[\mu_\cdot],\nu^{T,\hat{G}}_\cdot[\hat\mu_\cdot])\leq \varepsilon(\lambda) \overleftarrow{d_{\lambda,\bar{\kappa}}^{T}}(\mu_\cdot,\hat\mu_\cdot).
\end{equation}
\item\label{item:contraction_iterate_mild} If 
\begin{equation}\label{eq:suff_cond_tpik_mild_reg_in_proofs}
\const^F_{\mu}<\frac{\sqrt{\pi}}{4\sqrt{e}}\rho^L_{uu}\sigma_0C^2_{\bar{\kappa}}\lambda^{3/2}_{\bar{\kappa}},
\end{equation}
then for any $0<\lambda < \lambda^*$ with
\begin{equation} 
\lambda^*= - \frac{\alpha}{2\sqrt{\lambda_{\bar{\kappa}}}}+\frac{1}{2} \sqrt{\frac{\alpha^2}{4\lambda_{\bar{\kappa}}}-(2\lambda^{1/2}_{\bar{\kappa}}\alpha-\lambda^2_{\bar{\kappa}}) }, \quad \text{with} \quad \alpha= \frac{\sqrt{\pi}C^2_{\bar{\kappa}}\sigma_0\rho^{L}_{uu}}{2\const^F_{\mu}\sqrt{e}}
\end{equation}
we have $\varepsilon(\lambda)<1$. 
In particular, for any initial condition $\mu_0\in\cP_1(\bbR^d)$ the mean field PDE system \eqref{eq:MF_PDE} with terminal condition $\hat{G}$ and initial condition $\hat{\mu}_0$ has a unique solution $(\mu^{T,\hat{G}}_\cdot, \varphi^{T,\hat{G}}_\cdot)$ in $\Gamma_{\hat\mu_0,\const}$ with $\const$ as in \ref{item:grad_est_mild}. 
\item\label{item:mild_reg_tpike_est}
For any $\lambda < \lambda^*$ and any solution $(\mu^{T,G}_\cdot, \varphi^{T,G}_\cdot)$ to \eqref{eq:MF_PDE} with terminal condition $G:\cP_1(\bbR^d) \times \bbR^d \rightarrow \bbR$ and initial condition $\mu_0\in\cP_p(\bbR^d)$  we have the following turnpike estimates:
\ben[(a)]
\item\label{eq:mild_lipschitz_case}
If $\| G\|_{f_{\kappa_b}}<+\infty$ then, defining 
\bes\label{eq:mult_constants_mild_3}
\tau(G)= \frac{\log\big( (\|G\|_{f_{\kappa_b}}-\const^{\psi}_x)/\const^\psi_x\big)}{\lambda_{\kappa_b}}\vee 0 , \quad \kappa_G(r) = \kappa_b(r) -\frac{2(\const^{L(\cdot,0)}_u+ \max\{2\const^\psi_x, \|G \|_{f_{\kappa_b}}\})}{\rho^{L}_{uu} r},
\ees
we have that 
\begin{equation}\label{eq:tp_flow_Lip_mild}
W_{f_{\bar{\kappa}}}(\mu^{T,G}_t, \mu^{T,\hat{G}}_t) \leq \frac{1}{(1-\varepsilon(\lambda))}\Big(W_{f_{\bar\kappa}}(\mu_0,\hat{\mu}_0) e^{-\lambda t}  + \frac{e^{\lambda_{\bar\kappa}\tau(G) }}{2\lambda_{\kappa_G}C_{\kappa_G}\rho^{L}_{uu}}\|\hat{G} - G(\mu^{T,G}_T,\cdot) \|_{f_{\kappa_G}} e^{-\lambda(T-t)}\Big)
\end{equation}
holds for all $t\leq T$. Moreover, for all $t \leq T-\tau(G)$ we have
\begin{equation}\label{eq:tp_control_lip_mild}
\begin{aligned}
\|\varphi^{T,G}_t-\varphi^{T,\hat G}_t \|_\Lip
&\leq \frac{\const^F_{\mu}}{2C_{\bar\kappa}(1-\varepsilon(\lambda))} \Big(\frac{ W_{f_{\bar\kappa}}(\mu_0,\hat\mu_0)}{\sqrt{\pi} C_{\bar{\kappa}\sigma_0}}\left(\frac{1}{\sqrt{\lambda_{\bar\kappa}}} + \frac{\sqrt{\lambda_{\bar\kappa}}}{\lambda + \lambda_{\bar\kappa}} \right) e^{-\lambda t} \\
&+ \frac{2\sqrt{e}}{\sqrt{\pi}}\frac{\const^\psi_x e^{\lambda_{\bar\kappa}\tau(G)}}{\lambda_{\bar\kappa}C_{\bar{\kappa}}^2 \sigma_0\rho^L_{uu}}\left(\frac{1}{\sqrt{\lambda_{\bar\kappa}}} + \frac{\sqrt{\lambda_{\bar\kappa}}}{\lambda_{\bar\kappa} - \lambda} \right) e^{-\lambda(T-t)}\Big)+\frac{4\const^\psi_x}{C_{\bar{\kappa}}}e^{\lambda_{\bar\kappa}\tau(G)}e^{-\lambda_{\bar\kappa}(T-t)}
\end{aligned}
\end{equation}
\item\label{eq:mild_bdd_case} If  $\|G \|_{\infty} < + \infty$, if we define 
\footnote{ We have the explicit bound
\bes
\tau'(G)\leq  \max\left\{ \frac{1}{\lambda_{\kappa_b}}\log\const^G, \frac{1}{2\lambda_{\kappa_b}} \right\}> 0, \text{ with } \const^G = \frac{\frac{\sqrt{\lambda_{\kappa_b}e}}{\sqrt{\pi}C_{\kappa_b}\sigma_0}\| G\|_{\infty}  - \const^\psi_x}{\const^\psi_x}
\ees} \bes
\tau'(G)=\inf\{\tau>0: q^{\kappa_b}_\tau \|G\|_\infty\leq \const^\psi_x\},\quad T'_0:=T-\tau'(G),
\ees

then the turnpike estimate for marginal flows  \eqref{eq:tp_flow_Lip_mild} holds for $t\leq T'_0$ setting $\tau(G)=\tau'(G),\kappa_G=\bar\kappa$ and replacing $\|\hat{G} - G(\mu^{T,G}_T,\cdot) \|_{f_{\kappa_G}}$ with $4C_{\bar\kappa}^{-1}\const^\psi_x$. Moreover, for $t\in[T'_0,T]$ we have 
\bes
W_{f_{\bar\kappa}}(\mu^{T,G}_t,\mu^{T,\hat G}_t) \leq M_1(\|G\|_\infty)+\tilde{M}_1(2\const^\psi_x)
\ees
with $M_1(\|G\|_{\infty}),\,\tilde{M}_1(2\const^{\psi}_x)$ as in \eqref{eq:moment_bound}. Finally, the turnpike estimate \eqref{eq:tp_control_lip_mild} holds for all $t\leq T_0$ replacing $\tau(G)$ with $\tau'(G)$ .
\end{enumerate}
\end{enumerate}
\end{thm}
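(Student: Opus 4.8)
The plan is to follow essentially the same architecture as in the high-regularity case (Theorem \ref{prop:abstract_suff_cond_tpike_high}), replacing everywhere the $W_1$-Lipschitz stability estimate for $\partial_x F$ by the total-variation-type stability estimate that is available under \Cref{ass:MF_mild_reg_intro}, namely the bound $\const^{\delta\psi_t}_x = \int_t^T \frac{2\const^F_\mu}{C_{\tilde\kappa}} W_{f_{\tilde\kappa}}(\mu_s,\hat\mu_s) q^{\tilde\kappa}_{s-t}\,\De s$ from \Cref{lem:basics_MF_mild}-\ref{item:stab_MF_mild}. Item \ref{item:grad_est_mild} is immediate: it is literally \Cref{prop:abstract_suff_cond_tpike_high}-\ref{item:grad_est_high}, whose proof used only \Cref{lem:basics_MF_high}-\ref{item:grad_est_MF_high} and \Cref{prop:Holder_in_time}-\ref{item:time_Holder_Wf}, both of which hold verbatim here via \Cref{lem:basics_MF_mild}-\ref{item:grad_est_MF_mild}.

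For item \ref{item:stability_iterate_mild}, I would write $\hat\nu = \nu^{T,\hat G}$, $\hat\psi = \psi^{T,\hat G}$, use $W_{f_{\bar\kappa}}(\mu_s,\hat\mu_s)\leq e^{-\lambda s}\overrightarrow{d_{\lambda,\bar\kappa}^T}(\mu_\cdot,\hat\mu_\cdot)$, and plug into the $\const^{\delta\psi}_x$-bound to obtain, via \Cref{lem:boring_calculations_mild} applied to $\int_t^T e^{-\lambda s} q^{\bar\kappa}_{s-t}\,\De s$ (split across the two regimes of $q^{\bar\kappa}$ in \eqref{q_kappa_t_def}), a bound of the form $\const^{\delta\psi_t}_x \leq \frac{2\const^F_\mu\sqrt e}{\sqrt\pi C_{\bar\kappa}\sigma_0}\big(\tfrac{\sqrt{\lambda_{\bar\kappa}}}{\lambda_{\bar\kappa}^2-\lambda^2}+\tfrac{1}{\sqrt{\lambda_{\bar\kappa}}(\lambda_{\bar\kappa}-\lambda)}\big) e^{-\lambda t} C_{\bar\kappa}\,\overrightarrow{d_{\lambda,\bar\kappa}^T}(\mu_\cdot,\hat\mu_\cdot)$; then insert this into the flow stability estimate $W_{f_{\bar\kappa}}(\hat\nu_t[\mu_\cdot],\hat\nu_t[\hat\mu_\cdot])\leq e^{-\lambda_{\bar\kappa}t}W_{f_{\bar\kappa}}(\mu_0,\hat\mu_0)+\tfrac{1}{\rho^L_{uu}}\int_0^t e^{-\lambda_{\bar\kappa}(t-s)}\const^{\delta\psi_s}_x\,\De s$, bound $\int_0^t e^{-\lambda_{\bar\kappa}(t-s)}e^{-\lambda s}\De s\leq e^{-\lambda t}/(\lambda_{\bar\kappa}-\lambda)$, multiply by $e^{\lambda t}$ and take the sup over $t$. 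This produces $\varepsilon(\lambda)$ as in \eqref{eq:epsilon_lambda_mild}. The backward estimate \eqref{eq:iterate_stability_mild_T} is the same computation with $e^{-\lambda s}$ replaced by $e^{-\lambda(T-s)}$ and $\mu_0=\hat\mu_0$. Item \ref{item:contraction_iterate_mild} then follows by the elementary observation that $\varepsilon(\lambda)\to\tfrac{2\const^F_\mu\sqrt e}{\sqrt\pi C_{\bar\kappa}^2\sigma_0}\big(\tfrac{1}{\lambda_{\bar\kappa}^{3/2}}+\tfrac{1}{\lambda_{\bar\kappa}^{3/2}}\big)\cdot\tfrac{1}{\rho^L_{uu}}$ as $\lambda\to 0$, which is $<1$ exactly under \eqref{eq:suff_cond_tpik_mild_reg_in_proofs}; solving the resulting quadratic inequality in $\lambda$ (after clearing denominators, using $\lambda_{\bar\kappa}^2-\lambda^2\geq \lambda_{\bar\kappa}(\lambda_{\bar\kappa}-\lambda)$) gives the stated $\lambda^*$, and Banach's fixed point theorem on $(\Gamma_{\hat\mu_0,\const},\overrightarrow{d_{\lambda,\bar\kappa}^T})$ — which is complete and into which $\nu^{T,\hat G}[\cdot]$ maps by \ref{item:grad_est_mild} — gives existence and uniqueness.

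Item \ref{item:mild_reg_tpike_est} is obtained by feeding \ref{item:stability_iterate_mild} into \Cref{lem:abs_turnpike_est} with $\sfd=W_{f_{\bar\kappa}}$, $\upsilon^\lambda(t)=e^{-\lambda t}$, which yields \eqref{eq:high_tpike_1} verbatim; it then remains to bound $\overleftarrow{d_{\lambda,\bar\kappa}^T}(\hat\nu_\cdot[\mu^{T,G}_\cdot],\nu_\cdot[\mu^{T,G}_\cdot])$, and here one copies the case analysis of \Cref{prop:abstract_suff_cond_tpike_high}-\ref{item:high_reg_tpike_est}: in the subcase $\|G\|_{f_{\kappa_b}}\leq 2\const^\psi_x$ one has $\|\varphi^{T,G}_t\|_{f_{\kappa_b}}\leq 2\const^\psi_x$ for all $t\leq T$ by \Cref{lem:basics_MF_mild}-\ref{item:grad_est_MF_mild}, so \Cref{lem:basics_MF_mild}-\ref{item:stab_MF_mild} applies with $\tilde\kappa=\bar\kappa$ and, using \Cref{lem:boring_calculations_mild} to evaluate $\int_t^T q^{\bar\kappa}_{s-t}e^{-\lambda_{\bar\kappa}(T-s)}\De s$, produces \eqref{eq:tp_flow_Lip_mild} and then \eqref{eq:tp_control_lip_mild} via the $\psi$-difference bound; in the subcase $\|G\|_{f_{\kappa_b}}>2\const^\psi_x$, the dynamic programming principle with the early-stopping time $\tau(G)$ and $\kappa_G$ as defined reduces to the previous subcase on $[0,T-\tau(G)]$, and the factor $e^{\lambda_{\bar\kappa}\tau(G)}$ together with the equivalence of $\|\cdot\|_{f_{\bar\kappa}}$, $\|\cdot\|_{f_{\kappa_b}}$, $\|\cdot\|_{f_{\kappa_G}}$ (all controlled by $C_{\kappa_G}\leq 1$ and $\bar\kappa\geq\kappa_G$) absorbs the difference; the bounded case \eqref{eq:mild_bdd_case} is handled exactly as in the high-regularity proof, using $\tau'(G)$, \Cref{cor:grad_est_g_bdd}, the trivial bound $W_{f_{\bar\kappa}}\leq\int|x|\mu^{T,G}_t+\int|x|\mu^{T,\hat G}_t$ on $[T'_0,T]$, and \Cref{lem:moment_bound}.

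The main obstacle is purely bookkeeping: one must carry the two-regime structure of $q^{\kappa}_t$ (the $t^{-1/2}$ short-time behaviour versus the exponential long-time behaviour) through every integral, so that all constants come out explicit and $T$-independent; this is where \Cref{lem:boring_calculations_mild} does the heavy lifting, and the only genuinely delicate point is checking that the short-time singularity of $q^{\kappa}$ is integrable near $s=t$ and contributes the $\sqrt\pi$, $\sigma_0$, $\sqrt{\lambda_{\bar\kappa}}$ factors appearing in \eqref{eq:epsilon_lambda_mild} — everything else is a transcription of the high-regularity argument with $\const^F_{x\mu}/C_{\bar\kappa}^2$ replaced by $2\const^F_\mu q^{\bar\kappa}_{\cdot}/C_{\bar\kappa}$.
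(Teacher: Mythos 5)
Your proposal follows the same architecture as the paper's proof: item \ref{item:grad_est_mild} is imported from the high-regularity case; item \ref{item:stability_iterate_mild} is obtained by plugging $W_{f_{\bar\kappa}}(\mu_s,\hat\mu_s)\leq e^{-\lambda s}\overrightarrow{d_{\lambda,\bar\kappa}^T}$ into the $\const^{\delta\psi}_x$-bound from \Cref{lem:basics_MF_mild}-\ref{item:stab_MF_mild}, integrating via \Cref{lem:boring_calculations_mild}, and feeding the result into the flow stability estimate; item \ref{item:contraction_iterate_mild} is the $\lambda\to 0$ limit plus quadratic; and item \ref{item:mild_reg_tpike_est} is \Cref{lem:abs_turnpike_est} together with the case analysis (small $\|G\|_{f_{\kappa_b}}$, large $\|G\|_{f_{\kappa_b}}$ via dynamic programming and $\tau(G)$, bounded $G$ via $\tau'(G)$) copied from \Cref{prop:abstract_suff_cond_tpike_high}-\ref{item:high_reg_tpike_est}. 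There are some sketch-level typos in your intermediate bound for $\const^{\delta\psi_t}_x$ — the power of $C_{\bar\kappa}$ is off and the $\sqrt{e}$ should only appear in the backward estimate — but you recover the correct final $\varepsilon(\lambda)$, so the argument is sound.
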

\begin{proof}

\begin{itemize}[wide]
\item \underline{Proof of \ref{item:grad_est_mild}:} The proof is identical to the one of \Cref{prop:abstract_suff_cond_tpike_high}-\ref{item:grad_est_high}.
\item \underline{Proof of \ref{item:stability_iterate_mild}:} 
The proof being almost identical to the one of \Cref{prop:abstract_suff_cond_tpike_high}-\ref{item:stability_iterate_mild}, we only point out the differences here.
In order to ease notation, we still write $\hat{\nu}= \nu^{T,\hat{G}}$, $\nu = \nu^{T,G}$, $\hat{\psi} = \psi^{T,\hat{G}}$, $\psi = \psi^{T,G}$ and $\mu_\cdot = \mu^{T,G}_\cdot$, $\hat{\mu} = \mu^{T,\hat{G}}_\cdot$.
For the forward estimate \eqref{eq:iterate_stability_mild_1}
the bound on the gradients becomes due to \Cref{lem:basics_MF_mild}-\ref{item:stab_MF_mild} and \Cref{lem:boring_calculations_mild}
\begin{equation}
\frac{\|\nabla\hat{\psi}_t[\mu_\cdot]-\nabla\hat{\psi}_t[\hat\mu_\cdot]\|_{\infty}}{\overrightarrow{d_{\lambda,\bar{\kappa}}^{T}}(\mu_\cdot,\hat\mu_\cdot)}
\leq \frac{2\const^F_{\mu}}{C_{\bar{\kappa}}} \int_t^Te^{-\lambda s} q^{\bar{\kappa}}_{s-t} \De s
\leq  \frac{2\const^F_{\mu}}{\sqrt{\pi}C^2_{\bar{\kappa}}\sigma_0}  \bigg( e^{-\lambda t}\Big(\frac{\sqrt{\lambda_{\bar{\kappa}}}}{\lambda_{\bar{\kappa}}+\lambda}+\frac{1}{\sqrt{\lambda_{\bar{\kappa}}}}\Big)\bigg)
\end{equation}
Using this bound, we obtain after similar calculations as in the proof of \Cref{prop:abstract_suff_cond_tpike_high}-\ref{item:stability_iterate_mild}
\begin{equation}
W_{f_{\bar{\kappa}}}(\hat{\nu}_t[\mu_\cdot],\hat{\nu}_t[\hat\mu_\cdot])\leq e^{-\lambda_{\bar{\kappa}}t}W_{f_{\bar{\kappa}}}(\mu_0,\hat{\mu}_0)+ 
\frac{2\const^F_{\mu}}{\sqrt{\pi}C^2_{\bar{\kappa}}\sigma_0\rho^L_{uu}}  \bigg( e^{-\lambda t}\Big(\frac{\sqrt{\lambda_{\bar{\kappa}}}}{\lambda_{\bar{\kappa}}^2-\lambda^2}+\frac{1}{\sqrt{\lambda_{\bar{\kappa}}}(\lambda_{\bar{\kappa}}-\lambda)}\Big)\bigg)\overrightarrow{d_{\lambda,\bar{\kappa}}^{T}}(\mu_\cdot,\hat\mu_\cdot).
\end{equation}
For the second bound in $\overleftarrow{d_{\lambda,\bar{\kappa}}^{T}}$ suppose now $\mu_0 = \hat{\mu}_0$. The bound on the difference of the gradients becomes now with the help of \Cref{lem:boring_calculations_mild}
\begin{align}
\frac{\|\nabla\hat{\psi}_t[\mu_\cdot]-\nabla\hat{\psi}_t[\hat\mu_\cdot]\|_{\infty}}{\overleftarrow{d_{\lambda,\bar{\kappa}}^{T}}(\mu_\cdot,\hat\mu_\cdot)}
\leq \frac{2\const^F_{\mu}}{C_{\bar{\kappa}}} \int_t^Te^{-\lambda (T-s)}q^{\bar{\kappa}}_{s-t} \De s 
\leq \frac{2\const^F_{\mu}\sqrt{e}}{\sqrt{\pi}C^2_{\bar{\kappa}}\sigma_0}  \bigg( e^{-\lambda (T-t)}\Big(\frac{\sqrt{\lambda_{\bar{\kappa}}}}{\lambda_{\bar{\kappa}}-\lambda}+\frac{1}{\sqrt{\lambda_{\bar{\kappa}}}}\Big)\bigg).
\end{align}
Repeating the same calculations as before, we arrive at
\begin{equation}
W_{f_{\bar{\kappa}}}(\hat{\nu}_t[\mu_\cdot],\hat{\nu}_t[\hat\mu_\cdot])\leq  \frac{2\const^F_{\mu}\sqrt{e}}{\sqrt{\pi}C^2_{\bar{\kappa}}\sigma_0}  \bigg( e^{-\lambda (T-t)}\Big(\frac{\sqrt{\lambda_{\bar{\kappa}}}}{\lambda_{\bar{\kappa}}^2-\lambda^2}+\frac{1}{\sqrt{\lambda_{\bar{\kappa}}}({\lambda_{\bar{\kappa}}+\lambda)}}\Big)\bigg)\overleftarrow{d_{\lambda,\bar{\kappa}}^{T}}(\mu_\cdot,\hat\mu_\cdot)
\end{equation}
\item \underline{Proof of \ref{item:contraction_iterate_mild}:} 
The proof is identical to the the proof of item \ref{item:contraction_iterate_high} in \Cref{prop:abstract_suff_cond_tpike_high}.
\item \underline{Proof of \ref{item:mild_reg_tpike_est}:} 
For the turnpike estimates, note that thanks to the estimates we have just established in \ref{item:stability_iterate_mild}, \Cref{lem:abs_turnpike_est} gives for $\sfd = W_{f_{\bar{\kappa}}}$ and $\upsilon^\lambda(t) = e^{-\lambda t}$
\begin{equation}
W_{f_{\bar{\kappa}}}(\mu^{T,G}_t, \mu^{T,\hat{G}}_t) \leq \frac{e^{-\lambda t}}{1 - \varepsilon(\lambda)} W_{f_{\bar{\kappa}}}(\mu_0, \hat{\mu}_0) + 
\frac{e^{-\lambda(T-t)}}{1 - \varepsilon(\lambda)} \overleftarrow{d_{\lambda,\bar{\kappa}}^{T}}(\hat{\nu}_\cdot[\mu^{T,G}_\cdot],\nu_\cdot[\mu^{T,G}_\cdot]) \eqsp.
\end{equation}
We hence conclude the turnpike estimates by properly bounding $\overleftarrow{d_{\lambda,\bar{\kappa}}^{T}}(\hat{\nu}_\cdot[\mu^{T,G}_\cdot],\nu_\cdot[\mu^{T,G}_\cdot])$ as follows.
\begin{itemize}[wide]
\item Suppose first $\| G\|_{f_{\kappa_b}} := \sup_{\mu \in \cP_1(\bbR^d)}\| G(\mu,\cdot)\|_{f_{\kappa_b}}  < +\infty$.
In this case the proof of \eqref{eq:tp_flow_Lip_mild} is identical to the one of \eqref{eq:tp_flow_Lip_high}.

In order to obtain the turnpike estimate for the value functions, observe that setting $T_0 = T - \tau(G)$, we have $\|\varphi^{T,G}_t \|_{f_{\kappa_b}} \leq 2 \const^\psi_x$ for all $t \leq T_0$. Hence, invoking the dynamic programming principle and plugging the slightly better bound in \eqref{eq:tpike_better_G_Lip_small} into \Cref{lem:basics_MF_mild}-\ref{item:stab_MF_mild} gives after some basic calculations with the help of \Cref{lem:boring_calculations_mild} the turnpike estimate \eqref{eq:tp_control_lip_mild}.

\item The proof for $\| G\|_{\infty}  < +\infty$ follows the lines of proof of \Cref{prop:abstract_suff_cond_tpike_high}-\ref{item:high_reg_tpike_est}-\ref{item:hig_bdd_case} by using \eqref{eq:tp_flow_Lip_mild}, \eqref{eq:tp_control_lip_mild} instead of \eqref{eq:tp_flow_Lip_high}, \eqref{eq:tp_control_lip_high}.
\end{itemize}
\end{itemize}
\end{proof}

\begin{lemma}\label{lem:boring_calculations_mild}
Let $\lambda<\lambda_{\bar\kappa}$ and $t\leq T$. If we denote by $\Phi$ the cumulative distribution function of the standard Gaussian distribution, we have the bounds
\begin{equation}
\begin{split}
\int_t^Te^{-\lambda s}q^{\bar{\kappa}}_{s-t} \De s &\leq \frac{e^{-\lambda t}}{C_{\bar{\kappa}}\sigma_0}\Big[ \sqrt{\frac{2}{\lambda}}\big( \Phi(\sqrt{\lambda/\lambda_{\bar{\kappa}}})-\Phi(0)\big)+\frac{\sqrt{\lambda_{\bar{\kappa}}}}{\sqrt{\pi}(\lambda+\lambda_{\bar{\kappa}})}e^{-\lambda/2\lambda_{\bar{\kappa}}}\Big] \\
&\leq \frac{e^{-\lambda t}}{\sqrt{\pi}C_{\bar{\kappa}}\sigma_0} \left[\frac{1}{\sqrt{\lambda_{\bar{\kappa}}}} +\frac{\sqrt{\lambda_{\bar{\kappa}}}}{\lambda+\lambda_{\bar{\kappa}}}\right]
\end{split}
\end{equation}

\begin{equation}
\begin{split}
\int_t^Te^{-\lambda (T-s)}q^{\bar{\kappa}}_{s-t} \De s &\leq \frac{e^{-\lambda (T-t)}e^{\frac{\lambda}{2\lambda_{\bar{\kappa}}}}}{\sqrt{\pi}C_{\bar{\kappa}}\sigma_0} \left[\frac{1}{\sqrt{\lambda_{\bar{\kappa}}}} +\frac{\sqrt{\lambda_{\bar{\kappa}}}}{\lambda_{\bar{\kappa}}-\lambda}\right]
\end{split}
\end{equation}
\end{lemma}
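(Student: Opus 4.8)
The plan is to reduce both estimates to two elementary one–dimensional integrals. First I would unfold the definition of $q^{\bar\kappa}_t=q^{\bar\kappa,\lambda_{\bar\kappa},\sigma_0}_t$ from \eqref{q_kappa_t_def}, perform the change of variables $r=s-t$, extend the range of integration to $[0,+\infty)$, and split each resulting integral at the threshold $r_0:=1/(2\lambda_{\bar\kappa})$ at which the piecewise formula for $q^{\bar\kappa}$ switches. Concretely,
\begin{equation*}
\int_t^Te^{-\lambda s}q^{\bar\kappa}_{s-t}\,\De s = e^{-\lambda t}\int_0^{T-t}e^{-\lambda r}q^{\bar\kappa}_r\,\De r \le e^{-\lambda t}\Big(\int_0^{r_0}+\int_{r_0}^{\infty}\Big)e^{-\lambda r}q^{\bar\kappa}_r\,\De r,
\end{equation*}
and similarly for the backward integral after writing $e^{-\lambda(T-s)}=e^{-\lambda(T-t)}e^{\lambda r}$.

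For the first estimate, on $[0,r_0]$ one has $q^{\bar\kappa}_r=(C_{\bar\kappa}\sigma_0\sqrt{2\pi r})^{-1}$, and the substitution $r=v^2$ followed by a linear rescaling of $v$ identifies $\int_0^{\sqrt{r_0}}e^{-\lambda v^2}\,\De v$ with the Gaussian integral $\sqrt{\pi/\lambda}\,(\Phi(\sqrt{\lambda/\lambda_{\bar\kappa}})-\Phi(0))$, since $\sqrt{r_0}=1/\sqrt{2\lambda_{\bar\kappa}}$; this contributes $\sqrt{2/\lambda}\,(\Phi(\sqrt{\lambda/\lambda_{\bar\kappa}})-\Phi(0))$. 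On $[r_0,+\infty)$ one has $q^{\bar\kappa}_r=\frac{\sqrt{\lambda_{\bar\kappa}e}}{\sqrt{\pi}C_{\bar\kappa}\sigma_0}e^{-\lambda_{\bar\kappa}r}$, so that $\int_{r_0}^{\infty}e^{-(\lambda+\lambda_{\bar\kappa})r}\,\De r=e^{-(\lambda+\lambda_{\bar\kappa})/(2\lambda_{\bar\kappa})}/(\lambda+\lambda_{\bar\kappa})$, and the cancellation $\sqrt{e}\cdot e^{-1/2}=1$ leaves exactly $\frac{\sqrt{\lambda_{\bar\kappa}}}{\sqrt{\pi}(\lambda+\lambda_{\bar\kappa})}e^{-\lambda/(2\lambda_{\bar\kappa})}$. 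Summing the two contributions and multiplying by $(C_{\bar\kappa}\sigma_0)^{-1}$ gives the first displayed bound. The coarser second line then follows from the two trivial estimates $\Phi(x)-\Phi(0)\le x/\sqrt{2\pi}$ (whence $\sqrt{2/\lambda}\,(\Phi(\sqrt{\lambda/\lambda_{\bar\kappa}})-\Phi(0))\le 1/\sqrt{\pi\lambda_{\bar\kappa}}$) and $e^{-\lambda/(2\lambda_{\bar\kappa})}\le 1$.

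For the third estimate I would argue in the same way; the only new point is that $\int_0^{\infty}e^{\lambda r}q^{\bar\kappa}_r\,\De r$ is finite precisely because $\lambda<\lambda_{\bar\kappa}$, the integrand decaying like $e^{-(\lambda_{\bar\kappa}-\lambda)r}$ on $[r_0,\infty)$. There I compute $\int_{r_0}^{\infty}e^{-(\lambda_{\bar\kappa}-\lambda)r}\,\De r=e^{-(\lambda_{\bar\kappa}-\lambda)/(2\lambda_{\bar\kappa})}/(\lambda_{\bar\kappa}-\lambda)$ and again use $\sqrt{e}\cdot e^{-1/2}=1$, while on $[0,r_0]$ I bound $e^{\lambda r}\le e^{\lambda r_0}=e^{\lambda/(2\lambda_{\bar\kappa})}$ and use $\int_0^{r_0}(2\pi r)^{-1/2}\,\De r=\sqrt{2r_0/\pi}=1/\sqrt{\pi\lambda_{\bar\kappa}}$; collecting the two terms and factoring out $e^{\lambda/(2\lambda_{\bar\kappa})}/(\sqrt{\pi}C_{\bar\kappa}\sigma_0)$ yields the claim. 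The whole argument is a routine computation and I do not anticipate any genuine obstacle; the only points demanding care are the bookkeeping of the threshold $r_0=1/(2\lambda_{\bar\kappa})$, the recurring cancellation of $\sqrt{e}$ against the factor $e^{-1/2}$ produced by evaluating the exponential tails at $r_0$, and — for the backward integral — the remark that its integrability is exactly the hypothesis $\lambda<\lambda_{\bar\kappa}$ already present in the statement.
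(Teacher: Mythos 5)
Your computation is correct, and since the paper states this lemma without supplying a proof, there is nothing to compare it against: you have filled in exactly the routine argument the authors omitted. All the bookkeeping checks out. Splitting $\int_0^\infty e^{-\lambda r}q^{\bar\kappa}_r\,\De r$ at $r_0=1/(2\lambda_{\bar\kappa})$, the substitution $r=v^2$ followed by $w=v\sqrt{2\lambda}$ indeed turns $\int_0^{r_0}(2\pi r)^{-1/2}e^{-\lambda r}\,\De r$ into $\sqrt{2/\lambda}\,(\Phi(\sqrt{\lambda/\lambda_{\bar\kappa}})-\Phi(0))$; the tail integral on $[r_0,\infty)$ contributes $\frac{\sqrt{\lambda_{\bar\kappa}e}}{\sqrt\pi}\frac{e^{-(\lambda+\lambda_{\bar\kappa})r_0}}{\lambda+\lambda_{\bar\kappa}}$, and since $(\lambda+\lambda_{\bar\kappa})r_0=\tfrac12+\tfrac{\lambda}{2\lambda_{\bar\kappa}}$ the $\sqrt e\cdot e^{-1/2}$ cancellation produces the stated term. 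The coarse bound follows because $\Phi'(x)=\tfrac{1}{\sqrt{2\pi}}e^{-x^2/2}\le\tfrac{1}{\sqrt{2\pi}}$ gives $\Phi(\sqrt{\lambda/\lambda_{\bar\kappa}})-\Phi(0)\le\sqrt{\lambda/\lambda_{\bar\kappa}}/\sqrt{2\pi}$. For the backward integral, bounding $e^{\lambda r}\le e^{\lambda/(2\lambda_{\bar\kappa})}$ on $[0,r_0]$ together with $\int_0^{r_0}(2\pi r)^{-1/2}\,\De r=1/\sqrt{\pi\lambda_{\bar\kappa}}$ and the tail computation with $(\lambda_{\bar\kappa}-\lambda)r_0=\tfrac12-\tfrac{\lambda}{2\lambda_{\bar\kappa}}$ again lets $\sqrt e\cdot e^{-1/2}$ cancel, and the common factor $e^{\lambda/(2\lambda_{\bar\kappa})}$ comes out correctly; the hypothesis $\lambda<\lambda_{\bar\kappa}$ is exactly what makes the extension of the tail integral to $+\infty$ finite. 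The argument is sound.
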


\subsection{Low regularity}\label{sec:MF_low_reg}
In this section we redefine
\bes\label{c_psi_x_low}
\const^\psi_x:=  \frac{\const^L + \const^F}{\sqrt{\pi \lambda_{\kappa_{b}}}C_{\kappa_{b}}\sigma_0} \eqsp .
\ees

In the low regularity regime, we need to work with the total variation distance instead of the Wasserstein distance to show contraction of the fixed-point iterations. Therefore, we redefine the space of flows
\begin{equation}
\Gamma:= \left\{ \mu:[0,T] \rightarrow \cP_1(\bbR^d): 
\mu_0\in\cP_p(\bbR^d), \quad 
\sup_{s\neq t}\frac{\|\mu_s-\mu_t\|_{\TV}}{|t-s|^{1/2}}<+\infty \quad \forall s,t\in[\varepsilon,T], \, \forall \varepsilon >0\right\}
\end{equation}
and the abstract metrics for the turnpike estimate
\begin{align}\label{eq:def_metric_flow_TV}
\overrightarrow{d_{\lambda, \kappa}^{T}}(\fl,\hfl) := \sup_{0 \leq s\leq T}(q^{\kappa,\lambda}_s)^{-1}\|\mu_s-\hat\mu_s\|_{\TV}, \quad \overleftarrow{d_{\lambda,\kappa}^{T}}(\fl,\hfl) := \sup_{0 \leq s\leq T}e^{\lambda (T-t)}\|\mu_s-\hat\mu_s\|_{\TV}.
\end{align}
Let us summarize again the basic facts we deduce from \Cref{sec:fin_dim_cont} in the low regularity regime.
\begin{lemma}\label{lem:basics_MF_low}
Assume \Cref{ass:MF_drift_intro}, \Cref{ass:MF_coercivity_intro}, \Cref{ass:low_reg_in_proofs}. Let $\mu_\cdot \in \Gamma$, $G: \cP_1(\bbR^d) \times \bbR^d \rightarrow \bbR$ locally Lipschitz continuous in the second variable such that $G(\mu,\cdot)$ is of linear growth for all $\mu\in\cP_1(\bbR^d)$.
The following holds.
\begin{enumerate}[label=(\roman*)] 
\item\label{item:HJB_ok_low} There is a unique solution to \eqref{eq:linearized_PDE_system}, denote it by $(\psi^{T,G}_{\cdot}[\mu_\cdot],\nu^{T,G}_{\cdot}[\mu_\cdot])$. Moreover, setting $b_s=b$, $\ell_s(x,u) = L(x,u) + F(\mu_s,x)$, $g(x)=G(\mu_T,x)$, $\psi^{T,G}_{\cdot}[\mu_\cdot]$ is the value function of \eqref{eq:classical_control_problem} and the marginal laws of its optimally controlled dynamics are given by $\nu^{T,G}_{\cdot}[\mu_\cdot] \in \Gamma$.
\item\label{item:grad_est_MF_low} The following quantitative bounds hold true
\begin{align}
\|\psi^{T,G}_{t}[\mu_\cdot]\|_{f_{\kappa_{b}}} &\leq 2\const^\psi_x\left(2 - \sqrt{e} e^{-\lambda_{\kappa_{b}}(T -t) } \right) +\min\left\{ \| G(\mu_T,\cdot)\|_{f_{\kappa_{b}}}e^{-\lambda_{\kappa_{b}}(T-t)},\|G(\mu_T,\cdot)\|_{\infty} q^{\kappa_b}_{T-t}  \right\} \\
\|w(\cdot,\nabla\psi^{T,G}_{t}[\mu_\cdot](\cdot))\|_\infty &\leq \frac{\|\psi^{T,G}_t[\mu_\cdot]\|_{f_{\kappa_b}}+\const^{L(\cdot,0)}}{\rho^{L}_{uu}},
\end{align}
where
\begin{equation}
w(x,p) := \argmin_{u \in \bbR^d} \left\{ L(x,u)+(b(x)+u)\cdot p\right\}  \eqsp.
\end{equation}
\item\label{item:stab_MF_low} Suppose now that $G$ is Lipschitz in the space variable and that $\hat{G}: \cP_1(\bbR^d) \times \bbR^d \rightarrow \bbR$ is another terminal cost satisfying the same assumptions as $G$. Given another marginal flow $\hat{\mu}_\cdot \in \Gamma$, we have for $0 \leq t_0 \leq t \leq T$
\begin{align}
\|\psi^{T,G}_{t}[\mu_\cdot]-\psi^{T,\hat{G}}_{t}[\hat{\mu}_\cdot]\|_{f_{\tilde\kappa}}
&\leq \const^{\delta\psi_t}_x+ \|G(\mu_T,\cdot) - \hat{G}(\hat{\mu}_T,\cdot) \|_{f_{\tilde\kappa}}e^{-\lambda_{\tilde\kappa}(T-t)} \\
\|\nu^{T,G}_{\cdot}[\mu_\cdot]-\nu^{T,\hat{G}}_{\cdot}[\hat{\mu}_\cdot]\|_{\TV}
&\leq q^{\tilde\kappa}_{t-t_0}e^{-\lambda_{\tilde\kappa}t_0} W_{f_{\tilde\kappa}}(\mu_0, \hat{\mu}_0)\\
&+ q^{\tilde\kappa}_{t-t_0}\int_{0}^{t_0} e^{-\lambda_{\tilde\kappa}(t_0 - s)} \frac{\const^{\delta \psi_s}_x}{\rho^L_{uu}}\De s+ \frac{1}{\sqrt{2}\rho^L_{uu}} \left(\int_{t_0}^{t}\left(\const^{\delta \psi_s}_x\right)^2 \De s \right)^{\frac{1}{2}} \\ &+ \left(\frac{e^{-\lambda_{\tilde\kappa}(T-t)}}{2\sqrt{\lambda_{\tilde\kappa}}} + \frac{e^{-\lambda_{\tilde\kappa}T}}{2\lambda_{\tilde\kappa}}(e^{\lambda_{\tilde\kappa}t_0} - e^{-\lambda_{\tilde\kappa}t_0}) \right)\|G(\mu_T,\cdot) - \hat{G}(\hat{\mu}_T,\cdot) \|_{f_{\tilde\kappa}},
\end{align}
where 
\bes
\const^{\delta \psi_s}_x = 2\const^F_{\mu,\TV}\int_t^{T} \|\mu_u - \hat{\mu}_u \|_{\TV}q^{\tilde\kappa}_{u-t}\De u,\ees
and $\tilde{\kappa} \in \msk$ is any profile satisfying and $\tilde{\kappa} \in \msk$ is any profile satisfying \eqref{eq:tilde_kappa_cond}. In particular, if $\max\{\|\hat G(\mu_T,\cdot)\|_{f_{\kappa_\beta}},\| G(\mu_T,\cdot)\|_{f_{\kappa_\beta}}\}\leq 4\const^\psi_x$ we can choose $\tilde{\kappa}=\bar\kappa$, with $\bar\kappa$ as in \eqref{eq:kappa_low}.
\end{enumerate}
\end{lemma}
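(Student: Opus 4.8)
The plan is to reduce all three items to the finite-dimensional results of \Cref{sec:fin_dim_cont}, following verbatim the scheme used for \Cref{lem:basics_MF_high} and \Cref{lem:basics_MF_mild}; the only genuinely new inputs are the low-regularity variants \Cref{lemma:gradient_estimate_linearized problem}-\ref{item:grad_est_bdd+Lip} and \Cref{lem:stability_estimate_linear_pb}-\ref{item_stab_low_reg_new}, plus the explicit expression of $q^{\kappa_b}_\cdot$ from \eqref{q_kappa_t_def}. Fix $\mu_\cdot\in\Gamma$ and set $b_s=b$, $\ell_s(x,u)=L(x,u)+F(\mu_s,x)$, $g(x)=G(\mu_T,x)$. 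One first checks that \Cref{ass:MF_drift_intro}, \Cref{ass:MF_coercivity_intro} and \Cref{ass:low_reg_in_proofs} yield \Cref{ass:drift_field_fin_dim_control}, \Cref{ass-coercivity} and \Cref{ass:low-reg} for this data, with half-oscillation constant $\const^{\ell_s}=\const^L+\const^F$ (since $\|L(\cdot,u)-L(0,u)\|_\infty\le\const^L$ and $\|F(\mu_s,\cdot)\|_\infty\le\const^F$), and with the local $\beta$-Hölder-in-time regularity of $\ell_s$ on $(0,T)$ coming from $\|F(\mu_s,\cdot)-F(\mu_t,\cdot)\|_\infty\le\const^F_{\mu,\TV}\|\mu_s-\mu_t\|_\TV$ together with the $\TV$-Hölder property of $\mu_\cdot$ on compacts of $(0,T]$; when $G(\mu,\cdot)$ is merely bounded one uses \Cref{cor:grad_est_g_bdd} instead.

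\emph{Item (i).} With the above data, \Cref{prop:properties} gives existence and uniqueness of the classical solution $\psi^{T,G}_\cdot[\mu_\cdot]$ of the HJB part of \eqref{eq:linearized_PDE_system}, its identification with the value function of \eqref{eq:classical_control_problem}, and the optimal Markov policy $w(\cdot,\nabla\psi^{T,G}_\cdot[\mu_\cdot](\cdot))$ whose controlled dynamics is \eqref{eq:opt_flow}; its marginal flow is by definition $\nu^{T,G}_\cdot[\mu_\cdot]$, and uniqueness of the weak solution of the Fokker--Planck equation follows from \cite[Thm~1]{bogachev2007UniquenessSolutionsWeak}. That $\nu^{T,G}_\cdot[\mu_\cdot]\in\Gamma$ then follows from \Cref{prop:Holder_in_time}-\ref{item:time_Holder_TV} applied to $\beta_s(x)=\partial_pH(x,\nabla\psi^{T,G}_s[\mu_\cdot](x))=b(x)+w(x,\nabla\psi^{T,G}_s[\mu_\cdot](x))$, exactly as in the proof of \Cref{lem:basics_MF_high}-\ref{item:HJB_ok_high}: polynomial growth of $\beta_\cdot$ comes from the boundedness of $w$ (item (ii)) and the polynomial growth of $b$, the Hölder-in-time modulus uniform in $x$ from the interior parabolic regularity of $\psi$ and its uniform-in-time Lipschitz bound, and the lower bound on its monotonicity profile from item (ii).

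\emph{Item (ii).} I would invoke \Cref{lemma:gradient_estimate_linearized problem}-\ref{item:grad_est_bdd+Lip} (and \Cref{cor:grad_est_g_bdd} in the bounded-$G$ case) with $\const^{\ell_s}=\const^L+\const^F$. Evaluating $\int_t^T 2(\const^L+\const^F)\,q^{\kappa_b}_{s-t}\,\De s$ with the formula \eqref{q_kappa_t_def} and the identity $\sqrt e\,e^{-1/2}=1$ gives $2\const^\psi_x\big(2-\sqrt e\,e^{-\lambda_{\kappa_b}(T-t)}\big)$ with $\const^\psi_x$ as in \eqref{c_psi_x_low}, which together with the minimum against the terminal term produces the first bound; the bound on $\|w(\cdot,\nabla\psi^{T,G}_t[\mu_\cdot](\cdot))\|_\infty$ follows from \Cref{ass:MF_coercivity_intro} as in \Cref{lem:Est_Contr_Hamiltonian}.

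\emph{Item (iii).} The two frozen problems attached to $\mu_\cdot$ and $\hat\mu_\cdot$ share the drift $b$ and have running costs differing only in the control-independent part $\delta\ell_s(x)=F(\mu_s,x)-F(\hat\mu_s,x)$, with $\|\delta\ell_s\|_\infty\le\const^F_{\mu,\TV}\|\mu_s-\hat\mu_s\|_\TV$; hence \Cref{ass:stab_mild_reg} holds with $\const^{\delta b_s}=0$, $\const^{\delta\ell_s}_u=0$, $\const^{\delta\ell_s}=\const^F_{\mu,\TV}\|\mu_s-\hat\mu_s\|_\TV$ and $\const^{\delta g}_x=\|G(\mu_T,\cdot)-\hat G(\hat\mu_T,\cdot)\|_\Lip$, and \Cref{lem:stability_estimate_linear_pb}-\ref{item_stab_low_reg_new} applies. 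Setting $\const^{\delta\psi_s}_x:=2\const^F_{\mu,\TV}\int_s^T\|\mu_u-\hat\mu_u\|_\TV\,q^{\tilde\kappa}_{u-s}\,\De u$, the bound \eqref{eq:stab_est_mild_reg_linear} is the stated estimate on $\|\psi^{T,G}_t[\mu_\cdot]-\psi^{T,\hat G}_t[\hat\mu_\cdot]\|_{f_{\tilde\kappa}}$; in \eqref{eq:stability_between_laws} one has $\const^{\delta u_s}=\rho^L_{uu}{}^{-1}\big(\const^{\delta\psi_s}_x+e^{-\lambda_{\tilde\kappa}(T-s)}\|G(\mu_T,\cdot)-\hat G(\hat\mu_T,\cdot)\|_{f_{\tilde\kappa}}\big)$, and isolating the terminal contribution while bounding $\int_0^{t_0}e^{-\lambda_{\tilde\kappa}(t_0-s)}e^{-\lambda_{\tilde\kappa}(T-s)}\De s$ and $\int_{t_0}^t e^{-2\lambda_{\tilde\kappa}(T-s)}\De s$ elementarily gives the last two lines. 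Finally, under $\max\{\|\hat G(\hat\mu_T,\cdot)\|_{f_{\kappa_b}},\|G(\mu_T,\cdot)\|_{f_{\kappa_b}}\}\le 4\const^\psi_x$, item (ii) yields $\sup_s\|\psi^{T,G}_s[\mu_\cdot]\|_{f_{\kappa_b}}\le(8-2\sqrt e)\const^\psi_x$ (and likewise for $\hat\mu_\cdot$), hence $\|w(\cdot,\nabla\psi^{T,G}_s[\mu_\cdot](\cdot))\|_\infty\le\const^u$ with $\const^u$ as in \eqref{eq:kappa_low}, so $\bar\kappa$ from \eqref{eq:kappa_low} satisfies \eqref{eq:tilde_kappa_cond} and $\tilde\kappa=\bar\kappa$ is admissible.

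The argument is essentially bookkeeping; the one delicate point is the verification in item (i) that $\nu^{T,G}_\cdot[\mu_\cdot]\in\Gamma$, i.e.\ the $\TV$-Hölder-in-time property away from $t=0$, which requires a \emph{space-uniform} Hölder-in-time bound on the optimal drift $\partial_pH(\cdot,\nabla\psi^{T,G}_\cdot[\mu_\cdot](\cdot))$ and an honest treatment of the behaviour near $t=0$, where $F(\mu_s,\cdot)$ need only be $\TV$-Hölder on $(0,T]$ --- precisely why the low-regularity $\Gamma$ only demands the Hölder bound on $[\varepsilon,T]$. One should also double-check the elementary evaluation $\int_0^\tau q^{\kappa_b}_\sigma\,\De\sigma=(2-\sqrt e\,e^{-\lambda_{\kappa_b}\tau})/(\sqrt{\pi\lambda_{\kappa_b}}C_{\kappa_b}\sigma_0)$ underlying item (ii).
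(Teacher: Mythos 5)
Your proof follows exactly the route taken by the paper, which reduces each item to the finite-dimensional results: \Cref{prop:properties} and \Cref{cor:grad_est_g_bdd} together with \Cref{prop:Holder_in_time} for item (i), \Cref{lemma:gradient_estimate_linearized problem}-\ref{item:grad_est_bdd+Lip} plus the explicit evaluation of $\int_t^T q^{\kappa_b}_{s-t}\De s$ for item (ii), and \Cref{lem:stability_estimate_linear_pb}-\ref{item_stab_low_reg_new} for item (iii). The bookkeeping you supply — that the relevant "half-oscillation" constant is $\const^{\ell_s}=\const^L+\const^F$, the integral identity yielding $2\const^\psi_x(2-\sqrt e\,e^{-\lambda_{\kappa_b}(T-t)})$, the splitting of the terminal-cost contribution in item (iii), and the check that $\|G\|_{f_{\kappa_b}},\|\hat G\|_{f_{\kappa_b}}\le 4\const^\psi_x$ forces $\sup_t\|\psi^{T,\cdot}_t\|_{f_{\kappa_b}}\le(8-2\sqrt e)\const^\psi_x$ so that $\tilde\kappa=\bar\kappa$ is admissible — is all correct and matches what the paper leaves implicit.
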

\begin{proof}
\Cref{item:HJB_ok_low} is a consequence of \Cref{prop:properties} in combination with \Cref{prop:Holder_in_time}, where we refer to \Cref{cor:grad_est_g_bdd} for the bounded case. In the same way, \ref{item:grad_est_MF_low} follows by an application of \Cref{lemma:gradient_estimate_linearized problem} and \Cref{cor:grad_est_g_bdd} and noting that
\begin{align}
\int_t^T q^{\kappa_{b}}_{s-t} \De s 
= \frac{1}{\sqrt{\pi \lambda_{\kappa_{b}}}C_{\kappa_{b}}\sigma_0} \left(1 + \sqrt{e}(e^{-1/2} - e^{-\lambda_{\kappa_{b}}(T -t) }) \right) = \frac{2}{\sqrt{\pi\lambda_{\kappa_{b}}} C_{\kappa_{b}}\sigma_0} - \frac{\sqrt{e}}{\sqrt{\pi \lambda_{\kappa_{b}}}C_{\kappa_{b}}\sigma_0} e^{-\lambda_{\kappa_{b}}(T -t) } .
\end{align}
Finally, \ref{item:stab_MF_low} follows from \Cref{lem:stability_estimate_linear_pb}.
\end{proof}
We now proceed to establish a well posedness result for the ergodic system.

\begin{thm}\label{lem:erg_MF_syst_low_reg}
Assume \Cref{ass:MF_drift_intro}, \Cref{ass:MF_coercivity_intro}, \Cref{ass:low_reg_in_proofs} and that
\begin{equation}\label{eq:suff_ergodic_sol_low}
\left(\frac{1}{\sqrt{\pi}C_{\bar{\kappa}}\sigma_0}  + \frac{1}{2} \right)\frac{4\const^F_{\mu,\TV} }{\sqrt{\pi}\lambda_{\bar{\kappa}}\rho^L_{uu} C_{\bar{\kappa}}\sigma_0} < 1
\end{equation}

Then, there there exists a unique solution $(\mu^\infty,\varphi^\infty,\eta^\infty) \in \cP_1(\bbR^d) \times C^{0,1}(\bbR^d) \times \bbR$ to the ergodic mean field PDE system \eqref{eq:MF_PDE_ergodic} satisfying $\varphi^\infty(0)=0$ . Moreover, we have 
\begin{equation}\label{eq:gr_est_tpike_low}
\|\varphi^\infty \|_{f_{\kappa_b}}\leq 4\const^\psi_x.
\end{equation}
with $\const^\psi_x$ as in \eqref{c_psi_x_low}.
\end{thm}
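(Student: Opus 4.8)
The plan is to mirror the two–step proof of \Cref{lem:MF_syst_high_reg}, systematically replacing the Wasserstein bounds used there by their total–variation analogues, which in the low–regularity regime are gathered in \Cref{lem:basics_MF_low}. Throughout, $\const^\psi_x$ is as in \eqref{c_psi_x_low} and $\bar\kappa$ is the profile \eqref{eq:kappa_low}; the role of the constant $\const^u$ there is that, by \Cref{lem:basics_MF_low}-\ref{item:grad_est_MF_low}, any frozen value function with $\|g\|_{f_{\kappa_b}}\le 4\const^\psi_x$ satisfies $\sup_{t\le T}\|\psi^{T,g}_t[\mu_\cdot]\|_{f_{\kappa_b}}\le (8-2\sqrt e)\const^\psi_x$, so that the associated optimal drift $\partial_p H(\cdot,\nabla\psi^{T,g}_t[\mu_\cdot](\cdot))=b(\cdot)+w(\cdot,\nabla\psi^{T,g}_t[\mu_\cdot](\cdot))$ has monotonicity profile bounded below by $\bar\kappa\in\msk$. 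Consequently the twisted metric $W_{f_{\bar\kappa}}$, the rate $\lambda_{\bar\kappa}$ and the quantities $q^{\bar\kappa}_t$ of \eqref{q_kappa_t_def} are the right objects.

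\emph{Step 1 (frozen ergodic system).} For fixed $\mu\in\cP_1(\bbR^d)$ I would show, exactly as in \Cref{lem:MF_syst_high_reg}, that $\overline{\Phi}^\mu_T:g\mapsto\psi^{T,g}_0[\mu]-\psi^{T,g}_0[\mu](0)$ maps the ball $\cG^{4\const^\psi_x}_0$ essentially into itself (up to enlarging the radius by an arbitrarily small amount for finite $T$, the sharp bound being recovered a posteriori from the fixed–point identity by letting $T\to\infty$) thanks to \Cref{lem:basics_MF_low}-\ref{item:grad_est_MF_low}, and is a contraction for $\|\cdot\|_{f_{\bar\kappa}}$ with factor $e^{-\lambda_{\bar\kappa}T}$ by \Cref{lem:basics_MF_low}-\ref{item:stab_MF_low} applied to the constant flow $\mu_\cdot\equiv\mu$ (so $\const^{\delta\psi}_x=0$). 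Banach's theorem yields $\psi^\infty[\mu]$ and $\eta^\infty[\mu]$ via the $T$-independence argument (dynamic programming $\Phi^\mu_T=\Phi^\mu_{T/2}\circ\Phi^\mu_{T/2}$ together with continuity in $T$) of \Cref{lem:MF_syst_high_reg}; \Cref{prop:properties}-\ref{item:fin_dim_HJB} gives that $(\eta^\infty[\mu],\psi^\infty[\mu])$ solves the ergodic Hamilton--Jacobi equation in \eqref{eq:linearized_ergodic_PDE_system}, while the SDE with drift $\partial_pH(\cdot,\nabla\psi^\infty[\mu](\cdot))$ admits a unique invariant measure $\nu^\infty[\mu]$ by \Cref{prop:contr_same_drift}-\ref{item_2:contraction_coup_by_ref}, which by \cite{bogachev2007UniquenessSolutionsWeak} and \Cref{prop_existence_moment_estimate} is the unique weak solution of the Fokker--Planck equation and lies in $\cP_p(\bbR^d)$ since $\kappa_{\partial_pH(\cdot,\nabla\psi^\infty[\mu](\cdot))}\in\msk$. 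Uniqueness of the triple follows from \Cref{lem:basics_MF_low}-\ref{item:stab_MF_low} (letting $T\to\infty$ between two fixed points) and \cite{bogachev2007UniquenessSolutionsWeak}, and one records $\|\psi^\infty[\mu]\|_{f_{\kappa_b}}\le 4\const^\psi_x$.

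\emph{Step 2 (mean field ergodic system).} It is enough to show that $\cP_1(\bbR^d)\ni\mu\mapsto\nu^\infty[\mu]$ is a strict contraction for the total variation distance; Banach's theorem then produces the unique $\mu^\infty$, and $(\eta^\infty[\mu^\infty],\psi^\infty[\mu^\infty],\mu^\infty)$ is the unique solution of \eqref{eq:MF_PDE_ergodic} with $\varphi^\infty(0)=0$ and $\|\varphi^\infty\|_{f_{\kappa_b}}\le 4\const^\psi_x=\eqref{eq:gr_est_tpike_low}$. Since the iterates of this map are invariant measures of SDEs whose drifts share the common lower bound $\bar\kappa\in\msk$, they have uniformly bounded $p$-th moments by \Cref{prop_existence_moment_estimate}, so one works on the (TV–closed, hence complete) moment–bounded subset of $\cP_p(\bbR^d)$ and the fixed point lands there. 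Given $\mu,\hat\mu\in\cP_1(\bbR^d)$, applying \Cref{lem:basics_MF_low}-\ref{item:stab_MF_low} to the two constant flows, comparing the ergodic value functions and letting $T\to\infty$ (using $\int_0^\infty q^{\bar\kappa}_s\,\De s=2/(\sqrt{\pi\lambda_{\bar\kappa}}C_{\bar\kappa}\sigma_0)$, computed as in the proof of \Cref{lem:basics_MF_low}), gives $\|\nabla\psi^\infty[\mu]-\nabla\psi^\infty[\hat\mu]\|_\infty\le 4\const^F_{\mu,\TV}\|\mu-\hat\mu\|_{\TV}/(\sqrt{\pi\lambda_{\bar\kappa}}C_{\bar\kappa}\sigma_0)$, hence by \Cref{ass:MF_coercivity_intro} the two optimal drifts differ in supremum norm by at most $\const^{\delta u}:=4\const^F_{\mu,\TV}\|\mu-\hat\mu\|_{\TV}/(\rho^L_{uu}\sqrt{\pi\lambda_{\bar\kappa}}C_{\bar\kappa}\sigma_0)$. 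To convert this into a TV bound on the invariant measures I would run the $\delta$-approximate coupling by reflection \eqref{eq:delta_coup_by_ref} for these two drifts, started from the optimal $W_{f_{\bar\kappa}}$-coupling of $(\nu^\infty[\mu],\nu^\infty[\hat\mu])$; each marginal being invariant, $\|\nu^\infty[\mu]-\nu^\infty[\hat\mu]\|_{\TV}\le\bbP[X^\delta_t\neq\hat X^\delta_t]$ for every $t$, and combining \Cref{prop:contr_delta_coup}-\ref{item:TV_contr_delta_coup} with \Cref{prop:contr_delta_coup}-\ref{item:W_1_contr_delta_coup} and the a priori bound $W_{f_{\bar\kappa}}(\nu^\infty[\mu],\nu^\infty[\hat\mu])\le\const^{\delta u}/\lambda_{\bar\kappa}$ (obtained by starting the coupling synchronously from $\nu^\infty[\mu]$ and letting $t\to\infty$), then sending $\delta\to0$ and $t_0\to\infty$ with the lag $s:=t-t_0$ kept free, yields
\begin{equation}
\|\nu^\infty[\mu]-\nu^\infty[\hat\mu]\|_{\TV}\le q^{\bar\kappa}_{s}\,\frac{\const^{\delta u}}{\lambda_{\bar\kappa}}+\frac{\const^{\delta u}}{\sqrt 2}\sqrt{s}\eqsp.
\end{equation}
Taking $s=1/(2\lambda_{\bar\kappa})$, for which $q^{\bar\kappa}_s=\sqrt{\lambda_{\bar\kappa}}/(\sqrt\pi C_{\bar\kappa}\sigma_0)$ by \eqref{q_kappa_t_def}, and substituting $\const^{\delta u}$, the right–hand side is exactly $\big(\tfrac1{\sqrt\pi C_{\bar\kappa}\sigma_0}+\tfrac12\big)\tfrac{4\const^F_{\mu,\TV}}{\sqrt\pi\lambda_{\bar\kappa}\rho^L_{uu}C_{\bar\kappa}\sigma_0}\|\mu-\hat\mu\|_{\TV}$, so that \eqref{eq:suff_ergodic_sol_low} is precisely the assertion that this map is a strict contraction.

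\emph{Main obstacle.} The only genuinely new step is the one in Step 2: upgrading the (time–decaying) Wasserstein contraction of the coupling to a total–variation bound on the \emph{invariant} measures while taming the non–vanishing error term of \Cref{prop:contr_delta_coup} produced by the drift mismatch. The device is to push the base time $t_0$ to infinity but keep the lag $s$ finite and then optimize it, which is exactly what turns the two pieces $q^{\bar\kappa}_s/\lambda_{\bar\kappa}$ and $\sqrt s$ into the sum of constants appearing in \eqref{eq:suff_ergodic_sol_low}; choosing $s=1/(2\lambda_{\bar\kappa})$ is natural because it is the breakpoint of $q^{\bar\kappa}$. The remaining difficulties are bookkeeping: the slight gap in Step 1 between the self–mapping radius $4\const^\psi_x$ and the transient overshoot $(8-2\sqrt e)\const^\psi_x$ (handled, as in \Cref{lem:MF_syst_high_reg}, by a marginally larger ball plus the a posteriori argument $T\to\infty$), and the completeness of the domain of the TV fixed–point map, handled by confining the iterates to a moment–bounded subset of $\cP_p(\bbR^d)$.
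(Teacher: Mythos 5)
Your Step 2 is essentially the paper's proof: the same chain of estimates, the same constant $\const^{\delta w}_{\TV}=4\const^F_{\mu,\TV}\|\mu-\hat\mu\|_\TV/(\rho^L_{uu}\sqrt{\pi\lambda_{\bar\kappa}}C_{\bar\kappa}\sigma_0)$, the same use of \Cref{prop:contr_delta_coup} with lag $t-t_0=1/(2\lambda_{\bar\kappa})$, and the identical numerology leading to the threshold \eqref{eq:suff_ergodic_sol_low}. (Your invocation of the ``a priori bound'' $W_{f_{\bar\kappa}}(\nu^\infty[\mu],\nu^\infty[\hat\mu])\le\const^{\delta w}_\TV/\lambda_{\bar\kappa}$ is in fact redundant: once you send $t_0\to\infty$ the term $e^{-\lambda_{\bar\kappa}t_0}W_{f_{\bar\kappa}}$ vanishes on its own, which is what the paper exploits by letting $t\to\infty$ with $t-t_0$ frozen.)

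Your Step 1 is where you genuinely depart from the paper, and the departure is worth noting because the paper explicitly signals that the high-/mild-regularity Banach argument does \emph{not} transfer. The gradient estimate \Cref{lem:basics_MF_low}-\ref{item:grad_est_MF_low} gives
\begin{equation}
\|\overline\Phi^\mu_T(g)\|_{f_{\kappa_b}}\le 4\const^\psi_x+(R-2\sqrt e\,\const^\psi_x)e^{-\lambda_{\kappa_b}T}
\quad\text{whenever }\|g\|_{f_{\kappa_b}}\le R,
\end{equation}
so the ball of radius $4\const^\psi_x$ is never invariant for finite $T$ (the overshoot $(4-2\sqrt e)\const^\psi_x e^{-\lambda_{\kappa_b}T}$ is strictly positive), and there is no single ball that is invariant for every $T$, unlike in \Cref{lem:MF_syst_high_reg}. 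You propose to use a slightly enlarged ball, which is viable but requires the bookkeeping you gloss over: for each $\varepsilon>0$ the ball $\cG^{4\const^\psi_x+\varepsilon}_0$ is invariant only for $T\ge T_\varepsilon$; on it the contraction profile $\tilde\kappa_\varepsilon$ governing the intermediate gradient bounds (which peak at $(4-2\sqrt e)\const^\psi_x+4\const^\psi_x+\varepsilon$ rather than $(8-2\sqrt e)\const^\psi_x$) is strictly worse than $\bar\kappa$; and the sharp a-posteriori bound $\|\psi^\infty[\mu]\|_{f_{\kappa_b}}\le 4\const^\psi_x$ is recovered only by combining the $T$-independence of the fixed point with the fixed-point identity and sending $T\to\infty$. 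The paper avoids all of this by abandoning Banach: it defines the iterates $\phi_n=(\overline\Phi^\mu_T)^n(g)$, uses the dynamic programming identity $\phi_n=\psi^{nT,g}_0[\mu]-\psi^{nT,g}_0[\mu](0)$ so the growing horizon $nT$ directly gives a uniform Lipschitz bound via \Cref{lem:basics_MF_low}-\ref{item:grad_est_MF_low}, extracts a limit by Arzelà--Ascoli, and checks independence of $g$ via the stability estimate \eqref{eq:contr_est_seq_tp_low}. The paper's route yields the constant $4\const^\psi_x$ directly (pass $T\to\infty$ in \eqref{eq:grad_est_erg_low}), whereas yours yields it only after an additional limiting argument. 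Both work; the paper's is cleaner in the low-regularity regime precisely because it never needs an invariant set.

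One further small inaccuracy: you assert that ``any frozen value function with $\|g\|_{f_{\kappa_b}}\le 4\const^\psi_x$ satisfies $\sup_{t\le T}\|\psi^{T,g}_t[\mu_\cdot]\|_{f_{\kappa_b}}\le(8-2\sqrt e)\const^\psi_x$'' --- this is correct for $g$ in the ball of radius exactly $4\const^\psi_x$, but in your Banach argument $g$ ranges over the \emph{enlarged} ball, so the transient bound is slightly larger and $\bar\kappa$ must be replaced by a slightly worse $\tilde\kappa_\varepsilon\in\msk$ until the final limit is taken. This does not break the argument, but it should be stated.
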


\begin{proof}
\bei[wide]
\item \underline{Step 1:Existence and uniqueness for the frozen ergodic system}

\noindent In this step, we show that for any $\mu \in \cP_1(\bbR^d)$ we can find a unique solution $(\eta^{\infty}[\mu],\psi^{\infty}[\mu],\nu^{\infty}[\mu])$ to the ergodic system
\begin{equation}\label{eq:linearized_ergodic_PDE_system_low}
\bec
-\eta^\infty + \frac{1}{2}\tr\left(\sigma(x)^{\top}\sigma(x)\nabla^2\psi(x)\right) + H(x,\nabla\psi(x)) + F(\mu,x)=0, \quad \\
 -\frac{1}{2}\tr(\nabla^2(\sigma^{\top}\sigma\nu)(x)) + \nabla\cdot( \partial_p H(x,\nabla\psi_s(x))\nu(x))=0.
\eec
\end{equation}
Unlike the proofs of \Cref{lem:MF_syst_high_reg} and \Cref{lem:MF_syst_mild_reg} building on uniform in time Lipschitz estimates to  apply directly Banach's fixed point theorem to a suitable invariant set,  the Lipschitz estimates from \Cref{lem:basics_MF_low}-\ref{item:grad_est_MF_low}  are not strong enough to provide with such invariant sets directly. This is why we have to modify the proof here a bit.
Fix $T>0$, $\mu \in \cP_1(\bbR^d)$ and define
\begin{align}\label{def:iterations_linearized_tpike_low}
\Phi^{\mu}_T: C^{0,1}(\bbR^d) \longrightarrow C^{0,1}(\bbR^d), \qquad &g\mapsto \psi^{T,g}_0[\mu], \\
\overline{\Phi}^{\mu}_T:  C^{0,1}_0(\bbR^d) \longrightarrow C^{0,1}_0(\bbR^d), \qquad &g\mapsto \psi^{T,g}_0[\mu] - \psi^{T,g}_0[\mu](0),
\end{align}
where with a slight abuse of notation, for a given $\mu$ we write $ \psi^{T,g}_t[\mu]$ instead of $\psi^{T,g}_t[\mu_\cdot]$, when $\mu_\cdot$ is the flow constantly equal to $\mu$. We recall that $C^{0,1}(\bbR^d)$ is the space of Lipschitz continuous functions and $C^{0,1}_0(\bbR^d)$ its subspace of functions satisfying $g(0) = 0$. 

Let $g \in C^{0,1}_0(\bbR^d)$ with $\const^g_{x} := \| g\|_{\Lip}$. Define the sequence $(\phi_n)_{n \in \N}$ by iteratively applying $\overline{\Phi}^{\mu}_T$ to $g$, \ie,
\begin{equation}
\phi_0 := g, \quad \phi_{n+1} = \overline{\Phi}^{\mu}_T(\phi_n)
\end{equation}
From the dynamic programming principle we have
\begin{equation}
\phi_n = \psi^{nT,g}_0[\mu] - \psi^{nT,g}_0[\mu](0).
\end{equation}
This implies thanks to \Cref{lem:basics_MF_low}-\ref{item:grad_est_MF_low} that
\begin{align}
\| \phi_n \|_{f_{\kappa_b}} &\leq \frac{4(\const^L + \const^F)}{\sqrt{\pi \lambda_{\kappa_{b}}}C_{\kappa_{b}}\sigma_0} - \frac{2\sqrt{e}(\const^L + \const^F)}{\sqrt{\pi\lambda_{\kappa_{b}}} C_{\kappa_{b}}\sigma_0} e^{-\lambda_{\kappa_{b}}nT } + \frac{1}{C_{\kappa_b}}\const^g_x e^{-\lambda_{\kappa_b}nT}\label{eq:grad_est_erg_low} \\
&\leq \frac{4(\const^L + \const^F)}{\sqrt{\pi \lambda_{\kappa_{b}}}C_{\kappa_{b}}\sigma_0} + \max\left\{0, \frac{1}{C_{\kappa_b}}\const^g_x - \frac{2\sqrt{e}(\const^L + \const^F)}{\sqrt{\pi\lambda_{\kappa_{b}}} C_{\kappa_{b}}\sigma_0}\right\} =: \const^{\psi[\mu],g}_x
\end{align}
By Ascoli-Arzelà's theorem there exists $\psi^{\infty,T,g}[\mu]$ and a subsequence $(n_k)_{k\in \N}$ such that
\begin{equation}
\phi_{n_k} \longrightarrow \psi^{\infty,T,g}[\mu],
\end{equation}
locally uniformly, and this convergence can be strenghtened to be w.r.t. $\| \cdot \|_{\Lip}$ by standard arguments.
To see that $\psi^{\infty,T,g}[\mu]$ does not depend neither on the subsequence or $g$, let $\hat{g} \in C^{0,1}_0(\bbR^d)$ with $\const^{\hat{g}}_x := \| \hat{g}\|_{\Lip}$. Defining analogously $\hat{\phi}_n$ for $\hat{g}$ we obtain in the same way
\begin{align}
\| \hat{\phi}_n \|_{f_{\kappa_b}} \leq \frac{4(\const^L + \const^F)}{\sqrt{\pi \lambda_{\kappa_{b}}}C_{\kappa_{b}}\sigma_0} + \max\left\{0, \frac{1}{C_{\kappa}}\const^{\hat{g}}_x - \frac{2\sqrt{e}(\const^L + \const^F)}{\sqrt{\pi\lambda_{\kappa_{b}}} C_{\kappa_{b}}\sigma_0}\right\} =: \const^{\psi[\mu],\hat{g}}_x
\end{align}
Now applying \Cref{lem:basics_MF_low}-\ref{item:stab_MF_low} for $\tilde{\kappa}(r) = \kappa_b(r) - 2\max\{ \const^{\psi[\mu],g}_x, \const^{\psi[\mu],\hat{g}}_x \}/(\rho^L_{uu}r)$, we obtain
\begin{equation}\label{eq:contr_est_seq_tp_low}
\|\phi_n - \hat{\phi}_n \|_{f_{\tilde{\kappa}}} \leq e^{-\lambda_{\tilde{\kappa}}(nT)} \|g- \hat{g} \|_{f_{\tilde{\kappa}}}.
\end{equation}
This implies that the limit does not depend on $g$ and convergence of the full sequence. We can hence set $\psi^{\infty,T}[\mu] := \psi^{\infty,T,g}[\mu]$. To see that it is a fixed point, observe that we have continuity of $\overline{\Phi}^{\mu}_T$ thanks to \eqref{eq:contr_est_seq_tp_low} and hence
\begin{equation}
\overline{\Phi}^{\mu}_T(\psi^{\infty,T}[\mu]) = \lim_{n \rightarrow \infty} \overline{\Phi}^{\mu}_T(\phi_n) = \lim_{n \rightarrow \infty} \phi_{n+1} = \psi^{\infty,T}[\mu].
\end{equation}
Finally note that $\psi^{\infty,T}[\mu]$ is also the unique fixed point in $C^{0,1}_0(\bbR^d)$, again thanks to the contraction estimate \eqref{eq:contr_est_seq_tp_low}, and we have passing $T \rightarrow \infty$ in \eqref{eq:grad_est_erg_low}
\begin{equation}\label{eq:erg_fnc_mu_Lip_est_low}
\| \psi^{\infty,T}[\mu] \|_{f_{\kappa_b}} \leq \frac{4(\const^L + \const^F)}{\sqrt{\pi \lambda_{\kappa_{b}}}C_{\kappa_{b}}\sigma_0} = 4\const^\psi_x.
\end{equation}
Set $\eta^{\infty,T}=\psi^{\infty,T}[\mu](0)$. 
The fact that setting $\psi^{\infty}[\mu]:=\psi^{\infty,1}[\mu]$, ${\eta^{\infty}[\mu]:=\eta^{\infty,1}[\mu]}$ is well-defined, existence of a unique invariant measure $\nu^\infty[\mu]$ and that $(\eta^{\infty}[\mu],\nu^{\infty}[\mu],\psi^{\infty}[\mu])$ form a solution to \eqref{eq:linearized_ergodic_PDE_system_low} is analogous to the proof in \Cref{lem:MF_syst_high_reg}.

\item \underline{Step 2: Existence and uniqueness for the mean field ergodic system \eqref{eq:MF_PDE_ergodic}} 

\noindent Let $\mu,\hat\mu \in \cP_1(\bbR^d)$. Thanks to the fixed point properties $\psi^{\infty}[\mu]=\bar\Phi^\mu_T(\psi^{\infty}[\mu])$ and ${\psi^{\infty}[\hat\mu]=\bar\Phi^{\hat\mu}_T(\psi^{\infty}[\hat\mu])}$ and the fact that
\begin{equation}
\| \psi^{\infty}[\mu]\|_{f_{\kappa_b}},\, \| \psi^{\infty}[\hat\mu] \|_{f_{\kappa_b}} \leq 
4\const^\psi_x
\end{equation}
we can apply  \Cref{lem:basics_MF_low}-\ref{item:stab_MF_low} with $\tilde{\kappa}(r) = \bar{\kappa}$ from \eqref{eq:kappa_low} to get
\begin{equation}
\|\nabla\psi^{\infty}[\mu]-\nabla\psi^{\infty}[\hat\mu]\|_{\infty} \leq 2\const^F_{\mu,\TV} \|\mu - \hat{\mu} \|_{\TV} \int_0^Tq^{\bar{\kappa}}_s \De s + e^{-\lambda_{\bar{\kappa}}T} \frac{8\const^\psi_x}{C_{\bar{\kappa}}}. 
\end{equation}
Letting $T\rightarrow+\infty$ and using \Cref{lem:boring_calculations_mild} to obtain
\begin{equation}
\int_0^{+\infty}q^{\bar{\kappa} 
}_s \De s =\frac{2}{\sqrt{\pi \lambda_{\bar{\kappa}}}C_{\bar{\kappa}}\sigma_0}, 
\end{equation}
we arrive at 
\begin{equation}\label{eq:optimal_ergodic_policy_bound}
\|w(\cdot,\nabla\psi^{\infty}[\mu](\cdot))-w(\cdot,\nabla\psi^{\infty}[\hat\mu](\cdot))\|_{\infty}\leq \frac{4\const^F_{\mu,\TV} \|\mu - \hat{\mu} \|_{\TV}}{\sqrt{\pi\lambda_{\bar{\kappa}}}\rho^L_{uu}C_{\bar{\kappa}}\sigma_0} =: \const^{\delta w}_{\TV}
\end{equation}

Now let $(X_s,\hat{X}_s)$ be an approximate coupling by reflection \eqref{eq:delta_coup_by_ref} for the optimal dynamics associated to marginal flows $\mu$ and $\hat{\mu}$, starting in the invariant laws $\nu^{\infty}[\mu]$ and $\nu^{\infty}[\hat{\mu}]$. Then by \Cref{prop:contr_delta_coup}-\ref{item:TV_contr_delta_coup} for $0 \leq t_0 < t$ (since $\cL(X_t) = \nu^{\infty}[\mu]$, $\cL(\hat{X}_t) = \nu^{\infty}[\hat{\mu}]$)
\begin{equation}
\|\nu^{\infty}[\mu] - \nu^{\infty}[\hat{\mu}] \|_{\TV} \leq q^{\bar{\kappa}}_{t-t_0} \bbE[f_{\bar{\kappa}}(|X_{t_0} - \hat{X}_{t_0}|)] + \frac{1}{\sqrt{2}} \left(\int_{t_0}^{t}\left(\const^{\delta w}_{\TV}\right)^2 \De s \right)^{\frac{1}{2}} 
\end{equation}
Now using \Cref{prop:contr_delta_coup}-\ref{item:W_1_contr_delta_coup}
\begin{align}
\bbE[f_{\bar{\kappa}}(|X_{t_0} - \hat{X}_{t_0}|)]
\leq e^{-\lambda_{\bar{\kappa}}t_0} W_{f_{\bar{\kappa}}}(\mu_0, \hat{\mu}_0) + \int_{0}^{t_0} e^{-\lambda_{\bar{\kappa}}(t_0 - s)} \const^{\Delta w}_{\TV} \De s
\end{align}
For $t \geq \frac{1}{\lambda_{\bar\kappa}}$, choosing $t_0 = t - \frac{1}{2\lambda_{\bar\kappa}}$ we get
\begin{align}
\|\nu^{\infty}[\mu] - \nu^{\infty}[\hat{\mu}] \|_{\TV} \leq q^{\bar{\kappa}}_t W_{f_{\bar{\kappa}}}(\nu^{\infty}[\mu], \nu^{\infty}[\hat{\mu}]) + \frac{1}{\sqrt{\pi\lambda_{\kappa}}C_{\kappa}\sigma_0} \const^{\delta w}_{\TV} + \frac{1}{2\sqrt{\lambda_{\kappa}}} \const^{\delta w}_{\TV}
\end{align}
Letting $t \rightarrow \infty$ we arrive at
\begin{equation}
\|\nu^{\infty}[\mu] - \nu^{\infty}[\hat{\mu}] \|_{\TV} \leq \left(\frac{1}{\sqrt{\pi\lambda_{\kappa}}C_{\kappa}\sigma_0}  + \frac{1}{2\sqrt{\lambda_{\kappa}}} \right)\frac{4\const^F_{\mu,\TV} }{\sqrt{\pi\lambda_{\bar{\kappa}}}\rho^L_{uu} C_{\bar{\kappa}}\sigma_0} \|\mu - \hat{\mu} \|_{\TV}
\end{equation}
So condition \eqref{eq:suff_ergodic_sol_low} implies that  the map
\begin{equation}
    \cP_1(\bbR^d)\ni\mu \mapsto \nu^{\infty}[\mu]
\end{equation}
is a contraction and we conclude with Banach's fixed point theorem existence and uniqueness of a fixed point $\mu^{\infty}$ in $\cP_1(\bbR^d)$. It then follows that $(\eta^{\infty}[\mu^\infty],\psi^{\infty}[\mu^{\infty}],\mu^{\infty})$ is the unique solution to the ergodic mean field PDE system \eqref{eq:MF_PDE_ergodic}. 
\eei 
\end{proof}

\begin{thm}\label{prop:abstract_suff_cond_tpike_low}
Assume \Cref{ass:MF_drift_intro}, \Cref{ass:MF_coercivity_intro}, \Cref{ass:low_reg_in_proofs}. 
Let $\hat{G}: \bbR^d \rightarrow \bbR^d$ be such that $\| \hat{G}\|_{f_{\kappa_b}}  \leq 4 \const^\psi_x$.
\ben[label=(\roman*)] 
\item\label{item:grad_est_low} For any $\mu_\cdot, \hat{\mu}_\cdot \in\Gamma$ and $T>0$ the estimate
\begin{equation}\label{eq:gest_low_last}
\|\nabla\psi^{T,\hat{G}}_t[\hat{\mu}_\cdot] \|_\infty \leq (8 - 2\sqrt{e}) \const_x^\psi 
\end{equation}
hold for all $t\in[0,T].$  Moreover, for any $\hat\mu_\cdot\in\Gamma$ such that $\hat\mu_0\in\cP_p(\bbR^d)$, we have $\nu^{T,\hat G}[\hat\mu_\cdot]\in \Gamma_{\hat\mu_0,\const_{\cdot}}\subseteq\Gamma$ where
\bes
\Gamma_{\hat\mu_0,\const_{\cdot}} =\{\tilde\mu_\cdot: \tilde\mu_0=\hat\mu_0,\eqsp\sup_{\varepsilon\leq s<t\leq T}\frac{W_1(\tilde\mu_s,\tilde\mu_t)}{|t-s|^{1/2}}\leq \const_\varepsilon \eqsp\forall \varepsilon\leq T \}
\ees
and the function $\const_\cdot$ depends only on $\Sigma,\int|x|^p\mu_0(\De x),\kappa_b,\const^\psi_x$ and $T$.
\item\label{item:stability_iterate_low} 
Let $\bar\kappa$ be given by \eqref{eq:kappa_low}.
Then for any $\mu_\cdot, \hat{\mu}_\cdot \in \Gamma$, setting $\lambda=\lambda_{\bar{\kappa}}/2$ we have
\begin{equation}\label{eq:iterate_stability_low_1}
\overrightarrow{d_{\lambda,\bar{\kappa}}^{T}}(\nu^{T,\hat{G}}_\cdot[\mu_\cdot],\nu^{T,\hat{G}}_\cdot[\hat\mu_\cdot])\leq W_{f_{\bar{\kappa}}}(\mu_0,\hat{\mu}_0)+ \varepsilon(\lambda) \overrightarrow{d_{\lambda,\bar{\kappa}}^{T}}(\mu_\cdot,\hat\mu_\cdot)
\end{equation}
with
\begin{equation}\label{eq:epsilon_lambda_low}
\varepsilon(\lambda)= \frac{\sqrt{e}\const^F_{\mu,\TV}}{\sqrt{\pi} \rho^{L}_{uu}C_{\bar{\kappa}}\sigma_0\lambda_{\bar{\kappa}}} \max \left\{9, \left(4 + \frac{7}{\sqrt{\pi}C_{\kappa}\sigma_0} \right)\right\}
\end{equation}
If $\mu_0 = \hat{\mu}_0$, then we also have
\begin{equation}\label{eq:iterate_stability_low_T}
\overleftarrow{d_{\lambda,\bar{\kappa}}^{T}}(\nu^{T,\hat{G}}_\cdot[\mu_\cdot],\nu^{T,\hat{G}}_\cdot[\hat\mu_\cdot])\leq \varepsilon(\lambda) \overleftarrow{d_{\lambda,\bar{\kappa}}^{T}}(\mu_\cdot,\hat\mu_\cdot).
\end{equation}
\item\label{item:contraction_iterate_low} Let again $\lambda = \lambda_{\bar{\kappa}}/2$. If
\begin{equation}\label{eq:suff_cond_tpik_low_reg_in_proofs}
\const^F_{\mu,\TV}< \frac{\rho^{L}_{uu}\sqrt{\pi} C_{\bar{\kappa}}\sigma_0\lambda_{\bar{\kappa}}}{\sqrt{e} \max \left\{9, \left(4 + \frac{7}{\sqrt{\pi}C_{\kappa}\sigma_0} \right)\right\}},
\end{equation}
then 
we have $ \varepsilon(\lambda)<1$. 
In particular, for any initial condition $\mu_0\in\cP_1(\bbR^d)$ the mean field PDE system \eqref{eq:MF_PDE} with terminal condition $\hat{G}$ and initial condition $\hat{\mu}_0$ has a unique solution $(\mu^{T,\hat{G}}_\cdot, \varphi^{T,\hat{G}}_\cdot)$ in $\Gamma_{\hat\mu_0,\const_\cdot}$  with $\const$ as in \ref{item:grad_est_low}. 

\item\label{item:turnpike_low} Let again $\lambda = \lambda_{\bar{\kappa}}/2$. If $(\mu^{T,G}_\cdot, \varphi^{T,G}_\cdot)$ is any solution  to \eqref{eq:MF_PDE} with terminal condition $G:\cP_1(\bbR^d) \times \bbR^d \rightarrow \bbR$ and initial condition $\mu_0\in\cP_p(\bbR^d)$ the following holds.

\ben[(a)]
\item\label{eq:low_lipschitz_case} If $\| G\|_{f_{\kappa_b}}<+\infty$ then, defining 
\bes\label{eq:mult_constants_low_3}
\tau(G)= \frac{1}{\lambda_{\kappa_b}}\log \left(\frac{\| G \|_{f_{\kappa_b}} - 2\sqrt{e}\const^\psi_x}{(4-2\sqrt{e})\const^\psi_x} \right) \ke{\vee 0}, \eqsp \kappa_G(r) = \kappa_b(r) -\frac{2(\const^{L(\cdot,0)}_u+ \max\{(8-2\sqrt{e})\const^\psi_x, \|G \|_{f_{\kappa_b}}\})}{\rho^{L}_{uu} r}
\ees
we have that 
\begin{equation}\label{eq:tp_flow_Lip_low}
\|\mu^{T,G}_t - \mu^{T,\hat{G}}_t\|_{\TV} \leq \frac{1}{(1-\varepsilon(\lambda))}\Big( W_{f_{\bar{\kappa}}}(\mu_0, \hat{\mu}_0)q^{\bar{\kappa},\lambda}_t +
\frac{e^{\lambda_{\bar\kappa}\tau(G)}}{2\sqrt{\lambda_{\bar{\kappa}}}\rho^L_{uu}C_{\kappa_G}} \|\hat{G} - G(\mu^{T,G}_T,\cdot) \|_{f_{\kappa_G}}e^{-\lambda(T-t)} \Big)
\end{equation}
holds for all $0 \leq t\leq T$. Moreover, for all $\frac{1}{2\lambda_{\bar\kappa}} \leq t \leq T-\tau(G)$ we have
\bes\label{eq:tp_control_lip_low}
\|\varphi^{T,G}_t-\varphi^{T,\hat G}_t\|_{f_{\bar\kappa}} \leq \frac{4W_{f_{\bar\kappa}}(\mu_0,\hat{\mu_0})\const^{F}_{\mu,\mathrm{TV}}}{\pi C^2_{\bar\kappa}\sigma^2_0\sqrt{\lambda_{\bar\kappa}}(1-\varepsilon(\lambda))}e^{-\lambda t}+\Big(\frac{3e^{\frac{1}{4}}\const^{F}_{\mu,\mathrm{TV}}e^{\lambda_{\bar\kappa}\tau(G)}}{\sqrt{\pi}\lambda_{\bar\kappa}C_{\bar{\kappa}}^2\sigma_0\rho^L_{uu}(1-\varepsilon(\lambda))}+1\Big){\frac{(16-4\sqrt{e})\const^\psi_x}{C_{\bar{\kappa}}}} e^{-\lambda(T-t)}
\ees
\item If  $\|G \|_{\infty} < + \infty$ and we define
\footnote{ We have the explicit bound
\bes
\tau'(G)\leq \max\left\{ \frac{1}{\lambda_{\kappa_b}}\log\const^G, \frac{1}{2\lambda_{\kappa_b}} \right\}> 0, \text{ with } \const^G = \frac{\frac{\sqrt{\lambda_{\kappa_b}e}}{\sqrt{\pi}C_{\kappa_b}\sigma_0}\| G\|_{\infty}  - 2\sqrt{e}\const^\psi_x}{(4-2\sqrt{e})\const^\psi_x}
\ees
}
\bes
\tau'(G)=\inf\{\tau>0:\|\varphi^{T,G}_{T-s}\|_{f_{\kappa_b}} \leq (8-2\sqrt{e})\const^\psi_x\eqsp\,\, \forall s\geq \tau \},\quad T'_0=T-\tau'(G), 
\ees
then the turnpike estimate for marginal flows  \eqref{eq:tp_flow_Lip_low} holds for $t\leq T'_0$ when setting ${\tau(G)=\tau'(G),}$ $\kappa_G=\bar\kappa$ and replacing $\|\hat{G} - G(\mu^{T,G}_T,\cdot) \|_{f_{\kappa_G}}$ with $(16-4\sqrt{e})C_{\bar{\kappa}}^{-1}\const^\psi_x$. Finally, the turnpike estimate \eqref{eq:tp_control_lip_high} holds for all $\frac{1}{2\lambda_{\bar\kappa}}\leq t\leq T'_0$ replacing $\tau(G)$ with $\tau'(G)$ .
\een
\end{enumerate}
\end{thm}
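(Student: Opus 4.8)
The plan is to follow, almost line for line, the strategy used for \Cref{prop:abstract_suff_cond_tpike_high} and \Cref{prop:abstract_suff_cond_tpike_mild}, with the Wasserstein distance $W_{f_{\bar\kappa}}$ systematically replaced by the total variation distance and with every fixed‑point stability estimate now extracted from the low–regularity Lemmas \Cref{lem:basics_MF_low} and \Cref{lem:stability_estimate_linear_pb}\,\ref{item_stab_low_reg_new}, which are themselves based on the $\delta$–approximate coupling by reflection of \Cref{prop:contr_delta_coup}. For \ref{item:grad_est_low}, the gradient bound $\|\nabla\psi^{T,\hat G}_t[\hat\mu_\cdot]\|_\infty\le(8-2\sqrt e)\const^\psi_x$ is read off from \Cref{lem:basics_MF_low}\,\ref{item:grad_est_MF_low} using the hypothesis $\|\hat G\|_{f_{\kappa_b}}\le4\const^\psi_x$ together with $2\sqrt e<4$ and $C_{\kappa_b}\le1$; the claimed time–Hölder regularity of $\nu^{T,\hat G}[\hat\mu_\cdot]$ (hence its membership in $\Gamma_{\hat\mu_0,\const_\cdot}$ and in $\Gamma$) follows from \Cref{prop:Holder_in_time} applied to the SDE with drift $\beta_t(x)=\partial_pH(x,\nabla\psi^{T,\hat G}_t[\hat\mu_\cdot](x))$, whose monotonicity profile is bounded below by the just–obtained gradient estimate, exactly as in the proof of \Cref{prop:abstract_suff_cond_tpike_high}\,\ref{item:grad_est_high}.

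For \ref{item:stability_iterate_low}, fix $\mu_\cdot,\hat\mu_\cdot\in\Gamma$ and apply \Cref{lem:basics_MF_low}\,\ref{item:stab_MF_low} with $\tilde\kappa=\bar\kappa$ (legitimate since $\|\hat G\|_{f_{\kappa_b}}\le4\const^\psi_x$). I would insert the bound $\|\mu_u-\hat\mu_u\|_{\TV}\le q^{\bar\kappa,\lambda}_u\,\overrightarrow{d_{\lambda,\bar\kappa}^{T}}(\mu_\cdot,\hat\mu_\cdot)$ into the expression for $\const^{\delta\psi_s}_x$, use the two integral identities of \Cref{lem:boring_calculations_mild} to control the resulting $\int q^{\bar\kappa}$–integrals, and feed this back into the total–variation stability bound for $\nu^{T,\hat G}$, choosing the split point $t_0=t-\tfrac1{2\lambda_{\bar\kappa}}$ whenever $t\ge\tfrac1{\lambda_{\bar\kappa}}$ and $t_0=0$ otherwise, precisely as in the proof of \Cref{lem:erg_MF_syst_low_reg}. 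Multiplying by $(q^{\bar\kappa,\lambda}_t)^{-1}$ and maximising over $t\in[0,T]$ produces \eqref{eq:iterate_stability_low_1} with $\varepsilon(\lambda)$ as in \eqref{eq:epsilon_lambda_low} evaluated at $\lambda=\lambda_{\bar\kappa}/2$; the backward estimate \eqref{eq:iterate_stability_low_T} is obtained the same way after setting $\mu_0=\hat\mu_0$ (so that the $W_{f_{\bar\kappa}}(\mu_0,\hat\mu_0)$ term drops) and replacing $q^{\bar\kappa,\lambda}_s$ by $e^{\lambda(T-s)}$. Item \ref{item:contraction_iterate_low} is then immediate: \eqref{eq:suff_cond_tpik_low_reg_in_proofs} is exactly the condition $\varepsilon(\lambda_{\bar\kappa}/2)<1$, and since by \ref{item:grad_est_low} the map $\nu^{T,\hat G}[\cdot]$ sends the complete metric space $(\Gamma_{\hat\mu_0,\const_\cdot},\overrightarrow{d_{\lambda,\bar\kappa}^{T}})$ into itself and is, by \eqref{eq:iterate_stability_low_1} with $\mu_0=\hat\mu_0$, a strict contraction, Banach's fixed point theorem yields the unique solution.

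For \ref{item:turnpike_low}, I would apply \Cref{lem:abs_turnpike_est} with $\sfd=\|\cdot\|_{\TV}$ and $\upsilon^\lambda(t)=q^{\bar\kappa,\lambda}_t$, whose hypotheses \eqref{eq:abs_trunpike_cond_1}–\eqref{eq:abs_trunpike_cond_2} are precisely \ref{item:stability_iterate_low}; this gives
\begin{equation*}
\|\mu^{T,G}_t-\mu^{T,\hat G}_t\|_{\TV}\le\frac{q^{\bar\kappa,\lambda}_t}{1-\varepsilon(\lambda)}\,W_{f_{\bar\kappa}}(\mu_0,\hat\mu_0)+\frac{e^{-\lambda(T-t)}}{1-\varepsilon(\lambda)}\,\overleftarrow{d_{\lambda,\bar\kappa}^{T}}\big(\nu^{T,\hat G}_\cdot[\mu^{T,G}_\cdot],\nu^{T,G}_\cdot[\mu^{T,G}_\cdot]\big).
\end{equation*}
It remains to control the backward metric of the last ``fixed–flow'' difference; since both iterates are evaluated along the same flow $\mu^{T,G}_\cdot$ and with the same initial datum, the $\const^{\delta\psi}$–contribution in \Cref{lem:basics_MF_low}\,\ref{item:stab_MF_low} vanishes, leaving only the terminal–cost term $\propto\|G(\mu^{T,G}_T,\cdot)-\hat G\|_{f_{\tilde\kappa}}$. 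When $\tau(G)=0$ (i.e.\ $\|G\|_{f_{\kappa_b}}\le4\const^\psi_x$, so $\|\varphi^{T,G}_t\|_{f_{\kappa_b}}\le(8-2\sqrt e)\const^\psi_x$ for all $t\le T$ by \Cref{lem:basics_MF_low}\,\ref{item:grad_est_MF_low}) one may take $\tilde\kappa=\kappa_G$ directly and conclude; when $\tau(G)>0$ one invokes the dynamic programming principle to restrict to $[0,T_0]$, $T_0=T-\tau(G)$, where the same lemma and the definition of $\tau(G)$ give $\|\varphi^{T,G}_t\|_{f_{\kappa_b}}\le(8-2\sqrt e)\const^\psi_x$, hence $\kappa_{\partial_pH(\cdot,\nabla\varphi^{T,G}_t(\cdot))}\ge\kappa_G$ and likewise for $\hat G$, so that $\tilde\kappa=\kappa_G$ is admissible on $[0,T_0]$, while the contribution of $t\in[T_0,T]$ to $\overleftarrow{d_{\lambda,\bar\kappa}^{T}}$ is bounded by $e^{\lambda_{\bar\kappa}\tau(G)}$ times the terminal–cost term via one more use of \Cref{lem:basics_MF_low}\,\ref{item:stab_MF_low}. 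Assembling these pieces, using $\bar\kappa\ge\kappa_G$ and $C_{\kappa_G}\le1$ to pass back to the $f_{\bar\kappa}$–norm, yields \eqref{eq:tp_flow_Lip_low}; plugging this flow estimate into the value–function bound of \Cref{lem:basics_MF_low}\,\ref{item:stab_MF_low} and evaluating the remaining time integrals with \Cref{lem:boring_calculations_mild} gives \eqref{eq:tp_control_lip_low}. The bounded–$G$ subcase is handled exactly as \Cref{prop:abstract_suff_cond_tpike_high}\,\ref{item:hig_bdd_case}: after $T_0'=T-\tau'(G)$ one has $\|\varphi^{T,G}_t\|_{f_{\kappa_b}}\le(8-2\sqrt e)\const^\psi_x$ thanks to the decay of $q^{\kappa_b}_{T-t}$, so the DPP reduces it to the Lipschitz case with $G(\mu^{T,G}_T,\cdot),\hat G$ replaced by $\varphi^{T,G}_{T_0'},\varphi^{T,\hat G}_{T_0'}$, whose $f_{\bar\kappa}$–distance is at most $(16-4\sqrt e)C_{\bar\kappa}^{-1}\const^\psi_x$ by the triangle inequality.

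The main obstacle is \ref{item:stability_iterate_low}: converting the Wasserstein–type contraction coming from coupling by reflection into a genuine total–variation contraction forces one to propagate the kernels $q^{\bar\kappa,\lambda}_t$, which interpolate between a $t^{-1/2}$ singularity at $0$ and exponential decay, through nested time integrals and to optimise the split point $t_0$; this is exactly why the argument closes only for the single exponent $\lambda=\lambda_{\bar\kappa}/2$ and why the constant in \eqref{eq:epsilon_lambda_low}, and hence the smallness threshold \eqref{eq:suff_cond_tpik_low_reg_in_proofs}, takes the somewhat unwieldy form stated. Everything else is bookkeeping along lines already executed in the high– and mild–regularity regimes.
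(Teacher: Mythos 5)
Your proposal follows essentially the same route as the paper: part (i) by reading off the gradient bound from \Cref{lem:basics_MF_low} and applying \Cref{prop:Holder_in_time}; part (ii) by inserting the $\overrightarrow{d^T_{\lambda,\bar\kappa}}$ / $\overleftarrow{d^T_{\lambda,\bar\kappa}}$ bounds into the TV-stability estimate of \Cref{lem:basics_MF_low}\,\ref{item:stab_MF_low}, splitting the time integral at an intermediate $t_0$, and optimizing; part (iii) via Banach's theorem; and part (iv) via \Cref{lem:abs_turnpike_est} with $\sfd=\|\cdot\|_\TV$, $\upsilon^\lambda(t)=q^{\bar\kappa,\lambda}_t$, bounding the backward metric using the fact that $\nu^{T,G}$ and $\nu^{T,\hat G}$ are evaluated along the same flow (so only the terminal-cost term survives) and a DPP restriction to $[0,T_0]$ when $\tau(G)>0$. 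The one difference is bookkeeping: you propose the split $t_0=t-\tfrac1{2\lambda_{\bar\kappa}}$ for $t\ge\tfrac1{\lambda_{\bar\kappa}}$ (as in the proof of \Cref{lem:erg_MF_syst_low_reg}), whereas the paper's \Cref{lem:low_nasty_calc} uses $t_0=0$ for $t\le\tfrac2{\lambda_{\bar\kappa}}$ and $t_0=t-\tfrac1{\lambda_{\bar\kappa}}$ otherwise; both choices work, but the explicit constants in $\varepsilon(\lambda)$ and hence the threshold \eqref{eq:suff_cond_tpik_low_reg_in_proofs} are tuned to the paper's particular split, so your variant would yield a numerically different (though structurally identical) smallness condition.
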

\begin{proof}
\begin{itemize}[wide]
\item \underline{Proof of \ref{item:grad_est_low}:} 
The first claim follows directly from  \Cref{lem:basics_MF_low}-\ref{item:grad_est_MF_low}. The second claim follows applying \Cref{prop:Holder_in_time}-\ref{item:time_Holder_TV} with $\beta_\cdot=\partial_pH(\cdot,\nabla \psi^{T,\hat G}[\hat\mu_\cdot]_\cdot)$ using the gradient estimate \eqref{eq:gest_low_last} to find a lower bound for $\kappa_{\beta}$, and referring to \Cref{prop:properties} for the Hölder continuity in time.
\item \underline{Proof of \ref{item:stability_iterate_low}:} In order to ease notation, we write $\hat{\nu}= \nu^{T,\hat{G}}$, $\nu = \nu^{T,G}$, $\hat{\psi} = \psi^{T,\hat{G}}$, $\psi = \psi^{T,G}$.
Under the current assumptions on $\|\hat{G}\|_{f_{\kappa_b}}$ and thanks to \ref{item:grad_est_low} we are allowed to apply \Cref{lem:basics_MF_low}-\ref{item:stab_MF_low} with $\tilde{\kappa}=\bar\kappa$ to obtain using \eqref{eq:def_metric_flow_TV} 
\begin{equation}
\|\nabla\hat\psi_t[\mu_\cdot]-\nabla\hat\psi_t[\hat\mu_\cdot]\|_{\infty}\leq \const^{\delta\psi_t}_x=2\const^F_{\mu,\TV}  \overrightarrow{d_{\frac{\lambda_{\bar\kappa}}{2},\bar{\kappa}}^{T}}(\mu_\cdot,\hat\mu_\cdot) \int_t^Tq^{\bar{\kappa},\lambda_{\bar\kappa}/2}_s q^{\bar{\kappa}}_{s-t} \De s
\end{equation}
and for all $0\leq t_0\leq t\leq T$
\begin{equation}\label{eq:tv_bound_iterates_raw}
\begin{split}
\|\hat\nu_t[\mu_\cdot] - \hat\nu_t[\hat{\mu}_\cdot] \|_{\TV} &\leq q^{\bar{\kappa}}_{t-t_0}e^{-\lambda_{\bar{\kappa}}t_0} W_{f_{\bar{\kappa}}}(\mu_0, \hat{\mu}_0) + q^{\bar{\kappa}}_{t-t_0}\frac{1}{\rho^L_{uu}}\int_{0}^{t_0} e^{-\lambda_{\bar{\kappa}}(t_0 - s)} \const^{\delta\psi_s}_x \De s \eqsp+ \frac{1}{\sqrt{2}\rho^L_{uu}} \left(\int_{t_0}^{t}\left(\const^{\delta\psi_s}_x\right)^2 \De s \right)^{\frac{1}{2}}
\end{split}
\end{equation}
The precise further computations on how to properly further estimate the $\TV$ distance are postponed to \Cref{lem:low_nasty_calc}, which gives that for  $t \leq \frac{2}{\lambda_{\bar\kappa}}$
\begin{align}
\|\hat\nu_t[\mu_\cdot] - \hat\nu_t[\hat{\mu}_\cdot] \|_{\TV} 
\leq q^{\bar{\kappa}}_{t}W_{f_{\bar{\kappa}}}(\mu_0, \hat{\mu}_0) + \frac{\sqrt{2}\const^F_{\mu,\TV}}{\rho^{L}_{uu}\pi \sqrt{\lambda_{\bar\kappa}}C_{\kappa}^2 \sigma_0^2}\left(2\log(4)+  \frac{25e}{9} \right)^{\frac{1}{2}} \overrightarrow{d^{T}_{\frac{\lambda_{\bar\kappa}}{2},\bar\kappa}}(\fl,\hfl)
\end{align}
Dividing by $q^{\bar{\kappa},\lambda_{\bar\kappa}/2}_{t}$ and optimizing over $t \in (0, \frac{2}{\lambda_{\bar\kappa}}]$ gives (using $q^{\bar{\kappa}}_{t} \leq q^{\bar{\kappa},\lambda_{\bar\kappa}/2}_{t}$)
\begin{align}
\sup_{t \leq \frac{1}{\lambda_{\bar\kappa}}} (q^{\bar{\kappa},\lambda_{\bar\kappa}/2}_{t})^{-1}\|\hat\nu_t[\mu_\cdot] - \hat\nu_t[\hat{\mu}_\cdot] \|_{\TV} 
\leq W_{f_{\bar{\kappa}}}(\mu_0, \hat{\mu}_0) 
+ \frac{\sqrt{2e}\const^F_{\mu,\TV}}{\rho^{L}_{uu} \sqrt{\pi}C_{\kappa} \sigma_0\lambda_{\bar{\kappa}}}\left(8\log(2) +  \frac{50e}{9} \right)^{\frac{1}{2}} \overrightarrow{d^{T}_{\frac{\lambda_{\bar\kappa}}{2},\bar\kappa}}(\fl,\hfl)
\end{align}
For $t > \frac{2}{\lambda_{\bar\kappa}}$ we have
\begin{align}
\|\hat\nu_t[\mu_\cdot] - \hat\nu_t[\hat{\mu}_\cdot] \|_{\TV} 
&\leq q^{\bar{\kappa},\lambda_{\bar{\kappa}}}_{t}W_{f_{\bar{\kappa}}}(\mu_0, \hat{\mu}_0)\\ &+ e^{-(\lambda_{\bar\kappa}/2) t} \frac{\const^F_{\mu,\TV}e}{\rho^{L}_{uu}\pi \sqrt{\lambda_{\bar\kappa}} C_{\bar{\kappa}}^2\sigma_0^2}\left(   \frac{5}{3}\sqrt{e}+ \frac{\sqrt{2}}{\sqrt{\pi}C_{\kappa}\sigma_0} \left(1 + \frac{5\sqrt{2}}{3}\right) \right)
\overrightarrow{d^{T}_{\frac{\lambda_{\bar\kappa}}{2},\bar\kappa}}(\fl,\hfl)
\end{align}
Dividing again by $q^{\bar{\kappa},\lambda}_{t}$ and optimizing over $t \in (\frac{2}{\lambda_{\bar\kappa}},T]$ gives (using $q^{\bar{\kappa}}_{t} \leq q^{\bar{\kappa},\lambda_{\bar\kappa}/2}_{t}$)
\begin{align}
\sup_{ \frac{2}{\lambda_{\bar\kappa}} < t \leq T} (q^{\bar{\kappa},\lambda_{\bar\kappa}}_{t})^{-1}\|\hat\nu_t[\mu_\cdot] - \hat\nu_t[\hat{\mu}_\cdot] \|_{\TV} 
\leq W_{f_{\bar{\kappa}}}(\mu_0, \hat{\mu}_0) 
+ \frac{\const^F_{\mu,\TV}\sqrt{e}}{\rho^{L}_{uu}\sqrt{\pi} \lambda_{\bar\kappa}C_{\bar{\kappa}}\sigma_0}\left(\frac{5\sqrt{2e}}{3} + \frac{2(3+5\sqrt{2})}{3\sqrt{\pi}C_{\kappa}\sigma_0} \right) 
\overrightarrow{d^{T}_{\frac{\lambda_{\bar\kappa}}{2},\bar\kappa}}(\fl,\hfl)
\end{align}
This gives in total
\begin{align}
d_{\TV,\bar{\kappa}}^{\lambda_{\bar\kappa}/2,T}(\hat\nu_\cdot[\fl],\hat\nu_\cdot[\hfl]) 
&\leq W_{f_{\bar{\kappa}}}(\mu_0, \hat{\mu}_0) + \frac{\sqrt{e}\const^F_{\mu,\TV}}{\rho^{L}_{uu}\sqrt{\pi} C_{\bar{\kappa}}\sigma_0\lambda_{\bar{\kappa}}} \max \left\{\left(4\log(2)^{\frac{1}{2}} + \frac{10}{3}\sqrt{e}\right), \left(\frac{5\sqrt{2e}}{3} + \frac{2(3+5\sqrt{2})}{3\sqrt{\pi}C_{\kappa}\sigma_0} \right)\right\}\overrightarrow{d^{T}_{\frac{\lambda_{\bar\kappa}}{2},\bar\kappa}}(\fl,\hfl) \\
&\leq W_{f_{\bar{\kappa}}}(\mu_0, \hat{\mu}_0) + \varepsilon(\lambda)\overrightarrow{d^{T}_{\frac{\lambda_{\bar\kappa}}{2},\bar\kappa}}(\fl,\hfl).
\end{align}
For the second bound in $\overleftarrow{d_{\frac{\lambda_{\bar\kappa}}{2},\bar{\kappa}}^{T}}$ suppose now $\mu_0 = \hat{\mu}_0$. Then, using \Cref{lem:basics_MF_low}-\ref{item:stab_MF_low} and \eqref{eq:def_metric_flow_TV} we get
\begin{equation}
\|\nabla\hat\psi_t[\mu_\cdot]-\nabla\hat\psi_t[\hat\mu_\cdot]\|_{\infty}=\const^{\delta\psi_t}_x\leq 2\const^F_{\mu,\mathrm{TV}} \overleftarrow{d_{\frac{\lambda_{\bar\kappa}}{2},\bar{\kappa}}^{T}}(\mu_\cdot,\hat\mu_\cdot) \int_t^Te^{-(\lambda_{\bar\kappa}/2)(T-s)}q^{\bar{\kappa}}_{s-t} \De s
\end{equation}
and choosing $t_0 = 0$ in \eqref{eq:tv_bound_iterates_raw}
\begin{equation}\label{eq:low_reg_iteration_stability_1}
\begin{split}
\|\hat\nu_t[\mu_\cdot] - \hat\nu_t[\hat{\mu}_\cdot] \|_{\TV} \leq q^{\bar{\kappa}}_{t} W_{f_{\bar{\kappa}}}(\mu_0, \hat{\mu}_0) + \frac{1}{\sqrt{2}\rho^L_{uu}} \left(\int_{0}^{t}\left(\const^{\delta\psi_s}_x\right)^2 \De s \right)^{\frac{1}{2}}
\end{split}
\end{equation}
A standard calculation though long calculation detailed at \Cref{lem:boring_calculations_mild} gives
\begin{equation}
\const^{\delta\psi_s}_x\leq \frac{6\sqrt{2}\const^F_{\mu,\TV}}{\sqrt{\pi}\sqrt{\lambda_{\bar\kappa}}C_{\bar{\kappa}}\sigma_0} \overleftarrow{d_{\frac{\lambda_{\bar\kappa}}{2},\bar{\kappa}}^{T}}(\mu_\cdot,\hat\mu_\cdot)   e^{-(\lambda_{\bar\kappa}/2) (T-t)} \leq \varepsilon(\lambda) \overleftarrow{d_{\frac{\lambda_{\bar\kappa}}{2},\bar{\kappa}}^{T}}(\mu_\cdot,\hat\mu_\cdot)   e^{-(\lambda_{\bar\kappa}/2) (T-t)}
\end{equation}
Using this result in \eqref{eq:low_reg_iteration_stability_1} we obtain after some simple calculations that
\bes
\|\nu_t[\mu_\cdot] - \nu_t[\hat{\mu}_\cdot] \|_{\TV} \leq \varepsilon(\lambda_{\bar\kappa}/2)\overleftarrow{d_{\frac{\lambda_{\bar\kappa}}{2},\bar{\kappa}}^{T}}(\mu_\cdot,\hat\mu_\cdot) e^{-(\lambda_{\bar\kappa}/2) (T-t)}.
\ees
Optimizing over $t \in [0,T]$ we obtain \ref{item:stability_iterate_low}. 
\item \underline{Proof of \ref{item:contraction_iterate_low}:} 
The fact that we have $\varepsilon(\lambda)<1$ follows by standard computations. For \ref{item:contraction_iterate_low}. But then, thanks to \ref{item:grad_est_low}, we have that for any $\hat\mu_0\in\cP_p(\bbR^d)$, $\nu^{T,\hat G}[\cdot]$ maps the complete metric space $\Gamma_{\hat\mu_0,C}$ onto itself and is a strict contraction for the distance $\overrightarrow{d_{\lambda,\bar{\kappa}}^{T}}$. We can therefore invoke Banach's fixed point Theorem which gives us existence of a unique fixed point.
\item \underline{Proof of \ref{item:turnpike_low}:} For the turnpike estimate we have by \Cref{lem:abs_turnpike_est} and \ref{item:stability_iterate_low}
\begin{equation}\label{eq:abstract_tpike_low}
\|\mu^{T,G}_t - \mu^{T,\hat{G}}_t\|_{\TV} \leq \frac{q^{\bar{\kappa},\lambda}_t}{1 - \varepsilon(\lambda)} W_{f_{\bar{\kappa}}}(\mu_0, \hat{\mu}_0) + 
\frac{e^{-\lambda(T-t)}}{1 - \varepsilon(\lambda)} \overleftarrow{d_{\lambda,\bar{\kappa}}^{T}}(\nu^{T,\hat{G}}_\cdot[\mu^{T,G}_\cdot],\nu^{T,G}_\cdot[\mu^{T,G}_\cdot]) \eqsp.
\end{equation}
We can hence conclude the turnpike estimates for the dynamics by properly bounding the term involving $\overleftarrow{d_{\lambda,\bar{\kappa}}^{T}}$ as follows.
\begin{itemize}[wide]
\item Let $\| G\|_{f_{\kappa_b}} < + \infty$. Then, the proof on how to bound the term involving $\overleftarrow{d_{\lambda,\bar{\kappa}}^{T}}$ is very similar to the proof in \Cref{prop:abstract_suff_cond_tpike_high}-\ref{item:contraction_iterate_high}. If $\sup_{t\leq T}\|\psi^{T,G}_t[\mu_\cdot]\|_{f_{\kappa_b}}\leq (8-2\sqrt{e})\const^\psi_x$, we apply the TV-estimate of \Cref{lem:basics_MF_low}-\ref{item:stab_MF_low} for $t_0 = 0$ and $\tilde\kappa=\bar\kappa$ to obtain 
\begin{align}
\|\nu^{T,\hat{G}}_t[\mu^{T,G}_\cdot] - \nu^{T,G}_t[\mu^{T,G}_\cdot]\|_{\TV} \leq e^{-\lambda_{\bar{\kappa}}(T-t)} \frac{1}{2\sqrt{\lambda_{\bar{\kappa}}}\rho^L_{uu}} \|\hat{G} - G(\mu^{T,G}_T,\cdot) \|_{f_{\bar\kappa}} \eqsp
\end{align}
Plugging this bound in \eqref{eq:abstract_tpike_low} gives the turnpike estimate 
\bes
\|\mu^{T,G}_t - \mu^{T,\hat{G}}_t\|_{\TV} \leq \frac{q^{\bar{\kappa},\lambda}_t}{1 - \varepsilon(\lambda)} W_{f_{\bar{\kappa}}}(\mu_0, \hat{\mu}_0) +
\frac{e^{-\lambda(T-t)}}{(1 - \varepsilon(\lambda))}\frac{1}{2\sqrt{\lambda_{\bar{\kappa}}}} \|\hat{G} - G(\mu^{T,G}_T,\cdot) \|_{f_{\bar\kappa}} 
\ees
Plugging this estimate into the bound on the value functions from \Cref{lem:basics_MF_low}-\ref{item:stab_MF_low} and computing the resulting integrals by means of \Cref{lem:boring_calculations_mild} gives the following turnpike estimate for $\frac{1}{2\lambda_{\bar\kappa}}\leq t \leq T$
\bes\label{eq:est_val_func_low_G_Lip_small}
\|\varphi^{T,G}_t-\varphi^{T,\hat G}_t\|_{f_{\bar\kappa}} \leq \frac{4W_{f_{\bar\kappa}}(\mu_0,\hat{\mu_0})\const^{F}_{\mu,\mathrm{TV}}}{\pi(1-\varepsilon(\lambda))C^2_{\bar\kappa}\sigma^2_0\sqrt{\lambda_{\bar\kappa}}}e^{\ke{-\lambda t}}+\Big(\frac{3e^{\frac{1}{4}}\const^{F}_{\mu,\mathrm{TV}}}{\sqrt{\pi}(1-\varepsilon(\lambda))\lambda_{\bar\kappa}C_{\bar{\kappa}}\sigma_0\rho^L_{uu}C_{\bar{\kappa}}}+1\Big)\|\hat{G}- G(\mu_T,\cdot) \|_{f_{\bar\kappa}}e^{\ke{-\lambda(T-t)}}
\ees
If $\|G\|_{f_{\kappa_b}} \leq 4 \const^\psi_x$, we can guarantee $\sup_{t\leq T}\|\psi^{T,G}_t[\mu_\cdot]\|_{f_{\kappa_b}}\leq (8-2\sqrt{e})\const^\psi_x$ thanks to \Cref{lem:basics_MF_low}-\ref{item:grad_est_MF_low}. Hence, \eqref{eq:est_val_func_low_G_Lip_small} becomes \eqref{eq:tp_control_lip_low} by bounding ${\|\hat{G}- G(\mu_T,\cdot) \|_{f_{\bar\kappa}}}$ with $(16-4\sqrt{e})\const^\psi_x /C_{\bar{\kappa}}$ by means of the triangular inequality and setting $\tau(G)=0$.

To work with $\|\hat G\|_{f_{\kappa_b}} > 4{\const^x_\psi}$, we start by observing that setting $T_0 = T - \tau(G)$, we can guarantee ${\sup_{t\leq T_0}\|\psi^{T,G}_{t}[\mu_\cdot]\|_{f_{\kappa_b}}\leq (8-\sqrt{2})\const^\psi_x}$ from \Cref{lem:basics_MF_low}-\ref{item:grad_est_MF_low}.
From now on, the arguments used to conclude the estimates in this case are similar to the proof of \Cref{prop:abstract_suff_cond_tpike_high}-\ref{item:high_reg_tpike_est}, the only differences being that we directly work with the information ${\sup_{t\leq T_0}\|\psi^{T,G}_{t}[\mu_\cdot]\|_{f_{\kappa_b}}\leq (8-\sqrt{2})\const^\psi_x}$  this tim and we invoke \Cref{lem:basics_MF_low} instead of \Cref{lem:basics_MF_high}.
\item Suppose now $\| G\|_{\infty}  < +\infty$.
We have thanks to \Cref{lem:basics_MF_low}-\ref{item:grad_est_MF_low} that $\|\psi_{t}[\mu^{T,G}_\cdot] \|_{f_{\kappa_b}} \leq (8 - 2\sqrt{e}) \const^\psi_x$ for $t\leq T'_0$. Thus, invoking the dynamic programming principle, we can use the results obtained under the assumption $\sup_{t\leq T'_0}\|\psi_{t}[\mu^{T,G}_\cdot] \|_{f_{\kappa_b}} \leq (8 - 2\sqrt{e}) \const^\psi_x$ 
if we restrict to the time-interval $[0,T'_0]$. This means that the bounds \eqref{eq:tp_flow_Lip_low}-\eqref{eq:tp_control_lip_low} hold setting 
\bes 
\tau(G)=\tau'(G),\,\kappa_G=\bar\kappa, \quad \text{and} \quad \hat{G}=\varphi^{T,\hat G}_{T'_0},G(\mu_T,\cdot)=\varphi^{T,G}_{T'_0}.\ees 
Since the difference $\| \varphi^{T,G}_{T'_0}-\varphi^{T,\hat{G}}_{T_0}\|_{f_{\bar\kappa}}$ can be bounded with $(16-4\sqrt{e})C^{-1}_{\bar\kappa}\const^\psi_x$ by a triangular inequality and the definition of $T'_0$, we obtain the desired turnpike estimate for the flows and the sought turnpike estimate for the gradients of the value functions in $t\in[1/(2\lambda_{\bar{\kappa}}),T'_0]$. 
\end{itemize}
\end{itemize}
\end{proof}

\begin{lemma}\label{lem:low_nasty_calc}
In the setting of \Cref{prop:abstract_suff_cond_tpike_low} we have 
\ben[(i),wide]
\item If $t \leq \frac{2}{\lambda_{\bar\kappa}}$ then
\begin{align}
\|\nu_t[\mu_\cdot] - \nu_t[\hat{\mu}_\cdot] \|_{\TV} \leq &q^{\bar{\kappa}}_{t}W_{f_{\bar{\kappa}}}(\mu_0, \hat{\mu}_0) \\
&+ \frac{\sqrt{2}\const^F_{\mu,\TV}}{\rho^{L}_{uu}\pi \sqrt{\lambda_{\bar\kappa}}C_{\bar{\kappa}}^2 \sigma_0^2}\left(2\log(4) + \frac{25}{9} e  \right)^{\frac{1}{2}} \overrightarrow{d^{T}_{\frac{\lambda_{\bar\kappa}}{2},\bar\kappa}}(\fl,\hfl)
\end{align}
\item If $t > \frac{2}{\lambda_{\bar\kappa}}$ then
\begin{align}
\|\nu_t[\mu_\cdot] - \nu_t[\hat{\mu}_\cdot] \|_{\TV} 
&\leq q^{\bar{\kappa},\lambda_{\bar{\kappa}}}_{t}W_{f_{\bar{\kappa}}}(\mu_0, \hat{\mu}_0) \\
&+ e^{-(\lambda_{\bar\kappa}/2) t} \frac{\const^F_{\mu,\TV}e}{\rho^{L}_{uu}\pi C_{\bar{\kappa}}^2\sigma_0^2}\left(   \frac{5}{3}\sqrt{\frac{e}{\lambda_{\bar{\kappa}}}}
+ \frac{\sqrt{2}}{\sqrt{\pi}C_{\bar\kappa}\sigma_0} \left(\frac{1}{\sqrt{\lambda_{\bar{\kappa}}}} + \frac{5\sqrt{2}}{3\sqrt{\lambda_{\bar{\kappa}}}}\right) \right)
\overrightarrow{d^{T}_{\frac{\lambda_{\bar\kappa}}{2},\bar\kappa}}(\fl,\hfl)
\end{align}
\een
\end{lemma}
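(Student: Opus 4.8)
The plan is to derive both inequalities by a direct, if lengthy, computation that feeds the bound \eqref{eq:tv_bound_iterates_raw} into itself after choosing the free parameter $t_0$ conveniently. In case (i) I take $t_0=0$: this annihilates the middle integral in \eqref{eq:tv_bound_iterates_raw} and leaves exactly $q^{\bar\kappa}_t W_{f_{\bar\kappa}}(\mu_0,\hat\mu_0)$ as the first term, so that what remains is to estimate $\bigl(\int_0^t (\const^{\delta\psi_s}_x)^2\,\De s\bigr)^{1/2}$ for $t\le 2/\lambda_{\bar\kappa}$. In case (ii) I take $t_0=t-\tfrac{1}{2\lambda_{\bar\kappa}}$, which is admissible because $t>2/\lambda_{\bar\kappa}$; then $q^{\bar\kappa}_{t-t_0}=q^{\bar\kappa}_{1/(2\lambda_{\bar\kappa})}=\sqrt{\lambda_{\bar\kappa}}/(\sqrt\pi C_{\bar\kappa}\sigma_0)$ and $q^{\bar\kappa}_{t-t_0}e^{-\lambda_{\bar\kappa}t_0}=q^{\bar\kappa,\lambda_{\bar\kappa}}_t$, producing the stated leading term, and one is left with the two integrals $\int_0^{t_0}e^{-\lambda_{\bar\kappa}(t_0-s)}\const^{\delta\psi_s}_x\,\De s$ and $\int_{t_0}^{t}(\const^{\delta\psi_s}_x)^2\,\De s$, the latter over a window of fixed length $\tfrac{1}{2\lambda_{\bar\kappa}}$.

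The first ingredient is a pointwise upper bound on $\const^{\delta\psi_s}_x$. By \Cref{lem:basics_MF_low}-\ref{item:stab_MF_low} (with $\tilde\kappa=\bar\kappa$) one has $\const^{\delta\psi_s}_x = 2\const^F_{\mu,\TV}\int_s^T\|\mu_u-\hat\mu_u\|_{\TV}\,q^{\bar\kappa}_{u-s}\,\De u$, and the definition of the forward metric in \eqref{eq:def_metric_flow_TV} gives $\|\mu_u-\hat\mu_u\|_{\TV}\le q^{\bar\kappa,\lambda_{\bar\kappa}/2}_u\,\overrightarrow{d^{T}_{\frac{\lambda_{\bar\kappa}}{2},\bar\kappa}}(\fl,\hfl)$. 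Hence the task reduces to bounding the convolution $J(s):=\int_s^T q^{\bar\kappa,\lambda_{\bar\kappa}/2}_u\,q^{\bar\kappa}_{u-s}\,\De u$. I would do this by splitting the $u$-integration at the two cut-offs appearing in \eqref{q_kappa_t_def}, namely $u-s=\tfrac{1}{2\lambda_{\bar\kappa}}$ (for $q^{\bar\kappa}_{u-s}$) and $u=\tfrac{1}{\lambda_{\bar\kappa}}$ (for $q^{\bar\kappa,\lambda_{\bar\kappa}/2}_u$), and additionally at $u=2s$ near the lower endpoint, where both factors are in their singular $(\cdot)^{-1/2}$ regime. On $[s,2s]$ the substitution $u=s(1+w)$ turns that contribution into $\tfrac{1}{2\pi C_{\bar\kappa}^2\sigma_0^2}\int_0^1\bigl((1+w)w\bigr)^{-1/2}\,\De w$, a harmless $s$-independent constant; on the intermediate range the integrand is $\asymp u^{-1}$, which produces a $\log(1/s)$-type contribution; and on the tail $u-s\ge \tfrac{1}{2\lambda_{\bar\kappa}}$ one uses the exponential decay of $q^{\bar\kappa}_{u-s}$ together with $\int_0^\infty e^{-\lambda v}v^{-1/2}\,\De v=\sqrt{\pi/\lambda}$ and $\int_0^\infty q^{\bar\kappa}_v\,\De v = 2/(\sqrt{\pi\lambda_{\bar\kappa}}C_{\bar\kappa}\sigma_0)$ (the latter being the $T\to\infty$ limit of the integral computed in the proof of \Cref{lem:basics_MF_low}). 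The outcome is a bound $\const^{\delta\psi_s}_x\le \const^F_{\mu,\TV}\,\overrightarrow{d^{T}_{\frac{\lambda_{\bar\kappa}}{2},\bar\kappa}}(\fl,\hfl)\,\varrho(s)$ where $\varrho$ equals $(C_{\bar\kappa}^2\sigma_0^2)^{-1}$ times a logarithmically slowly varying function that decays like $e^{-(\lambda_{\bar\kappa}/2)s}$, with explicit numerical constants.

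With $\varrho$ at hand, the two remaining integrals are again evaluated by the same splitting. For case (i) one simply computes $\int_0^{t}\varrho(s)^2\,\De s$ over $t\le 2/\lambda_{\bar\kappa}$; the logarithmic contribution near $s=0$ is square-integrable and, after taking the square root, yields the numerical constant $2\log 4+\tfrac{25}{9}e$. For case (ii) one writes $e^{-\lambda_{\bar\kappa}t_0}=\sqrt{e}\,e^{-\lambda_{\bar\kappa}t}$ in order to factor out the global $e^{-(\lambda_{\bar\kappa}/2)t}$, then bounds $\int_0^{t_0}e^{-\lambda_{\bar\kappa}(t_0-s)}\varrho(s)\,\De s$ by a constant times $e^{-(\lambda_{\bar\kappa}/2)t}$ (a convolution of an exponential with $\varrho$, estimated via the identities behind \Cref{lem:boring_calculations_mild}), while $\int_{t_0}^{t}\varrho(s)^2\,\De s$ is taken over the fixed-length window $[t-\tfrac{1}{2\lambda_{\bar\kappa}},t]$ on which $\varrho\asymp e^{-(\lambda_{\bar\kappa}/2)t}$, so its square root is again $\asymp e^{-(\lambda_{\bar\kappa}/2)t}$ times a constant; substituting back into \eqref{eq:tv_bound_iterates_raw} and regrouping all the numerical factors gives the displayed expression.

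The proof is conceptually elementary, so the main obstacle is the bookkeeping: one must carry the exact dependence on $C_{\bar\kappa}$, $\sigma_0$, $\lambda_{\bar\kappa}$ and $\sqrt e$ through three nested integrations (the convolution defining $\const^{\delta\psi_s}_x$, then an $L^2$- or exponentially weighted integral in $s$, then a square root) so as to land precisely on the stated constants. The one point requiring a little care rather than brute force is the short window $u\approx s$ in $J(s)$, where both $q$-factors are simultaneously singular: there one must \emph{not} pull $q^{\bar\kappa,\lambda_{\bar\kappa}/2}_s$ out of the integral (that would create a spurious $s^{-1/2}$ factor and destroy the square-integrability), and instead keep the convolution structure, which makes $J(s)$ only logarithmically large as $s\downarrow 0$.
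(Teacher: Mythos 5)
Your high-level strategy is the same as the paper's: feed the general bound \eqref{eq:tv_bound_iterates_raw} into itself, set $t_0=0$ for small $t$, set $t_0$ a fixed distance below $t$ for large $t$, split the convolution $J(s)=\int_s^T q^{\bar\kappa,\lambda_{\bar\kappa}/2}_u q^{\bar\kappa}_{u-s}\,\De u$ at the regime changes of $q$, and keep the convolution structure near $u\approx s$ rather than pulling one $q$ outside. That last observation is indeed the crucial one and you have it right.

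Two technical departures mean your sketch as written would not reproduce the constants in the lemma statement. First, you choose $t_0=t-\tfrac{1}{2\lambda_{\bar\kappa}}$; the paper takes $t_0=t-\tfrac{1}{\lambda_{\bar\kappa}}$. Both choices make $q^{\bar\kappa}_{t-t_0}e^{-\lambda_{\bar\kappa}t_0}$ collapse to the stated leading term $q^{\bar\kappa,\lambda_{\bar\kappa}}_t$ (you verify this correctly), but the remainder integrals $\int_0^{t_0}e^{-\lambda_{\bar\kappa}(t_0-s)}\const^{\delta\psi_s}_x\,\De s$ and $\int_{t_0}^t(\const^{\delta\psi_s}_x)^2\,\De s$ are then taken over different windows and produce different numerical factors. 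Second, and more importantly, the specific numbers $2\log(4)$ and $\tfrac{25}{9}e$ in case (i) come from the $\sinh^{-1}$ parametrization of the near-diagonal singularity: the paper bounds $q^{\bar\kappa}_{u-s}\leq \tfrac{1}{\sqrt{2\pi(u-s)}C_{\bar\kappa}\sigma_0}$ on all of $[s,1/\lambda_{\bar\kappa}]$ and computes $\int_0^{1/\lambda_{\bar\kappa}-s}\tfrac{\De r}{\sqrt{r(r+s)}}=2\sinh^{-1}\bigl(\sqrt{1/(\lambda_{\bar\kappa}s)-1}\bigr)$, and then relies on the closed-form identities $\int_0^{1}\sinh^{-1}(\sqrt{1/u-1})^2\,\De u=\log 4$ and $\int_0^{1}\sinh^{-1}(\sqrt{1/u-1})\,\De u=1$ to obtain the exact constants. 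Your proposed extra split at $u=2s$ (constant piece on $[s,2s]$, $\asymp u^{-1}$ on $[2s,1/\lambda_{\bar\kappa}]$, exponential tail) gives an estimate of the same order but does not collapse to those closed forms, so the final numbers would come out different. The argument would still yield a version of the lemma with comparable constants, which suffices for the downstream turnpike estimates, but it would not be a proof of the lemma exactly as stated.
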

\begin{proof}
We recall the identity \eqref{eq:tv_bound_iterates_raw}
\begin{equation}\label{eq:low_reg_TV_est_general}
\begin{split}
\|\nu_t[\mu_\cdot] - \nu_t[\hat{\mu}_\cdot] \|_{\TV} &\leq q^{\bar{\kappa}}_{t-t_0}e^{-\lambda_{\bar{\kappa}}t_0} W_{f_{\bar{\kappa}}}(\mu_0, \hat{\mu}_0) + q^{\bar{\kappa}}_{t-t_0}\frac{1}{\rho^L_{uu}}\int_{0}^{t_0} e^{-\lambda_{\bar{\kappa}}(t_0 - s)} \const^{\delta\psi_s}_x \De s \eqsp+ \frac{1}{\sqrt{2}\rho^L_{uu}} \left(\int_{t_0}^{t}\left(\const^{\delta\psi_s}_x\right)^2 \De s \right)^{\frac{1}{2}},
\end{split}
\end{equation}
which is valid  for all $t_0\leq t\leq T$ and where
\begin{equation}
\const^{\delta\psi_t}_x = 2\const^F_{\mu,\TV}  \overrightarrow{d_{\lambda_{\bar\kappa}/2,\bar{\kappa}}^{T}}(\mu_\cdot,\hat\mu_\cdot) \int_t^Tq^{\bar{\kappa},\lambda_{\bar\kappa}/2}_s q^{\bar{\kappa}}_{s-t} \De s \eqsp. 
\end{equation}
Let us define for brevity
\begin{equation}
\bar{\const}^{\delta\psi_t}_x := 2\const^F_{\mu,\TV} \int_t^Tq^{\bar{\kappa},\lambda_{\bar\kappa}/2}_s q^{\bar{\kappa}}_{s-t} \De s \eqsp. 
\end{equation}
The rest of the proof consists of three steps: we first bound $\bar{\const}^{\delta\psi_t}_x$, then use this result in \eqref{eq:low_reg_TV_est_general} to obtain the conclusion $t\leq2/\lambda_{\bar\kappa}$ and conclude by doing the same in the case $t>2/\lambda_{\bar\kappa}$.
\bei
\item \underline{Step 1: upper bound for $\bar{\const}^{\delta\psi_t}_x$}
 If $t \geq \frac{1}{\lambda_{\bar\kappa}}$ then we can apply \Cref{lem:boring_calculations_mild} to obtain
\begin{align}\label{eq:ctv_bound}
\bar{\const}^{\delta\psi_t}_x
&= \frac{\sqrt{2}\const^F_{\mu,\TV}\sqrt{\lambda_{\bar\kappa} e}}{\sqrt{\pi}C_{\bar{\kappa}}\sigma_0}\left(\int_t^{T} q^{\bar{\kappa}}_{s-t} e^{-(\lambda_{\bar\kappa}/2) s}\De s \right) 
\leq  \frac{10\sqrt{e}}{3\sqrt{2}\pi }  \frac{\const^F_{\mu,\TV}}{C_{\bar{\kappa}}^2\sigma_0^2} e^{-(\lambda_{\bar\kappa}/2) t}.
\end{align}
Whereas for $t < \frac{1}{\lambda_{\bar\kappa}}$
\begin{align}\label{eq:bound_c_bar_psi}
\bar{\const}^{\delta\psi_t}_x
&= 2\const^F_{\mu,\TV}\left(\int_t^{\frac{1}{\lambda_{\bar\kappa}}} q^{\bar\kappa}_{s-t} q^{\bar\kappa,\lambda_{\bar\kappa}/2}_s\De s + \int_{\frac{1}{\lambda_{\bar\kappa}}}^T q^{\bar\kappa}_{s-t} q^{\bar\kappa,\lambda_{\bar\kappa}/2}_s\De s \right) \\
&\leq \frac{2\const^F_{\mu,\TV}}{\pi C_{\bar{\kappa}}^2 \sigma_0^2} \Big(\sinh^{-1}\left(\sqrt{\frac{1}{\lambda_{\bar\kappa} t} - 1} \right)+  \frac{5\sqrt{e}}{3\sqrt{2} }   e^{-\lambda_{\bar\kappa}/2 t} \Big)\eqsp,
\end{align}
where for the first integral we use $q^{\bar\kappa}_{s-t} \leq \frac{1}{\sqrt{2\pi (s-t)}C_{\bar{\kappa}}\sigma_0}$. This gives
\begin{align}
\int_t^{\frac{1}{\lambda_{\bar\kappa}}} q^{\bar\kappa}_{s-t} q^{\bar\kappa,\lambda_{\bar\kappa}/2}_s\De s
\leq \frac{1}{2\pi C_{\bar\kappa}^2 \sigma_0^2}\int_0^{\frac{1}{\lambda_{\bar\kappa}}-t}\frac{1}{\sqrt{s(s+t)}} \De s = \frac{1}{\pi C_{\bar\kappa}^2 \sigma_0^2}\sinh^{-1}\left(\sqrt{\frac{1}{\lambda_{\bar\kappa} t} - 1} \right) \eqsp .
\end{align}
The second integral is computed analogously as for $t \geq \frac{1}{2\lambda_{\bar\kappa}}$ using $e^{-\frac{1}{2}} < e^{-(\lambda_{\bar\kappa}/2) t}$ for $t < \frac{1}{2\lambda_{\bar\kappa}}$.

\item \underline{Step 2: $t \leq \frac{2}{\lambda_{\bar\kappa}}$.}
In this case we set $t_0=0$. Then, \eqref{eq:low_reg_TV_est_general} reads as
\bes
\|\nu_t[\mu_\cdot] - \nu_t[\hat{\mu}_\cdot] \|_{\TV} \leq q^{\bar{\kappa},\lambda_{\bar{\kappa}}}_{t}W_{f_{\bar{\kappa}}}(\mu_0, \hat{\mu}_0) + \frac{1}{\sqrt{2}\rho^L_{uu}} \left(\int_{0}^{t}\left(\bar{\const}^{\delta\psi_s}_x\right)^2 \De s \right)^{\frac{1}{2}} \overrightarrow{d^{T}_{\frac{\lambda_{\bar\kappa}}{2},\bar\kappa}}(\fl,\hfl) \eqsp.
\ees
In order to compute the second term we split the integral as follows
\begin{align}
\int_{0}^{t}\left(\bar{\const}^{\delta\psi_s}_x\right)^2 \De s  
= \int_{0}^{\frac{1}{\lambda_{\bar\kappa}}\wedge t}\left(\bar{\const}^{\delta\psi_s}_x\right)^2 \De s + \int_{\frac{1}{\lambda_{\bar\kappa}}\wedge t}^t\left(\bar{\const}^{\delta\psi_s}_x\right)^2 \De s \eqsp .
\end{align}
For the first term 
\begin{align}
\int_{0}^{\frac{1}{\lambda_{\bar\kappa}}\wedge t}\left(\bar{\const}^{\delta\psi_s}_x\right)^2 \De s 
\stackrel{\eqref{eq:bound_c_bar_psi}}{\leq} &\,2\left(\frac{2\const^F_{\mu,\TV}}{\pi C_{\bar{\kappa}}^2 \sigma_0^2}\right)^2\Biggl[\int_{0}^{\frac{1}{\lambda_{\bar\kappa}}} \sinh^{-1}\left(\sqrt{\frac{1}{\lambda_{\bar\kappa} s} - 1} \right)^2 \De s 
+ \frac{25 e}{18}\int_{0}^{\frac{1}{\lambda_{\bar\kappa}}\wedge t}e^{-\lambda_{\bar\kappa} s} \De s\Biggr] \eqsp.
\end{align}
Now, for the first integral
\begin{align}
\int_{0}^{\frac{1}{\lambda_{\bar\kappa}}} \sinh^{-1}\left(\sqrt{\frac{1}{\lambda_{\bar\kappa} s} - 1} \right)^2 \De s
=\frac{1}{\lambda_{\bar\kappa}}\int_{0}^{1} \sinh^{-1}\left(\sqrt{\frac{1}{s} - 1} \right)^2 \De s = \frac{\log(4)}{\lambda_{\bar\kappa}} \eqsp .
\end{align}
With this we obtain
\begin{align}
\left(\int_{0}^{t}\left(\bar{\const}^{\delta\psi_s}_x\right)^2 \De s \right)^{\frac{1}{2}} 
&\leq \left(\int_{0}^{\frac{1}{\lambda_{\bar\kappa}}}\left(\bar{\const}^{\delta\psi_s}_x\right)^2 \De s + \int_{\frac{1}{\lambda_{\bar\kappa}}\wedge t}^t\left(\bar{\const}^{\delta\psi_s}_x\right)^2 \De s \right)^{\frac{1}{2}} \\
&\leq \frac{2\const^F_{\mu,\TV}}{\pi C_{\bar{\kappa}}^2 \sigma_0^2}\left(\frac{2\log(4)}{\lambda_{\bar\kappa}} + \frac{25}{9}\int_0^t  e^{-\lambda _{\bar\kappa}s} \De s \right)^{\frac{1}{2}}\\
&\leq \frac{2\left(2\log(4) +   \frac{25}{9}e\right)^{\frac{1}{2}}}{\pi}\frac{\const^F_{\mu,\TV}}{ \sqrt{\lambda_{\bar\kappa}}C_{\bar{\kappa}}^2 \sigma_0^2} \eqsp ,
\end{align}
which is the desired result.
\item \underline{ Step 3: $t \geq \frac{2}{\lambda_{\bar\kappa}}$.} In this case, we choose $t_0 = t - \frac{1}{\lambda_{\bar\kappa}}$. Thus, \eqref{eq:low_reg_TV_est_general} becomes
\begin{align}\label{eq:horror_lemma_last step}
\|\nu_t[\mu_\cdot] - \nu_t[\hat{\mu}_\cdot] \|_{\TV} 
\leq &\,q^{\bar\kappa}_t W_{f_{\bar{\kappa}}}(\mu_0, \hat{\mu}_0) + q^{\bar\kappa}_{\frac{1}{\lambda_{\bar\kappa}}} \frac{1}{\rho^L_{uu}}\int_{0}^{t - \frac{1}{\lambda_{\bar\kappa}}} e^{-\lambda_{\bar{\kappa}}(t - \frac{1}{\lambda_{\bar\kappa}} - s)} \bar{\const}^{\delta\psi_s}_x \De s \eqsp \overrightarrow{d^{T}_{\frac{\lambda_{\bar\kappa}}{2},\bar\kappa}}(\fl,\hfl) \\
&+ \frac{1}{\sqrt{2}\rho^L_{uu}} \left(\int_{t - \frac{1}{\lambda_{\bar\kappa}}}^{t}\left(\bar{\const}^{\delta\psi_s}_x\right)^2 \De s \right)^{\frac{1}{2}} d_{\TV,\bar\kappa}^{\lambda_{\bar\kappa}/2,T}(\fl,\hfl),
\end{align}
where we have used that 
\begin{equation}
q^{\bar{\kappa},\lambda_{\bar{\kappa}}}_{\frac{1}{\lambda_{\bar\kappa}}} e^{-\lambda_{\bar{\kappa}}(t - \frac{1}{\lambda_{\bar\kappa}})} = q^{\bar{\kappa},\lambda_{\bar{\kappa}}}_t \eqsp .
\end{equation}
Since $t - \frac{1}{\lambda_{\bar\kappa}} \geq \frac{1}{\lambda_{\bar\kappa}}$
\begin{align}\label{eq:first_int_bd}
\left(\int_{t - \frac{1}{\lambda_{\bar\kappa}}}^{t}\left(\bar{\const}^{\delta\psi_s}_x\right)^2 \De s \right)^{\frac{1}{2}}
\leq \frac{2\const^F_{\mu,\TV}\sqrt{e}}{\pi C_{\bar{\kappa}}^2\sigma_0^2}   \frac{5}{3\sqrt{2}}\left(\int_{t - \frac{1}{\lambda_{\bar\kappa}}}^{t}e^{-\lambda_{\bar\kappa} s} \De s \right)^{\frac{1}{2}} \leq \frac{5\sqrt{2}e^{3/2}}{3\pi} \frac{\const^F_{\mu,\TV}}{ \sqrt{\lambda_{\bar{\kappa}}}C_{\bar{\kappa}}^2\sigma_0^2}   e^{-(\lambda_{\bar\kappa}/2) t} \eqsp .
\end{align}
For the other integral,
\begin{align}\label{eq:second_int_bd}
&\,\int_{0}^{t - \frac{1}{\lambda_{\bar\kappa}}} e^{-\lambda_{\bar{\kappa}}(t - \frac{1}{\lambda_{\bar{\kappa}}} - s)} \bar{\const}^{\delta\psi_s}_x \De s \\
= &\,\int_{0}^{\frac{1}{\lambda_{\bar\kappa}}} e^{-\lambda_{\bar{\kappa}}(t - \frac{1}{\lambda_{\bar\kappa}} - s)} \bar{\const}^{\delta\psi_s}_x \De s + \int_{\frac{1}{\lambda_{\bar\kappa}}}^{t - \frac{1}{\lambda_{\bar\kappa}}} e^{-\lambda_{\bar{\kappa}}(t - \frac{1}{\lambda_{\bar\kappa}} - s)} \bar{\const}^{\delta\psi_s}_x \De s \\
\stackrel{\eqref{eq:bound_c_bar_psi}}{\leq} &\,\frac{2\const^F_{\mu,\TV}}{\pi C_{\bar{\kappa}}^2 \sigma_0^2} \left(\int_{0}^{\frac{1}{\lambda_{\bar\kappa}}} e^{-\lambda_{\bar{\kappa}}(t - \frac{1}{\lambda_{\bar{\kappa}}} - s)} \sinh^{-1}\left(\sqrt{\frac{1}{\lambda_{\bar\kappa} s} - 1} \right)\De s  +   \frac{5\sqrt{e}}{3\sqrt{2}}\int_{0}^{t-\frac{1}{\lambda_{\bar\kappa}}} e^{-\lambda_{\bar{\kappa}}(t - \frac{1}{\lambda_{\bar\kappa}} - s)}e^{-(\lambda_{\bar\kappa}/2) s} \De s\right) \\
\leq &\, \frac{2e\const^F_{\mu,\TV}}{\pi \lambda_{\bar\kappa}C_{\bar{\kappa}}^2 \sigma_0^2} \big(1+\frac{5\sqrt{2}}{3}\big)e^{-(\lambda_{\bar\kappa}/2) t} \eqsp,
\end{align}
where for the first integral we have used $e^{-\lambda_{\bar{\kappa}}(t - \frac{1}{\lambda_{\bar\kappa}} - s)} \leq e^{-\lambda_{\bar\kappa} t} e$ and
\begin{equation}
\int_{0}^{\frac{1}{\lambda_{\bar\kappa}}}  \sinh^{-1} \left(\sqrt{\frac{1}{\lambda_{\bar\kappa} s} - 1} \right)\De s = 1 \eqsp.
\end{equation}
Using \eqref{eq:first_int_bd} and \eqref{eq:second_int_bd} in \eqref{eq:horror_lemma_last step} and bounding $q^{\bar\kappa}_{1/\lambda_{\bar\kappa}}$ with $q^{\bar\kappa,\lambda_{\bar\kappa}/2}_{1/\lambda_{\bar\kappa}}$ gives the desired result.
\eei

\end{proof}

\appendix
\section{Proofs of \Cref{sec:couplings}}\label{sec:appendix_coupling}

\subsection{Proofs of \Cref{sec:intro_coupling}}

We preface this section by well-known moment estimates of the diffusion under \Cref{ass:coupling}.
Consider the generator associated with \eqref{eq:SDE_sigma} defined for any $f \in C^2(\rset^d)$, $s \geq 0$ and $x \in \rset^d$ by
\begin{equation}
    \label{eq:def_generator}
    \scrL_s f(x) = (\beta_s(x) + \alpha_s(x)) \cdot \nabla f(x) + \tr(\sigma(x) \sigma(x)^{\transpose} \nabla^2 f(x)) \eqsp.
\end{equation}

\begin{prop}
\label{prop_existence_moment_estimate}
Assume \Cref{ass:coupling} and let $p \geq 1$.
Then, setting $V_p(x) = \sqrt{1+\norm{x}^{2p}}$, there exist, $\uplambda_p >0$ and $\const_p\geq 0$ such that for any $s \geq 0$ and $x \in \rset^d$, we have
\begin{equation}
    \scrL_s V_p(x) \leq - \uplambda_p V_p(x) + \const_p \eqsp.
    \label{eq:drift_V_1}
\end{equation}
Therefore, there exists unique strong
solutions for \eqref{eq:SDE_sigma} that satisfy if $\PE[\norm{X_0}^p] < \plusinfty$,
\begin{equation}
\sup_{t \geq 0} \PE[\norm{X_t}^p] < \plusinfty \eqsp.
\end{equation}
\end{prop}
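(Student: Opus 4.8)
The statement to prove is \Cref{prop_existence_moment_estimate}: a Lyapunov drift inequality $\scrL_s V_p(x) \leq -\uplambda_p V_p(x) + \const_p$ for $V_p(x) = \sqrt{1 + \norm{x}^{2p}}$, uniformly in $s$, together with the resulting existence, uniqueness and uniform-in-time $p$-th moment bound. The overall approach is first to verify the drift condition by a direct computation with the generator $\scrL_s$, exploiting \Cref{ass:coupling}, and then to deduce the moment bound by a standard Gronwall/Dynkin argument.

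\emph{Step 1: the drift inequality.} I would start by computing $\nabla V_p$ and $\nabla^2 V_p$. For $r = \norm{x}$ large, $V_p(x) \sim r^p$, with $\nabla V_p(x) = p r^{2p-2} x / \sqrt{1+r^{2p}}$ and a Hessian whose entries are $\cO(r^{p-2})$. Plugging into \eqref{eq:def_generator}, the diffusion term contributes $\tr(\sigma\sigma^\transpose(x)\nabla^2 V_p(x))$, which by the uniform bound $\sigma\sigma^\transpose(x) \preceq 2\Sigma^2 \Idd$ in \Cref{ass:MF_drift_intro}-\ref{ass:MF_drift_intro_diffusion_coeff} is $\cO(r^{p-2})$ for large $r$, hence lower order. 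The drift term gives $(\beta_s(x)+\alpha_s(x))\cdot\nabla V_p(x)$; here I would decompose $\beta_s(x)\cdot\nabla V_p(x)$ by writing $\beta_s(x) = \beta_s(x) - \beta_s(0) + \beta_s(0)$ and use that $\kappa_{\beta} \geq \bar\kappa \in \msk$ (\Cref{ass:coupling}-\ref{ass:_kappa}), which forces $\liminf_{r\to\infty}\kappa_\beta(r) > 0$; combined with the one-sided Lipschitz bound this yields $\ip{\beta_s(x)-\beta_s(0)}{x} \leq -c\,r^2$ for $r$ large enough, giving $\beta_s(x)\cdot\nabla V_p(x) \leq -c\, p\, r^p + \cO(r^{p-1})$ outside a large ball. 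The boundedness of $\alpha$ (from \Cref{ass:coupling}, when $\alpha \equiv 0$ this term vanishes; in general we would use $\sup\norm{\alpha_s} < \infty$) and of $\beta_s(0)$ (uniformly in $s$, since $0$ lies in a fixed compact and $\upphi_{\msc}$ is locally integrable — here I would in fact need the stronger uniform-in-$s$ bound which holds under the assumptions invoking this proposition, e.g. polynomial growth of $b$) contribute only $\cO(r^{p-1})$. Collecting terms, $\scrL_s V_p(x) \leq -\tfrac{cp}{2} r^p + C$ for $r \geq R_0$ and some $R_0$; on the compact ball $\{r \leq R_0\}$ the continuous function $\scrL_s V_p$ is bounded uniformly in $s$ (again using uniform bounds on the coefficients). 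Since $r^p$ and $V_p(x)$ are comparable up to constants, this gives \eqref{eq:drift_V_1} with suitable $\uplambda_p, \const_p$.

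\emph{Step 2: existence, uniqueness and the moment bound.} Existence and uniqueness of strong solutions was already recorded in the excerpt (right after \Cref{ass:coupling}, via \cite[Corollary 2.6]{gyongy1996ExistenceStrongSolutions}); alternatively the Lyapunov function itself prevents explosion. For the moment bound I would apply Itô's formula to $V_p(X_t)$, use \eqref{eq:drift_V_1} to get
\begin{equation}
\frac{\rmd}{\rmd t}\PE[V_p(X_t)] \leq -\uplambda_p \PE[V_p(X_t)] + \const_p,
\end{equation}
after a standard localization argument (stopping times $\tau_n = \inf\{t : \norm{X_t} \geq n\}$ to control the local martingale, then $n \to \infty$ by Fatou, using $\PE[V_p(X_0)] < \infty$ which follows from $\PE[\norm{X_0}^p] < \infty$). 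Gronwall's lemma then yields $\PE[V_p(X_t)] \leq e^{-\uplambda_p t}\PE[V_p(X_0)] + \const_p/\uplambda_p$, whence $\sup_{t\geq 0}\PE[\norm{X_t}^p] \leq \sup_{t\geq 0}\PE[V_p(X_t)] < \infty$.

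\emph{Main obstacle.} The only genuinely delicate point is making the drift estimate \emph{uniform in $s$} at the level of the lower-order terms: \Cref{ass:coupling} only provides local-in-time integrable bounds $\upphi_{\msc}(t)$ on $\sup_{x\in\msc}\norm{\beta_s(x)}$ and $\sup_s\norm{\alpha_s}$, which is not by itself enough for a clean time-uniform $\const_p$. In practice this proposition is always invoked under the stronger standing hypotheses of the sections that use it (polynomial growth of $b$ in \Cref{ass:MF_drift_intro}, or the explicit $\const^\beta_p$ bound in \Cref{prop:Holder_in_time}, or boundedness of the optimal-control correction in \Cref{sec:fin_dim_cont}), under which $\beta_s(0)$ and $\alpha$ are bounded uniformly in $s$; I would state the proposition under such a uniform-growth assumption so that the compact-ball and lower-order estimates are genuinely $s$-independent. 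Everything else is routine.
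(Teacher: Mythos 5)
Your proposal matches the paper's proof almost line for line: both compute $\nabla V_p$ and $\nabla^2 V_p$ explicitly, extract the confining inequality $\beta_s(x)\cdot x \leq -\kappa_+\norm{x}^2 + \const_\beta$ from $\kappa_\beta\geq\bar\kappa\in\msk$, observe that the $\alpha$ and diffusion contributions are lower order in $\norm{x}$, and close the moment bound via It\^o with localization, Gr\"onwall (the paper uses \Cref{lem:Gronwall_weak_form}) and Fatou. The point you flag as the ``main obstacle'' is well taken and is in fact passed over silently by the paper's own proof: the constant $\const_\beta$ in $\beta_s(x)\cdot x \leq -\kappa_+\norm{x}^2 + \const_\beta$ implicitly requires $\sup_{s}\norm{\beta_s(0)}<\plusinfty$, which \Cref{ass:coupling} only gives as a locally integrable bound in $t$; your observation that the proposition is always invoked under the stronger standing hypotheses (polynomial growth of $b$, bounded optimal controls) which do furnish the uniform-in-$s$ bound is exactly the right way to patch this.
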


\begin{proof}
  Since $\kappaX \in \msk$, we have that there exists $\kappa_+ >0$  and $\const_{\beta}$ such that for any $x \in\rset^d$ and $s \geq 0$, it holds
  \begin{equation}
    \beta_s(x) \cdot x \leq -\kappa_+ \norm{x}^2 + \const_\beta \eqsp.
  \end{equation}
  In addition, we have
  \begin{equation}
    \nabla V_p(x) = \frac{p x \norm{x}^{2p-2}}{V_p(x)} \eqsp, \quad \nabla^2 V_p(x) - \frac{p \Idd \norm{x}^{2p-2}}{V_p(x)} + \frac{p(p-1) x x^{\transpose} \norm{x}^{2(p-2)}}{V_p(x)} - \frac{p^2 x x^{\transpose} \norm{x}^{4(p-1)}}{V_p^{2}(x)} \eqsp.
  \end{equation}
  Therefore, combining this two facts and using \eqref{eq:def_generator} and by \Cref{ass:coupling}, $\alpha$ and $\sigma$ are uniformly bounded (in time and space), we obtain that
  there exist $\uplambda_p >0$ and $\const_{1,p},\const_{2,p}\geq 0$ such that for any $s \geq 0$ and $x \in \rset^d$,
  \begin{equation}
    \label{eq:drift_proof_V}
    \scrL_s V_p(x) \leq - 2 \uplambda_p V_p(x) + \const_{1,p} \norm{x}^{p-1} + \const_{2,p} \eqsp.
  \end{equation}
  Then,  there exists $M_{\beta} \geq 0$ such that for any $s \geq 0$ and $x \in\rset^d$, $\norm{x} \geq M_\beta$, $    \scrL_s V_p(x) \leq -  \uplambda_p V_p(x)$, which completes the proof of \eqref{eq:drift_V_1} using \eqref{eq:drift_proof_V} again.

Equation~\eqref{eq:drift_V_1} and  \cite[Cor 2.6]{gyongy1996ExistenceStrongSolutions} imply that there exists unique strong
solutions for \eqref{eq:SDE_sigma}. In addition, combining Itô formula and \eqref{eq:drift_V_1}, we have for any $M \geq 0$, and $t \geq t_0 \geq 0$,
\begin{align}
  V_p(X_{t \wedge \tau_M}) &=   V_p(X_{t_0 \wedge \tau_M}) + \int_{t_0 \wedge \tau_M}^{t \wedge \tau_M} \scrL_s  V_p(X_{s\wedge \tau_M}) \rmd s + \int_{t_0 \wedge \tau_M}^{t \wedge \tau_M} \nabla V_p(X_s) \cdot \rmd B_s \\
  & \leq  V_p(X_{t_0 \wedge \tau_M}) - \uplambda_p \int_{t_0 \wedge \tau_M}^{t \wedge \tau_M}  V_p(X_{s\wedge \tau_M}) \rmd s + \const_p (t \wedge \tau_M) + \int_{t_0 \wedge \tau_M}^{t \wedge \tau_M} \nabla V_p(X_s) \cdot \rmd B_s \eqsp,
\end{align}
 where $\tau_M = \inf \{ s \geq 0 \, :\, \norm{X_s} \geq M \}$. Since $(\int_{t_0 \wedge \tau_M}^{t \wedge \tau_M} \nabla V_p(X_s) \cdot \rmd B_s)_{t \geq 0}$ is a bounded martingale, we get taking expectation using Gronwall lemma (see \Cref{lem:Gronwall_weak_form} for a suitable version) for any $M \geq 0$, and $t \geq 0$,
 \begin{equation}
\PE[ \1\{t \leq \tau_M\} V_p(X_{t \wedge \tau_M})] \leq \PE[  V_p(X_{0})] + \const_p/\uplambda_p \eqsp.
 \end{equation}
 Taking $M\to \plusinfty$ and using Fatou's Lemma complete the proof.
\end{proof}

\subsubsection{Proof of \Cref{prop:Holder_in_time}}\label{sec:proof_Holder_in_time}

The proof of \ref{item:time_Holder_Wf} is standard under our assumptions. Indeed,
by \Cref{ass:coupling}, $\alpha,\sigma$ are uniformly bounded, $\sup_{x \in\rset^d \,  s \geq 0} \{  \norm{\alpha_s(x)} + \normFr{\sigma(x)\sigma(x)^{\transpose}}\} \leq \const_1$ for $\const_1\geq 0$,  and the assumption on $\beta$ implies that there exists $\const_2 \geq 0$ such that for any $x \in\rset^d$ and $s \geq 0$, $\norm{\beta_s(x)} \leq \const_2(1+ \norm{x}^p)$. Therefore by \eqref{eq:SDE_sigma}, we have that for any $t,s \geq 0$, using Jensen inequality
\begin{align}\label{eq:proof_1_SDE_sigma}
 \norm{X_t- X_s} &=  \int_s^t \norm{\beta_u(X_u) + \alpha_{u}(X_u)} \De u +\norm{\int_s^t \sigma(X_u) \De B_u} \\
 & \leq \int_s^t \const_3(\norm{X_u}^p +1)  \De u +\norm{\int_s^t \sigma(X_u) \De B_u} \eqsp,
\end{align}
where $\const_3 = \const_1\vee \const_2$.
Taking expectation,  using the Cauchy-Schwarz inequality and Itô isometry,  we get
\begin{equation}
  \PE[  \norm{X_t- X_s} ] \leq 2 \int_s^t \const_3(\bbE[\norm{X_u}^p] +1)  \De u + \| \sigma \|^2_{\mathrm{Fr}} \sqrt{t-s}.
  \end{equation}
  \Cref{prop_existence_moment_estimate} concludes the proof.
  We now show~\ref{item:time_Holder_TV}. To this end, denote by $(P_{s,t})_{t \geq s \geq 0}$ the in-homogeneous semigroup associated with~\eqref{eq:SDE_sigma} satisfying for any measurable and bounded function $F: \rset^d \to \rset$, $P_{s,t}F(X_s) = \PE[F(X_t)\vert X_s]$. We have using that for any $s,t \geq 0$, $s \leq t$, $\mu_s P_{s,t} = P_{0,t}$,
\begin{equation}
\|\mu_s-\mu_t\|_{\mathrm{TV}} \leq \|\mu_{s-\varepsilon}P_{s-\varepsilon,s}-\mu_{t-\varepsilon}P_{s-\varepsilon,s}\|_{\mathrm{TV}} + \|\mu_{t-\varepsilon}P_{s-\varepsilon,s}-\mu_{t-\varepsilon}P_{t-\varepsilon,t}\|_{\mathrm{TV}}
\end{equation}
Now by definition of the total variation distance and using  \Cref{prop:contr_same_drift}, we have
\begin{equation}
  \label{eq:2}
 \|\mu_{s-\varepsilon}P_{s-\varepsilon,s}-\mu_{t-\varepsilon}P_{s-\varepsilon,s}\|_{\mathrm{TV}} \leq  \|\mu_{s-\varepsilon}-\mu_{t-\varepsilon}\|_{\mathrm{TV}} \leq q^\kappa_{\varepsilon} W_f(\mu_{s-\varepsilon}, \mu_{t-\varepsilon}) \eqsp,
\end{equation}
where $q^\kappa_\varepsilon$ is defined by \eqref{q_kappa_t_def}.
Regarding the second term, we apply Pinsker's inequality and the data processing inequality to get
\begin{align}
\|\mu_{t-\varepsilon}P_{s-\varepsilon,s}-\mu_{t-\varepsilon}P_{t-\varepsilon,t}\|_{\mathrm{TV}}
  &\leq \scrH(\mu_{t-\varepsilon}P_{s-\varepsilon,s} |\mu_{t-\varepsilon}P_{t-\varepsilon,t})^{\frac{1}{2}}\\
  & \leq \scrH(\mathcal{L}((\tilde{X}^s_{u})_{u \in \ccint{0,\varepsilon}}) \vert \mathcal{L}((\tilde{X}^t_{u})_{u \in \ccint{0,\varepsilon}}))^{1/2} \eqsp,
\end{align}
where $\scrH(\mu \vert \nu)$ denotes the relative entropy of $\mu$ w.r.t. $\nu$,
$\scrH(\mu \vert \nu) = \int_{\mathrm{p}_{\mu,\nu} >0}\log \mathrm{p}_{\mu,\nu} \rmd \mu$, if $\mu$ is absolutely continuous with respect to $\nu$ and $\mathrm{p}_{\mu,\nu}$ denotes the corresponding density and $\scrH(\mu \vert \nu) = \plusinfty$ otherwise, and $\mathcal{L}((\tilde{X}^s_{u})_{u \in \ccint{0,\varepsilon}})$ and $\mathcal{L}((\tilde{X}^t_{u})_{u \in \ccint{0,\varepsilon}})$ denotes the distributions of solutions of \eqref{eq:SDE_sigma} on $\ccint{s-\varepsilon,s}$ and $\ccint{t-\varepsilon,t}$ respectively starting from $\mu_{t-\varepsilon}$. Then using the Girsanov theorem \cite[VIII Thm 1.7]{revuz1999ContinuousMartingalesBrownian}, we get  
\begin{align}
\|\mu_{t-\varepsilon}P_{s-\varepsilon,s}-\mu_{t-\varepsilon}P_{t-\varepsilon,t}\|_{\mathrm{TV}} &\leq (1/\sqrt{2})\bbE\left[\int_0^{\varepsilon}|\beta_{s-\varepsilon+u}(\tilde{X}_u^s) - \beta_{t-\varepsilon+u}(\tilde{X}_u^t)|^2 \De u\right]^{1/2} \\
  \label{eq:4}
&\leq (1/\sqrt{2}) \sqrt{\varepsilon}\const_{\beta} |t-s|^{\upgamma} \eqsp,
\end{align}
where we have used that $\beta$ is $\upgamma$-Hölder and $\const_{\beta}$ denotes the corresponding constant.
Using~\eqref{eq:2} and \eqref{eq:4} completes the proof of \ref{item:time_Holder_TV}.

\subsection{Proofs of \Cref{prop:contr_same_drift}}\label{sec:proof_contr_same_drift}

Conditioning on $(X_0,\hX_0)$, we can assume without loss of generality that $(X_0,\hX_0) = (x,\hat{x})$.

\begin{itemize}[wide]
\item \underline{Proof of \ref{item_2:contraction_coup_by_ref}.}
Let us start by establishing a differential inequality satisfied by $f_{\bar{\kappa}}(|X_s-\hat{X}_s|)$. Define for any $s \geq 0$, $r_s = |X_s - \hat{X}_s|$. Then applying Itô's formula we obtain for $s < T_0$, setting $\De W_s = \rme_s \cdot \De B^1_s$
\begin{align}
\De r_s = &\,\frac{(X_s - \hat{X}_s)}{r_s} \cdot \left(\beta_s(X_s) - \beta_s(\hat{X}_s) \right) \De s + 2 \sigma_0  \De W_s + \frac{(X_s - \hat{X}_s)}{r_s} \cdot(\bar{\sigma}(X_s) - \bar{\sigma}(\hat{X}_s)) \De B^3_s \\
&+ \frac{1}{2} \tr \Bigg((2\rme_s \cdot \rme_s^{\top}, \bar{\sigma}(X_s) - \bar{\sigma}(\hat{X}_s))^{\top} \left(\frac{\mathrm{I}}{r_s} - \frac{(X_s - \hat{X}_s) (X_s - \hat{X}_s)^{\top}}{r_s^3} \right)
(2\rme_s \cdot \rme_s^{\top}, \bar{\sigma}(X_s) - \bar{\sigma}(\hat{X}_s))\Bigg) \De s\\
=&\,\frac{(X_s - \hat{X}_s)}{r_s} \cdot \left(\beta_s(X_s) - \beta_s(\hat{X}_s)\right) \De s + 2 \sigma_0  \De W_s + \frac{(X_s - \hat{X}_s)}{r_s} \cdot (\bar{\sigma}(X_s) - \bar{\sigma}(\hat{X}_s)) \De B^3_s \\
&+ \defEns{ \frac{1}{2r_s} \|\bar{\sigma}(X_s) - \bar{\sigma}(\hat{X}_s) \|_{\mathrm{Fr}}^2 - \frac{1}{2r_s^3} |(\bar{\sigma}(X_s) - \bar{\sigma}(\hat{X}_s))(X_s - \hat{X}_s)|^2 }\De s \eqsp.
\end{align}
Then applying the Itô-Tanaka formula \cite[VI Thm 1.5]{revuz1999ContinuousMartingalesBrownian} combined with the occupation formula \cite[VI Cor 1.6]{revuz1999ContinuousMartingalesBrownian} to the concave function $f_{\bar{\kappa}}$ which is continuously differentiable and such that $f'_{\bar{\kappa}}$ is absolutely continuously, for any $t \geq t_0 \geq 0$ and $M>0$,
\begin{align}
 &f_{\bar{\kappa}}(r_{t \wedge \tau_M}) - f_{\bar{\kappa}}(r_{t_0 \wedge \tau_M})\\
 & =\int_{t_0 \wedge \tau_M}^{t \wedge \tau_M }\defEns{f_{\bar{\kappa}}'(r_s) \frac{(X_s - \hat{X}_s)}{r_s} \cdot \left(\beta_s(X_s) - \beta_s(\hat{X}_s)\right) + \frac{1}{2}f_{\bar{\kappa}}''(r_s)\left(4 \sigma_0^2 + \frac{1}{(r_s)^2} |(\bar{\sigma}(X_s) - \bar{\sigma}(\hat{X}_s))(X_s - \hat{X}_s)|^2\right)}\De s \\ &  +  M_{t\wedge \tau_M} - M_{t_0\wedge \tau_M}  + \int_{t_0 \wedge \tau_M}^{t \wedge \tau_M} \defEns{ \frac{1}{2r_s} \|\bar{\sigma}(X_s) - \bar{\sigma}(\hat{X}_s) \|_{\mathrm{Fr}}^2 - \frac{1}{2r_s^3} |(\bar{\sigma}(X_s) - \bar{\sigma}(\hat{X}_s))(X_s - \hat{X}_s)|^2 }\De s \\
 &\leq  \int_{t_0 \wedge \tau_M}^{t \wedge \tau_M }\defEns{-{\bar{\kappa}_{\beta}}(r_s)r_sf_{\bar{\kappa}}'(r_s)  + 2f_{\bar{\kappa}}''(r_s) \sigma_0^2 } \rmd s  +  M_{t\wedge \tau_M} - M_{t_0\wedge \tau_M} \eqsp ,
\end{align}
where we have used the assumption on $\bar{\kappa}$ and set
\begin{equation}
  \label{eq:def_m_s-1deda}
  \rmd M_s = f_{\bar{\kappa}}'(r_s)\{ 2 \sigma_0 \rmd W_s + r_s^{-1}{(X_s - \hat{X}_s)} \cdot (\bar{\sigma}(X_s) - \bar{\sigma}(\hat{X}_s))\rmd B_s^3\}\eqsp,
\end{equation}
and $\tau_M= \inf \{ t >0 \, : \, r_t \leq 1/M \eqsp, \, r_t \geq M \}$.
Using \eqref{eq:Eberle_funct_2}, we obtain
\begin{equation}
  \label{eq:proof_1coupling_tv_1}
  f_{\bar{\kappa}}(r_{t \wedge \tau_M}) - f_{\bar{\kappa}}(r_{t_0 \wedge \tau_M}) \leq - \lambda_{\bar{\kappa}} \int_{t_0 \wedge \tau_M}^{t \wedge \tau_M} f_{\bar{\kappa}}(r_s)\rmd s + M_{t\wedge \tau_M}  - M_{t_0\wedge \tau_M} \eqsp.
\end{equation}
Then, since $( M_{t\wedge \tau_M})_{t \geq 0}$ is a martingale, we get
for any $t \geq t_0 \geq 0$ and $M>0$,

\begin{align}
\PE[f_{\bar{\kappa}}(r_{t \wedge \tau_M}) - f_{\bar{\kappa}}(r_{t_0 \wedge \tau_M}) ] \leq -\lambda_{\bar{\kappa}} \PE\left[\int_{t_0 \wedge \tau_M}^{t \wedge \tau_M} f_{\bar{\kappa}}(r_{s})\rmd s\right] \eqsp.
\end{align}
Taking $M \to \plusinfty$ and using Fatou's Lemma, and then using the proper version of Grönwall's lemma (see \Cref{lem:Gronwall_weak_form}), we obtain
\begin{equation}
  \PE[f_{\bar{\kappa}}(r_{t})] \leq \rme^{\lambda_{\bar{\kappa}}t } f_{\bar{\kappa}}(\norm{x-\hat{x}}) \eqsp.
\end{equation}

\item \underline{Proof of \ref{item:final_TV_coup_by_ref}.}
We consider first the case $t \leq  1/(2\lambda_{\bar{\kappa}})$.
Following the same lines as the proof of  \eqref{eq:proof_1coupling_tv_1}, we obtain that for $t < T_0$, almost surely
\begin{equation}
  \label{eq:proof_1coupling_tv_2}
  f_{\bar{\kappa}}(r_{t}) - f_{\bar{\kappa}}(r_0) \leq - \lambda_{\bar{\kappa}} \int_0^{t } f_{\bar{\kappa}}(r_s)\rmd s + M_{t}\eqsp.
\end{equation}
Note that we can extend $M_t$ defined in \eqref{eq:def_m_s-1deda} for $t \geq T_0$ by continuity and $\langle M\rangle_t \geq 4 f_{\bar{\kappa}}'(r_s)^2\sigma_0^2 t$, where $\langle M\rangle_t$ denotes the bracket of $(M_t)_{t \geq 0}$. In addition, since $f_{\bar{\kappa}}(r_t) =0$ for $t \geq T_0$, for any $t \geq 0$, we have
\begin{equation}
  \label{eq:proof_1coupling_tv_3}
  f_{\bar{\kappa}}(r_{t}) \leq  f_{\bar{\kappa}}(r_0)+ M_{t\wedge T_0}\eqsp.
\end{equation}
Since $f_{\bar{\kappa}}'(r)\geq C_{\bar{\kappa}}$ for any $r \geq 0$, for any $t \geq 0$,
\begin{equation}
  \label{eq:lower_bound_bracket_M}
  \langle M\rangle_t \geq 4 C_{\bar{\kappa}}^2 \sigma_0^2 t \eqsp.
\end{equation}
Then, the process $(M_t)_{t \geq 0}$ satisfies the hypothesis of the Dambis-Dubins-Schwartz Theorem  \cite[V Thm 1.6]{revuz1999ContinuousMartingalesBrownian} whose application yields that there exists a one-dimensional Brownian motion $(\tilde{B}_t)_{t\geq 0}$ such that $M_t = \tilde{B}_{\langle M\rangle_t}$.
Therefore, we obtain by~\eqref{eq:proof_1coupling_tv_3},
\begin{align}
  \PP(T_0 \geq t) &= \PP\parenthese{\tilde{B}_{\langle M\rangle_s} + f_{\bar{\kappa}}(r_0) \geq 0\, , \text{for any $s \in \ccint{0,t}$}}\\
  \label{eq:frist_estimate_tv_proof-1}
  &\leq \PP\Big(\sup_{s\in \ccint{0,4 C_{\bar{\kappa}}^2 \sigma^2 t}} \tilde{B}_s \leq f_{\bar{\kappa}}(r_0)\Big) \leq\PP(\absLigne{ \tilde{B}_{4 C_{\bar{\kappa}}^2 \sigma^2 t}} \leq  f_{\bar{\kappa}}(r_0)) = \frac{2f_{\bar{\kappa}}(|x-\hat{x}|)}{\sqrt{8\uppi C_{\bar{\kappa}}^2 \sigma_0^2 t}}\eqsp,
\end{align}
where we have used the the reflection principle asserting that if $(\tilde{B}_s)_{s\geq0}$ is a one-dimensional Brownian motion, then
\begin{equation}
  \txts \bbP(\sup_{0\leq u\leq s}\tilde{B}_u \geq a) = 2\bbP(\tilde{B}_s\geq a) \quad \forany a,s>0 \eqsp.
\end{equation}

We now consider the case  $t \geq 1/(2\lambda_{\bar{\kappa}})$. Set $\epsilon = 1/(2\lambda_{\bar{\kappa}})$. Then by combining \eqref{eq:frist_estimate_tv_proof-1} and \ref{item_2:contraction_coup_by_ref}, and using the Markov property, we have
\begin{align}
\bbP[X_t \neq \hat{X}_t] \leq \frac{\bbE[f_{\bar{\kappa}}(|X_{t-\epsilon} - \hat{X}_{t - \epsilon}|)]}{{\sqrt{2\uppi C_{\bar{\bar{\kappa}}}^2\sigma_0^2\epsilon}}}
\leq \frac{\rme^{\lambda_{\bar{\kappa}}\epsilon}}{{\sqrt{2\uppi C_{\bar{\bar{\kappa}}}^2\sigma_0^2\epsilon}}} \rme^{-\lambda_{\bar{\kappa}} t} f_{\bar{\kappa}}(|x-\hat{x}|)\eqsp.
\end{align}
Using the definition of $\epsilon$ concludes the proof.
\end{itemize}

\subsection{Proof of \Cref{prop:contr_same_drift_interp}}\label{sec:proof_contr_same_drift_interp}

The proofs of \ref{item_2:contraction_coup_by_ref_interp}-\ref{item:final_TV_coup_by_ref_interp} follow the same lines as their counterpart in \Cref{prop:contr_same_drift} and are therefore omitted.

We now deal with \ref{item:h_dominates_square}. First note that following the same arguments as for proving
\eqref{eq:proof_1coupling_tv_1} but using standard Itô's formula, we have that for any twice continuously differentiable function $f:\rset_+ \to \rset_+$ as well as for $f=f_{\bar{\kappa}}$,
\begin{align}
 &f(r_{t \wedge \tau_M}) - f(r_{t_0 \wedge \tau_M})
    \label{eq:def_m_s-1deda_0_1}
\leq  \int_{t_0 \wedge \tau_M}^{t \wedge \tau_M }\defEns{-{\bar{\kappa}}(r_s)r_sf'(r_s)  + 2f''(r_s) \sigma_0^2 } \rmd s  +  M_{t\wedge \tau_M}^f -  M_{t_0\wedge \tau_M}^f\eqsp,
\end{align}
where we have set
\begin{equation}
  \label{eq:def_m_s-1deda_1}
  \rmd M_s^f = f'(r_s)\{ 2 \sigma_0 \rmd W_s + r_s^{-1}{(X_s - \hat{X}_s)} \cdot (\bar{\sigma}(X_s) - \bar{\sigma}(\hat{X}_s))\rmd B_s^3\}\eqsp,
\end{equation}
and $\tau_M= \inf \{ t >0 \, : \, r_t \leq 1/M \eqsp, \, r_t \geq M \}$.

Then, since $\bar{\kappa} \in\msk$,
\begin{equation}
  \text{ there exists $\bar{\kappa}_+ >0$ and $R_1 \geq 1$ such that $\bar{\kappa}(r) \geq \bar{\kappa}_+$ for any $r \geq R_1$} \eqsp.
  \label{eq:def_kappa_p}
\end{equation}
We now define
\begin{equation}
  \tilde{f}_{\bar{\kappa},2}(r) = f_{\bar{\kappa}}(r) + \akappa \fRU(r) \eqsp, \quad \akappa = \frac{\lambda_{\bar{\kappa}}C_{\bar{\kappa}}R_1}{24(1+\sigma_0^2)} \eqsp, \quad
  \fRU(r)  = \frac{(r-R_1)_+^3}{1+\sigma_{0,\kappa}^2+r}\eqsp, \quad \sigma_{0,\kappa}^2 = 8 \sigma^2_0/\kappa_+ \eqsp.
\end{equation}
Note that $\fRU$ is twice continuously differentiable and for any $r \geq 0$,
\begin{align}
  \fRU'(r) &=  \frac{3 (r-R_1)_+^2}{1+\sigma_{0,\kappa}^2+r} - \frac{ (r-R_1)_+^3}{(1+\sigma_{0,\kappa}^2+r)^2} \geq \frac{2 (r-R_1)_+^2}{1+\sigma_{0,\kappa}^2+r} \eqsp,\\
  \fRU''(r) &= \frac{6 (r-R_1)_+}{1+\sigma_{0,\kappa}^2+r} - \frac{6 (r-R_1)_+^2}{(1+\sigma_{0,\kappa}^2+r)^2} + \frac{2 (r-R_1)_+^3}{(1+\sigma_{0,\kappa}^2+r)^3}\eqsp.
\end{align}
Using these results, \eqref{eq:Eberle_funct_2}, \eqref{eq:def_kappa_p} and \eqref{eq:def_m_s-1deda_0_1}, we get
\begin{align}
 \tilde{f}_{\bar{\kappa},2}(r_{t \wedge \tau_M}) - \tilde{f}_{\bar{\kappa},2}(r_{t_0 \wedge \tau_M})
  & \leq  \int_{t_0 \wedge \tau_M}^{t \wedge \tau_M }\defEns{-{\bar{\kappa}}(r_s)r_sf_{\bar{\kappa}}'(r_s) + 2f_{\bar{\kappa}}''(r_s) \sigma_0^2 - {\bar{\kappa}}(r_s)r_s \akappa \fRU'(r_s)  + 2\akappa\fRU''(r_s) \sigma_0^2 } \rmd s \\
   & \qquad +  M_{t\wedge \tau_M}^{f_{\bar{\kappa}}}+ M_{t\wedge \tau_M}^{\fRU} -M_{t_0\wedge \tau_M}^{f_{\bar{\kappa}}}- M_{t_0\wedge \tau_M}^{\fRU}\\
  \label{eq:5}
  & \leq \int_{t_0 \wedge \tau_M}^{t \wedge \tau_M }\defEns{- \lambda_{\bar{\kappa}}f_{\bar{\kappa}}(r_s)  - \bar{\kappa}_+r_s\akappa\fRU'(r_s)  + 2\akappa\fRU''(r_s) \sigma_0^2 } \rmd s \\
  &+ M_{t\wedge \tau_M}^{f_{\bar{\kappa}}}+  M_{t\wedge \tau_M}^{\fRU} -M_{t_0\wedge \tau_M}^{f_{\bar{\kappa}}}- M_{t_0\wedge \tau_M}^{\fRU} \eqsp.
\end{align}
Then, it is easy to verify, distinguishing the cases $r \in \ccint{0,R_1}$, $r\in\ccint{R_1,\bar{R}_1}$ and $r \geq \bar{R}_1$, $\bar{R}_1 = (R_1+1)\vee (12\sigma^2_0/\kappa_+)$, that there exists $\tilde{\lambda}_{\bar{\kappa},2}>0$ such that for any $r \geq 0$,
\begin{equation}
  - \lambda_{\bar{\kappa}}f_{\bar{\kappa}}(r)  - \bar{\kappa}_+\akappa r\fRU'(r)  + 2\akappa\fRU''(r) \sigma_0^2 \leq -\tilde{\lambda}_{\bar{\kappa},2} \tilde{f}_{\bar{\kappa},2}(r)\eqsp.
\end{equation}
This inequality, \eqref{eq:5} Fatou's Lemma and another application of Grönwall (\Cref{lem:Gronwall_weak_form}) conclude the proof.

\subsection{Proofs of \Cref{prop:contr_delta_coup}}\label{sec:proof_contr_delta_coup}

\begin{itemize}[wide]

\item\textbf{Step 1: SDE inequality for $f(r^{\delta}_s)$}

Define $\tau_n := \inf\{s \geq 0: |X^\delta_s| \geq n \}$ and consider the localized processes $X^{\delta,n}_s := X^{\delta}_{s\wedge\tau_n}$, $\hat{X}^{\delta,n}_s := \hat{X}^{\delta}_{s\wedge\tau_n}$. For readability, we omit the $n$ in the following computation.

Define $r^{\delta,a}_s = |X^{\delta}_s - \hat{X}^{\delta}_s|_a$, where $|x|_a = \sqrt{|x|^2 + a}$. Then applying Itô's formula
\begin{align}
\De r^{\delta,a}_s &= \frac{(X^{\delta}_s - \hat{X}^{\delta}_s)}{r^{\delta,a}_s} \cdot \left(\beta_s(X^{\delta}_s) - \hat{\beta}_s(\hat{X}^{\delta}_s) \right) \De s + 2 \sigma_0 \rcd(r^{\delta}_s) \De W_s 
+ \frac{(X^{\delta}_s - \hat{X}^{\delta}_s)}{r^{\delta,a}_s} \cdot(\bar{\sigma}(X^{\delta}_s) - \bar{\sigma}(\hat{X}^{\delta}_s)) \De B^3_s \\
&+ \frac{1}{2} \tr \Bigg((2\rcd(r^{\delta}_s)\rme_s \cdot \rme_s^{\top}, \bar{\sigma}(X^{\delta}_s) - \bar{\sigma}(\hat{X}^{\delta}_s))^{\top} \left(\frac{\mathrm{I}}{r^{\delta,a}_s} - \frac{(X^{\delta}_s - \hat{X}^{\delta}_s) (X^{\delta}_s - \hat{X}^{\delta}_s)^{\top}}{(r^{\delta,a}_s)^3} \right) (2\rcd(r^{\delta}_s)\rme_s \cdot \rme_s^{\top}, \bar{\sigma}(X^{\delta}_s) - \bar{\sigma}(\hat{X}^{\delta}_s))\Bigg) \De s\\
&=\frac{(X^{\delta}_s - \hat{X}^{\delta}_s)}{r^{\delta,a}_s} \cdot \left(\beta_s(X^{\delta}_s) - \hat{\beta}_s(\hat{X}^{\delta}_s) \right) \De s 
+ 2 \sigma_0 \rcd(r^{\delta}_s) \De W_s 
+ \frac{(X^{\delta}_s - \hat{X}^{\delta}_s)}{r^{\delta,a}_s} \cdot (\bar{\sigma}(X^{\delta}_s) - \bar{\sigma}(\hat{X}^{\delta}_s)) \De B^3_s \\
&+ 2 \rcd(r^{\delta}_s)^2\left(\frac{1}{r^{\delta,a}_s} - \frac{r^{\delta}_s}{(r^{\delta,a}_s)^3} \right)  
+ \frac{1}{2r^{\delta,a}_s} \|\bar{\sigma}(X^{\delta}_s) - \bar{\sigma}(\hat{X}^{\delta}_s) \|_{\mathrm{Fr}}^2 - \frac{1}{2(r^{\delta,a}_s)^3} |(\bar{\sigma}(X^{\delta}_s) - \bar{\sigma}(\hat{X}^{\delta}_s))(X^{\delta}_s - \hat{X}^{\delta}_s)|^2 \De s
\end{align}
Then applying again Itô's formula
\begin{align}
&\De f(r^{\delta,a}_s) \\
=&f'(r^{\delta,a}_s) \frac{(X^{\delta}_s - \hat{X}^{\delta}_s)}{r^{\delta,a}_s} \cdot \left(\beta_s(X^{\delta}_s) - \hat{\beta}_s(\hat{X}^{\delta}_s) \right) \De s + \De M_s \\
&+ f'(r^{\delta,a}_s) \Bigg(2 \rcd(r^{\delta}_s)^2\left(\frac{1}{r^{\delta,a}_s} - \frac{r^{\delta}_s}{(r^{\delta,a}_s)^3} \right) + \frac{1}{2r^{\delta,a}_s} \|\bar{\sigma}(X^{\delta}_s) - \bar{\sigma}(\hat{X}^{\delta}_s) \|_{\mathrm{Fr}}^2 
- \frac{1}{2(r^{\delta,a}_s)^3} |(\bar{\sigma}(X^{\delta}_s) - \bar{\sigma}(\hat{X}^{\delta}_s))(X^{\delta}_s - \hat{X}^{\delta}_s)|^2\Bigg) \De s \\
&+ \frac{1}{2}f''(r^{\delta,a}_s)\left(4 \rcd(r^{\delta}_s)^2\sigma_0^2 + \frac{1}{(r^{\delta,a}_s)^2} |(\bar{\sigma}(X^{\delta}_s) - \bar{\sigma}(\hat{X}^{\delta}_s))(X^{\delta}_s - \hat{X}^{\delta}_s)|^2\right) \De s\\
\leq &f'(r^{\delta,a}_s)\left(-\rcd(r^{\delta}_s)^2\bar{\kappa}(r^{\delta}_s)\frac{r^{\delta}_s}{r^{\delta,a}_s} +  \const^{\delta\beta}_s\right) + 2f''(r^{\delta,a}_s) \rcd(r^{\delta}_s)^2\sigma_0^2 \De s + \De M_s\\
&+ f'(r^{\delta,a}_s) \left(2 \rcd(r^{\delta}_s)^2\left(\frac{1}{r^{\delta,a}_s} - \frac{r^{\delta}_s}{(r^{\delta,a}_s)^3} \right) \right) \De s + (1 - \rcd(r^{\delta}_s)^2)f'(r^{\delta,a}_s) \frac{(X^{\delta}_s - \hat{X}^{\delta}_s)}{r^{\delta,a}_s} \cdot \left(\beta_s(X^{\delta}_s) - \hat{\beta}_s(\hat{X}^{\delta}_s) + \frac{L_{\sigma}^2(r^{\delta}_s)^2}{2r^{\delta,a}_s}  \right) \De s,
\end{align}
since $\bar{\sigma}$ is Lipschitz continuous with constant $L_{\sigma}$ thanks to \Cref{lem:sigmabar_Lip}.
Observe that by standard dominated convergence arguments as in the proof of \cite[Lem 7]{durmus2020elementary} we can take $a \rightarrow 0$ to get
\begin{align}
\De f(r^{\delta}_s)
\leq &f'(r^{\delta}_s)\left(-\rcd(r^{\delta}_s)^2\bar{\kappa}(r^{\delta}_s) +  \const^{\delta\beta}_s \right) + 2f''(r^{\delta}_s) \rcd(r^{\delta}_s)^2\sigma_0^2 \De s + (1 - \rcd(r^{\delta}_s)^2)f'(r^{\delta}_s)  \left(|\beta_s(X^{\delta}_s) - \hat{\beta}_s(\hat{X}^{\delta}_s)| + \frac{L_\sigma^2}{2}r^{\delta}_s  \right) \De s \eqsp .
\end{align}
Now, using the functional inequality satisfied by $f = f_{\bar{\kappa}}$ given in \eqref{eq:Eberle_funct_2}
\begin{align}
\De f_{\bar{\kappa}}(r^{\delta}_s)
\leq &-\rcd(r^{\delta}_s)^2\lambda f_{\bar{\kappa}}(r^{\delta}_s) +  \const^{\delta\beta}_s  \De s + \De M_s + (1 - \rcd(r^{\delta}_s)^2)f_{\bar{\kappa}}'(r^{\delta}_s)  \left(|\beta_s(X^{\delta}_s) - \hat{\beta}_s(\hat{X}^{\delta}_s)| + \frac{L_\sigma^2}{2}r^{\delta}_s \right) \De s\\
\leq &-\lambda f_{\bar{\kappa}}(r^{\delta}_s) + \const^{\delta\beta}_s + \lambda \delta \De s + \De M_s + (1 - \rcd(r^{\delta}_s)^2)f_{\bar{\kappa}}'(r^{\delta}_s)  \left(|\beta_s(X^{\delta}_s) - \hat{\beta}_s(\hat{X}^{\delta}_s)| + \frac{L_\sigma^2}{2}r^{\delta}_s \right) \De s \eqsp
\end{align}

\item\textbf{Step 2: Conclusion of contraction in Wasserstein distance}

Recall that we are working with the stopped processes $X^{\delta,n}_s = X^{\delta}_{s\wedge\tau_n}$, $\hat{X}^{\delta,n}_s = \hat{X}^{\delta}_{s\wedge\tau_n}$, define the distance process $r^{\delta,n}_s  := |X^{\delta,n}_s - \hat{X}^{\delta,n}_{s}|$. The last inequality means rigorously
\begin{align}
f_{\bar{\kappa}}(r^{\delta, n}_t) - f_{\bar{\kappa}}(r^{\delta, n}_{t_0})
\leq &\int_{t_0 \wedge \tau_n}^{t \wedge \tau_n}-\lambda f_{\bar{\kappa}}(r^{\delta, n}_s) + \lambda\delta +  \const^{\delta\beta}_s  + (1 - \rcd(r^{\delta, n}_s)^2)|\beta_s(X^{\delta, n}_s) - \beta_s(\hat{X}^{\delta, n}_s)| + \frac{L_{\sigma}^2}{2} r^{\delta,n}_s\De s  \\
&+ \int_{t_0 \wedge \tau_n}^{t \wedge \tau_n}\De M_s
\end{align}
Using Grönwall's lemma and taking expectation we obtain
\begin{align}
\bbE[f_{\bar{\kappa}}(r^{\delta, n}_{t})]
&\leq \bbE[e^{-\lambda(t\wedge \tau_n-t_0\wedge \tau_n)}f_{\bar{\kappa}}(r^{\delta, n}_{t_0})] \\
& \quad + \bbE\left[\int_{t_0 \wedge \tau_n}^{t \wedge \tau_n} e^{-\lambda(t \wedge \tau_n-s)} \left(\lambda\delta +  \const^{\delta\beta}_s + (1 - \rcd(r^{\delta, n}_s)^2)|\beta_s(X^{\delta, n}_s) - \beta_s(\hat{X}^{\delta, n}_s)| +  \frac{L_{\sigma}^2}{2}  r^{\delta,n}_s\right) \De s \right]\\
&\leq \bbE[e^{-\lambda(t\wedge \tau_n-t_0\wedge \tau_n)}f_{\bar{\kappa}}(r^{\delta, n}_{t_0})] + \bbE\left[\int_{t_0 \wedge \tau_n}^{t \wedge \tau_n} e^{-\lambda(t \wedge \tau_n-s)} \left(\lambda\delta  +  \const^{\delta\beta}_s \right) \De s \right] \\
& \quad + \bbE\left[\int_{0}^{t}  (1 - \rcd(r^{\delta, n}_s)^2)|\beta_s(X^{\delta, n}_s) - \beta_s(\hat{X}^{\delta, n}_s)| + \frac{L_{\sigma}^2}{2}  r^{\delta,n}_s \De s \right]
\end{align}
We proceed to prove that the last term goes to $0$ as $\delta \rightarrow 0$.
Since $\beta_s$ is continuous, it is locally uniformly continuous on $B_{n+1}(0)$ with say rate $\epsilon_{n+1}, \epsilon_{n+1}(r) \rightarrow 0$ as $r \rightarrow 0$, \ie, for $x,\hat{x} \in B_{n+1}(0)$
\begin{equation}
    |\beta_s(x) - \beta_s(\hat{x})| \leq  \epsilon_{n+1}(|x - \hat{x}|).
\end{equation}
Using this and $\rcd(r) =1$ for $r\geq \delta$, we get for $\delta \leq 1$
\begin{align}
\bbE\left[  (1 - \rcd(r^{\delta, n}_s)^2)|\beta_s(X^{\delta, n}_s) - \beta_s(\hat{X}^{\delta, n}_s)| \right]
&\leq \bbE[\IND_{\{r^{\delta,n}_s \leq \delta\}}
|\beta_s(X^{\delta,n}_s) - \beta_s(\hat{X}^{\delta,n}_s)|] \\
&\leq \bbE[\IND_{\{r^{\delta,n}_s \leq \delta\}} \epsilon_R(r^{\delta,n}_s)]
\end{align}
From this we get
\begin{align}
&\lim_{\delta \rightarrow 0}\bbE[(1 - \rcd(r^{\delta,n}_s)^2)|\beta_s(X^{\delta,n}_s) - \beta_s(\hat{X}^{\delta,n}_s)|] = 0
\end{align}
Using dominated convergence, we arrive at
\begin{align}
\limsup_{\delta \rightarrow 0}\bbE[f_{\bar{\kappa}}(r^{\delta, n}_{t})]
\leq \limsup_{\delta \rightarrow 0}\bbE[e^{-\lambda(t\wedge \tau_n-t_0\wedge \tau_n)}f_{\bar{\kappa}}(r^{\delta, n}_{t_0})] + \bbE\left[\int_{t_0 \wedge \tau_n}^{t \wedge \tau_n} e^{-\lambda(t \wedge \tau_n-s)}  (\const^{\delta\beta}_s ) \De s \right]
\end{align}
Using an optimal intial coupling at $t_0$ and letting $n \rightarrow \infty$ we conclude
\begin{align}
W_f(\cL(X^{\delta}_t),\cL(\hat{X}^{\delta}_t)) \leq e^{-\lambda(t-t_0)}W_f(\cL(X^{\delta}_{t_0}),\cL(\hat{X}^{\delta}_{t_0})) + \int_{t_0}^t e^{-\lambda(t-s)}  (\const^{\delta\beta}_s ) \De s
\end{align}

\item\textbf{Step 3: Estimate in TV}
 Denote by $(P_{s,t})_{t\geq s \geq 0}$ and $(\hat{P}_{s,t})_{t\geq s \geq 0}$ the non-homogeneous Markov semigroup associated with the first SDE in \eqref{eq:delta_coup_by_ref} and the second SDE respectively.

Let $0 \leq t_0 < t$. By the triangular inequality
\begin{align}
\| \cL(X^{\delta}_t) - \cL(\hat{X}^{\delta}_t) \|_{\TV}
\leq \| \cL(X^{\delta}_t) - \cL(\hat{X}^{\delta}_{t_0})P_{t,t_0} \|_{\TV} + \| \cL(\hat{X}^{\delta}_{t_0})P_{t,t_0} - \cL(\hat{X}^{\delta}_t) \|_{\TV}
\end{align}
Then by \Cref{prop:contr_same_drift}-\ref{item:final_TV_coup_by_ref}
\begin{align}
\| \cL(X^{\delta}_t) - \cL(\hat{X}^{\delta}_{t_0})P_{t,t_0} \|_{\TV}
= \| \cL(X^{\delta}_{t_0})P_{t,t_0} - \cL(\hat{X}^{\delta}_{t_0})P_{t,t_0} \|_{\TV} 
\leq q^{\bar{\kappa}}_{t-t_0}\bbE[f(r^{\delta}_{t_0})]
\end{align}
For the second term, we obtain by Pinsker's inequality and Girsanov's theorem
\begin{align}
\| \cL(\hat{X}^{\delta}_{t_0})P_{t,t_0} - \cL(\hat{X}^{\delta}_t) \|_{\TV}
&= \| \cL(\hat{X}^{\delta}_{t_0})P_{t,t_0} - \cL(\hat{X}^{\delta}_{t_0})\hat{P}_{t,t_0} \|_{\TV} 
\leq \scrH(\cL(\hat{X}^{\delta}_{t_0})P_{t,t_0} | \cL(\hat{X}^{\delta}_{t_0})\hat{P}_{t,t_0})^\frac{1}{2} \\
&= \frac{1}{\sqrt{2}} \bbE\left[\int_{t_0}^t |\beta_s(X_s) - \hat{\beta}_s(X_s)|^2 \De s \right]^\frac{1}{2} \leq \frac{1}{\sqrt{2}} \left(\int_{t_0}^{t} (\const^{\delta\beta_s})^2 \De s \right)^{\frac{1}{2}}.
\end{align}
Putting the estimates together finishes the proof.
\end{itemize}

We need the following technical lemma.
\begin{lemma}\label{lem:sigmabar_Lip}
Assume \Cref{ass:coupling}. The map $x \mapsto \bar{\sigma}(x)$ is Lipschitz continuous, \ie, there exists $L_{\sigma} \geq 0$ such that
\begin{equation}
\| \bar{\sigma}(x) - \bar{\sigma}(\hat{x}) \|_{\mathrm{Fr}} \leq L_{\sigma} |x-\hat{x}|
\end{equation}
\end{lemma}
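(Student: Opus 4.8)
The statement to prove is \Cref{lem:sigmabar_Lip}: under \Cref{ass:coupling}, the map $x \mapsto \bar\sigma(x) = \sqrt{\sigma(x)\sigma(x)^\transpose - \sigma_0^2\Idd}$ is Lipschitz continuous in Frobenius norm.

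\textbf{Approach.} The plan is to reduce the claim to a quantitative statement about the Lipschitz dependence of the principal square root on the cone of positive semidefinite matrices, applied to the matrix field $A(x) := \sigma(x)\sigma(x)^\transpose - \sigma_0^2\Idd$. First I would record that $A$ is itself Lipschitz: writing $A(x) - A(\hat x) = (\sigma(x) - \sigma(\hat x))\sigma(x)^\transpose + \sigma(\hat x)(\sigma(x)^\transpose - \sigma(\hat x)^\transpose)$ and using that $\sigma$ is bounded by $\sqrt{2}\Sigma$ and $1$-Lipschitz (both from \Cref{ass:MF_drift_intro}-\ref{ass:MF_drift_intro_diffusion_coeff}, which \Cref{ass:coupling} imposes), one gets $\|A(x) - A(\hat x)\|_{\mathrm{Fr}} \leq 2\sqrt{2}\Sigma\,|x - \hat x|$. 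Moreover, by uniform ellipticity $A(x) = \sigma\sigma^\transpose(x) - \sigma_0^2\Idd \succeq \sigma_0^2\Idd$, so $A(x)$ stays uniformly bounded away from $0$, with all eigenvalues in $[\sigma_0^2, 2\Sigma^2 - \sigma_0^2]$.

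\textbf{Key steps.} The crucial ingredient is the following fact: on the set $\mathcal S_{m}$ of symmetric matrices with spectrum in $[m, \infty)$ for a fixed $m > 0$, the map $B \mapsto \sqrt B$ is Lipschitz with constant $\tfrac{1}{2\sqrt m}$ in Frobenius norm. I would prove this by the standard argument: if $P = \sqrt B$, $Q = \sqrt{\hat B}$, then $B - \hat B = P(P - Q) + (P - Q)Q$, i.e.\ $P - Q$ solves the Sylvester equation $P X + X Q = B - \hat B$; diagonalizing $P$ and $Q$ and looking at matrix entries in the respective eigenbases, each entry of $P - Q$ is an entry of $B - \hat B$ divided by a sum $p_i + q_j$ of eigenvalues, which is $\geq 2\sqrt m$ since all eigenvalues of $P,Q$ are $\geq \sqrt m$. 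This gives $\|P - Q\|_{\mathrm{Fr}} \leq \tfrac{1}{2\sqrt m}\|B - \hat B\|_{\mathrm{Fr}}$. Applying this with $B = A(x)$, $\hat B = A(\hat x)$, $m = \sigma_0^2$ and combining with the Lipschitz bound on $A$ yields
\[
\|\bar\sigma(x) - \bar\sigma(\hat x)\|_{\mathrm{Fr}} \leq \frac{1}{2\sigma_0}\,\|A(x) - A(\hat x)\|_{\mathrm{Fr}} \leq \frac{\sqrt{2}\,\Sigma}{\sigma_0}\,|x - \hat x|,
\]
so one may take $L_\sigma = \sqrt{2}\,\Sigma/\sigma_0$.

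\textbf{Main obstacle.} There is no deep obstacle here; the statement is essentially a lemma of linear algebra combined with the a priori bounds furnished by the ellipticity assumption. The one point requiring a little care is the Sylvester-equation step: one must justify that the principal (symmetric positive semidefinite) square roots $P, Q$ can be simultaneously handled even though they do not commute, which is exactly why passing to the two different eigenbases (one for $P$, one for $Q$) and bounding entrywise is the clean route, rather than trying to manipulate $\sqrt B - \sqrt{\hat B}$ directly. I would present the entry-wise bound explicitly and note that the Frobenius norm is unitarily invariant so the change of basis costs nothing.
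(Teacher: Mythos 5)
Your proof is correct and arrives at essentially the same conclusion as the paper, but via a decomposition that is more explicit and, in fact, more rigorous. The paper composes $\bar\sigma = P\circ\sigma$ with $P(A) = \sqrt{AA^\transpose - \sigma_0^2\Idd}$ and obtains the Lipschitz bound by writing a formula for the Fr\'echet derivative, namely $DP(A)H = \tfrac{1}{2}\bigl(\sqrt{AA^\transpose - \sigma_0^2\Idd}\bigr)^{-1}\bigl(AH^\transpose + HA^\transpose\bigr)$, and then estimating its operator norm. Strictly speaking, that displayed formula is not the derivative of the matrix square root: if $P = \sqrt{Q}$, then $DP\cdot H$ solves the Sylvester equation $XP + PX = DQ\cdot H$, and the simple form $\tfrac12 P^{-1}(DQ\cdot H)$ only holds when $P$ commutes with $DQ\cdot H$, which is generally false. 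Your argument makes exactly this point precise: you identify $P - Q$ as the solution of $P(P-Q) + (P-Q)Q = B - \hat B$, pass to the eigenbases of $P$ and $Q$, bound each entry by dividing by $p_i + q_j \geq 2\sqrt{m}$, and use the unitary invariance of the Frobenius norm. This is the standard clean route and is what the paper's norm estimate actually requires; the paper's end inequality $\|P(A) - P(\hat A)\|_{\frob} \lesssim \sigma_0^{-1}\,\Sigma\,\|A - \hat A\|_{\frob}$ is correct and agrees (up to a harmless factor) with your $L_\sigma = \sqrt{2}\Sigma/\sigma_0$, but the Sylvester step you write out is the justification the paper leaves implicit (and slightly misstates). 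So: same idea, same constants up to a factor of $\sqrt{2}$, but your version fills a genuine gap in the paper's calculus.
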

\begin{proof}
For $\cM = \{A \in \bbR^{d \times d}: \Sigma^2 \mathrm{I} \succeq AA^{\top} \succeq 2\sigma_0^2 \mathrm{I} \}$
define the map
$$P: \cM \rightarrow \bbR^{d \times d}, \, A \mapsto \sqrt{A A^{\top} - \sigma_0^2 \mathrm{I}}\eqsp .$$
This map is differentiable on $\cM$ and for $A \in \cM$, $H \in \bbR^{d \times d}$
\begin{equation}
DP(A)H = \frac{1}{2}\left( \sqrt{AA^{\top} - \sigma_0^2 \mathrm{I}} \right)^{-1} \left( AH^{\top} + H A^{\top}\right)
\end{equation}
Using $AA^{\top} - \sigma_0^2 \mathrm{I} \succeq  \sigma_0^2 \mathrm{I}$ for all $A \in \cM$, we get for $A, \hat{A} \in \cM$
\begin{equation}
\|P(A) -P(\hat{A}) \|_{\mathrm{Fr}} \leq \frac{2\Sigma}{\sigma_0}\|A - \hat{A} \|_{\mathrm{Fr}}
\end{equation}
The statement now follows because $\bar{\sigma} = P \circ \sigma$ and using Lipschitz continuity of $\sigma$.
\end{proof}

\section{Estimates Hamiltonian}
\begin{lemma}\label{lem:Est_Contr_Hamiltonian}
Suppose \Cref{ass-coercivity}.
The function $(s,x,p) \in \ccint{0,T}\times \rset^d\times \rset^d \mapsto w_s(x,p)$ in \eqref{eq:optimal_policy_def}, associated with the Hamiltonian given in  \eqref{eq:Hamiltonian_fin_dim} is well-defined and satisfies  for any $(s,x,p) \in \ccint{0,T}\times \rset^d\times \rset^d$,
\begin{equation}\label{eq:Est_Optcontr}
|w_s(x,p)| \leq (1/\rho^\ell_{uu}) (\const^{\ell(\cdot,0)}_u + |p|) \eqsp .
\end{equation}
  In addition,
the function $p \mapsto w_s(x,p)$ is Lipschitz continuous with constant $\frac{1}{\rho^{\ell}_{uu}}$, \ie,
\begin{equation}\label{eq:Est_Lip_contr}
|\partial_p w_s(x,p)| \leq 1/{\rho^{\ell}_{uu}} \eqsp.
\end{equation}
If in addition $\|\partial_{ux}\ell_s \|_{\infty} \leq \const^\ell_{ux}$, then $|\partial_x w_s(x,p)| \leq \const^\ell_{ux}/\rho^{\ell}_{uu}$.
\end{lemma}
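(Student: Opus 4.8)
\textbf{Proof strategy for Lemma \ref{lem:Est_Contr_Hamiltonian}.}
The plan is to exploit strong convexity of $u\mapsto \ell_s(x,u)+(b_s(x)+u)\cdot p$ together with first-order optimality conditions. First I would record that by \Cref{ass-coercivity}-(i), for each fixed $(s,x,p)$ the map $u\mapsto \ell_s(x,u)+(b_s(x)+u)\cdot p$ is $\rho^\ell_{uu}$-strongly convex and coercive (its Hessian in $u$ is $\partial_{uu}\ell_s(x,u)\succeq\rho^\ell_{uu}\mathrm{I}$), hence it admits a unique minimizer, which shows $w_s(x,p)$ is well-defined. The minimizer is characterized by the stationarity condition
\begin{equation}\label{eq:stationarity_w}
\partial_u \ell_s(x,w_s(x,p)) + p = 0.
\end{equation}

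For the bound \eqref{eq:Est_Optcontr}, I would use strong convexity in the form of the standard inequality $\langle \partial_u\ell_s(x,u)-\partial_u\ell_s(x,0),u-0\rangle\geq \rho^\ell_{uu}|u|^2$ applied at $u=w_s(x,p)$. Substituting \eqref{eq:stationarity_w} gives $\langle -p-\partial_u\ell_s(x,0),w_s(x,p)\rangle\geq \rho^\ell_{uu}|w_s(x,p)|^2$, so by Cauchy--Schwarz and \Cref{ass-coercivity}-(ii), $\rho^\ell_{uu}|w_s(x,p)|^2\leq (|p|+\const^{\ell(\cdot,0)}_u)|w_s(x,p)|$, which yields \eqref{eq:Est_Optcontr} after dividing by $|w_s(x,p)|$ (the case $w_s(x,p)=0$ being trivial).

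For the Lipschitz estimate \eqref{eq:Est_Lip_contr}, I would subtract the stationarity conditions at $p$ and $\hat p$: $\partial_u\ell_s(x,w_s(x,p))-\partial_u\ell_s(x,w_s(x,\hat p)) = \hat p - p$. Pairing with $w_s(x,p)-w_s(x,\hat p)$ and using the $\rho^\ell_{uu}$-strong monotonicity of $\partial_u\ell_s(x,\cdot)$ on the left and Cauchy--Schwarz on the right gives $\rho^\ell_{uu}|w_s(x,p)-w_s(x,\hat p)|^2\leq |p-\hat p|\,|w_s(x,p)-w_s(x,\hat p)|$, hence $|w_s(x,p)-w_s(x,\hat p)|\leq \rho^\ell_{uu}{}^{-1}|p-\hat p|$; differentiability of $w_s$ in $p$ follows from the implicit function theorem applied to \eqref{eq:stationarity_w} (invertibility of $\partial_{uu}\ell_s$ is guaranteed by \Cref{ass-coercivity}-(i)), and the same computation in differential form gives $|\partial_p w_s(x,p)|\leq \rho^\ell_{uu}{}^{-1}$. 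For the last claim, I would differentiate \eqref{eq:stationarity_w} in $x$, obtaining $\partial_{xu}\ell_s(x,w_s(x,p)) + \partial_{uu}\ell_s(x,w_s(x,p))\,\partial_x w_s(x,p)=0$, so $\partial_x w_s(x,p) = -(\partial_{uu}\ell_s)^{-1}\partial_{xu}\ell_s$, and the operator norm bound $|(\partial_{uu}\ell_s)^{-1}|\leq \rho^\ell_{uu}{}^{-1}$ together with $\|\partial_{ux}\ell_s\|_\infty\leq\const^\ell_{ux}$ gives $|\partial_x w_s(x,p)|\leq \const^\ell_{ux}/\rho^\ell_{uu}$. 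There is no serious obstacle here; the only mild technical point is justifying differentiability of $w_s$ via the implicit function theorem, which requires $\ell_s$ to be $C^2$ in $u$ (assumed) and local uniformity, both of which are covered by \Cref{ass-coercivity}.
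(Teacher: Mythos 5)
Your proof is correct and follows essentially the same route as the paper: well-definedness via strong convexity, the first-order stationarity condition, Cauchy--Schwarz combined with strong monotonicity of $\partial_u\ell_s$ for \eqref{eq:Est_Optcontr}, and the implicit function theorem applied to the stationarity condition for the two derivative bounds. The only (harmless) stylistic difference is that for \eqref{eq:Est_Lip_contr} you first derive the Lipschitz bound directly by subtracting stationarity conditions and invoking strong monotonicity, then pass to the differential form via the implicit function theorem, whereas the paper goes directly to the implicit function theorem argument.
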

\begin{proof}
  Note that under  \Cref{ass-coercivity},  for any $x,p \in \rset^d$ and $s \in\ccint{0,T}$, $u \mapsto \ell_s(x,u) + (b_s(x)+u) \cdot p$ is strongly convex and twice continuously differentiable. Therefore, it admits a unique minimizer that corresponds to $w_s(x,p)$. In addition, it should satisfy by the first order optimality condition that for any $x,p \in \rset^d$ and $s \in\ccint{0,T}$.
\begin{equation}\label{eq:OptCont_contr}
\partial_u \ell_s(x,w_s(x,p)) + p =0 \eqsp.
\end{equation}
Using that under  \Cref{ass-coercivity} for any $u \in\rset^d$, $|\{\partial_u \ell_s(x,u) - \partial_u \ell_s(x,0)\}\cdot u | \geq \rho^\ell_{uu} |u |^2$, we get using  \eqref{eq:OptCont_contr} and the Cauchy-Schwarz inequality, $|\partial_u \ell_s(x,0) + p| \geq \rho^\ell_{uu} |w_s(x,p)|$,
which implies \eqref{eq:Est_Optcontr}.
Furthermore, applying \cite[Theorem 1B.1]{Dontchev:rocka:2014} to the function $(p,u) \mapsto \partial_u \ell_s(x,u) + p$, for fixed $x$ and $s$, we get $\partial_{uu} \ell_s(x,w_s(x,p))\partial_{p}w_s(x,p) + \Id =0$,
which yields \eqref{eq:Est_Lip_contr} by \Cref{ass-coercivity}.
Regarding the last part of the statement, applying \cite[Theorem 1B.1]{Dontchev:rocka:2014} again to the function $(x,u) \mapsto \partial_u \ell_s(x,u) + p$, for fixed $p$ and $s$ gives
\begin{equation}
\partial^2_{ux} \ell_s(x,w_s(x,p)) + \partial^2_{uu} \ell_s(x,w_s(x,p)) \partial_x w_s(x,p) = 0 \eqsp,
\end{equation}
which concludes the proof using \Cref{ass-coercivity}.
\end{proof}

\section{Miscellaneous}
\subsection{Proof of \Cref{lem:moment_bound}}\label{sec:proof_lem:moment_bound}
\begin{proof}
Let us write $\mu_t = \mu^{T,G}_t$. By a simple triangle inequality
\bes
\int_{\bbR^d}|x|\mu_t \leq \int_{\bbR^d}|x|\mu^b+C_{\kappa_b}^{-1}W_{f_{\kappa_b}}(\mu_t,\mu^b).
\ees
To bound the Wasserstein distance, we apply $\delta$-coupling by reflection \eqref{eq:delta_coup_by_ref} with 
\bes 
\hat\beta_s = b(x), \quad \beta_s = b(x)+w_s(x,\nabla\varphi^{T,G}_s(x)),
\ees
and initial distributions $\mu_0,\mu^b$. We have from \Cref{prop:contr_delta_coup}-\ref{item:W_1_contr_delta_coup} and \Cref{lem:basics_MF_high}-\ref{item:grad_est_MF_high} that 
\bes
\begin{aligned}
W_{f_{\kappa_b}}(\mu_t,\mu^b)
&\leq \exp(-\lambda_{\kappa_b}t)W_{f_{\kappa_b}t}(\mu_0,\mu^b)+\frac{1}{\rho^L_{uu}}\int_0^te^{-\lambda_{\kappa_b}(t-s)}[\|\varphi^{T,g}_s\|_{f_{\kappa_{b}}} + \const^{L(\cdot,0)}_u]\De s.
\end{aligned}
\ees
If $\| G \|_{f_{\kappa_b}} < + \infty$, we can apply \Cref{lem:basics_MF_high}-\ref{item:grad_est_MF_high} to bound
\begin{align}
W_{f_{\kappa_b}}(\mu_t,\mu^b)
&\leq \exp(-\lambda_{\kappa_b}t)W_{f_{\kappa_b}t}(\mu_0,\mu^b)+\frac{1}{\rho^L_{uu}}\int_0^te^{-\lambda_{\kappa_b}(t-s)}[(1-e^{-\lambda_{\kappa_b}(T-s)})\const^{\psi}_{x}+e^{-\lambda_{\kappa_b}(T-s)}\|G\|_{f_{\kappa_b}} + \const^{L(\cdot,0)}_u]\De s\\
&\leq \int_{\bbR^d}|x|\mu^b + \int_{\bbR^d}|x|\mu_0 + \frac{1}{\rho^L_{uu}\lambda_{\kappa_b}} (\const^{\psi}_{x}+\|G\|_{f_{\kappa_b}} + \const^{L(\cdot,0)}_u).
\end{align}
Now if $\| G \|_{\infty} < + \infty$, this gives, again referring to \Cref{lem:basics_MF_high}-\ref{item:grad_est_MF_high} for the bound on $\|\varphi^{T,G}_{t}\|_{f_{\kappa_{b}}}$
\begin{align}
W_{f_{\kappa_b}}(\mu_t,\mu^b)
&\leq \exp(-\lambda_{\kappa_b}t)W_{f_{\kappa_b}t}(\mu_0,\mu^b)+\frac{1}{\rho^L_{uu}}\int_0^te^{-\lambda_{\kappa_b}(t-s)}[(1-e^{-\lambda_{\kappa_b}(T-s)})\const^{\psi}_{x}+q^{\kappa_b}_{T-s}\|G\|_\infty + \const^{L(\cdot,0)}_u]\De s\\
&\leq W_{f_{\kappa_b}}(\mu_0,\mu^b)+\frac{1}{\rho^L_{uu}\lambda_{\kappa_b}}(\const^\psi_x + \const^{L(\cdot,0)}_u)+\frac{\|G\|_\infty}{\rho^L_{uu}}\int_T^{T+t}q^{\kappa_b}_{s-t}e^{-\lambda_{\kappa_b}(s-T)}\De s \\
&\leq \int_{\bbR^d}|x|\mu^b + \int_{\bbR^d}|x|\mu_0 +\frac{1}{\rho^L_{uu}\lambda_{\kappa_b}}(\const^\psi_x + \const^{L(\cdot,0)}_u)+\|G\|_\infty \frac{3}{2\rho^L_{uu}\sqrt{\pi\lambda_{\kappa_b}}C_{\kappa_b}\sigma_0},
\end{align}
where in the last inequality we have used \Cref{lem:boring_calculations_mild}.
\end{proof}

\subsection{On Grönwall's lemma}

\begin{lemma}\label{lem:Gronwall_weak_form}[Grönwall's lemma - integral form]
Suppose that $f: \bbR_+ \rightarrow \bbR_+$ is continuous and that there exists $\lambda \in \bbR$ such that for all $0\leq s \leq t$
\begin{equation}
    f(t) - f(s) \leq \lambda \int_s^t f(u) \De u.
\end{equation}
Then for all $t \geq 0$
\begin{equation}
    f(t) \leq e^{\lambda t}f(0).
\end{equation}
\end{lemma}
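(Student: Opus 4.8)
The plan is to reduce the integral inequality to a monotonicity statement for an auxiliary function and then resolve an explicit linear ordinary differential equation. First I would set $F(t):=\int_0^t f(u)\,\De u$, which is $C^1$ with $F(0)=0$, $F'=f$, and introduce $\psi(t):=f(t)-\lambda F(t)$. The hypothesis $f(t)-f(s)\leq\lambda\bigl(F(t)-F(s)\bigr)$ is, after rearrangement, precisely the statement that $\psi$ is non-increasing, i.e. $\psi(t)\leq\psi(s)$ whenever $0\leq s\leq t$; moreover $\psi$ is continuous and $\psi(0)=f(0)$. This is the only place the hypothesis is used.

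Next I would solve for $F$. Since $F\in C^1$ satisfies $F'(t)-\lambda F(t)=\psi(t)$, the map $t\mapsto e^{-\lambda t}F(t)$ has derivative $e^{-\lambda t}\psi(t)$, and integrating from $0$ to $t$ gives $F(t)=\int_0^t e^{\lambda(t-s)}\psi(s)\,\De s$, hence
$$f(t)=\psi(t)+\lambda\int_0^t e^{\lambda(t-s)}\psi(s)\,\De s.$$
Using $\lambda\int_0^t e^{\lambda(t-s)}\,\De s=e^{\lambda t}-1$ and the substitution $\psi(s)=\psi(t)+(\psi(s)-\psi(t))$ this can be rewritten as $f(t)=e^{\lambda t}\psi(t)+\lambda\int_0^t e^{\lambda(t-s)}\bigl(\psi(s)-\psi(t)\bigr)\,\De s$, while the substitution $\psi(s)=\psi(0)+(\psi(s)-\psi(0))$ gives $f(t)=\psi(t)+\psi(0)\bigl(e^{\lambda t}-1\bigr)+\lambda\int_0^t e^{\lambda(t-s)}\bigl(\psi(s)-\psi(0)\bigr)\,\De s$. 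I would then split on the sign of $\lambda$. If $\lambda<0$, the integrand $e^{\lambda(t-s)}(\psi(s)-\psi(t))$ is non-negative for $s\leq t$ by monotonicity of $\psi$, so the first rewriting yields $f(t)\leq e^{\lambda t}\psi(t)\leq e^{\lambda t}\psi(0)=e^{\lambda t}f(0)$ (using $e^{\lambda t}>0$ and $\psi(t)\leq\psi(0)$). If $\lambda\geq0$, then $\psi(s)-\psi(0)\leq0$ and $\lambda\geq0$ make the integral term in the second rewriting non-positive, and with $\psi(t)\leq\psi(0)$ one gets $f(t)\leq\psi(0)+\psi(0)(e^{\lambda t}-1)=e^{\lambda t}f(0)$; this also covers $\lambda=0$.

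I do not expect a genuine obstacle: the statement is elementary. The single point demanding care is that $f$ is only assumed continuous, not differentiable, so one cannot simply differentiate the hypothesis; this is exactly why the argument is routed through the $C^1$ function $F$ and the monotone continuous function $\psi$. An alternative, uniform in the sign of $\lambda$, would be to write $e^{-\lambda t}f(t)=f(0)+\int_0^t e^{-\lambda s}\,\De\psi(s)$ by Riemann--Stieltjes integration by parts and conclude from $\De\psi\leq0$; I would keep the elementary two-case version to avoid Stieltjes calculus.
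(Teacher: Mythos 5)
Your proof is correct, and it takes a genuinely different route from the paper. The paper proceeds by mollification: setting $f_n=\rho_n\ast f$ and $F_n=\rho_n\ast F$, it shows the difference-quotient inequality passes through the convolution, differentiates to obtain $f_n'\leq\lambda f_n$, applies the classical (differential) Grönwall lemma to each $f_n$, and lets $n\to\infty$. Your argument avoids smoothing and limits entirely: the observation that $\psi:=f-\lambda F$ is non-increasing repackages the hypothesis exactly, the linear ODE $F'-\lambda F=\psi$ gives the explicit representation $f(t)=\psi(t)+\lambda\int_0^t e^{\lambda(t-s)}\psi(s)\,\De s$, and the sign split on $\lambda$ (using the two algebraically equivalent rewritings) finishes it by monotonicity of $\psi$. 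Your version is more elementary — no approximation argument, no need to verify that convolution commutes with the inequality — at the mild cost of an explicit two-case analysis that the mollification route sidesteps by delegating to the standard Grönwall. Both are sound; yours is arguably cleaner and more self-contained.
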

\begin{proof}
Let us take a sequence of mollifiers indexed by $n \in \N$, i.e. $\rho_n \in C^\infty_c(\bbR)$ with $\mathrm{spt}(\rho_n) \subset (-\frac{1}{2n}, \frac{1}{2n})$, $\rho_n \geq 0$ $\int_{\bbR}\rho_n = 1$. Set for $t \geq \frac{1}{n}$
\begin{equation}
    f_n(t) = \rho_n \ast f (t), \ F_n(t) = \rho_n \ast F (t),
\end{equation}
where $F(t) = \int_0^t f(u) \De u$. Then since convolution preserves positivity we have for $t \geq s \geq \frac{1}{n}$
\begin{equation}
    \frac{f_n(t) - f_n(s)}{t-s} \leq \lambda \frac{F_n(t) - F_n(s)}{t-s}.
\end{equation}
Now observe that $F_n'(t) = \rho_n \ast F'(t) = f_n(t)$, so that taking $t \rightarrow s$ we obtain
\begin{equation}
    f_n'(s) \leq \lambda f_n(s).
\end{equation}
At this point we deduce with the standard Grönwall's lemma
\begin{equation}
    f_n(s) \leq e^{\lambda (s- \frac{1}{n})} f_n(\frac{1}{n})
\end{equation}
The result now follows by taking $n \rightarrow \infty$.
\end{proof}

\bibliographystyle{plain}
\bibliography{refs_tpike}
\end{document}